\documentclass[11pt,letterpaper]{article}
\usepackage[T1]{fontenc}
\usepackage[margin=1in]{geometry}
\usepackage{amsmath,amsthm,amssymb,mathtools,booktabs,array}
\usepackage{lmodern}
\usepackage{microtype}
\usepackage[authoryear,round]{natbib}
\usepackage{xcolor}
\usepackage{hyperref}
\hypersetup{
  colorlinks=true,
  allcolors=blue,
  pdftitle={Critical fluctuations in the edge--triangle ERGM},
  pdfauthor={Songhao Liu, Qiman Shao, Hao Shi, Jingyu Xu},
  pdfsubject={},
  pdfkeywords={Exponential random graph, Stein method, critical fluctuations, critical window, finite-size correction}
}
\numberwithin{equation}{section}
\theoremstyle{plain}
\newtheorem{theorem}{Theorem}[section]
\newtheorem{proposition}[theorem]{Proposition}
\newtheorem{lemma}[theorem]{Lemma}

\theoremstyle{definition}
\newtheorem{remark}{Remark}[section]
\newcommand{\E}{\mathbb E}
\newcommand{\PP}{\mathbb P}
\newcommand{\R}{\mathbb R}

\newcommand{\var}{\operatorname{Var}}
\newcommand{\cov}{\operatorname{Cov}}
\newcommand{\dd}{\,\mathrm d}
\newcommand{\ee}{\mathrm e}
\newcommand{\calN}{\mathcal N}
\newcommand{\calL}{\mathcal L}
\newcommand{\calF}{\mathcal F}

\newcommand{\norm}[1]{\lVert#1\rVert}
\newcommand{\meanM}{\mathord{\mkern2mu\overline{\mkern-2mu M\mkern-2mu}\mkern2mu}}
\DeclareMathOperator{\inj}{inj}
\DeclareMathOperator{\diag}{diag}
\DeclareMathOperator{\Tr}{Tr}
\allowdisplaybreaks[1]

\title{Critical fluctuations in the edge--triangle exponential random graph models}
\author{Song-Hao Liu$^{1}$ \and Qi-Man Shao$^{2}$ \and Hao Shi$^{3}$ \and Jing-Yu Xu$^{3}$}
\date{}

\begin{document}
\maketitle
\vspace{-1.5em}
\begin{center}
\small
\begin{minipage}{0.98\textwidth}
\centering
$^{1}$School of Mathematical Sciences, Dalian University of Technology,\\
Dalian, Liaoning, China\\[0.4em]
$^{2}$Department of Statistics and Data Science,\\
Shenzhen International Center for Mathematics,\\
Southern University of Science and Technology,\\
Shenzhen, Guangdong, 518055, China\\[0.4em]
$^{3}$Department of Statistics and Data Science,\\
Southern University of Science and Technology,\\
Shenzhen, Guangdong, 518055, China\\[0.7em]
\href{mailto:liusonghao@dlut.edu.cn}{\texttt{liusonghao@dlut.edu.cn}};\quad
\href{mailto:shaoqm@sustech.edu.cn}{\texttt{shaoqm@sustech.edu.cn}}\\
\href{mailto:Hao.Shi@link.cuhk.edu.hk}{\texttt{Hao.Shi@link.cuhk.edu.hk}};\quad
\href{mailto:12131253@mail.sustech.edu.cn}{\texttt{12131253@mail.sustech.edu.cn}}
\end{minipage}
\end{center}

\begin{abstract}
We prove quantitative limit theorems for the edge density $Q$ in the dense
edge--triangle exponential random graph model at criticality and under
parameter perturbations of order $n^{-1}$.
At the fixed critical parameters $(\alpha_c,h_c),$ the centered fluctuations satisfy
$n^{2/3}(Q-\E Q)\Rightarrow\calN(0,\sigma_0^2)$, with explicit variance
$\sigma_0^2>0$. This disproves the quartic fluctuation conjecture of
\citet{BCM2024}.
Gaussian limits also hold  $(\alpha_c+ \lambda/n,h_c + \gamma/n)$,
except when the coefficients satisfy $-\frac{11}{16}+\frac{2}{9}(\gamma+\frac49\lambda)=0$.
Under this linearity condition, the limiting laws of $\sqrt n(Q-2/3)$ have densities proportional to
$\exp\{-81z^4/64+9\theta z^2/8\}$, where $\theta=-57/16+8\lambda/27$.
We obtain a sharp $n^{-1/3}$ rate in $1$-Wasserstein distance for the
Gaussian limits and an $O(n^{-1/2})$ bound for the quartic limits.
The proofs combine Stein's method for exchangeable pairs with conditional
row estimates and a local  Dobrushin contraction.
\end{abstract}

\noindent\textbf{Keywords:} Exponential random graph models; Stein's method; critical fluctuations; 
critical window; finite-size correction.
\medskip

\noindent\textbf{MSC subject classifications:} Primary 60F05, 05C80;
secondary 60K35.
\medskip

\section{Introduction}\label{sec:introduction}
Exponential random graph models (ERGMs) assign probabilities to graphs through an exponential family whose sufficient statistics are some specific subgraph counts. They allow dependence between edges through local interactions that affect global graph structure; see, for example, \citet{HL1981,FS1986,PN2004}. In this paper, we focus on the dense edge--triangle ERGM, in which the sufficient statistics are the numbers of edges and triangles.

Let \([n]=\{1,\ldots,n\}\) and \(N=\binom n2\). We identify a simple undirected graph\footnote{ no loops or multiple edges} on \([n]\) by a vector of edge indicators
\[
x=(x_{ij})_{1\le i<j\le n}\in\mathcal X_n:=\{0,1\}^{N},
\]
where \(x_{ij}=1\) if the edge \(\{i,j\}\) is present and \(x_{ij}=0\) otherwise. Thus the random graph is represented by the random vector \(X=(X_{ij})_{1\le i<j\le n}\) taking values in \(\mathcal X_n\).
For \(x\in\mathcal X_n\), define
\[
S(x)=\sum_{1\le i<j\le n}x_{ij},
\qquad
T(x)=\sum_{1\le i<j<k\le n}x_{ij}x_{ik}x_{jk}.
\]
The edge--triangle ERGM has probability law
\begin{equation}\label{eq:triangle-model}
 \PP_n(X=x)=\mathcal Z_n^{-1}\exp\{H_n(x)\},\qquad
 H_n(x)=h_nS(x)+\frac{\alpha_n}{n}T(x).
\end{equation}
where \(\mathcal Z_n=\sum_{x\in\mathcal X_n}\exp\{H_n(x)\}\) is the normalizing constant.
The factor \(1/n\) places both interaction terms on the \(n^2\) scale in the dense regime.
When \(\alpha_n=0\), the model reduces to the Erd\H{o}s--R\'enyi model
\(G(n,e^{h_n}/(1+e^{h_n}))\).
We study the edge density \(Q=S(X)/N\) at the critical endpoint and under finite-size perturbations of the parameters.

For fixed parameters \(h_n=h\in\R\) and \(\alpha_n=\alpha\ge0\), write
\(\mathcal Z_n(\alpha,h)\) for the normalizing constant and define the
variational function, with $0\log0=0$, by
\begin{equation}\label{eq:variational-function}
 \Phi_{\alpha,h}(q)=\frac h2q+\frac\alpha6q^3
 -\frac12\{q\log q+(1-q)\log(1-q)\},\qquad 0\le q\le1.
\end{equation}
\citet{CD2013} showed that the limiting free energy
$\psi(\alpha,h)$ satisfies
\begin{equation}\label{eq:variational-free-energy}
 \psi(\alpha,h)=\lim_{n\to\infty}\frac1{n^2}\log\mathcal Z_n(\alpha,h)
 =\max_{0\le q\le1}\Phi_{\alpha,h}(q).
\end{equation}
If $\Phi_{\alpha,h}$ has a unique maximizer $q_*$, then $Q\to q_*$
in probability.
\citet{RY2013} identified a first-order transition curve.
Its endpoint parameters, denoted by $\alpha_c$ and $h_c$, are
\begin{equation}\label{eq:critical-point}
 \alpha_c=\frac{27}{8},\qquad h_c=\log2-\frac32.
\end{equation}
For $\alpha>\alpha_c$, write $h_{\mathrm{coex}}(\alpha)$ for the edge
parameter on this curve. At $h=h_{\mathrm{coex}}(\alpha)$, the two global
maximizers $q_-<q_+$ satisfy
\begin{equation}\label{eq:coexistence-values}
 \Phi_{\alpha,h_{\mathrm{coex}}(\alpha)}(q_-)
 =\Phi_{\alpha,h_{\mathrm{coex}}(\alpha)}(q_+)
 =\psi\bigl(\alpha,h_{\mathrm{coex}}(\alpha)\bigr).
\end{equation}
They describe the low-density and high-density phases. Crossing the curve
changes the limiting edge density between them. The two maximizers merge
at the endpoint.

The limiting free energy describes the macroscopic phase diagram, but a separate analysis is needed to determine fluctuations. For the edge--triangle ERGM, \citet[Theorem~3.6 and Remark~3.7]{BCM2024} established a central limit theorem, with an explicit variance, outside the coexistence curve and its endpoint in the replica-symmetric region. Their result applies, in particular, to the noncritical parameters with \(\alpha\ge0\) considered here. Quantitative normal approximation for general subcritical ERGMs, including fixed subgraph counts, was developed by \citet{FLSZ2025}. In the subcritical edge--triangle ERGM, the limiting density \(q_*\) is the unique solution of
\[
q_*=\frac{e^{h+\alpha q_*^2}}{1+e^{h+\alpha q_*^2}},
\]
and the edge-density limit is
\[
n(Q-EQ)\Rightarrow
N\!\left(0,\frac{2q_*(1-q_*)}{1-2\alpha q_*^2(1-q_*)}\right).
\]
The denominator in the variance is positive in this regime. \citet{Winstein2026} obtains quantitative Gaussian limits beyond the subcritical region, including noncritical phases with a unique global maximizer for the free energy. On the coexistence curve, \citet{MW2026} establish Gaussian limits within individual phases, using phase-conditioned means. These results leave the critical endpoint as a distinct fluctuation problem.

At the endpoint, the critical edge density is $2/3$
\citep[Proposition~3.2]{RY2013}.
For real $t\to0$, expansion at $2/3$ gives
\begin{equation}\label{eq:critical-variational-expansion}
 \Phi_{\alpha_c,h_c}(2/3+t)-\Phi_{\alpha_c,h_c}(2/3)=-\frac{81}{64}t^4+O(t^5).
\end{equation}
The variational function has a quartic maximum. The fluctuation law at
this endpoint requires a separate analysis.
A useful comparison is provided by the edge--two-star ERGM, whose interaction counts paths of length two rather than triangles. Write $Q_\star$ for its
edge density and $Z_\star$ for the critical limit variable, with density
$\pi_\star$. \citet{MX2023} proved that, at the critical
parameters,
\begin{equation}\label{eq:two-star-critical-comparison}
 2\sqrt n\left(Q_\star-\frac12\right)\Rightarrow Z_\star,\qquad
 \pi_\star(z)\propto
 \exp\left\{-\frac{z^2}{2}-\frac{z^4}{24}\right\},\quad z\in\R.
\end{equation}
They rewrite the edge--two-star ERGM as an Ising model on the line graph
of the complete graph and analyze the resulting spin system using
auxiliary variables. This reformulation exploits the quadratic
interaction in the edge indicators. The triangle interaction is
cubic, so the same reduction does not directly apply to the
edge--triangle ERGM.

For the edge--triangle ERGM,  \citet{BCM2024} studied a mean-field approximation whose Hamiltonian depends only on the edge count and whose limiting free energy agrees with that of the original model. They proved a quartic critical limit for this approximation and conjectured the same behavior for the original graph law at the fixed critical parameters (Conjecture 3.8):
\begin{equation}\label{eq:BCM-critical-conjecture}
 \sqrt n(Q-\E Q)\Rightarrow Z_0,\qquad
 \pi_0(z)\propto\exp\left\{-\frac{81}{64}z^4\right\},
 \quad z\in\R.
\end{equation}
The distinction between the original model and its approximation is essential here: agreement of the limiting free energies does not by itself control the lower-order terms related to critical fluctuations.

At the fixed critical parameters, we obtain a different fluctuation scale and limit law. With \(a_0=(11/36)^{1/3}\) and \(\sigma_0^2=16/(243a_0^2)\), Theorem \ref{thm:triangle-gaussian} yields
\begin{equation}\label{eq:intro-gaussian-limit}
      EQ=\frac23-a_0n^{-1/3}+O(n^{-2/3}),
      \qquad
      n^{2/3}(Q-EQ)\Rightarrow N(0,\sigma_0^2).
\end{equation}
In particular, \(\sqrt n(Q-EQ)\to0\) in probability, disproving the nondegenerate limit in \citet[Conjecture 3.8]{BCM2024}.

The Gaussian limit also holds under fixed perturbations of order \(n^{-1}\) around the critical parameters. More precisely, for fixed coefficients \(\lambda,\gamma\), let \(\alpha_n=\alpha_c+\lambda/n\) and \(h_n=h_c+\gamma/n\). Theorem \ref{thm:triangle-gaussian} gives Gaussian limits unless
\[
\tau:= -\frac{11}{16}+\frac{2}{9}(\gamma+\frac49\lambda)=0.
\]
The condition \(\tau=0\) imposes one linear constraint on the perturbation coefficients
\((\lambda,\gamma)\), leaving one degree of freedom along the line
\(\gamma+4\lambda/9=99/32\). We parameterize this line by
\(\theta=-57/16+8\lambda/27\). Theorem~\ref{thm:triangle-quartic} shows that
the fluctuations have scale \(n^{-1/2}\) and limiting density
\[
\pi_{\theta}(z)\propto
\exp\left\{-\frac{81}{64}z^4+\frac{9\theta}{8}z^2\right\},
\qquad \theta\in\mathbb R.
\]
The pure quartic density \(\pi_0\) corresponds to \(\theta=0\).

We also obtain a sharp \(n^{-1/3}\) rate in 1-Wasserstein distance for the Gaussian limits and an \(O(n^{-1/2})\) bound for the critical-window limits. Theorem~\ref{thm:subgraph-limits} gives the corresponding joint limits for fixed subgraph densities.

The proofs use Stein's method with an exchangeable pair formed by
resampling one edge given the others \citep{CS2011,EL2010,SZ2019}.
To estimate the pair's conditional moments, we choose a vertex set
$\mathcal I$ of fixed size $k$ containing the relevant vertices. Let $\mathcal F_{\mathcal I}$ fix the
edges within $\mathcal I$ and within $\mathcal I^c$, and write
$B=(X_{uv})_{u,v\in\mathcal I^c}$, $e_{ab}=X_{ab}$ for $a,b\in\mathcal I$,
and $\mathbf z_a=(X_{au})_{u\in\mathcal I^c}$, viewed as column vectors.
With $(\alpha,h)=(\alpha_n,h_n)$, the conditional law of
$\mathbf z=(\mathbf z_a)_{a\in\mathcal I}$ is
\[
\begin{aligned}
p_{B,1}(\mathbf z)
&\propto\exp\left\{
\sum_{a\in\mathcal I}\left(
h\mathbf1^\top\mathbf z_a+\tfrac12\mathbf z_a^\top J_B\mathbf z_a
\right)+J_{\mathrm{cross}}\right\},\\
J_B&=\frac{\alpha}{n}B,
\qquad J_{\mathrm{cross}}=\frac{\alpha}{n}
\sum_{\substack{a<b\\a,b\in\mathcal I}}
e_{ab}\mathbf z_a^\top\mathbf z_b.
\end{aligned}
\]
This conditional system satisfies a local Dobrushin condition: its influence
matrix obeys $\|\mathcal C\|_{\mathrm{op}}\le|\alpha|/4$, with
$\alpha_c/4=27/32<1$ at the critical endpoint. Lemma~\ref{lem:comparison} therefore gives
concentration and covariance bounds uniform in the conditioned edges for
parameters near the endpoint.

The summation in $J_{\mathrm{cross}}$ involves only the order of $n$
terms, fewer than in the within-row interaction involving $J_B$.
Thus, removing $J_{\mathrm{cross}}$ gives $p_{B,0}=p_B^{\otimes k}$,
a product measure, at an affordable cost. More precisely,
we bound the change in expectations by interpolating along
$p_{B,t}\propto e^{tJ_{\mathrm{cross}}}p_{B,0}$, $0\le t\le1$.
Writing $\mathbb E_t$ for expectation under $p_{B,t}$,
Lemma~\ref{lem:row-interpolation} in Section~\ref{sec:conditional-rows}
gives, for any observable $U$ of the free edges,
\[
 \mathbb E_1U-\mathbb E_0U =\int_0^1\operatorname{Cov}_t(U,J_{\mathrm{cross}})\,dt,\qquad |\mathbb E_1U-\mathbb E_0U| \le C_k n^{-1/2}\|\boldsymbol\delta U\|_{\ell^2},
\]
where $\boldsymbol\delta U$ records the maximal absolute changes in $U$ when individual
free edges are flipped. The bound follows from the same Dobrushin estimate
along the interpolation and
$\|\boldsymbol\delta J_{\mathrm{cross}}\|_{\ell^2}=O_k(n^{-1/2})$.
Compared with an independent-edge reference, this product of row laws preserves
the within-row covariances. The moment estimates in
Lemmas~\ref{thm:fields}--\ref{thm:tested-triangle} give the finite-size drift
correction and error bounds in Proposition~\ref{prop:conditional-moments}.

\noindent\textbf{Notation.}
We write
$\ell(t)=\ee^t/(1+\ee^t)$
for the logistic function.
All limits are taken as $n\to\infty$, and $\Rightarrow$ denotes
convergence in distribution.
{For a probability measure $p$ and $1\le r<\infty$, write
\[
 \norm{Y}_{L^r(p)}=\left(\int |Y|^r\,dp\right)^{1/r}.
\]
We use the abbreviation $\norm{Y}_r=\norm{Y}_{L^r(\PP_n)}$
only for the graph law $\PP_n$ in \eqref{eq:triangle-model}.
For every other probability measure, we retain the full subscript $L^r(p)$.
In particular, for a sigma-field $\mathcal F$, set
\[
 \norm{Y}_{L^r(\PP_n(\cdot\mid\mathcal F))}
 =\bigl(\E[|Y|^r\mid\mathcal F]\bigr)^{1/r}.
\]
This conditional norm is an $\mathcal F$-measurable random variable.
For a positive deterministic sequence $a_n$, the notation
$Y_n=O_{L^r}(a_n)$ means $\norm{Y_n}_r=O(a_n)$, and
$Y_n=o_{L^r}(a_n)$ means $\norm{Y_n}_r=o(a_n)$.

Vectors are written in boldface, and their coordinates in ordinary type.
For a vector $\mathbf v=(v_i)$, define
\[
 \norm{\mathbf v}_{\ell^2}=\left(\sum_i|v_i|^2\right)^{1/2},
 \qquad
 \norm{\mathbf v}_{\ell^\infty}=\max_i|v_i|.
\]
For a matrix $A$, $\norm{A}_{\mathrm{op}}
=\sup_{\norm{\mathbf v}_{\ell^2}=1}\norm{A\mathbf v}_{\ell^2}$
is the induced operator norm. These vector and matrix norms are
evaluated pointwise when their arguments are random.
For a function $f$ on its stated domain $D$, write
$\norm f_\infty=\sup_{x\in D}|f(x)|$.
The space $C_b^j(\R)$ consists of functions with bounded continuous
derivatives through order $j$, with
$\norm{f}_{C_b^j}=\sum_{i=0}^j\norm{f^{(i)}}_\infty$.
For $f\in C^j([0,1])$, we use the same sum to define
$\norm{f}_{C^j}$, with each supremum taken over $[0,1]$.}
Constants denoted by $C$ may vary from line to line and are independent of $n$;
their dependence on fixed parameters, moment orders or graphs is indicated
where needed. For the scalar families $F_{\alpha,h}$, $g_{\alpha,h}$,
$A_{\alpha,h}$ and $K_{\alpha,h}$ defined below, we omit the parameter
subscripts only at $(\alpha_c,h_c)$, writing
$F=F_{\alpha_c,h_c}$, $g=g_{\alpha_c,h_c}$,
$A=A_{\alpha_c,h_c}$ and $K=K_{\alpha_c,h_c}$.

\noindent\textbf{Organization.}
Section~\ref{sec:main} states the limit theorems for edge and subgraph
densities.  In Section~\ref{sec:proofs}, we gives the common Stein bound and proves
these results. Our main technical lemma is stated in Section~\ref{sec:conditional-rows}. 
Section~\ref{sec:conditional-row-proofs} gives their proofs.
Sections~\ref{sec:moment-reduction} and~\ref{sec:motif-proof} prove the useful propositions. The remaining lemma proofs are given in
Section~\ref{sec:lemma-proofs}.

\section{Main results}\label{sec:main}
Let $X$ be a random simple graph on $[n]=\{1,\ldots,n\}$
with law \eqref{eq:triangle-model}. We identify $X$ with its
adjacency matrix $(X_{ij})_{i,j=1}^n$, where
$X_{ij}=X_{ji}\in\{0,1\}$ and $X_{ii}=0$.
All expectations $\E$ are taken under this law, and all limits are
as $n\to\infty$.
Let $S$ and $T$ denote the numbers of edges and triangles in $X$:
\begin{equation}\label{eq:graph-statistics}
 S=\sum_{1\le i<j\le n}X_{ij},\qquad
 T=\sum_{1\le i<j<k\le n}X_{ij}X_{ik}X_{jk}.
\end{equation}
Write $N=\binom n2$ for the number of possible edges and
$Q=S/N$ for the edge density.

For the fixed critical parameters, write
\begin{equation}\label{eq:fixed-critical-constants}
 a_0=\left(\frac{11}{36}\right)^{1/3},\qquad
 \sigma_0^2=\frac{16}{243a_0^2}.
\end{equation}
For fixed $\lambda,\gamma\in\R$, put
\begin{equation*}
 \alpha_n=\alpha_c+\lambda/n,\qquad h_n=h_c+\gamma/n,\qquad
 \tau=-\frac{11}{16}+\frac29\left(\gamma+\frac49\lambda\right).
\end{equation*}

\begin{theorem}\label{thm:triangle-gaussian}
Consider the edge--triangle ERGM \eqref{eq:triangle-model} with the
parameters above, and suppose $\tau\ne0$.
Let $a=(4\tau/9)^{1/3}$, and set
\begin{equation*}
 c_2=-\frac{a^2}{8}
      +\frac{4}{27a}\left(-\frac{81}{32}-\frac{\gamma}{3}
                          +\frac{4\lambda}{27}\right),
 \qquad \sigma_a^2=\frac{16}{243a^2}.
\end{equation*}
Then
\begin{equation}\label{eq:critical-mean}
\begin{aligned}
 \E Q=\frac23+an^{-1/3}
 +c_2n^{-2/3}+o(n^{-2/3}), \qquad
 n^{4/3}\var(Q)&\longrightarrow\sigma_a^2,
\end{aligned}
\end{equation}
and
\begin{equation}\label{eq:critical-Gaussian}
 cn^{-1/3}\le
 d_{\mathrm W}\!\left(\calL(n^{2/3}(Q-\E Q)),\calN(0,\sigma_a^2)\right)
 \le Cn^{-1/3}.
\end{equation}
Here $0<c\le C<\infty$ may depend on $\lambda,\gamma$ but not on $n$; the bounds hold for all
sufficiently large $n$, and $d_{\mathrm W}$ is the
$1$-Wasserstein distance:
$d_{\mathrm W}(\mu,\nu)=\sup_{\operatorname{Lip}(\varphi)\le1}
|\mu\varphi-\nu\varphi|$, where $\mu\varphi=\int\varphi\,\dd\mu$.
The same distributional limit holds with center
$2/3+an^{-1/3}+c_2n^{-2/3}$.
\end{theorem}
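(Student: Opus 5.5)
We use Stein's method with the exchangeable pair $(X,X')$ in which $X'$ is obtained by picking a uniformly random pair $\{i,j\}$ and resampling $X_{ij}$ from its conditional law given the other edges. Write $W=n^{2/3}(Q-m_n)$, where $m_n$ is a deterministic center to be chosen later. The conditional probability of an edge given the rest is $\ell\bigl(h_n+\tfrac{\alpha_n}{n}\,\mathrm{codeg}_{ij}\bigr)$, with $\mathrm{codeg}_{ij}=\sum_k X_{ik}X_{jk}$. This gives the regression identity
\[
\E[Q-Q'\mid X]=\frac1N\Bigl(Q-\frac1N\sum_{i<j}\ell\bigl(h_n+\tfrac{\alpha_n}{n}\mathrm{codeg}_{ij}\bigr)\Bigr).
\]

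\textbf{Step 1: expand the drift in the edge density.} Expand $\ell$ around $h_c+\alpha_c(2/3)^2=\log 2$, where $\ell=2/3$. Replace the empirical average of the codegree terms by a function of $Q$, using $\mathrm{codeg}_{ij}/n\approx Q^2$ plus corrections. This is where Proposition~\ref{prop:conditional-moments} enters. Through the conditional row estimates and Lemmas~\ref{thm:fields}--\ref{thm:tested-triangle}, it expresses $N^{-1}\sum\ell(\cdot)$ as $F_{\alpha_n,h_n}(Q)$ plus a deterministic finite-size correction of order $n^{-1}$ plus an error that is small in $L^2$.

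The correction comes from within-row covariances: the variance of the codegrees, and the discrepancy between the triangle density and $Q^3$. It is why the combination $-\tfrac{11}{16}$ appears inside $\tau$.

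At the critical point, $t-F(2/3+t)$ vanishes to third order, so the drift becomes
\[
\E[Q-Q'\mid X]\approx \frac1N\Bigl(\tfrac{9}{4}\,t^3\cdot c-\tfrac{\tau'}{n}+\dots\Bigr),\qquad t=Q-\tfrac23.
\]
Matching the cubic term against the $n^{-1}$ forcing gives $t\approx a n^{-1/3}$ with $a^3=4\tau/9$. Linearising the cubic at $t=an^{-1/3}$ produces a restoring slope of order $a^2n^{-2/3}$, and hence the variance scale $n^{-4/3}$ with $\sigma_a^2=16/(243a^2)$. The ratio of $\E[(Q-Q')^2\mid X]\approx \tfrac{2}{N}\cdot\tfrac23\cdot\tfrac13$ to twice the slope yields exactly this value.

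\textbf{Step 2: a priori localisation and Stein bound.} Before linearising, we need $Q-2/3=O_{L^r}(n^{-1/3})$ with $Q$ concentrated near $2/3+an^{-1/3}$. We obtain this by a moment bootstrap with test functions $f(t)=t^{2k-1}$ in the exchangeable-pair identity $\E[(Q-Q')(f(Q)+f(Q'))]=0$. Since the cubic drift has a unique real root when $\tau\neq0$, this forces concentration around $an^{-1/3}$.

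We then apply the standard exchangeable-pair Stein bound for $d_{\mathrm W}$. The error terms are:
\begin{itemize}
\item the quadratic remainder $3a(W^2)n^{-1/3}$ in the drift, of order $n^{-1/3}$;
\item the conditional-variance fluctuation $\E|\E[(W-W')^2\mid X]-\text{const}|$, which is $O(n^{-2/3})$ relative;
\item the third-moment term, which is $O(n^{-1/3})$ after scaling.
\end{itemize}
Iterating the expansion to second order gives $c_2$: take the $n^{-2/3}$ order of the fixed-point equation, including $a^2$ feedback through the quadratic term $-a^2/8$ and the $(\gamma,\lambda)$ corrections $\gamma/3$, $4\lambda/27$.

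\textbf{Step 3: sharp lower bound.} The quadratic term $\tfrac{3a}{?}\,W^2n^{-1/3}$ in the drift produces a nonzero skewness $\E W^3\sim \kappa n^{-1/3}$ with $\kappa\neq0$, since $a\neq0$. Testing $d_{\mathrm W}$ against a fixed smooth Lipschitz function that captures the third cumulant (for instance $\varphi(w)=\sin(w)$ or a truncated cubic) then yields the lower bound $cn^{-1/3}$.

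\textbf{Main obstacle.} The main obstacle is Step 1's deterministic $n^{-1}$ correction with $o(n^{-1})$ accuracy uniformly near criticality. Everything hinges on the exact constant $-\tfrac{11}{16}$, coming from codegree variance and triangle–edge correlations. This requires the local Dobrushin contraction of Lemma~\ref{lem:comparison} and the row interpolation of Lemma~\ref{lem:row-interpolation}, and we would lean on Proposition~\ref{prop:conditional-moments} for it.
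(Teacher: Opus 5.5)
Your outline follows the paper's architecture: the heat-bath pair, drift reduction via Proposition~\ref{prop:conditional-moments}, the root $u_n$ of $\mathcal B_n$, stationarity moment bounds, the Stein bound, and a lower bound from the quadratic drift. However, your upper-bound error budget has a hole: you never account for the drift remainder $\varepsilon_n$. In \eqref{eq:exchangeable-identity} the drift is multiplied by $n^2/r_n=n^{4/3}$. The $L^2$ control you invoke, $\norm{\varepsilon_n}_2\le Cn^{-3/2}$, therefore only bounds $n^{4/3}\E[\varepsilon_n f(W)]$ by $Cn^{-1/6}$. As budgeted, your argument gives the rate $n^{-1/6}$, not $n^{-1/3}$. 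For the same reason, ``$o(n^{-1})$ accuracy'' of the finite-size correction is the wrong target: since $\mathcal B_n'(u_n)\asymp n^{-2/3}$, already the mean expansion needs $\E\varepsilon_n=o(n^{-4/3})$.

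The missing ingredient is the tested estimate \eqref{eq:quartic-tested-error}. It keeps cancellation by conditioning on $\calF_{\mathcal I}$, under which $\var(Q\mid\calF_{\mathcal I})=O(n^{-3})$ (Lemma~\ref{thm:tested-triangle}). The paper applies it to the centered, rescaled test function $\widetilde f_n(x)=f(n^{1/6}x+n^{2/3}(2/3-\E Q))-\E f(W_n)$, whose $C_b^1$ norm is $O(n^{1/6})$. This gives $n^{4/3}|\E[(\varepsilon_n-\E\varepsilon_n)f(W_n)]|\le Cn^{-1/2}$.

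Some of your other sizes are also off, though harmlessly for the upper bound. Against the fixed target, the conditional-variance term is of exact order $n^{-1/3}$, not $n^{-2/3}$. This is because $q\mapsto q+(1-2q)F(q)$ has slope $-2/3$ at $2/3$ while $Q-2/3\approx an^{-1/3}$. The third-moment (jump) term is $O(n^{-4/3})$.

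The lower bound is the second gap. A third cumulant of order $n^{-1/3}$ does not by itself bound $d_{\mathrm W}$ from below. The function $w^3$ is not Lipschitz, and a truncated cubic picks up tail contributions of the same order. For $\varphi=\sin$ you would need $\E\sin W_n=c'n^{-1/3}+o(n^{-1/3})$ with $c'\ne0$, which no moment statement supplies. Such an expansion has to come from the exchangeable-pair identity, where several effects enter at order $\delta_n=n^{-1/3}$:
\begin{itemize}
\item the slope mismatch $n^{2/3}\mathcal B_n'(u_n)+27a^2/4$;
\item the quadratic drift $\chi\delta_n(W_n^2-\E W_n^2)$ with $\chi=-27a/4$;
\item the noise shift $v_n=\frac49-\frac{2a}{3}\delta_n+o_{L^1}(\delta_n)$;
\item the remainder $\varepsilon_n$, which is $o(\delta_n)$ only through the tested bound.
\end{itemize}
You must show that the others do not cancel the quadratic term.

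The paper fixes a nonzero, nonnegative even $f\in C_c^\infty(\R)$ supported in $\{|x|>\sigma_a\}$. It then tests with the odd Lipschitz function $\psi=\frac49f'-\frac{27a^2}{4}xf$, so that $\E\psi(Z)=0$. Parity kills the slope and noise contributions in the limit, and $\delta_n^{-1}\E\psi(W_n)\to-\chi\E[(Z^2-\sigma_a^2)f(Z)]$. The support condition makes this limit manifestly nonzero.

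Your $\sin$ test exploits the same parity, since its Stein solution is even. Gaussian integration by parts shows that its surviving constant is a nonzero multiple of $\E\cos Z$. But the bound follows only after this expansion is carried out, not from the skewness alone.
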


Theorem~\ref{thm:triangle-gaussian} shows that
\(\E Q-2/3\) is of order \(n^{-1/3}\), whereas the standard
deviation of \(Q\) is of order \(n^{-2/3}\).
Consequently, the term \(c_2n^{-2/3}\) cannot in general be
omitted when choosing an explicit deterministic center.
Indeed,
\[
 n^{2/3}\bigl(Q-2/3-an^{-1/3}\bigr)
 \Rightarrow \calN(c_2,\sigma_a^2).
\]
Including the correction \(c_2n^{-2/3}\) therefore yields
a centered Gaussian limit with an explicit deterministic center.

At the fixed critical parameters, \(\lambda=\gamma=0\), we have
\(\tau=-11/16\), \(a=-a_0\), \(c_2=97/(288a_0)\), and
\(\sigma_a^2=\sigma_0^2\).
In this case, Theorem~\ref{thm:triangle-gaussian} recovers
the Gaussian limit in \eqref{eq:intro-gaussian-limit}.
The mean expansion in \eqref{eq:critical-mean} refines
the corresponding statement by identifying the coefficient
of the \(n^{-2/3}\) term.

The two-sided bound in \eqref{eq:critical-Gaussian} establishes
the sharp order of the Wasserstein error for the stated
normalization and fixed Gaussian target.
The coefficients \(\lambda,\gamma\) are fixed throughout;
the theorem does not give a uniform approximation as
\(\tau\to0\).

\begin{remark}\label{rem:critical-comparison}
At \((\alpha_c,h_c)\),
Theorem~\ref{thm:triangle-gaussian} implies
\[
 \sqrt n\,(Q-\E Q)
 =n^{-1/6}\bigl[n^{2/3}(Q-\E Q)\bigr]
 \longrightarrow 0\qquad\text{in probability}.
\]
Thus our result disproves the critical fluctuation conjecture
of \citet{BCM2024} for the original edge--triangle ERGM.
Their quartic limit theorem applies to a mean-field
approximation whose Hamiltonian depends only on the edge count.
For the original model, our proof identifies the finite-size
drift correction and controls the remainder needed to establish
the Gaussian limit.
Even when \((\alpha_n,h_n)\) lies in the subcritical region
for all sufficiently large \(n\), our Gaussian limit describes
a critical-window regime.
The parameters approach \((\alpha_c,h_c)\) at rate \(n^{-1}\),
whereas the Gaussian limits of \citet{BCM2024} and
\citet{FLSZ2025} concern fixed noncritical parameters.
The fluctuation scale here is \(n^{-2/3}\), compared with
\(n^{-1}\) in those fixed-parameter limits.
\end{remark}

{
The remaining case is the compensation line \(\tau=0\), equivalently \(\gamma+4\lambda/9=99/32\). Parameterizing this line by \(\theta=-57/16+8\lambda/27\) leads to the following critical window.
}

\begin{theorem}\label{thm:triangle-quartic}
Let $\theta\in\R$ be fixed, and take
\begin{equation}\label{eq:cancellation-line}
 \alpha_n=\alpha_c+\frac1n\left(\frac{1539}{128}+\frac{27}{8}\theta\right),\qquad h_n=h_c-\frac1n\left(\frac94+\frac32\theta\right).
\end{equation}
For these parameters, $\tau=0$.
Let $Z_{\theta}$ have density $\pi_{\theta}$,
with normalizing constant $\mathcal Z_{\theta}$, given by
\begin{equation}\label{eq:quartic-window}
 \pi_{\theta}(z)=\mathcal Z_{\theta}^{-1}
 \exp\left\{-\frac{81}{64}z^4+\frac{9\theta}{8}z^2\right\}.
\end{equation}
The density is even, so $\E Z_{\theta}=0$. Then
\begin{equation}\label{eq:quartic-moments}
\begin{aligned}
 \E Q=\frac23+o(n^{-1/2}), \qquad
 n\var(Q)&\to\var(Z_{\theta}),
\end{aligned}
\end{equation}
and
\begin{equation}\label{eq:quartic-limit}
 d_{\mathrm W}\!\left(\calL(\sqrt n(Q-2/3)),\calL(Z_{\theta})\right)
 \le C_{\theta}n^{-1/2}.
\end{equation}
The constant $C_{\theta}$ is independent of $n$.
\end{theorem}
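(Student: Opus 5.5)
We plan to prove Theorem~\ref{thm:triangle-quartic} with the same exchangeable-pair Stein scheme used for Theorem~\ref{thm:triangle-gaussian}, now with a non-Gaussian target. Let $W=\sqrt n(Q-2/3)$. Build the pair $(W,W')$ by choosing an edge $\{i,j\}$ uniformly at random and resampling $X_{ij}$ from its conditional law given the other edges. That conditional probability is $\ell(h_n+\tfrac{\alpha_n}{n}\sum_{k}X_{ik}X_{jk})$. We then get
\[
\E[W-W'\mid X]=\frac{\sqrt n}{N}\cdot\frac1N\sum_{i<j}\Bigl(X_{ij}-\ell\bigl(h_n+\tfrac{\alpha_n}{n}(X^2)_{ij}\bigr)\Bigr).
\]
The main step is to expand this drift in $W$ to third order. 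We Taylor expand $\ell$ about $h_c+\alpha_c\cdot 4/9$, the point where $\ell=2/3$ and the first two derivatives of $q\mapsto \ell(h_c+\alpha_c q^2)-q$ vanish. We then replace the codegrees $(X^2)_{ij}/n$ by $Q^2$ plus their conditional corrections. For this replacement we use the conditional row estimates: Proposition~\ref{prop:conditional-moments} together with the interpolation Lemma~\ref{lem:row-interpolation} and the Dobrushin bounds of Lemma~\ref{lem:comparison}.

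The expansion should give
\[
\E[W-W'\mid X]=\frac{\kappa}{n^{2}}\Bigl(\tfrac{81}{16}n^{-1}W^3-\tfrac{9\theta}{4}n^{-1}W+\text{const}\cdot\tau\, n^{-1/2}+R\Bigr),
\]
for a suitable normalizing constant $\kappa$. Three points need checking here.
\begin{itemize}
\item[(i)] The cubic coefficient comes from \eqref{eq:critical-variational-expansion}, that is, $-\Phi'$ evaluated at $2/3+t$ equals $\tfrac{81}{16}t^3$.
\item[(ii)] The constant term $n^{-1}$ is the finite-size drift correction, namely $-\tfrac{11}{16}$ together with the parameter shifts $\gamma,\lambda$. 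It combines exactly into $\tau$, so it vanishes on the line \eqref{eq:cancellation-line}. This matches the Gaussian theorem, where $a^3=4\tau/9$ balances the cubic term.
\item[(iii)] The linear coefficient collects the contributions of $\lambda$ and $\gamma$ at order $n^{-1}\cdot t$, plus a finite-size linear correction. On the line these should combine into $-\tfrac{9\theta}{8}\cdot 2$, which is how $\theta=-57/16+8\lambda/27$ arises.
\end{itemize}
The target is $R=O_{L^1}(n^{-3/2})$ after scaling, so that it contributes $O(n^{-1/2})$ to the Stein bound.

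Next we compute the conditional variance $\E[(W-W')^2\mid X]=\frac{n}{N^2}\cdot\frac{1}{N}\sum_{i<j}\Var(X_{ij}\mid\text{rest})$. At leading order this equals $\frac{2n}{N^2}\cdot\frac29$. With the same normalization as the drift, its fluctuation is $O_{L^1}(n^{-1/2})$ relative to the main term, because the average conditional variance is a smooth function of $Q$ plus averaged codegree fluctuations. After normalization the Stein characterization reads $\E[f'(W)-\psi(W)f(W)]\approx0$ with $\psi(z)=\tfrac{81}{16}z^3-\tfrac{9\theta}{4}z=-(\log\pi_\theta)'(z)$.

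We then apply the Stein bound for exchangeable pairs with the density target $\pi_\theta$. This is the generic bound from Section~\ref{sec:proofs}, used in the same form as for Chatterjee--Shao-type non-normal approximations. It bounds $d_W(\mathcal L(W),\pi_\theta)$ by three quantities:
\begin{itemize}
\item $\E|1-\tfrac{1}{2\Lambda}\E[(W-W')^2\mid X]|$, the variance-remainder term;
\item $\tfrac1\Lambda\E|W-W'|^3$, which is $O(n^{-1/2})$ because $|W-W'|\le \sqrt n/N$ and the rescaled step gives $\Lambda\asymp n^{-3}$, so the term is of order $n^{3/2}n^{-6}/n^{-3}$, well below the target;
\item $\tfrac1\Lambda\E|R|$.
\end{itemize}
These estimates need uniform moment bounds $\E W^{6}\le C$. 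We obtain them by inserting $f(w)=w^{3}$-type test functions into the exchangeable-pair identity, which exploits the coercive quartic drift. The solution of the Stein equation for $\pi_\theta$ has bounded derivatives weighted by polynomial growth, so these moments suffice. Combining the pieces gives \eqref{eq:quartic-limit}. Taking expectations of the drift identity, and using the evenness of $\pi_\theta$ together with $\tau=0$, gives $\E W=o(1)$, which is the mean statement. Uniform integrability from the moment bounds then gives $n\var(Q)\to\var(Z_\theta)$.

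The main obstacle is step (iii) with the error $R$: identifying the $n^{-1}$ finite-size corrections to the drift exactly, at both constant and linear order in $W$. This requires the conditional covariance of codegrees with the edge indicators at precision $o(n^{-1})$ uniformly, using Lemmas~\ref{thm:fields}--\ref{thm:tested-triangle}. We would first reuse the Gaussian-case drift expansion with $a$ set to zero, and read off the linear coefficient by keeping one more order in the Taylor expansion in $Q-2/3$.
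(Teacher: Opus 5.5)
Your overall scheme matches the paper: the single-edge heat-bath pair, the drift from Proposition~\ref{prop:conditional-moments}, the Stein equation for $\pi_\theta$, and a moment bound from the coercive drift. The gap is in how you control the drift remainder. You bound its contribution by $\Lambda^{-1}\E|R|$ and aim for $R=O_{L^1}(n^{-3/2})$ in your normalization. That amounts to $\norm{b_n-\mathcal B_n(Q)}_1=O(n^{-2})$, with $\mathcal B_n(q)=F(q)-q+\beta(q)/n$. No such bound is available, and it should not be expected. Proposition~\ref{prop:conditional-moments}(i) gives only $\norm{\varepsilon_n}_2\le Cn^{-3/2}$, which is exactly the order of the main drift terms $-\frac94(Q-2/3)^3$ and $\theta(Q-2/3)/n$. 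The leading part of $\varepsilon_n$ is $\frac{\alpha}{n}\ell'(h+\alpha Q^2)\{V_n-A_{\alpha,h}(Q)\}+\frac{\alpha^2}{2n}\ell''(h+\alpha Q^2)\{K_n-K_{\alpha,h}(Q)\}$. The empirical degree and codegree variances $V_n,K_n$ carry genuine $n^{-1/2}$ fluctuations that are not functions of $Q$; already in Erd\H{o}s--R\'enyi graphs the empirical degree variance fluctuates at relative order $n^{-1/2}$. So the bound $\norm{V_n-A_{\alpha,h}(Q)}_2\le Cn^{-1/2}$ of Lemma~\ref{prop:triangle-moments} is of the right order. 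Consequently $n^{3/2}\varepsilon_n$ is generically of order one in $L^1$, your $\Lambda^{-1}\E|R|$ term is $O(1)$, and the Stein bound would not even tend to zero.

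The missing idea is to keep the remainder inside the expectation against the Stein solution. You need the tested estimate of Proposition~\ref{prop:conditional-moments}(iii), $n^{3/2}|\E[\varepsilon_nf(\sqrt n(Q-2/3))]|\le Cn^{-1/2}\norm{f}_{C_b^1}$. Its proof (Lemma~\ref{thm:tested-triangle}) conditions on $\calF_{\mathcal I}$ for each pair in the averages defining $V_n$ and $K_n$. Under this conditioning $\var(Q\mid\calF_{\mathcal I})\le Cn^{-3}$, so $f(\sqrt n(Q-2/3))$ can be frozen at $\E(Q\mid\calF_{\mathcal I})$ at cost $O(\sqrt n\,\norm{f'}_\infty n^{-3/2})=O(n^{-1})$. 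This is why a bounded $f'$, not just a bounded $f$, is needed. Lemma~\ref{lem:actual-refined} then shows that the conditional means of $(d_i-d_j)^2/(2n)$ and $(M_e-M_f)^2/(2n)$ equal $A_{\alpha,h}(q_B)$ and $K_{\alpha,h}(q_B)$ up to $O(H_B^2/n)$. Term I then splits into $\norm{f}_\infty\E|b(W)-n^{3/2}\mathcal B_n(Q)|$, handled by Taylor expansion and $\sup_n\E W^4<\infty$, plus the tested term. This is the real obstacle, not the linear coefficient: given Proposition~\ref{prop:conditional-moments}(i), that coefficient is simply $\beta'(2/3)=-\frac{81}{32}-\frac\gamma3+\frac{4\lambda}{27}=\theta$.

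Two minor corrections:
\begin{itemize}
\item The conditional second moment of the jump is $nN^{-2}v_n$ with $v_n=N^{-1}\sum_e\{X_e+L_e-2X_eL_e\}$, not an average of conditional variances. The two agree only in expectation, up to the factor $2$ you inserted by hand.
\item $\E W=O(n^{-1/2})$ follows from the Wasserstein bound applied to $\varphi(z)=z$, not from averaging the drift identity and invoking the evenness of $\pi_\theta$.
\end{itemize}
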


The order-\(n^{-1}\) perturbations in~\eqref{eq:cancellation-line} satisfy
\[
 \gamma+\frac49\lambda=\frac{99}{32},
\]
which cancels the leading constant correction to the critical drift. This is a line in the \((\lambda,\gamma)\)-plane, rather than a single exceptional point. Along this line, \(\theta\) parametrizes the remaining degree of freedom and determines the quadratic term in the limiting log density.

The limiting density is symmetric, so \(\E Z_{\theta}=0\); centering by the true mean gives
\[
 \sqrt n\,(Q-\E Q)\Rightarrow Z_{\theta}.
\]
The pure quartic density in~\eqref{eq:BCM-critical-conjecture} is obtained at \(\theta=0\), corresponding to
\[
 \alpha_n=\alpha_c+\frac{1539}{128n},
 \qquad h_n=h_c-\frac{9}{4n}.
\]

\begin{remark}\label{rem:two-star-comparison}
\citet[Theorem~1.4(c)]{MX2023} prove a non-Gaussian limit for the edge density $Q_\star$ at the fixed critical parameters of the edge--two-star ERGM. They showed that
\[
 2\sqrt n\,(Q_\star-1/2)\Rightarrow Z_\star,
 \qquad \pi_\star(z)\propto
 \exp\{-z^2/2-z^4/24\}.
\]
The quadratic term is part of their limiting density and cannot be omitted. In the edge--triangle ERGM, Theorem~\ref{thm:triangle-gaussian} instead gives Gaussian fluctuations at the fixed critical parameters, whereas Theorem~\ref{thm:triangle-quartic} gives quartic limits.
There is also a methodological distinction. Mukherjee and Xu rewrite the edge--two-star ERGM as an Ising model on the line graph of the complete graph and analyze the resulting spin system using auxiliary variables. Our Hamiltonian is cubic in the edge indicators; we obtain tractable quadratic systems by conditioning on all edges except a fixed collection of rows, retaining dependence within each row.
\end{remark}

{
The edge-density limits in Theorems~\ref{thm:triangle-gaussian} and~\ref{thm:triangle-quartic}
also determine the leading fluctuations of fixed subgraph densities.
}
Let $H$ be
a fixed finite simple graph with $v_H$ vertices and $e_H$ edges.
Let $\inj(H,X)$ denote the number of injective maps
$\varphi:V(H)\to[n]$ such that
$X_{\varphi(u)\varphi(v)}=1$ for every $\{u,v\}\in E(H)$.
No condition is imposed on nonedges of $H$.
Define the injective subgraph density by
\begin{equation}\label{eq:motif-density}
 t_H=\frac{\inj(H,X)}{(n)_{v_H}},\qquad
 (n)_j=n(n-1)\cdots(n-j+1),\quad (n)_0=1.
\end{equation}
The denominator counts all injective maps from $V(H)$ to $[n]$,
so $t_H$ is the proportion of these maps that preserve every
edge of $H$. In particular,
\begin{equation*}
 t_{K_2}=Q,\qquad t_{K_3}=\frac{T}{\binom n3}.
\end{equation*}

The coefficients in the joint limits below are
\begin{equation*}
 \left.\frac{\mathrm d}{\mathrm d q}q^{e_H}\right|_{q=2/3}
     =e_H(2/3)^{e_H-1}.
\end{equation*}
Isolated vertices are allowed. If $H$ has no edges, then
$t_H=1$ and $e_H(2/3)^{e_H-1}=0$.

\begin{theorem}\label{thm:subgraph-limits}
Let $H_1,\ldots,H_m$ be a fixed finite list of finite simple graphs.

\noindent\textup{(i)} Under the assumptions of Theorem~\ref{thm:triangle-gaussian}, let
$Z\sim\calN(0,\sigma_a^2)$. Then, for $1\le i,j\le m$,
\begin{equation}\label{eq:joint-covariance}
 n^{4/3}\cov(t_{H_i},t_{H_j})
 \longrightarrow e_{H_i}e_{H_j}(2/3)^{e_{H_i}+e_{H_j}-2}\sigma_a^2,
\end{equation}
and
\begin{equation}\label{eq:joint}
 \bigl(n^{2/3}(t_{H_j}-\E t_{H_j})\bigr)_{j=1}^m
 \Rightarrow (e_{H_j}(2/3)^{e_{H_j}-1}Z)_{j=1}^m.
\end{equation}

\noindent\textup{(ii)} Under the assumptions of Theorem~\ref{thm:triangle-quartic},
we have, for $1\le i,j\le m$,
\begin{equation}\label{eq:joint-quartic-covariance}
 n\cov(t_{H_i},t_{H_j})
 \longrightarrow e_{H_i}e_{H_j}(2/3)^{e_{H_i}+e_{H_j}-2}\var(Z_{\theta}),
\end{equation}
and
\begin{equation}\label{eq:joint-quartic}
 \bigl(\sqrt n(t_{H_j}-\E t_{H_j})\bigr)_{j=1}^m
 \Rightarrow
 \bigl(e_{H_j}(2/3)^{e_{H_j}-1}Z_{\theta}\bigr)_{j=1}^m.
\end{equation}
\end{theorem}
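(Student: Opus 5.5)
The plan is to reduce Theorem~\ref{thm:subgraph-limits} to Theorems~\ref{thm:triangle-gaussian} and~\ref{thm:triangle-quartic} via a linearization
\[
 t_H-\E t_H=e_H(2/3)^{e_H-1}(Q-\E Q)+R_H,
\]
with the remainder $R_H$ negligible at the relevant scale. That means $\norm{R_H}_2=o(n^{-2/3})$ in case (i) and $o(n^{-1/2})$ in case (ii). Once this holds, the joint convergence \eqref{eq:joint} and \eqref{eq:joint-quartic} follow from the one-dimensional limits and Slutsky's lemma, since every coordinate is a deterministic multiple of the same scalar $n^{2/3}(Q-\E Q)$ (respectively $\sqrt n(Q-\E Q)$) plus a remainder that vanishes in $L^2$. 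The covariance limits \eqref{eq:joint-covariance} and \eqref{eq:joint-quartic-covariance} follow by expanding $\cov(t_{H_i},t_{H_j})$ bilinearly and applying Cauchy--Schwarz to the cross terms, together with the variance limits $n^{4/3}\var(Q)\to\sigma_a^2$ and $n\var(Q)\to\var(Z_\theta)$. Edgeless $H$ is trivial.

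To obtain the linearization, I would first compare $t_H$ with $Q^{e_H}$. The heuristic is that conditionally on the global density the graph is close to Erd\H{o}s--R\'enyi with edge probability near $Q$. I would write
\[
 t_H-Q^{e_H}=\frac{1}{(n)_{v_H}}\sum_{\varphi}\Bigl(\prod_{uv\in E(H)}X_{\varphi(u)\varphi(v)}-Q^{e_H}\Bigr)
\]
and expand each edge indicator as $Q+(X_{ij}-Q)$. The terms linear in a centered edge cancel after summation, up to $O(1/n)$ corrections from overlapping injective maps. What remains are sums containing at least two centered edges; these are degenerate $U$-statistic-type quantities in the edge variables.

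Their $L^2$ size is controlled using the paper's conditional-row machinery, namely the local Dobrushin bound of Lemma~\ref{lem:comparison} and the interpolation Lemma~\ref{lem:row-interpolation}. For a path of two centered edges sharing a vertex, $\sum_{i}\bigl(\sum_j (X_{ij}-Q)\bigr)^2/n^3$ measures degree fluctuations. Under conditional row laws with Dobrushin constant $\alpha_c/4<1$, each row sum has conditional variance $O(n)$, so this term is $O(n^{-1})$. Disjoint pairs behave like $\sum_{ij,kl}(X_{ij}-p)(X_{kl}-p)/n^4$ minus $(Q-p)^2$-type terms, and are likewise $O_{L^2}(n^{-1})$. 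Finally, $Q^{e_H}-\E Q^{e_H}=e_H(2/3)^{e_H-1}(Q-\E Q)+O\bigl((Q-2/3)^2+|\E Q-2/3|\,|Q-\E Q|\bigr)$. In case (i), $|Q-2/3|=O_{L^2}(n^{-1/3})$, so the error is $O(n^{-2/3})$ times $n^{-1/3}$, which is negligible at scale $n^{-2/3}$. In case (ii), $(Q-2/3)^2=O(n^{-1})=o(n^{-1/2})$.

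The main obstacle is the remainder bound $\norm{t_H-Q^{e_H}-\E(t_H-Q^{e_H})}_2=o(n^{-2/3})$. A crude $O(n^{-1/2})$ bound on the degenerate terms suffices for (ii) but not for (i). I would aim for $O_{L^2}(n^{-1})$ by conditioning on all edges outside a bounded vertex set $\mathcal I$ and using the product-of-rows approximation $p_{B,0}$ to decorrelate centered edge products on distinct rows. I would then sum the resulting covariance estimates over pairs of maps $\varphi,\varphi'$, which is where uniformity in the conditioned edges is essential.
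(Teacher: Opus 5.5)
Your outer reduction (linearize $t_H-\E t_H$ about $e_H(2/3)^{e_H-1}(Q-\E Q)$, then Cauchy--Schwarz for covariances and Slutsky for the joint limits) matches the paper. The gap is the step you call the main obstacle, $\|t_H-Q^{e_H}\|_2=O(n^{-1})$ (Proposition~\ref{thm:collapse}): it is not supplied, and the tool you name cannot supply it as described.

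After the linear terms (which in fact cancel exactly), $t_H-Q^{e_H}$ is a combination of pattern sums $Y_F=(n)_{v_H}^{-1}\sum_\varphi P_\varphi$, with $P_\varphi=\prod_{b\in E(F)}(X_{\varphi(b)}-Q)$ and $F\subseteq E(H)$, $|F|\ge2$. You only treat $|F|=2$, where exact counting identities reduce everything to $V_n$. Patterns with three or more centered edges need a genuine cancellation. For the triangle one has exactly
\[
Y_{K_3}=\frac1{n-2}\Bigl[\frac1N\sum_e(X_e-Q)(M_e-\meanM)-2Q(V_n+Q^2-Q)\Bigr].
\]
Crude bounds give the first average only as $O_{L^2}(\sqrt n)$, hence $\|Y_{K_3}\|_2=O(n^{-1/2})$, which is useless for both (i) and (ii).

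Conditioning on $\calF_{\mathcal I}$ freezes every edge inside $\mathcal I$, and $p_{B,0}$ only decorrelates the rows $\mathbf z_a$ (edges from $\mathcal I$ to $\mathcal I^c$). Once $\mathcal I\supseteq\varphi(V(H))\cup\varphi'(V(H))$, the product $P_\varphi P_{\varphi'}$ is $\calF_{\mathcal I}$-measurable apart from $Q$, so there is nothing left to decorrelate. Yet the $\asymp n^{2v_H}$ pairs with disjoint images dominate $\E Y_F^2$, so your $O(n^{-1})$ target requires $\E[P_\varphi P_{\varphi'}]=O(n^{-2})$ for them. The natural source of that smallness is replacing $X_e$ by $L_e=\PP(X_e=1\mid X_{-e})$, so that each centered factor contributes $L_e-\bar L=O(n^{-1/2})$. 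The leftover parts $X_e-L_e$ then need a separate argument. Your proposal contains no such step.

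The paper supplies exactly this through induction on $e_H$ with the single-edge Glauber pair. With $T_H=t_H-Q^{e_H}$ it derives
\[
N\E(T_H'-T_H\mid X)=-e_HT_H+\bar L\sum_{a}T_{H-a}+\epsilon_H .
\]
Here the $X_e$-terms reassemble into $t_H$ via $N^{-1}\sum_eX_eU_e^{H,a}=t_H$, which produces the restoring term, so no $X_e-L_e$ part is ever estimated. The only new error is $N^{-1}\sum_e(L_e-\bar L)(U_e^{H,a}-t_{H-a})$. This is a product of two $O_{L^r}(n^{-1/2})$ contrasts of row statistics, which Lemmas~\ref{thm:fields} and~\ref{lem:rooted} control. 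Stationarity of $\E|T_H|^s$ then closes the bound, with no pattern-by-pattern analysis and no need for $\bar L-Q$ to be small.

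Two smaller slips:
\begin{itemize}
\item An $O(n^{-1/2})$ remainder would not suffice for (ii), which requires $o(n^{-1/2})$.
\item In (i), your remainder $O((Q-2/3)^2)$ is too large: since $\E Q-2/3\sim an^{-1/3}$, it is of order $n^{-2/3}$. Expand $Q^{e_H}$ about $\E Q$ instead, so the quadratic remainder is $(Q-\E Q)^2$. This is controlled by a fourth-moment bound such as $\|Q-\E Q\|_4^2\le Cn^{-7/6}$ from the proof of Theorem~\ref{thm:triangle-gaussian}.
\end{itemize}
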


\begin{remark}\label{rem:subgraph-fluctuations}
At either scale in Theorem~\ref{thm:subgraph-limits}, the leading fluctuation of \(t_H\) is \(e_H(2/3)^{e_H-1}\) times that of \(Q\), which is similar to that in \citet{FLSZ2025}.
\end{remark}

\section{Proofs of the main results}\label{sec:proofs}\label{sec:stein}
Let $W_n=r_n(Q-q_n)$, with a deterministic center $q_n$ and scale $r_n>0$.
The center and scale are specified in each proof.
For a target density $\pi$ and a $1$-Lipschitz test function
$\varphi:\R\to\R$, let $f$ be the bounded solution of the Stein equation
\citep{CS2011,SZ2019}
\begin{equation}\label{eq:stein-equation}
 \frac49f'(z)+b(z)f(z)=\varphi(z)-\pi\varphi,
\end{equation}
where $\pi\varphi=\int_{\R}\varphi(z)\pi(z)\dd z$ and
\begin{equation}\label{eq:stein-target}
 b(z)=
 \begin{cases}
  -27a^2z/4,&\pi\text{ is the density of }\calN(0,\sigma_a^2),\\
  -9z^3/4+\theta z,&\pi=\pi_{\theta}.
 \end{cases}
\end{equation}
Here $C_b^j$ denotes the functions with bounded continuous
derivatives through order $j$, with
$\norm{\varphi}_{C_b^j}=\sum_{i=0}^j\norm{\varphi^{(i)}}_\infty$.

We construct an exchangeable pair by one step of the single-edge
heat-bath Glauber dynamics. For each possible edge
$e=\{i,j\}$, write $X_e=X_{ij}$ and $X_{-e}$ for all the other edges.
Choose $I$ uniformly from the $N$ possible edges, independently of $X$.
Given $I$ and $X_{-I}$, let $X_I'$ be an independent copy of $X_I$,
and set
\begin{equation*}
 S'=S-X_I+X_I',\qquad Q'=S'/N,\qquad W_n'=r_n(Q'-q_n).
\end{equation*}
Then $(S,S')$ and $(W_n,W_n')$ are exchangeable pairs, and
\begin{equation*}
 Q'-Q=\frac{X_I'-X_I}{N},\qquad
 \Delta=W_n'-W_n=r_n(Q'-Q).
\end{equation*}
For each possible edge $e$, define
\begin{equation*}
 L_e=\PP(X_e=1\mid X_{-e}).
\end{equation*}
By the resampling construction, conditional on $X$ and $I=e$,
$X_I'$ has distribution $\operatorname{Bernoulli}(L_e)$. In particular,
\begin{equation*}
 \E(X_I'\mid X,I=e)
 =\E((X_I')^2\mid X,I=e)=L_e.
\end{equation*}
Since $X_e$ is fixed when conditioning on $X$ and $X_e^2=X_e$,
\begin{align*}
 \E(Q'-Q\mid X,I=e) &=\frac{\E(X_I'\mid X,I=e)-X_e}{N} =\frac{L_e-X_e}{N},\\
 \E((Q'-Q)^2\mid X,I=e) &=\frac{L_e-2X_eL_e+X_e}{N^2} =\frac{X_e(1-L_e)+(1-X_e)L_e}{N^2}.
\end{align*}
Write $\bar L=N^{-1}\sum_eL_e$.
Averaging over $I$ yields
\begin{equation}\label{eq:heatbath}
\begin{aligned}
 b_n&=N\E(Q'-Q\mid X)
       =\frac1N\sum_e(L_e-X_e)=\bar L-Q,\\
 v_n&=N^2\E((Q'-Q)^2\mid X)
       =\frac1N\sum_e\{X_e+L_e-2X_eL_e\}.
\end{aligned}
\end{equation}
Multiplying these conditional moments by $r_n$ and $r_n^2$,
respectively, gives
\begin{equation}\label{eq:jumps}
 \E(\Delta\mid X)=\frac{r_n}{N}b_n,\qquad \E(\Delta^2\mid X)=\frac{r_n^2}{N^2}v_n, \qquad |\Delta|\le r_n/N, \qquad 0\le v_n\le1.
\end{equation}
The last two bounds follow from $|X_I'-X_I|\le1$.
For every $f\in C_b^1(\R)$ with $\operatorname{Lip}(f')<\infty$,
exchangeability gives
\begin{equation*}
 0=\E[\Delta\{f(W_n')+f(W_n)\}].
\end{equation*}
Since
$|f(x+t)-f(x)-tf'(x)|\le\tfrac12\operatorname{Lip}(f')t^2$,
we obtain
\begin{equation}\label{001}
 \left|2\E[\Delta f(W_n)]
       +\E[\Delta^2f'(W_n)]\right|
 \le\frac12\operatorname{Lip}(f')\E|\Delta|^3.
\end{equation}
Substituting \eqref{eq:jumps} into \eqref{001}, multiplying by $Nn^2/(2r_n^2)$,
and using $\E|\Delta|^3\le(r_n/N)^3$, we obtain
\begin{equation}\label{eq:exchangeable-identity}
 \left|\E\left[
       \frac{n^2}{r_n}b_nf(W_n)+\frac{n^2}{2N}v_nf'(W_n)
       \right]\right|
 \le\frac{n^2r_n}{4N^2}\operatorname{Lip}(f').
\end{equation}
Taking expectations in \eqref{eq:stein-equation}, adding and subtracting
the expectation on the left above, and applying the triangle inequality
yields
\begin{equation}\label{eq:stein-master}
 |\E\varphi(W_n)-\pi\varphi|
 \le\mathrm I+\mathrm{II}+\mathrm{III},
\end{equation}
where
\begin{align*}
 \mathrm I&=\left|\E\left[
       \left\{b(W_n)-\frac{n^2}{r_n}b_n\right\}f(W_n)\right]\right|,\\
 \mathrm{II}&=\norm{f'}_\infty
       \E\left|\frac49-\frac{n^2}{2N}v_n\right|,\qquad
 \mathrm{III}=\frac{n^2r_n}{4N^2}\operatorname{Lip}(f').
\end{align*}
The following lemma gives bounds for the solutions of
\eqref{eq:stein-equation}. Its proof is given in
Section~\ref{sec:smooth-factors}.
\begin{lemma}\label{lem:smooth-stein}
For either fixed target in \eqref{eq:stein-target} and every
$\varphi:\R\to\R$ with $\operatorname{Lip}(\varphi)\le1$,
the unique bounded solution of \eqref{eq:stein-equation} is
\begin{equation}\label{eq:stein-solution}
 f(z)=\frac{9}{4\pi(z)}
       \int_{-\infty}^{z}\{\varphi(t)-\pi\varphi\}\pi(t)\dd t.
\end{equation}
This solution belongs to $C_b^1(\R)$, its derivative is Lipschitz, and
\begin{equation}\label{eq:smooth-stein-factors}
 \norm f_\infty+\norm{f'}_\infty+\operatorname{Lip}(f')\le C.
\end{equation}
The constant depends only on the fixed target parameters.

For the quartic target and $\varphi(z)=z^j$, $j=1,2$,
\eqref{eq:stein-solution} also defines a solution in $C_b^2(\R)$, with
\begin{equation}\label{eq:quartic-polynomial-stein-factors}
 \norm{f}_\infty+\norm{f'}_\infty+\operatorname{Lip}(f')\le C.
\end{equation}
Here $C$ depends only on $\theta$.
\end{lemma}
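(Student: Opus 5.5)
We treat the Stein equation \eqref{eq:stein-equation} as a first-order linear ODE. Dividing by $4/9$, it reads $f'+\frac94 b f=\frac94(\varphi-\pi\varphi)$. In both cases the target density satisfies $\pi'(z)/\pi(z)=\frac94 b(z)$. Indeed, for the Gaussian case $\frac94\cdot(-27a^2z/4)=-z/\sigma_a^2$ because $\sigma_a^2=16/(243a^2)$. For the quartic case $\frac94(-9z^3/4+\theta z)=\frac{d}{dz}\{-\frac{81}{64}z^4+\frac{9\theta}{8}z^2\}$. Hence $(\pi f)'=\frac94(\varphi-\pi\varphi)\pi$, and every solution has the form \eqref{eq:stein-solution} plus $c/\pi(z)$. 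Since $1/\pi$ is unbounded at $\pm\infty$, boundedness forces $c=0$, which gives uniqueness. The representation \eqref{eq:stein-solution} is also equal to $-\frac{9}{4\pi(z)}\int_z^\infty\{\varphi-\pi\varphi\}\pi$ because $\int_{\R}(\varphi-\pi\varphi)\pi=0$. We use the left integral for $z\le0$ and the right integral for $z\ge0$.

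\textbf{Bounds on $f$ and $f'$.} For Lipschitz $\varphi$ we have $|\varphi(t)-\pi\varphi|\le |t|+\pi|\cdot|$. So it suffices to bound the Mills-type ratios
\[
\frac{1}{\pi(z)}\int_z^\infty(1+t)\pi(t)\,dt,\qquad z\ge0,
\]
and their mirror images for $z\le 0$. Since $-\log\pi$ is convex at infinity with $|b(z)|\to\infty$, the standard estimate $\int_z^\infty\pi\le \pi(z)/(\frac94|b(z)|)$ holds for large $z$. This estimate follows by integrating $\pi'=\frac94 b\pi$ and using monotonicity of $b$ on $[z_0,\infty)$. The same argument gives $\int_z^\infty t\pi(t)dt\lesssim \pi(z)\,z/|b(z)|$. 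Together these give $|f(z)|\lesssim (1+|z|)/|b(z)|\le C$ for large $|z|$, with continuity handling compact sets. Next, $f'=\frac94(\varphi-\pi\varphi)-\frac94 bf$. We write $-bf$ via the tail integral and integrate by parts:
\[
-\frac94 b(z)\int_z^\infty g\pi=\int_z^\infty g'\,\pi + \text{boundary term},
\]
where $g=\varphi-\pi\varphi$. The more direct route is to write $\frac94 b(z)\pi(z)=\pi'(z)$ and use the identity
\[
f'(z)=\frac94\Big[g(z)+\frac{\pi'(z)}{\pi(z)^2}\int_z^\infty g\pi\Big].
\]
Then we bound $|g(t)-g(z)|\le|t-z|$, using $\int_z^\infty\pi'/\pi(z)=-1$ to cancel $g(z)$. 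This leaves $\frac{|\pi'(z)|}{\pi(z)^2}\int_z^\infty(t-z)\pi(t)dt$, which is bounded by the same Mills-ratio estimate applied twice, namely $\int_z^\infty (t-z)\pi\lesssim \pi(z)/b(z)^2$.

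\textbf{Lipschitz bound on $f'$.} Differentiating, $f''=\frac94(g'-\frac94 b'f-\frac94 bf')$ almost everywhere, with $|g'|\le1$. This is the main obstacle in the quartic case: $b'(z)=-\frac{27}{4}z^2+\theta$ grows, so boundedness of $f$ alone is not enough. We need the sharper decay $|f(z)|\lesssim 1/|z|^2$ and $|f'(z)|\lesssim 1/|z|^{2}$ at infinity, obtained by a second-order expansion of the Mills ratio. More cleanly, substituting the equation gives
\[
f''=\tfrac94\big(g'-\tfrac94 b' f\big)-\big(\tfrac94\big)^2 b\,(g-\tfrac94 bf).
\]
We then exploit the cancellation already used for $f'$: the combination $g-\frac94bf$ equals $\frac49 f'$, and $f'$ decays like $1/|b|$ by the refined estimate $\int_z^\infty(t-z)\pi\approx\pi(z)/(\frac94b(z))^2$. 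With this, each term in $f''$ is $O(1)$ uniformly, after an asymptotic expansion $\int_z^\infty\pi=\pi(z)\{-\frac{4}{9b}+O(b'/b^3)\}$ and $b'/b^2\to0$. For the Gaussian case $b'$ is constant and these bounds are classical.

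\textbf{Polynomial test functions.} For $\varphi(z)=z^j$ with $j=1,2$ in the quartic case, $g$ is no longer Lipschitz. However, $g\pi$ is integrable and $\int_z^\infty t^j\pi\sim\pi(z)z^j/(\frac94|b(z)|)$. Since $|b|\sim z^3$, this gives $f=O(z^{j-3})$, which is bounded. The same cancellation gives $f'=\frac94(g-\frac94bf)=O(z^{j-1}/z^{2})$, and $f''$ is bounded because the expansion gains a factor $z^{-2}$ per step. Smoothness of $\varphi$ and $\pi$ gives $f\in C_b^2$. Consequently $\norm{f}_\infty+\norm{f'}_\infty+\operatorname{Lip}(f')\le C(\theta)$.
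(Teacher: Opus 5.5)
Your treatment of the Lipschitz case has the same skeleton as the paper's proof. You get tail decay $|f(z)|\lesssim(1+|z|)/|b(z)|$ and $|f'(z)|\lesssim 1/|b(z)|$, boundedness on compacts, and then the differentiated Stein equation for $f''$. The only difference is that you obtain the tail decay from Mills-ratio inequalities (monotonicity of $b$, integration by parts). The paper instead bounds the kernel $k(z,s)=\pi(z+s)/\pi(z)\le \ee^{-cz^ms}$ and differentiates under the integral. Your substitute is legitimate, but two repairs are needed.

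First, in $f'(z)=\frac94[g(z)+\pi'(z)\pi(z)^{-2}\int_z^\infty g\pi]$, the identity $\int_z^\infty\pi'(t)\dd t=-\pi(z)$ does not cancel $g(z)$. After writing $g(t)=g(z)+\{g(t)-g(z)\}$, the coefficient of $g(z)$ is $1-\frac94|b(z)|\pi(z)^{-1}\int_z^\infty\pi$. This is nonnegative by your upper Mills bound. It is small, of order $|b'(z)|/b(z)^2$, only by the two-sided expansion you quote later (or by your integration-by-parts formula). Since $|g(z)|\lesssim1+|z|$, that term is still $O(1/|b(z)|)$.

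Second, the rate needed for the quartic target is $f'=O(1/|b|)=O(|z|^{-3})$, not $O(|z|^{-2})$. Also, the differentiated equation is $f''=\frac94(g'-b'f-bf')$, without the extra factors $\frac94$ you wrote.

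The genuine gap is the polynomial case $j=2$; the case $j=1$ is already covered, since $\varphi(z)=z$ is $1$-Lipschitz. Your claimed rate $f'=O(z^{j-1}/z^2)=O(z^{-1})$ only gives $bf'=O(z^2)$, so the differentiated Stein equation yields nothing; the correct rate is $O(z^{-2})$. Even with the correct rate, the three terms in $\frac49f''=g'-b'f-bf'$ are each of order $z$. From $f(z)\approx-\frac49z^{-1}$ one finds $g'(z)=2z$, $b'(z)f(z)\approx3z$ and $b(z)f'(z)\approx-z$, which cancel only at leading order. Bounding $f''$ this way would require a many-term asymptotic expansion of $f$ and $f'$. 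The phrase ``the expansion gains a factor $z^{-2}$ per step'' does not supply it.

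The paper avoids this cancellation altogether. It differentiates $f(z)=-\frac94\int_0^\infty\bar\varphi(z+s)k(z,s)\dd s$ twice and uses $\int_0^\infty(z+s)^d|\partial_z^rk(z,s)|\dd s\le Cz^{d-3-r}$, which gives $|f''(z)|\le Cz^{j-5}$ directly.

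Within your framework you can close the gap in two steps. First prove $f'=O(z^{-2})$ with the same cancellation argument. Then note that $f'$ is the bounded solution of $\frac49(f')'+bf'=\tilde g$ with $\tilde g=g'-b'f$. This $\tilde g$ is $\pi$-centered, because $\frac49(f'\pi)'=\tilde g\pi$ and $f'\pi\to0$. It is $O(|z|)$, and its derivative is bounded in the tails (this uses the sharp rate for $f'$). Your Lipschitz-case derivative estimate applied to $f'$ then gives $f''=O(|z|^{-3})$.
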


\subsection{Proof of Theorem~\ref{thm:triangle-gaussian}}\label{sec:gaussian-proof}

{
Write $b_n=\mathcal B_n(Q)+\varepsilon_n$.
We first use an auxiliary lemma to locate the zero $u_n$ of $\mathcal B_n$.
Stationarity and the dissipation estimate then give
\[
\begin{aligned}
 u_n&=\frac23+an^{-1/3}+c_2n^{-2/3}+O(n^{-1}),\\
 \E(Q-u_n)^2&=O(n^{-4/3}),\qquad
 |\E Q-u_n|=O(n^{-5/6}).
\end{aligned}
\]
The last bound gives the mean expansion.
Substituting $n^{2/3}\mathcal B_n'(u_n)\to-27a^2/4$ and
$\E v_n\to4/9$ into the quadratic stationarity identity gives the variance limit.
For $W_n=n^{2/3}(Q-\E Q)$, subtracting expectations from the drift
expansion gives a centered drift identity.
The moment bounds and the tested remainder estimate then bound the drift,
conditional variance and Taylor remainder terms in the Stein identity by
\[
 \mathrm I\le C(n^{-1/3}+n^{-1/2}),\qquad
 \mathrm{II}\le Cn^{-1/3},\qquad
 \mathrm{III}\le Cn^{-4/3}.
\]
The uniform Stein factors give the Wasserstein upper bound.
For the matching lower bound, retain the quadratic drift correction
$-27an^{-1/3}(W_n^2-\E W_n^2)/4$.
A nonzero, nonnegative even $f\in C_c^\infty(\R)$ supported where
$|z|>\sigma_a$ removes the limiting linear and noise corrections by parity,
while $\E[(Z^2-\sigma_a^2)f(Z)]>0$ for
$Z\sim\calN(0,\sigma_a^2)$.
Testing with the odd Lipschitz function
$\psi(z)=\frac49f'(z)-\frac{27a^2}{4}zf(z)$ detects this term.
\par}

\begin{proof}
Under the assumptions of Theorem~\ref{thm:triangle-gaussian},
$(\alpha_n,h_n)\to(\alpha_c,h_c)$.
The limiting critical edge density is $2/3$
\citep[Proposition~3.2]{RY2013}.
Constants in this proof may depend on the fixed $\lambda,\gamma$
with $\tau\ne0$.
Take $r_n=n^{2/3}$.
We begin with estimates for $b_n$ and $v_n$.
Define
\begin{equation}\label{eq:critical-F}
 F_{\alpha,h}(q)=\ell(h+\alpha q^2),\qquad
 F(q)=F_{\alpha_c,h_c}(q),\qquad 0\le q\le1.
\end{equation}

\begin{proposition}\label{prop:conditional-moments}
For each $q\in[0,1]$, let $\varrho_c(q)$ be the unique root
$r\in(0,1)$ of
\begin{equation}\label{eq:critical-row-root}
 r-\ell(h_c+\alpha_cqr)=0.
\end{equation}
Set
\begin{equation}\label{eq:critical-row-response}
 v_c(q)=\varrho_c(q)(1-\varrho_c(q)),\qquad A(q)=\frac{v_c(q)}{1-\alpha_cqv_c(q)},\qquad K(q)=v_c(q)^2+2\varrho_c(q)^2A(q).
\end{equation}
Define
\begin{equation}\label{eq:critical-g}
 g(q)=\alpha_cF(q)(1-F(q))\biggl[A(q)-q^2-q +\frac{\alpha_c}{2}(1-2F(q))K(q)\biggr].
\end{equation}
Then $g\in C^2([0,1])$.
Fix $\lambda,\gamma\in\R$ and take
\begin{equation}\label{eq:proof-parameter-family}
 \alpha_n=\alpha_c+\lambda/n,\qquad
 h_n=h_c+\gamma/n.
\end{equation}
\begin{enumerate}
\renewcommand{\labelenumi}{\textup{(\roman{enumi})}}
\item
The drift $b_n$ satisfies
\begin{equation}\label{eq:endpoint-closure}
 b_n={}F(Q)-Q+\frac{g(Q)}n +F(Q)(1-F(Q))\frac{\gamma+\lambda Q^2}{n}+\varepsilon_n, \qquad \norm{\varepsilon_n}_2\le Cn^{-3/2}.
\end{equation}
\item
The quantity $v_n$ in \eqref{eq:heatbath} satisfies
\begin{equation}\label{eq:taylor-noise}
 \left\lVert v_n-\{Q+(1-2Q)F(Q)\}\right\rVert_1\le Cn^{-1/2}.
\end{equation}
\item
The remainder $\varepsilon_n$ in \eqref{eq:endpoint-closure} satisfies, for every $f\in C_b^1(\R)$,
\begin{equation}\label{eq:quartic-tested-error}
 n^{3/2}\left|\E[\varepsilon_n f(\sqrt n(Q-2/3))]\right|
 \le Cn^{-1/2}\norm{f}_{C_b^1}.
\end{equation}
\end{enumerate}
The constant $C$ may depend on $\lambda$ and $\gamma$.
\end{proposition}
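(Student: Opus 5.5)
We compute $L_e$ explicitly and then reduce everything to conditional moments of codegrees. For $e=\{i,j\}$,
\[
L_e=\ell\Bigl(h_n+\frac{\alpha_n}{n}D_{ij}\Bigr),\qquad D_{ij}=\sum_{u\ne i,j}X_{iu}X_{ju}.
\]
So $b_n=N^{-1}\sum_e\ell(h_n+\alpha_nD_e/n)-Q$.

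\textbf{Expansion of $L_e$.} Write $D_e/n=\mu_e+\xi_e$, where $\mu_e$ is the conditional mean of $D_e/n$ given the sigma-field $\mathcal F_{\mathcal I}$ with $\mathcal I=\{i,j\}$. Taylor expand $\ell$ to second order around $h_n+\alpha_n\mu_e$. The first-order term averages to a small quantity after the exchangeable-pair testing. The second-order term contributes $\frac{\alpha_c^2}{2}\ell''\,\E[\xi_e^2\mid\mathcal F]$, and $n\E[\xi_e^2\mid\mathcal F]\approx K(Q)$. This is where the $\frac{\alpha_c}{2}(1-2F)K$ piece of $g$ comes from, since $\ell''=\ell(1-\ell)(1-2\ell)$. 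The parameter shifts produce $F(1-F)(\gamma+\lambda Q^2)/n$ by a first-order expansion in $(\alpha,h)$.

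\textbf{Computing $\mu_e$ via the row laws.} Under $p_{B,1}$, compare with the product measure $p_{B,0}=p_B^{\otimes2}$ using Lemma~\ref{lem:row-interpolation}; the cross term $e_{ij}\mathbf z_i^\top\mathbf z_j$ shifts the mean by $O(1/n)$ explicitly. Under $p_B$, each row is a quadratic (Curie--Weiss-like) system with coupling $\alpha B/n$. Its mean-field fixed point is the root $\varrho_c$ of \eqref{eq:critical-row-root}, with $q$ replaced by the local density. The linear response $A(q)=v_c/(1-\alpha_cqv_c)$ enters through the within-row covariance sum $n^{-1}\sum_{u,w}\cov(X_{iu},X_{iw})$. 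The product of the two row means gives $\mu_e\approx Q^2$ plus $O(1/n)$ corrections. These corrections come from three sources: row-degree fluctuations around $Q$ (the $A-q^2$ term), the exclusion of $u=i,j$ and the diagonal (the $-q$ term), and cross interaction. Lemmas~\ref{thm:fields}--\ref{thm:tested-triangle} are invoked here for the conditional fields, second moments and tested triangle sums. Lemma~\ref{lem:comparison} (the Dobrushin contraction with $\alpha_c/4<1$) supplies concentration uniform in $B$, so that replacing local row densities by $Q$ costs $O(n^{-1/2})$ per fluctuation. Squared fluctuations then cost $O(n^{-1})$, and their covariances with $Q$ give the $n^{-3/2}$ remainder. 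Summing and replacing $\ell(h_c+\alpha_cQ^2)$ by $F(Q)$ yields (i). The bound $\norm{\varepsilon_n}_2\le Cn^{-3/2}$ follows from third-order Taylor terms ($O(n^{-3/2})$ in $L^2$ by the concentration) and from averaging the centered first-order terms over $N$ edges. Those averaged terms have weak correlations controlled by the covariance bounds.

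\textbf{Parts (ii) and (iii).} For (ii), $v_n=Q+\bar L-2N^{-1}\sum_eX_eL_e$. Each $L_e$ equals $F(Q)+O_{L^2}(n^{-1/2})$ by concentration of codegrees, which gives the $L^1$ bound directly. For (iii), test the remainder against $f(\sqrt n(Q-2/3))$. The centered first-order pieces are sums of terms like $\xi_e$, and their covariance with a smooth function of $Q$ is one order smaller. Decoupling $Q$ from the conditioned rows changes $\sqrt nQ$ by $O(n^{-1/2})$, costing $\norm{f'}_\infty$. Applying Lemma~\ref{lem:row-interpolation} with $U=\xi_e$ gains the extra $n^{-1/2}$.

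\textbf{Main obstacle.} The hardest step is the exact $1/n$ bookkeeping in $\mu_e$. We must separate the genuine row-response correction $A$ from the naive independent-edge value. This requires a second-order expansion of the conditional row means in the fluctuation of $B$'s degrees, done uniformly near criticality. The Dobrushin bound prevents blowup here, but the constants must be tracked carefully. We would first verify the coefficient formula for $g$ against the Erd\H{o}s--R\'enyi case $\alpha=0$, and then perform the perturbative computation around the product measure.
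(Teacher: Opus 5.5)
Your central step does not work as described. You expand $\ell$ around $h+\alpha\mu_e$, where $\mu_e=\E[M_e/n\mid\mathcal F_{\{i,j\}}]$. You then claim $\mu_e=Q^2+O(1/n)$ and discard the averaged first-order term $N^{-1}\sum_e\alpha\ell'(h+\alpha\mu_e)\xi_e$. Both claims fail.

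Given $\mathcal F_{\{i,j\}}$, the rows $\mathbf z_i,\mathbf z_j$ are, up to the $O(1/n)$ cross term, independent draws from $p_B$. As you yourself note, the mean of $p_B$ is $\varrho_{\alpha,h}(q_B)\mathbf 1$ (up to $O(H_B)$ in $\ell^2$, Lemma~\ref{lem:row-coarse}), and $q_B=Q+O(1/n)$. Hence $\mu_e=\varrho_{\alpha,h}(Q)^2+O_{L^r}(1/n)$, not $Q^2+O(1/n)$. The row density satisfies $\varrho_{\alpha,h}(Q)=Q$ only when $F_{\alpha,h}(Q)=Q$. In general
$\varrho_{\alpha,h}(Q)-Q=\{F_{\alpha,h}(Q)-Q\}/\{1-\alpha Qv_{\alpha,h}(Q)\}+O(\{F_{\alpha,h}(Q)-Q\}^2)$.

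So your zeroth-order average equals $\kappa(Q)\{F_{\alpha,h}(Q)-Q\}+O(\{F_{\alpha,h}(Q)-Q\}^2)+O(1/n)$, with $\kappa(q)=1+2\alpha q\,\ell'(h+\alpha q^2)/\{1-\alpha qv_{\alpha,h}(q)\}$. At $(\alpha_c,h_c)$ and $q=2/3$ one gets $\kappa=3$. Since $b_n$ is the same random variable in both expansions, the first-order average you drop must supply roughly $-2\{F(Q)-Q\}$. It is therefore as large as the leading drift $-\tfrac94(Q-2/3)^3$ that the Stein argument has to capture. It cannot be placed in $\varepsilon_n$, and ``exchangeable-pair testing'' cannot dismiss it; in any case (i) is an $L^2$ statement, not a tested one. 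If you drop it, the drift comes out as $3\{F(Q)-Q\}$ instead of $F(Q)-Q$.

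The exact identity $N^{-1}\sum_eM_e/n=Q^2+(V_n-Q^2-Q)/n$ shows a further problem. The discarded average is also where the degree-variance term sits, and that term is the actual source of $A-q^2-q$ in $g$; it does not come from $\mu_e$. Showing this average is small directly would require concentration of $Q$ at the fixed point, which is what the proposition is later used to prove.

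The paper avoids conditional means of $M_e$ altogether. It Taylor-expands at the global point $h+\alpha Q^2$. By $\sum_eM_e=\sum_i\binom{d_i}{2}$, the first-order term is then exactly $\alpha\ell'(h+\alpha Q^2)(V_n-Q^2-Q)/n$, and the second-order term is $\alpha^2\ell''(h+\alpha Q^2)K_n/(2n)$. The row laws are needed only for second moments, to show $\norm{V_n-A_{\alpha,h}(Q)}_2+\norm{K_n-K_{\alpha,h}(Q)}_2\le Cn^{-1/2}$. That uses pairs of disjoint index pairs, the conditional-mean bounds of Lemma~\ref{lem:actual-refined}, and the decorrelation bound of Lemma~\ref{lem:separated}.

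Part (iii) has a second gap. To get $n^{3/2}|\E[\varepsilon_nf(W_n)]|\le Cn^{-1/2}\norm f_{C_b^1}$, every untested piece of $\varepsilon_n$ must be $O_{L^2}(n^{-2})$. Your third-order Taylor term, bounded by concentration ($|M_e-nQ^2|\approx\sqrt n$), is only $O_{L^2}(n^{-3/2})$, so it contributes $O(1)$ after multiplying by $n^{3/2}$. The paper closes this with the signed cubic estimate $\norm{N^{-1}\sum_e(M_e-nQ^2)^3}_2\le Cn$ of Lemma~\ref{thm:triangle-cubic}. That estimate rests on three ingredients:
\begin{itemize}
\item the identity $N^{-1}\sum_e(M_e-\meanM)^3=N^{-3}\sum_{e,f,g}(M_e-M_f)^2(M_e-M_g)$;
\item the third-cumulant bound $|\E_B\{\mathbf z_1^\top\mathbf z_2-\E_B(\mathbf z_1^\top\mathbf z_2)\}^3|\le CnH_B$ of Lemma~\ref{lem:overlap-cubic}, rather than the naive $n^{3/2}$;
\item Lemma~\ref{lem:separated}.
\end{itemize}
Nothing in your sketch supplies this cancellation.

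Your decoupling scale is also off. Since $\var(Q\mid\mathcal F_{\mathcal I})\le Cn^{-3}$, freezing $f(\sqrt n(Q-2/3))$ at $q_{\mathcal I}=\E(Q\mid\mathcal F_{\mathcal I})$ costs $O(n^{-1})$, not $O(n^{-1/2})$. Combined with $|\E(J^{(d)}_{ij}\mid\mathcal F_{\mathcal I})-A_{\alpha,h}(q_B)|\le CH_B^2/n$, this gives the tested bound of Lemma~\ref{thm:tested-triangle}; a decoupling cost of $O(n^{-1/2})$ would not suffice.

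Part (ii) and the parameter expansion are fine as you describe them.
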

The proof is given in Section~\ref{sec:moment-reduction}.
These estimates are the main technical ingredients in the proofs of
Theorems~\ref{thm:triangle-gaussian} and \ref{thm:triangle-quartic}.
{
The \(L^2\) bound in \eqref{eq:endpoint-closure} is used in the mean and variance estimates.
The bound in \eqref{eq:quartic-tested-error} controls
\(\E[\varepsilon_n f(\sqrt n(Q-2/3))]\) in the Stein identity
and is key to obtaining the sharper Wasserstein bounds.
}

A straightforward calculation from \eqref{eq:critical-row-root},
\eqref{eq:critical-row-response} and
\eqref{eq:critical-g} gives
\begin{equation}\label{eq:g-values}
 g(2/3)=-\frac{11}{16},\qquad g'(2/3)=-\frac{81}{32}.
\end{equation}
Define
\begin{equation}\label{eq:critical-effective-drift}
 \beta(q)=g(q)+F(q)(1-F(q))(\gamma+\lambda q^2),\qquad \mathcal B_n(q)=F(q)-q+\frac{\beta(q)}n.
\end{equation}
Then $\beta(2/3)=\tau$ and $\beta\in C^2([0,1])$.
Exchangeability gives $\E Q'=\E Q$. Hence
\begin{equation}\label{eq:critical-mean-balance}
 0=N\E(Q'-Q)=\E b_n
   =\E\mathcal B_n(Q)+\E\varepsilon_n, \qquad
 \norm{\varepsilon_n}_2\le Cn^{-3/2}.
\end{equation}
We then prove the mean expansion in \eqref{eq:critical-mean}
from \eqref{eq:critical-mean-balance}
by expanding $\mathcal B_n$ and bounding the remainder.
The next lemma gives bounds for $F$ and locates the zero $u_n$ of $\mathcal B_n$.
The drift bound in \eqref{eq:triangle-dissipation} will control the moments of $Q-u_n$.

\begin{lemma}\label{lem:gaussian-center}
There is a constant $c>0$ such that
\begin{equation}\label{eq:triangle-global-derivative}
 1-F'(q)\ge c(q-2/3)^2,\qquad
 -(q-2/3)\{F(q)-q\}\ge\frac c3(q-2/3)^4
 \quad(0\le q\le1).
\end{equation}
For fixed $\lambda,\gamma$ with $\tau\ne0$ and all large $n$,
$\mathcal B_n$ has a unique zero
$u_n\in[0,1]$, with
\begin{equation}\label{eq:endpoint-root}
 u_n=\frac23+an^{-1/3}+c_2n^{-2/3}+O(n^{-1}).
\end{equation}
There is $c_*>0$ such that
\begin{equation}\label{eq:triangle-dissipation}
 -(q-u_n)\mathcal B_n(q)
 \ge c_*\{n^{-2/3}(q-u_n)^2+(q-u_n)^4\}
 \quad(0\le q\le1).
\end{equation}
We also have
\begin{equation}\label{eq:Gaussian-root-slope}
 \mathcal B_n'(u_n)=-\frac{27a^2}{4}n^{-2/3}+O(n^{-1}).
\end{equation}
\end{lemma}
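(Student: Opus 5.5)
The plan is to work directly with $F(q)=\ell(h_c+\alpha_c q^2)$ and Taylor-expand around $2/3$, using the two criticality identities. First, $F(2/3)=2/3$: indeed $h_c+\alpha_c\cdot 4/9=\log 2$, so $\ell(\log2)=2/3$. Second, writing $F'(q)=2\alpha_c qF(1-F)$, we get $F'(2/3)=2\cdot\frac{27}{8}\cdot\frac23\cdot\frac29=1$. A further computation gives $F''(2/3)=0$ and $F'''(2/3)=-\frac{243}{16}<0$. This matches $\Phi'=\tfrac12(h+\alpha q^2-\operatorname{logit} q)$ and the quartic expansion \eqref{eq:critical-variational-expansion}, since $F(q)-q$ has the same sign as $\Phi'(q)$.

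From this, $F(q)-q=-\kappa(q-2/3)^3+O((q-2/3)^4)$ with $\kappa=81/32$. Checking: $-\tfrac{81}{64}t^4$ corresponds to $\Phi'\approx-\tfrac{81}{16}t^3$, and since $\operatorname{logit}'(2/3)=9/2$, this gives $F-q\approx\tfrac29\cdot 2\cdot(-\tfrac{81}{16})t^3\cdot\tfrac12\cdot\ldots$; I will fix the constant by direct differentiation. Similarly $1-F'(q)=3\kappa(q-2/3)^2+O(|q-2/3|^3)$.

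For \eqref{eq:triangle-global-derivative} I would combine this local expansion with a global argument. The claim is that $F'(q)<1$ for all $q\ne2/3$ in $[0,1]$, and that $F(q)=q$ only at $q=2/3$. The second follows from uniqueness of the maximizer of $\Phi_{\alpha_c,h_c}$ at the endpoint \citep{RY2013}. The first I would check by writing $F'=2\alpha q\,\ell(1-\ell)$ and analysing its unique maximum. Compactness of $[0,1]$ together with the local expansion then gives the constant $c$ in the first inequality. The second inequality follows by integrating: with $t=q-2/3$, $F(q)-q=-\int_0^t(1-F'(2/3+s))\,ds$, which has sign opposite to $t$ and magnitude at least $c|t|^3/3$.

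For the root I take $\mathcal B_n(q)=F(q)-q+\beta(q)/n$ and substitute $q=2/3+x$. This gives
\[
0=-\kappa x^3+\tfrac{F^{(4)}(2/3)}{24}x^4+\cdots+\tfrac{\tau+\beta'(2/3)x}{n}+O(n^{-1}x^2).
\]
Setting $x=an^{-1/3}+c_2n^{-2/3}$ and matching terms yields $\kappa a^3=\tau$. With $\kappa=9/4$ (to be confirmed), this gives $a=(4\tau/9)^{1/3}$, matching the statement. At order $n^{-4/3}$, $3\kappa a^2c_2$ balances the quartic Taylor term $a^4F^{(4)}/24$ against $\beta'(2/3)a$, where $\beta'(2/3)=g'(2/3)+\frac29\cdot\frac{4\lambda}{3}+\ldots$ comes from \eqref{eq:g-values}. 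This reproduces the formula for $c_2$; consistency of the $-a^2/8$ term fixes $F^{(4)}(2/3)$. Existence of a root near this point follows from the intermediate value theorem applied at $x=an^{-1/3}\pm Cn^{-2/3}$. Uniqueness follows from monotonicity: $\mathcal B_n'(q)=-(1-F'(q))+O(1/n)$ is negative wherever $|q-2/3|\ge Mn^{-1/3}$. In the window $|x|\le Mn^{-1/3}$ the cubic $-\kappa x^3+\tau/n$ has a single real root, because $\tau\ne0$, and is strictly monotone away from $0$. The slope \eqref{eq:Gaussian-root-slope} is then $-3\kappa a^2n^{-2/3}+O(n^{-1})=-\tfrac{27}{4}a^2n^{-2/3}+O(n^{-1})$.

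For the dissipation bound \eqref{eq:triangle-dissipation}, write $\mathcal B_n(q)=\mathcal B_n(q)-\mathcal B_n(u_n)=-\int_{u_n}^q(1-F'(s))\,ds+O(|q-u_n|/n)$. Using $1-F'(s)\ge c(s-2/3)^2$ and $s-2/3=(s-u_n)+an^{-1/3}+O(n^{-2/3})$, the integral of $(s-u_n+an^{-1/3})^2$ over $[u_n,q]$ is at least $c'(y^3+n^{-2/3}y)$, where $y=|q-u_n|$. This holds because a quadratic $\int_0^y(s+b)^2\,ds\ge c(y^3+b^2y)$ uniformly in the sign of $b$: the minimum over $b$ of $y^3/3+by^2+b^2y$ is $y^3/12$, and a convex combination handles the rest. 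The $O(y/n)$ error is absorbed by the $n^{-2/3}y$ term, and multiplying by $y$ gives the bound.

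I expect the main obstacle to be the exact constants: verifying $F'''(2/3)$, and computing $F^{(4)}(2/3)$ and $\beta'(2/3)$ so that they combine into the stated $c_2$. This is routine but error-prone, and I would do it by differentiating $\ell(h_c+\alpha_cq^2)$ symbolically.
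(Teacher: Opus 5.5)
Your route is the paper's. You show $F'<1$ away from $2/3$ (the paper does this via the sign change of $F''$ at $2/3$), use compactness, and integrate $1-F'$ for the quartic bound. You expand $n\mathcal B_n(2/3+n^{-1/3}y)$ for $u_n$; the paper applies the implicit function theorem to this $C^2$ function of $(n^{-1/3},y)$ where you match coefficients. You bound the secant integral of $1-F'-\beta'/n$ from below by a quadratic for the dissipation estimate, and use Taylor's formula for the slope.

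What needs repair is the arithmetic you flagged. In fact $F'''(2/3)=-27/2$, not $-243/16$. With $s(q)=h_c+\alpha_cq^2$ one has $s'(2/3)=9/2$, $s''=27/4$ and $\ell''(\log2)=\ell'''(\log2)=-2/27$, so $F'''(2/3)=\ell'''s'^3+3\ell''s's''=-\frac{27}{4}-\frac{27}{4}$. Hence $\kappa=9/4$ throughout (never $81/32$), and $1-F'(2/3+x)=\frac{27}{4}x^2+O(x^3)$. Your variational check confirms this once carried through, since $\ell'(\log2)\cdot(-\frac{81}{8})=-\frac94$. Likewise $F^{(4)}(2/3)=-\frac{81}{4}$, i.e.\ $F(2/3+x)-(2/3+x)=-\frac94x^3-\frac{27}{32}x^4+O(x^5)$, must be computed directly. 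Inferring it from the stated $c_2$ would be circular.

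There are also two points of rigor. First, an intermediate value bracket at $x=an^{-1/3}\pm Cn^{-2/3}$ locates $u_n$ only to $O(n^{-2/3})$. To justify the $c_2n^{-2/3}+O(n^{-1})$ expansion, bracket at $x=an^{-1/3}+c_2n^{-2/3}\pm Cn^{-1}$, where $\mathcal B_n=O(n^{-5/3})$ and $\mathcal B_n'\approx-\frac{27}{4}a^2n^{-2/3}$, or bootstrap from the coarse location. Second, for uniqueness, $\mathcal B_n'<0$ outside the window still permits one zero on each outer interval unless you also check the signs of $\mathcal B_n(2/3\pm Mn^{-1/3})$. It is simpler to argue as the paper does: your own secant bound $-\mathcal B_n(q)/(q-u_n)\ge c_*\{(q-u_n)^2+n^{-2/3}\}>0$ already excludes any other zero.
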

The proof is given in Section~\ref{sec:center-moments}.\\
Since $\mathcal B_n(u_n)=0$, Taylor's formula gives
\begin{equation}\label{eq:Gaussian-root-Taylor}
\begin{aligned}
 \mathcal B_n(Q)&=\mathcal B_n'(u_n)(Q-u_n)+R_n,\\
 R_n&=(Q-u_n)^2\int_0^1(1-t)
 \left\{F''(u_n+t(Q-u_n))+\frac{\beta''(u_n+t(Q-u_n))}n\right\}\dd t.
\end{aligned}
\end{equation}
Substituting \eqref{eq:Gaussian-root-Taylor} into
\eqref{eq:critical-mean-balance} gives
\begin{equation}\label{eq:Gaussian-mean-identity}
 0=\mathcal B_n'(u_n)(\E Q-u_n)
       +\E R_n+\E\varepsilon_n.
\end{equation}

To bound $\E R_n$, we first estimate the moments of $Q-u_n$.
We derive these moment bounds by a recursive argument similar to the one
used by \citet{FLSW2025}.
We begin with the second moment.
Since $Q$ and $Q'$ have the same distribution, \eqref{eq:jumps} gives
\begin{equation*}
 0=\E\{(Q'-u_n)^2-(Q-u_n)^2\}
  =\frac2N\E[(Q-u_n)b_n]+\frac{\E v_n}{N^2}.
\end{equation*}
Substituting $b_n=\mathcal B_n(Q)+\varepsilon_n$, we obtain
\begin{equation}\label{eq:quadratic-stationarity}
 \E[-(Q-u_n)\mathcal B_n(Q)]
 =\E[(Q-u_n)\varepsilon_n]+\frac{\E v_n}{2N}.
\end{equation}
By \eqref{eq:triangle-dissipation} and
\eqref{eq:quadratic-stationarity},
\begin{equation*}
\begin{aligned}
 c_*\{n^{-2/3}\E(Q-u_n)^2+\E(Q-u_n)^4\}
 &\le \E[(Q-u_n)\varepsilon_n]+\frac{\E v_n}{2N}\\
 &\le\{\E(Q-u_n)^2\}^{1/2}\norm{\varepsilon_n}_2+\frac1{2N}\\
 &\le Cn^{-3/2}\{\E(Q-u_n)^2\}^{1/2}+Cn^{-2},
\end{aligned}
\end{equation*}
where in the second line we use the Cauchy--Schwarz inequality and $0\le v_n\le1$,
and the last inequality holds by \eqref{eq:endpoint-closure} and $N=n(n-1)/2$.
Dropping the fourth-moment term and multiplying by $n^{2/3}$ gives
\begin{equation*}
 \E(Q-u_n)^2\le Cn^{-5/6}\{\E(Q-u_n)^2\}^{1/2}+Cn^{-4/3} \le\frac12\E(Q-u_n)^2+C(n^{-5/3}+n^{-4/3}),
\end{equation*}
where the last step follows from Young's inequality. Hence
\begin{equation}\label{eq:Gaussian-tightness}
 \E(Q-u_n)^2\le Cn^{-4/3}.
\end{equation}

To bound the third moment, we first estimate the fourth moment.
Since $Q$ and $Q'$ have the same distribution,
\begin{equation*}
 0=\E\{(Q'-u_n)^4-(Q-u_n)^4\}.
\end{equation*}
Since $Q'-Q\in\{0,\pm N^{-1}\}$, we have
$(Q'-Q)^3=(Q'-Q)/N^2$ and $(Q'-Q)^4=(Q'-Q)^2/N^2$.
Expanding the fourth powers and conditioning on $X$ as in
\eqref{eq:jumps}, we obtain
\begin{equation}\label{eq:Gaussian-fourth-expansion}
\begin{aligned}
 0={}&4\E[(Q-u_n)^3(Q'-Q)]+6\E[(Q-u_n)^2(Q'-Q)^2]\\
 &+4\E[(Q-u_n)(Q'-Q)^3]+\E(Q'-Q)^4\\
 ={}&\frac4N\E[(Q-u_n)^3b_n] +\frac6{N^2}\E[(Q-u_n)^2v_n] +\frac4{N^3}\E[(Q-u_n)b_n]+\frac{\E v_n}{N^4}.
\end{aligned}
\end{equation}
Since $\E[(Q-u_n)b_n]=-\E v_n/(2N)$, the last two terms satisfy
\begin{equation*}
 \frac4{N^3}\E[(Q-u_n)b_n]+\frac{\E v_n}{N^4}
 =-\frac{2\E v_n}{N^4}+\frac{\E v_n}{N^4}
 =-\frac{\E v_n}{N^4}.
\end{equation*}
Substituting this simplification into
\eqref{eq:Gaussian-fourth-expansion}, multiplying by $N/4$,
and using $b_n=\mathcal B_n(Q)+\varepsilon_n$, we obtain
\begin{equation}\label{eq:Gaussian-fourth-stationarity}
 -\E[(Q-u_n)^3\mathcal B_n(Q)] =\E[(Q-u_n)^3\varepsilon_n] +\frac{3}{2N}\E[(Q-u_n)^2v_n]-\frac{\E v_n}{4N^3}.
\end{equation}
Multiplying \eqref{eq:triangle-dissipation} by $(Q-u_n)^2$ and using
\eqref{eq:Gaussian-fourth-stationarity}, we obtain
\begin{equation*}
\begin{aligned}
 c_*\{n^{-2/3}\E(Q-u_n)^4+\E(Q-u_n)^6\}
 &\le\norm{\varepsilon_n}_2\{\E(Q-u_n)^6\}^{1/2}
       +\frac{3}{2N}\E(Q-u_n)^2\\
 &\le\frac{c_*}{2}\E(Q-u_n)^6+Cn^{-3}+Cn^{-10/3},
\end{aligned}
\end{equation*}
where we used the Cauchy--Schwarz inequality and  Young's inequality,
\eqref{eq:endpoint-closure} and \eqref{eq:Gaussian-tightness}.
It then follows that
\begin{equation}\label{eq:Gaussian-root-moments}
 \E(Q-u_n)^4\le Cn^{-7/3},\qquad \E|Q-u_n|^3 \le\{\E(Q-u_n)^2\,\E(Q-u_n)^4\}^{1/2}\le Cn^{-11/6}.
\end{equation}

We now use these moment bounds to estimate the Taylor remainder in
\eqref{eq:Gaussian-root-Taylor}.
Differentiating \eqref{eq:critical-F} gives $F''(2/3)=0$. Since $F$ is smooth and $\beta\in C^2([0,1])$,
both $F'''$ and $\beta''$ are bounded on $[0,1]$.
By \eqref{eq:endpoint-root}, $u_n-2/3=O(n^{-1/3})$.
Combining this estimate with the boundedness of $F'''$ and $\beta''$ gives
\begin{equation}\label{002}
 |R_n|\le C\{n^{-1/3}(Q-u_n)^2+|Q-u_n|^3\}.
\end{equation}
Thus \eqref{eq:endpoint-closure}, \eqref{eq:Gaussian-tightness} and \eqref{eq:Gaussian-root-moments}
imply
\begin{equation}\label{eq:Gaussian-mean-remainders}
 \E|R_n|
 \le C\{n^{-1/3}\E(Q-u_n)^2+\E|Q-u_n|^3\}
 \le Cn^{-5/3},\qquad
 |\E\varepsilon_n|\le\norm{\varepsilon_n}_2\le Cn^{-3/2}.
\end{equation}
Substituting \eqref{eq:Gaussian-mean-remainders} into
\eqref{eq:Gaussian-mean-identity}, we obtain
\begin{equation}\label{eq:Gaussian-mean-error}
\begin{aligned}
 0&=\E b_n =\E\mathcal B_n(Q)+\E\varepsilon_n =\mathcal B_n'(u_n)(\E Q-u_n) +\E R_n+\E\varepsilon_n\\
 &=\mathcal B_n'(u_n)(\E Q-u_n) +O(n^{-5/3})+O(n^{-3/2}) =\mathcal B_n'(u_n)(\E Q-u_n)+O(n^{-3/2}).
\end{aligned}
\end{equation}
Since $|\mathcal B_n'(u_n)|\ge c n^{-2/3}$ by
\eqref{eq:Gaussian-root-slope}, \eqref{eq:Gaussian-mean-error} gives
\begin{equation}\label{eq:Gaussian-mean-root-gap}
 |\E Q-u_n|\le Cn^{-5/6}=o(n^{-2/3}).
\end{equation}
Together with \eqref{eq:endpoint-root}, this proves the mean expansion
in \eqref{eq:critical-mean}. It also justifies replacing $\E Q$
by $u_n$ or by $2/3+an^{-1/3}+c_2n^{-2/3}$ in the distributional limit.

We next prove the variance limit in \eqref{eq:critical-mean}.
By the Cauchy--Schwarz inequality, \eqref{eq:Gaussian-tightness} and \eqref{eq:endpoint-root},
\begin{equation*}
 \E|Q-2/3|\le\{\E(Q-u_n)^2\}^{1/2}+|u_n-2/3| \le Cn^{-2/3}+Cn^{-1/3}\le Cn^{-1/3}.
\end{equation*}
The function $q\mapsto q+(1-2q)F(q)$ has a bounded derivative on $[0,1]$.
Its value at $q=2/3$ is $4/9$, since $F(2/3)=2/3$.
Hence \eqref{eq:taylor-noise} gives
\begin{equation}\label{eq:Gaussian-noise-bound}
\begin{aligned}
 \norm{v_n-4/9}_1
 &\le\norm{v_n-\{Q+(1-2Q)F(Q)\}}_1
       +\E|Q+(1-2Q)F(Q)-4/9|\\
 &\le Cn^{-1/2}+C\E|Q-2/3|\le Cn^{-1/3}.
\end{aligned}
\end{equation}
Using \eqref{eq:Gaussian-root-moments}, the bound \eqref{002} on $R_n$
and the Cauchy--Schwarz inequality, we obtain
\begin{equation}\label{eq:Gaussian-variance-errors}
\begin{aligned}
 |\E[(Q-u_n)R_n]|
 &\le C\{n^{-1/3}\E|Q-u_n|^3+\E(Q-u_n)^4\}\le Cn^{-13/6},\\
 |\E[(Q-u_n)\varepsilon_n]|
 &\le\{\E(Q-u_n)^2\}^{1/2}\norm{\varepsilon_n}_2\le Cn^{-13/6}.
\end{aligned}
\end{equation}
Substituting \eqref{eq:Gaussian-root-Taylor} into
\eqref{eq:quadratic-stationarity}, we therefore obtain
\begin{equation}\label{eq:Gaussian-variance-identity}
 -\mathcal B_n'(u_n)\E(Q-u_n)^2
 =\frac{\E v_n}{2N}+O(n^{-13/6}).
\end{equation}
The expansion \eqref{eq:Gaussian-root-slope} gives
$n^{2/3}\mathcal B_n'(u_n)\to-27a^2/4$, and
\eqref{eq:Gaussian-noise-bound} gives $\E v_n\to4/9$.
Since $n^2/(2N)\to1$, \eqref{eq:Gaussian-variance-identity} yields
\begin{equation*}
 n^{4/3}\E(Q-u_n)^2\longrightarrow\frac{16}{243a^2}=\sigma_a^2.
\end{equation*}
By \eqref{eq:Gaussian-mean-root-gap},
$n^{4/3}(\E Q-u_n)^2=O(n^{-1/3})$. Hence
\begin{equation}\label{eq:Gaussian-variance-limit}
 n^{4/3}\var(Q)
 =n^{4/3}\E(Q-u_n)^2-n^{4/3}(\E Q-u_n)^2
 \longrightarrow\sigma_a^2.
\end{equation}
This proves \eqref{eq:critical-mean}.

It remains to establish the Gaussian approximation bound in \eqref{eq:critical-Gaussian}.
We set $q_n=\E Q$ and put $W_n=n^{2/3}(Q-q_n)$.
Equation~\eqref{eq:Gaussian-variance-limit} gives
\begin{equation}\label{eq:Gaussian-centered-tightness}
 \E W_n^2=n^{4/3}\var(Q)\longrightarrow\sigma_a^2.
\end{equation}
In particular, $\sup_n\E W_n^2<\infty$.
Take $\pi$ to be the density of $\calN(0,\sigma_a^2)$.
For any $1$-Lipschitz $\varphi:\R\to\R$, let $f$ be the bounded solution of
\eqref{eq:stein-equation} with $b(z)=-\frac{27a^2}{4}z$.
Lemma~\ref{lem:smooth-stein} gives \eqref{eq:smooth-stein-factors}
uniformly over these $\varphi$.
We estimate the three terms in \eqref{eq:stein-master}.

\noindent\textbf{For $\mathrm I$.} Using
$b_n=\mathcal B_n(Q)+\varepsilon_n$ and $\E b_n=0$, subtract
expectations from \eqref{eq:Gaussian-root-Taylor} to obtain
\begin{equation}\label{eq:Gaussian-centered-drift}
 b_n=\mathcal B_n'(u_n)(Q-\E Q)
       +(R_n-\E R_n)+(\varepsilon_n-\E\varepsilon_n).
\end{equation}
Here $Q-\E Q=n^{-2/3}W_n$ and $b(W_n)=-\frac{27a^2}{4}W_n$.
The expansion \eqref{eq:Gaussian-root-slope} and the Cauchy--Schwarz inequality
applied to \eqref{eq:Gaussian-centered-tightness} give
\begin{equation*}
 \left|n^{2/3}\mathcal B_n'(u_n)+\frac{27a^2}{4}\right|
 \le Cn^{-1/3},\qquad
 \E|W_n|\le(\E W_n^2)^{1/2}\le C.
\end{equation*}
To estimate the centered $\varepsilon_n$ term, put
\begin{equation*}
 \widetilde f_n(x)=f\!\left(n^{1/6}x+n^{2/3}(2/3-q_n)\right)-\E f(W_n).
\end{equation*}
Then $\norm{\widetilde f_n}_{C_b^1}\le2\norm f_\infty+n^{1/6}\norm{f'}_\infty
\le Cn^{1/6}$, so \eqref{eq:quartic-tested-error} gives
\begin{equation}\label{eq:Gaussian-centered-tested-error}
 n^{4/3}\left|\E[(\varepsilon_n-\E\varepsilon_n)f(W_n)]\right| =n^{4/3}\left|\E[\varepsilon_n \widetilde f_n(\sqrt n(Q-2/3))]\right| \le Cn^{-2/3}\norm{\widetilde f_n}_{C_b^1}\le Cn^{-1/2}.
\end{equation}
Applying the triangle inequality to \eqref{eq:Gaussian-centered-drift}
and using \eqref{eq:Gaussian-mean-remainders} and
\eqref{eq:Gaussian-centered-tested-error}, we obtain
\begin{equation}\label{eq:Gaussian-term-I}
\begin{aligned}
 \mathrm I &\le\norm{f}_\infty\biggl[ \left|n^{2/3}\mathcal B_n'(u_n)+\frac{27a^2}{4}\right|\E|W_n| +n^{4/3}\E|R_n-\E R_n|\biggr]\\
 &\quad+n^{4/3}\left|\E[(\varepsilon_n-\E\varepsilon_n)f(W_n)]\right|\\
 &\le C\bigl[n^{-1/3}+2n^{4/3}\E|R_n|+n^{-1/2}\bigr] \le C(n^{-1/3}+n^{-1/2}).
\end{aligned}
\end{equation}

\noindent\textbf{For $\mathrm{II}$.} We use \eqref{eq:Gaussian-noise-bound}.
Since $n^2/(2N)=1+1/(n-1)$ and $0\le v_n\le1$,
the bound on $\norm{f'}_\infty$ in \eqref{eq:smooth-stein-factors} gives
\begin{equation}\label{eq:Gaussian-smooth-noise-error}
\begin{aligned}
 \mathrm{II} &=\norm{f'}_\infty\E\left|\frac49-v_n-\frac{v_n}{n-1}\right|\\
 &\le\norm{f'}_\infty \left\{\norm{v_n-4/9}_1+\frac{\E v_n}{n-1}\right\} \le C(n^{-1/3}+n^{-1}) \le Cn^{-1/3}.
\end{aligned}
\end{equation}

\noindent
\textbf{For $\mathrm{III}$.} Since $N=n(n-1)/2$ and $r_n=n^{2/3}$,
the bound on $\operatorname{Lip}(f')$ in \eqref{eq:smooth-stein-factors} gives
\begin{equation}\label{eq:Gaussian-term-III}
 \mathrm{III}=\frac{n^{2/3}}{(n-1)^2}\operatorname{Lip}(f')
       \le Cn^{-4/3}.
\end{equation}
Substituting \eqref{eq:Gaussian-term-I},
\eqref{eq:Gaussian-smooth-noise-error} and \eqref{eq:Gaussian-term-III}
into \eqref{eq:stein-master} yields
\begin{equation}\label{eq:Gaussian-smooth-error}
 |\E\varphi(W_n)-\pi\varphi|
 \le C(n^{-1/3}+n^{-1/2}+n^{-4/3})\le Cn^{-1/3}.
\end{equation}
Taking the supremum over all $1$-Lipschitz $\varphi$ proves
the upper bound in \eqref{eq:critical-Gaussian}.

\noindent\textbf{Sharpness of the rate.}
Put $\delta_n=n^{-1/3}$, $\kappa=27a^2/4$ and
$\chi=-27a/4$. For $D_n=Q-u_n$, Taylor's formula gives
\begin{equation*}
 R_n=\tfrac12F''(u_n)D_n^2+O(|D_n|^3+n^{-1}D_n^2),\qquad
 F''(u_n)=2\chi\delta_n+O(\delta_n^2).
\end{equation*}
Here we used $\beta\in C^2$, $F''(2/3)=0$, $F^{(3)}(2/3)=-27/2$
and \eqref{eq:endpoint-root}.
Equations~\eqref{eq:Gaussian-tightness}, \eqref{eq:Gaussian-root-moments}
and \eqref{eq:Gaussian-mean-root-gap} imply
\begin{equation}\label{eq:Gaussian-sharpness-remainder}
 n^{4/3}(R_n-\E R_n)
 =\chi\delta_n(W_n^2-\E W_n^2)+O_{L^1}(n^{-1/2}).
\end{equation}
In \eqref{eq:Gaussian-sharpness-remainder}, the $O(|D_n|^3)$ remainder
in the expansion of $R_n$ contributes $O_{L^1}(n^{-1/2})$ by
\eqref{eq:Gaussian-root-moments}. The cross term obtained from
$n^{2/3}D_n=W_n+n^{2/3}(\E Q-u_n)$ contributes the same order by
\eqref{eq:Gaussian-mean-root-gap} and the Cauchy--Schwarz inequality
applied to \eqref{eq:Gaussian-centered-tightness}.

For $V(q)=q+(1-2q)F(q)$, we have $V(2/3)=4/9$ and $V'(2/3)=-2/3$.
The estimate \eqref{eq:taylor-noise}, Taylor's formula for $V$,
\eqref{eq:Gaussian-tightness} and \eqref{eq:endpoint-root} give
\begin{equation}\label{eq:Gaussian-sharpness-noise}
 v_n=\tfrac49-\tfrac{2a}{3}\delta_n+o_{L^1}(\delta_n).
\end{equation}

The bound \eqref{eq:Gaussian-smooth-error} and \eqref{eq:Gaussian-centered-tightness}
give $W_n\Rightarrow Z\sim\calN(0,\sigma_a^2)$ and $\E W_n^2\to\sigma_a^2$.
Choose a nonzero, nonnegative even $f\in C_c^\infty(\R)$ supported in
$\{x:|x|>\sigma_a\}$, and put $\psi(x)=\tfrac49f'(x)-\kappa xf(x)$.
Set $a_n=n^{2/3}\mathcal B_n'(u_n)+\kappa$; \eqref{eq:Gaussian-root-slope}
gives $a_n=O(\delta_n)$.
Combining \eqref{eq:exchangeable-identity} and
\eqref{eq:Gaussian-centered-drift} with the expansions
\eqref{eq:Gaussian-sharpness-remainder} and \eqref{eq:Gaussian-sharpness-noise},
and estimating the centered $\varepsilon_n$ term as in
\eqref{eq:Gaussian-centered-tested-error}, we obtain
\begin{equation*}
 \E\psi(W_n) =-a_n\E[W_nf(W_n)] -\chi\delta_n\E[(W_n^2-\E W_n^2)f(W_n)] +\tfrac{2a}{3}\delta_n\E f'(W_n)+o(\delta_n).
\end{equation*}
By weak convergence, compact support and parity,
\begin{equation*}
 \delta_n^{-1}\E\psi(W_n)
 \longrightarrow-\chi\E[(Z^2-\sigma_a^2)f(Z)]\ne0.
\end{equation*}
Since $\psi$ is odd and Lipschitz, $\E\psi(Z)=0$ and
$L=\operatorname{Lip}(\psi)>0$. Testing with $\psi/L$ gives the lower bound in
\eqref{eq:critical-Gaussian}. This completes the proof.
\end{proof}

\subsection{Proof of Theorem~\ref{thm:triangle-quartic}}\label{sec:quartic-proof}

{
Set $W_n=\sqrt n(Q-2/3)$.
The parameter choice gives $\beta(2/3)=0$ and $\beta'(2/3)=\theta$, so
Taylor expansion yields
\[
 n^{3/2}\mathcal B_n(Q)
 =b(W_n)+O\!\left(n^{-1/2}(|W_n|^4+W_n^2)\right),
 \qquad b(z)=-\frac94z^3+\theta z.
\]
A uniform fourth-moment bound controls the expectation of this remainder.
Together with the tested remainder estimate
and $\norm{v_n-4/9}_1=O(n^{-1/2})$, the exchangeable-pair identity gives
\[
 \left|\E\left[\frac49f'(W_n)+b(W_n)f(W_n)\right]\right|
 \le C_{\theta}n^{-1/2}
 \bigl(\norm{f}_{C_b^1}+\operatorname{Lip}(f')\bigr)
\]
for $f\in C_b^1(\R)$ with Lipschitz derivative.
This is the Stein operator for $\pi_{\theta}$.
Applying the bound to its solutions for $\varphi(z)=z$ and $z^2$ gives
$\E W_n^j=\E Z_{\theta}^j+O(n^{-1/2})$, $j=1,2$, and hence the mean
and variance asymptotics.
For $1$-Lipschitz $\varphi$, uniform bounds for the Stein solution
and its derivatives give the $O(n^{-1/2})$ Wasserstein bound.
\par}

\begin{proof}
Under the assumptions of Theorem~\ref{thm:triangle-quartic}, take
$r_n=\sqrt n$ and $q_n=2/3$, so that $W_n=\sqrt n(Q-2/3)$.
In Proposition~\ref{prop:conditional-moments}, take
\begin{equation*}
 \lambda=\frac{1539}{128}+\frac{27}{8}\theta,\qquad
 \gamma=-\frac94-\frac32\theta.
\end{equation*}
Then \eqref{eq:proof-parameter-family} agrees with
\eqref{eq:cancellation-line}. Recall
\begin{equation}\label{eq:beta-correction}
 \beta(q)=g(q)+F(q)(1-F(q))(\gamma+\lambda q^2),\qquad
 \mathcal B_n(q)=F(q)-q+\beta(q)/n.
\end{equation}
Here $F$ and $g$ are evaluated at the fixed pair $(\alpha_c,h_c)$.
The parameter perturbations enter through $\beta$
in \eqref{eq:endpoint-closure}.
Since $F(2/3)=2/3$ and $F'(2/3)=1$, \eqref{eq:g-values} gives
\begin{equation}\label{eq:beta-values}
 \beta(2/3)=-\frac{11}{16}+\frac29\left(\gamma+\frac49\lambda\right)=\tau,\qquad \beta'(2/3)=-\frac{81}{32}-\frac\gamma3+\frac{4\lambda}{27}.
\end{equation}
Substituting the values of $\lambda$ and $\gamma$ gives
\begin{equation*}
 \beta(2/3)=0,\qquad \beta'(2/3)=\theta.
\end{equation*}

We use the following fourth-moment bound, which is proved in Section~\ref{sec:quartic-moment-proof}.
\begin{lemma}\label{lem:quartic-moment}
Under the assumptions of Theorem~\ref{thm:triangle-quartic},
\begin{equation}\label{eq:quartic-fourth-moment}
      \sup_n\E|\sqrt n(Q-2/3)|^4<\infty.
\end{equation}
\end{lemma}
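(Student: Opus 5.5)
We will prove Lemma~\ref{lem:quartic-moment} with the recursive stationarity argument already used for \eqref{eq:Gaussian-tightness} and \eqref{eq:Gaussian-root-moments}, now centered at $2/3$. First we need a dissipation inequality for $\mathcal B_n$ on the line $\tau=0$. By \eqref{eq:triangle-global-derivative},
\[
 -(q-2/3)\{F(q)-q\}\ge\tfrac c3(q-2/3)^4 .
\]
Because $\beta(2/3)=0$ and $\beta\in C^2([0,1])$, we have $|\beta(q)|\le C|q-2/3|$, and hence
\[
 -(q-2/3)\mathcal B_n(q)\ge \tfrac c3(q-2/3)^4-\tfrac Cn(q-2/3)^2 \qquad (0\le q\le1).
\]
Unlike \eqref{eq:triangle-dissipation}, the quadratic term now has the wrong sign when $\theta>0$. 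This is exactly the scale at which the quartic term balances it: write $D=Q-2/3$, so that $D^4\sim n^{-2}$ when $D^2\sim n^{-1}$.

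\textbf{Second-moment identity.} We run the identity of \eqref{eq:quadratic-stationarity} with $u_n$ replaced by $2/3$. Exchangeability gives
\[
 \E[-D\mathcal B_n(Q)]=\E[D\varepsilon_n]+\E v_n/(2N).
\]
Combining this with the dissipation inequality, Cauchy--Schwarz, $\norm{\varepsilon_n}_2\le Cn^{-3/2}$ from \eqref{eq:endpoint-closure}, and $0\le v_n\le1$, we get
\[
 \tfrac c3\E D^4\le \tfrac Cn\E D^2+Cn^{-3/2}(\E D^2)^{1/2}+Cn^{-2}.
\]
Next use $\E D^2\le(\E D^4)^{1/2}$ and Young's inequality. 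The right-hand side becomes
\[
 \tfrac c6\E D^4+Cn^{-2}+Cn^{-3/2}(\E D^4)^{1/4}.
\]
The last term is also absorbed by Young's inequality with exponents $4/3$ and $4$: it is at most $\tfrac c{12}\E D^4+Cn^{-2}$. Together these give $\E D^4\le Cn^{-2}$, i.e.\ $\sup_n\E|\sqrt n D|^4<\infty$.

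\textbf{Main obstacle.} The step that needs care is confirming that the remainder $\varepsilon_n$ from Proposition~\ref{prop:conditional-moments}(i) really carries the $L^2$ bound $Cn^{-3/2}$ uniformly in $n$ along the parameter family \eqref{eq:cancellation-line}. This is stated there for fixed $\lambda,\gamma$, which covers our fixed $\theta$. The other point to check is that the constant in $|\beta(q)|\le C|q-2/3|$ is uniform on all of $[0,1]$, not just near $2/3$. Away from $2/3$ there is no issue, because the quartic term dominates any $O(n^{-1})$ perturbation there.

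If one wants a more robust argument, one can first establish the crude bound $\E D^2\le Cn^{-1/2}$ from the same identity. That bound shows the terms being absorbed are genuinely lower order, after which the absorption above yields \eqref{eq:quartic-fourth-moment}.
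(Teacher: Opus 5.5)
Your proposal is correct and follows the paper's argument: the quadratic stationarity identity centered at $2/3$, the quartic lower bound $-(q-2/3)\{F(q)-q\}\ge\frac c3(q-2/3)^4$, the global bound $|\beta(q)|\le\norm{\beta'}_\infty|q-2/3|$ from $\beta(2/3)=0$, Cauchy--Schwarz with $\norm{\varepsilon_n}_2\le Cn^{-3/2}$, and Young's inequality to absorb. The only difference is that the paper multiplies by $n^2$ and absorbs in terms of $W_n=\sqrt n(Q-2/3)$. Your closing remarks about uniformity and the preliminary crude bound are unnecessary, since $\beta\in C^2([0,1])$ already gives the linear bound on all of $[0,1]$ and $D$ is bounded.
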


Since $W_n=\sqrt n(Q-2/3)$, H\"older's inequality and
the fourth-moment bound \eqref{eq:quartic-fourth-moment} give
\begin{equation*}
 \E|Q-2/3|=n^{-1/2}\E|W_n|
 \le n^{-1/2}(\E|W_n|^4)^{1/4}\le Cn^{-1/2}.
\end{equation*}
The function $q\mapsto q+(1-2q)F(q)$ has a bounded derivative on $[0,1]$
and equals $4/9$ at $q=2/3$.
Thus \eqref{eq:taylor-noise}, the mean value theorem
and $\E|Q-2/3|\le Cn^{-1/2}$ give
\begin{equation}\label{eq:quartic-noise-bound}
\begin{aligned}
 \norm{v_n-4/9}_1
 &\le\norm{v_n-\{Q+(1-2Q)F(Q)\}}_1
       +\E|Q+(1-2Q)F(Q)-4/9|\\
 &\le Cn^{-1/2}+C\E|Q-2/3|\le Cn^{-1/2}.
\end{aligned}
\end{equation}

By \eqref{eq:endpoint-closure} and \eqref{eq:beta-correction},
\begin{equation*}
 n^{3/2}b_n=n^{3/2}\mathcal B_n(Q)
       +n^{3/2}\varepsilon_n.
\end{equation*}
Here $F''(2/3)=0$ and $F'''(2/3)/6=-9/4$ by \eqref{eq:critical-F}.
Taylor's formula on $[0,1]$, together with
$\beta(2/3)=0$ and $\beta'(2/3)=\theta$, gives
\begin{equation}\label{eq:quartic-global-drift}
\begin{aligned}
 \left|n^{3/2}\{F(Q)-Q\}+\frac94W_n^3\right|
 &\le Cn^{3/2}|Q-2/3|^4=Cn^{-1/2}|W_n|^4,\\
 |\sqrt n\beta(Q)-\theta W_n|
 &\le C\sqrt n(Q-2/3)^2=Cn^{-1/2}W_n^2.
\end{aligned}
\end{equation}

To prove \eqref{eq:quartic-moments}, take $\pi=\pi_{\theta}$
and $\varphi(z)=z^j$, with $j=1,2$.
Let $f$ be the solution given by Lemma~\ref{lem:smooth-stein}.
We use the bound \eqref{eq:quartic-polynomial-stein-factors}.
Taking expectations in \eqref{eq:stein-equation} and adding and subtracting
the expectation in \eqref{eq:exchangeable-identity}, we obtain
\begin{equation}\label{eq:quartic-moment-identity}
\begin{aligned}
 \E W_n^j-\E Z_{\theta}^j &=\E\left[\frac49f'(W_n)+b(W_n)f(W_n)\right]\\
 &=\E[\{b(W_n)-n^{3/2}b_n\}f(W_n)] +\E\left[\left\{\frac49-\frac{n^2}{2N}v_n\right\}f'(W_n)\right]\\
 &\quad+\E\left[n^{3/2}b_nf(W_n)+\frac{n^2}{2N}v_nf'(W_n)\right].
\end{aligned}
\end{equation}

For the drift term, the triangle inequality,
\eqref{eq:quartic-global-drift} and \eqref{eq:quartic-tested-error}
give, for every $f\in C_b^1(\R)$,
\begin{equation}\label{eq:quartic-drift-bound}
\begin{aligned}
 &\left|\E[\{b(W_n)-n^{3/2}b_n\}f(W_n)]\right|\\
 &\quad\le\norm{f}_\infty \E\left|b(W_n)-n^{3/2}\mathcal B_n(Q)\right| +n^{3/2}|\E[\varepsilon_n f(W_n)]|\\
 &\quad\le Cn^{-1/2}\bigl\{\norm{f}_\infty \E(|W_n|^4+W_n^2)+\norm{f}_{C_b^1}\bigr\} \le Cn^{-1/2}\norm{f}_{C_b^1}.
\end{aligned}
\end{equation}
The last step uses H\"older's inequality and the fourth-moment bound
\eqref{eq:quartic-fourth-moment}.
For the second expectation in \eqref{eq:quartic-moment-identity},
use the triangle inequality, $n^2/(2N)=1+1/(n-1)$ and
\eqref{eq:quartic-noise-bound}.
The final expectation in \eqref{eq:quartic-moment-identity}
is bounded by \eqref{eq:exchangeable-identity} with $r_n=\sqrt n$.
Together with \eqref{eq:quartic-polynomial-stein-factors} and $0\le v_n\le1$,
these bounds yield
\begin{equation}\label{eq:quartic-moment-error}
\begin{aligned}
 |\E W_n^j-\E Z_{\theta}^j| &\le Cn^{-1/2}\norm{f}_{C_b^1} +\norm{f'}_\infty \left\{\norm{v_n-4/9}_1+\frac{\E v_n}{n-1}\right\} +\frac{\sqrt n}{(n-1)^2}\operatorname{Lip}(f')\\
 &\le C(n^{-1/2}+n^{-1}+n^{-3/2}) \le Cn^{-1/2},\qquad j=1,2.
\end{aligned}
\end{equation}

Taking $j=1$ in \eqref{eq:quartic-moment-error} gives
\begin{equation*}
 \sqrt n(\E Q-2/3)=\E W_n\longrightarrow\E Z_{\theta}=0.
\end{equation*}
This proves the mean expansion in \eqref{eq:quartic-moments}.
Taking $j=2$ in \eqref{eq:quartic-moment-error} and using the first-moment limit,
we obtain
\begin{equation*}
 n\var(Q)-\var(Z_{\theta}) =\E W_n^2-\E Z_{\theta}^2 -(\E W_n)^2 \longrightarrow0.
\end{equation*}
This proves \eqref{eq:quartic-moments}.

It remains to establish the approximation bound in \eqref{eq:quartic-limit}.
For any $1$-Lipschitz $\varphi:\R\to\R$, let $f$ be the bounded solution of
\eqref{eq:stein-equation} with $\pi=\pi_{\theta}$.
The Stein factor estimate \eqref{eq:smooth-stein-factors} gives
$\norm f_\infty+\norm{f'}_\infty+\operatorname{Lip}(f')\le C_{\theta}$.
We estimate the three terms in \eqref{eq:stein-master}, with constants
independent of $\varphi$ and $n$.

\noindent
\textbf{For $\mathrm I$.} Combining \eqref{eq:quartic-drift-bound}
with \eqref{eq:smooth-stein-factors}, we obtain
\begin{equation}\label{eq:quartic-smooth-drift-error}
 \mathrm I
 \le Cn^{-1/2}\norm{f}_{C_b^1}
 \le Cn^{-1/2}.
\end{equation}

\noindent
\textbf{For $\mathrm{II}$.} The calculation in
\eqref{eq:Gaussian-smooth-noise-error}, with \eqref{eq:quartic-noise-bound}
and \eqref{eq:smooth-stein-factors}, gives
\begin{equation}\label{eq:quartic-smooth-noise-error}
 \mathrm{II}\le\norm{f'}_\infty \left\{\norm{v_n-4/9}_1+\frac{\E v_n}{n-1}\right\} \le Cn^{-1/2}.
\end{equation}

\noindent
\textbf{For $\mathrm{III}$.} Since $N=n(n-1)/2$ and $r_n=\sqrt n$,
the bound on $\operatorname{Lip}(f')$ in \eqref{eq:smooth-stein-factors} gives
\begin{equation}\label{eq:quartic-term-III}
 \mathrm{III}=\frac{\sqrt n}{(n-1)^2}\operatorname{Lip}(f')
       \le Cn^{-3/2}.
\end{equation}
Substituting \eqref{eq:quartic-smooth-drift-error},
\eqref{eq:quartic-smooth-noise-error} and \eqref{eq:quartic-term-III}
into \eqref{eq:stein-master} yields
\begin{equation}\label{eq:quartic-smooth-error}
 |\E\varphi(W_n)-\pi\varphi|
       \le C(n^{-1/2}+n^{-3/2})\le Cn^{-1/2}.
\end{equation}
Taking the supremum over all $1$-Lipschitz $\varphi$ proves
\eqref{eq:quartic-limit}.
This completes the proof.
\end{proof}

\subsection{Proof of Theorem~\ref{thm:subgraph-limits}}\label{sec:subgraph-proof}
{
We first approximate each subgraph density by a power of the edge density,
with $\norm{t_H-Q^{e_H}}_2\le C_Hn^{-1}$.
Centering the Taylor expansion of $Q^{e_H}$ about $\E Q$ then yields
\[
\begin{aligned}
 \norm{t_H-\E t_H-e_H(2/3)^{e_H-1}(Q-\E Q)}_2\le C_H\bigl\{n^{-1}+\norm{Q-\E Q}_4^2
              +|\E Q-2/3|\norm{Q-\E Q}_2\bigr\}.
\end{aligned}
\]
The moment estimates from the preceding two proofs make this bound
$O(n^{-1})$ in both regimes. Thus, writing $W_n=r_n(Q-\E Q)$, we obtain
\[
 \norm{r_n(t_H-\E t_H)-e_H(2/3)^{e_H-1}W_n}_2
 \le
 \begin{cases}
  C_Hn^{-1/3},&r_n=n^{2/3}\text{ in part \textup{(i)}},\\
  C_Hn^{-1/2},&r_n=\sqrt n\text{ in part \textup{(ii)}}.
 \end{cases}
\]
The distributional and second-moment limits of $W_n$ are supplied by
Theorems~\ref{thm:triangle-gaussian} and~\ref{thm:triangle-quartic}.
The Cauchy--Schwarz inequality transfers the second-moment limits of $W_n$
to the covariance limits, while Slutsky's theorem gives joint convergence
for any fixed finite collection of graphs. Each limiting coordinate is
therefore the displayed multiple of the same Gaussian or quartic random
variable.
\par}

\begin{proof}
Throughout this proof, with the model parameters fixed, $C_H$ denotes
a positive constant depending only on $H$, but not on $n$.
Its value may change from line to line.
\noindent\textup{(i)} Under the assumptions of Theorem~\ref{thm:triangle-gaussian}, put
$W_n=n^{2/3}(Q-\E Q)$.
We first prove the covariance limits in \eqref{eq:joint-covariance}.
We then obtain the joint convergence in \eqref{eq:joint}.

\begin{proposition}\label{thm:collapse}
Consider the law \eqref{eq:triangle-model}, with $(\alpha_n,h_n)$ in a fixed
compact subset of $(-4,4)\times\R$.
For any fixed finite simple graph $H$ and any fixed $s\ge1$, we have
\begin{equation}\label{eq:collapse}
 \norm{t_H-Q^{e_H}}_s\le C_{H,s}n^{-1}.
\end{equation}
The constant depends only on $H$, $s$ and the compact set.
\end{proposition}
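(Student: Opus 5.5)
We prove the bound in \eqref{eq:collapse} by induction on $e_H$, using concentration of local edge statistics around their conditional means. The key input is the local Dobrushin structure already described in the introduction. Conditionally on $\mathcal F_{\mathcal I}$, the rows $\mathbf z_a$ form a quadratic system. Its influence matrix has $\|\mathcal C\|_{\mathrm{op}}\le|\alpha|/4<1$ uniformly on the compact subset of $(-4,4)\times\R$. Lemma~\ref{lem:comparison} then yields Gaussian-type concentration for Lipschitz functions of the free edges, uniformly in the conditioning. The same Dobrushin argument, applied to the full single-edge conditional laws $L_e=\ell(h+\tfrac{\alpha}{n}\sum_k X_{ik}X_{jk})$, has influences of size $|\alpha|/(4n)$ per neighboring edge. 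From this we expect the standard consequence that every codegree-type average concentrates. For instance,
\[
 \Bigl\|\tfrac1n\textstyle\sum_k X_{ik}X_{jk}-\tfrac1n\sum_k\E[X_{ik}X_{jk}\mid\ldots]\Bigr\|_s=O(n^{-1/2}),
\]
and global averages such as $Q$ have fluctuations $O(n^{-1})$ about suitable conditional centers.

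The reduction runs as follows. Write $\inj(H,X)$ as a sum over injective maps. Peel off one edge $\{u,v\}$ of $H$, with $H^-=H\setminus\{u,v\}$. We would then show
\[
 t_H=t_{H^-}\,Q+O_{L^s}(n^{-1}).
\]
This amounts to a decorrelation statement: the average over embeddings of $H^-$ of the indicator of the image pair $\{\varphi(u),\varphi(v)\}$ equals $Q$ up to $O(n^{-1})$. To prove it, consider $D=t_H-t_{H^-}Q$, an average of products $\prod_{e\in E(H^-)}X_{\varphi(e)}\,(X_{\varphi(u)\varphi(v)}-Q)$. We bound $\E D^{2m}$ by expanding it into $2m$ copies of embeddings. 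Since $s$ is fixed, it suffices to treat even $s=2m$.

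Two kinds of terms arise in this expansion. Terms in which the copies share vertices form a fraction $O(1/n)$ of all terms. Each such term is bounded, so overlapping configurations contribute $O(n^{-1})$ to $\E D^{2m}$, and we must improve this to $O(n^{-2m})$. For disjoint copies, we condition on all edges except the $2m$ marked pairs and replace each $X_{\varphi(u)\varphi(v)}$ by $L_{\varphi(u)\varphi(v)}$. The resulting errors are martingale-like and have mean zero for distinct marked edges.

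A crude count is therefore insufficient; we need the genuine $n^{-1}$ rate for each centered factor. To obtain it, we would instead prove
\[
 \|t_H-Q^{e_H}\|_s\le C n^{-1}
\]
via a Stein/exchangeable-pair moment recursion, mirroring the one used in Section~\ref{sec:gaussian-proof} for $Q-u_n$. Take the Glauber pair and the statistic $D_H=t_H-Q^{e_H}$. Its conditional drift satisfies
\[
 N\,\E(D_H'-D_H\mid X)=-e_HD_H+\text{(terms controlled by lower-order $D_{H'}$)}+O_{L^s}(n^{-1}).
\]
The linear restoring coefficient comes from the $L_e-X_e$ factor, and $L_e$ depends on codegrees, which are themselves subgraph densities of smaller graphs. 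The conditional second moment of the jump $D_H'-D_H$ is $O(n^{-2}\cdot N^{-1}\cdot n^{2})$, reflecting that one edge flip changes $t_H$ by $O(n^{-2})$. Stationarity, via $\E(D_H'^{2m}-D_H^{2m})=0$, then gives
\[
 e_H\E D_H^{2m}\lesssim \E|D_H|^{2m-1}\cdot n^{-1}+n^{-2}\E D_H^{2m-2}+\ldots,
\]
and Young's inequality closes the recursion with $\E D_H^{2m}=O(n^{-2m})$.

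\textbf{The main obstacle} is establishing that the drift of $D_H$ has a uniformly negative linear coefficient. The coefficient is $-e_H$ from resampling, but the interaction contributes, through $L_e$, terms of size $\alpha\cdot\ell'(\cdot)\cdot(\text{codegree deviations})$. These must be bounded by strict contraction, $|\alpha|/4<1$, which is precisely the Dobrushin condition. We would first treat $H=$ two-star and triangle by hand, then organize general $H$ by induction on the number of edges, using Lemma~\ref{lem:comparison} to control the codegree fluctuations appearing in $L_e-\ell(h+\alpha Q^2)$.
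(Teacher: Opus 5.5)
\paragraph{Gap: the $O(n^{-1})$ drift remainder.}
Your final plan is the skeleton the paper uses: induction on $e_H$, the single-edge Glauber pair, stationarity of $\E D_H^{2m}$, and Young's inequality. The moment recursion you write down would close. The gap is the drift identity that feeds it. You assert a remainder $O_{L^s}(n^{-1})$ but give no mechanism for it, and the mechanisms you sketch do not produce it.

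Let $U_e^{H,a}\in[0,1]$ be the normalized count of embeddings of $H-a$ with $a$ sent to $e$.
\begin{itemize}
\item If you bound the interaction part $N^{-1}\sum_e\{L_e-\ell(h+\alpha Q^2)\}U_e^{H,a}$ using only the marginal estimate $L_e-\ell(h+\alpha Q^2)=O_{L^s}(n^{-1/2})$, you get $O(n^{-1/2})$. That yields only $\norm{D_H}_s=O(n^{-1/2})$.
\item If you instead linearize $L_e$ in the codegree, then $N^{-1}\sum_e (M_e/n)U_e^{H,a}$ is, up to $O(n^{-1})$, the density of $H-a$ with a new vertex joined to both endpoints of $a$. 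That graph has $e_H+1$ edges, and higher Taylor terms bring in still larger graphs. The drift of $D_H$ then couples upward, and the induction on edges no longer closes. Your \emph{strict contraction $|\alpha|/4<1$} would have to control an infinite hierarchy, which you do not carry out.
\end{itemize}

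\paragraph{The missing idea.}
Keep $L_e$ intact and center both factors at their averages. Write $T_H=t_H-Q^{e_H}$ and use three facts: $N^{-1}\sum_eX_eU_e^{H,a}=t_H$, $N^{-1}\sum_eU_e^{H,a}=t_{H-a}$, and $N\E((Q')^{e_H}-Q^{e_H}\mid X)=e_HQ^{e_H-1}(\bar L-Q)+O(N^{-1})$. Then
\[
\begin{aligned}
 N\E(T_H'-T_H\mid X)={}&-e_HT_H+\bar L\sum_{a\in E(H)}T_{H-a}\\
 &+\frac1N\sum_e\sum_{a\in E(H)}(L_e-\bar L)(U_e^{H,a}-t_{H-a})+O(N^{-1}).
\end{aligned}
\]
The restoring coefficient is exactly $-e_H$ for every $\alpha$, and only graphs with fewer edges appear, so your ``main obstacle'' does not arise. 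The remainder is an empirical covariance. It is $O_{L^s}(n^{-1})$ by H\"older once $\norm{L_e-\bar L}_r$ and $\norm{U_e^{H,a}-t_{H-a}}_r$ are both $O(n^{-1/2})$.

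\paragraph{Where Dobrushin enters.}
The Dobrushin condition is used only to get these two bounds. Concentration about a conditional mean, as in your codegree display, is not enough. You need cross-edge comparisons: $\norm{M_e-M_f}_r\le C\sqrt n$, which gives $L_e-\bar L$, and $\norm{U_e^{H,a}-U_f^{H,a}}_r\le Cn^{-1/2}$.

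The paper obtains them by conditioning on $\calF_{\mathcal I}$, with $\mathcal I$ the endpoints of $e$ and $f$. Under the product row law the rows are independent with a common law. Hence the two conditional means agree up to $O(1)$ for codegrees, and up to $O(n^{-1})$ for rooted counts after discarding embeddings that hit $\mathcal I$. The interpolation $\E_1U-\E_0U=\int_0^1\cov_t(U,J_{\rm cross})\,dt$ transfers this to the true conditional law.

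You never identify the rooted-count concentration, yet it is precisely what makes the remainder $O(n^{-1})$ rather than $O(n^{-1/2})$.
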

The proof is given in Section~\ref{sec:motif-proof}.
Here $(\alpha_n,h_n)\to(\alpha_c,h_c)$ and $\alpha_c<4$, so
the parameters lie in a fixed compact subset of $(-4,4)\times\R$
for all large $n$, and \eqref{eq:collapse} applies.
If $e_H=0$, then $t_H=1$, and both conclusions are immediate.
For $e_H\ge1$, Taylor's formula gives
\begin{equation}\label{eq:subgraph-power-taylor}
\begin{aligned}
 \left|Q^{e_H}-(\E Q)^{e_H}
       -e_H(\E Q)^{e_H-1}(Q-\E Q)\right|
 &\le C_H(Q-\E Q)^2,\\
 |e_H(\E Q)^{e_H-1}-e_H(2/3)^{e_H-1}|
 &\le C_H|\E Q-2/3|.
\end{aligned}
\end{equation}
Centering does not increase the $L^2$ norm. Thus the Taylor remainder satisfies
\begin{equation}\label{eq:subgraph-centered-remainder}
\begin{aligned}
 \norm{Q^{e_H}-\E Q^{e_H}-e_H(\E Q)^{e_H-1}(Q-\E Q)}_2\le C_H\norm{Q-\E Q}_4^2.
\end{aligned}
\end{equation}
Combining the centered-remainder estimate in \eqref{eq:subgraph-centered-remainder} and the coefficient bound in \eqref{eq:subgraph-power-taylor} with \eqref{eq:collapse}, we obtain
\begin{equation}\label{eq:subgraph-linearization}
\begin{aligned}
 &\norm{t_H-\E t_H-e_H(2/3)^{e_H-1}(Q-\E Q)}_2\\
 &\quad\le C_H\left\{n^{-1}+\norm{Q-\E Q}_4^2
                +|\E Q-2/3|\norm{Q-\E Q}_2\right\}.
\end{aligned}
\end{equation}

Since $|\E Q-u_n|\le\norm{Q-u_n}_4$,
\eqref{eq:Gaussian-root-moments} gives
\begin{equation*}
 \norm{Q-\E Q}_4^2
 \le\bigl(\norm{Q-u_n}_4+|\E Q-u_n|\bigr)^2
 \le4\norm{Q-u_n}_4^2\le Cn^{-7/6}.
\end{equation*}
The mean and variance asymptotics in \eqref{eq:critical-mean} also give
\begin{equation*}
 |\E Q-2/3|\le Cn^{-1/3},\qquad
 \norm{Q-\E Q}_2\le Cn^{-2/3},\qquad
 \E W_n^2\longrightarrow\sigma_a^2.
\end{equation*}
Multiplying \eqref{eq:subgraph-linearization} by $n^{2/3}$ yields
\begin{equation}\label{eq:subgraph-Gaussian-approximation}
 \norm{n^{2/3}(t_H-\E t_H)-e_H(2/3)^{e_H-1}W_n}_2 \le C_H(n^{-1/3}+n^{-1/2}) \le C_Hn^{-1/3}.
\end{equation}
In particular,
\begin{equation*}
 \norm{n^{2/3}(t_H-\E t_H)}_2
 \le e_H(2/3)^{e_H-1}\norm{W_n}_2+C_Hn^{-1/3}\le C_H.
\end{equation*}

The Cauchy--Schwarz inequality and \eqref{eq:subgraph-Gaussian-approximation} now give
\begin{equation}\label{eq:subgraph-covariance-error}
\begin{aligned}
 &\left|n^{4/3}\cov(t_{H_i},t_{H_j})-e_{H_i}e_{H_j}(2/3)^{e_{H_i}+e_{H_j}-2}\E W_n^2\right|\\
 &\quad\le\norm{n^{2/3}(t_{H_i}-\E t_{H_i})-e_{H_i}(2/3)^{e_{H_i}-1}W_n}_2 \norm{n^{2/3}(t_{H_j}-\E t_{H_j})}_2\\
 &\qquad+e_{H_i}(2/3)^{e_{H_i}-1}\norm{W_n}_2 \norm{n^{2/3}(t_{H_j}-\E t_{H_j})-e_{H_j}(2/3)^{e_{H_j}-1}W_n}_2 \le C_{H_i,H_j}n^{-1/3}.
\end{aligned}
\end{equation}
Since $\E W_n^2\to\sigma_a^2$, this proves \eqref{eq:joint-covariance}.

By \eqref{eq:critical-Gaussian}, $W_n\Rightarrow Z$.
For the graphs $H_1,\ldots,H_m$, \eqref{eq:subgraph-Gaussian-approximation} and Slutsky's theorem give \eqref{eq:joint}.
This proves part \textup{(i)}.

\noindent\textup{(ii)} Under the assumptions of Theorem~\ref{thm:triangle-quartic}, put
$W_n=\sqrt n(Q-\E Q)$.
Since $(\alpha_n,h_n)\to(\alpha_c,h_c)$ and $\alpha_c<4$,
these parameter pairs lie in a fixed compact subset of $(-4,4)\times\R$
for all large $n$.
Proposition~\ref{thm:collapse} therefore applies,
so we may use \eqref{eq:subgraph-linearization}.
Lemma~\ref{lem:quartic-moment} and \eqref{eq:quartic-moments} give
\begin{equation*}
 \norm{Q-\E Q}_4^2\le4\norm{Q-2/3}_4^2\le Cn^{-1},\qquad \norm{Q-\E Q}_2\le Cn^{-1/2},\qquad |\E Q-2/3|\le Cn^{-1/2}.
\end{equation*}
Thus \eqref{eq:subgraph-linearization} gives
\begin{equation}\label{eq:subgraph-quartic-approximation}
 \norm{\sqrt n(t_H-\E t_H)-e_H(2/3)^{e_H-1}W_n}_2\le C_Hn^{-1/2}.
\end{equation}
By \eqref{eq:quartic-moments},
$\E W_n^2=n\var(Q)\to\var(Z_{\theta})$.
Using \eqref{eq:subgraph-quartic-approximation} and repeating the argument
leading to \eqref{eq:subgraph-covariance-error}, we obtain
\begin{equation*}
 \left|n\cov(t_{H_i},t_{H_j})-e_{H_i}e_{H_j}(2/3)^{e_{H_i}+e_{H_j}-2}\E W_n^2\right|
 \le C_{H_i,H_j}n^{-1/2}.
\end{equation*}
This proves \eqref{eq:joint-quartic-covariance}.

Equations~\eqref{eq:quartic-moments} and~\eqref{eq:quartic-limit} give
\begin{equation*}
 W_n=\sqrt n(Q-2/3)-\sqrt n(\E Q-2/3)
 \Rightarrow Z_{\theta}.
\end{equation*}
Applying Slutsky's theorem to this convergence and
\eqref{eq:subgraph-quartic-approximation} gives
\eqref{eq:joint-quartic} for $H_1,\ldots,H_m$.
This completes the proof.
\end{proof}

\section{Main technical lemmas}\label{sec:conditional-rows}
In this section, we present three main technical lemmas used in the proofs.
The three lemmas in this section provide the conditional row estimates
used in the proofs. The first lemma bounds covariances and centered moments
in terms of coordinate oscillations. The second  lemma compares expectations under
the conditional graph law and a product of row laws, and controls degree
and common-neighbor differences. The third lemma gives sharper covariance bounds
for observables depending on disjoint groups of rows. Their proofs are given
in Section~\ref{sec:conditional-row-proofs}.

\begin{lemma}\label{lem:comparison}
Let $m\ge1$ be an integer and let $p$ be a strictly positive
probability mass function on $\mathcal X=\{0,1\}^m$.
Let $\mathcal C$ be symmetric and nonnegative, with entries that bound the changes in
single-coordinate conditional probabilities. Assume $\norm{\mathcal C}_{\rm op}\le c<1$.
For functions $F,G:\mathcal X\to\R$, define
 the coordinate oscillations
$\delta_iF=\sup_{\mathbf x}|F(\mathbf x^{i,1})-F(\mathbf x^{i,0})|$
and the vector $\boldsymbol\delta F=(\delta_iF)_i$, where $\mathbf x^{i,b}$
is obtained from $\mathbf x$ by setting coordinate $i$ to $b$.
Put $\mathcal D=(I-\mathcal C)^{-1}$.
Then, for every $t\in\R$,
\begin{align}
 |\cov_p(F,G)|&\le\tfrac14(\boldsymbol\delta F)^\top\mathcal D\boldsymbol\delta G,\label{eq:comparison-cov}\\
 \log\E_p \ee^{t(F-\E_p F)}
 &\le\tfrac{t^2}{8}(\boldsymbol\delta F)^\top\mathcal D\boldsymbol\delta F
 \le\frac{t^2}{8(1-c)}\norm{\boldsymbol\delta F}_{\ell^2}^2.\label{eq:comparison-mgf}
\end{align}
Consequently
\begin{equation}\label{eq:comparison-moment}
 \norm{F-\E_p F}_{L^r(p)}\le C_{r,c}\norm{\boldsymbol\delta F}_{\ell^2}
\end{equation}
for each fixed $r\ge1$.
\end{lemma}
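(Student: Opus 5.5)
This is a Dobrushin-type comparison lemma, and I would prove it with the standard Glauber-dynamics coupling argument. The influence matrix is read as follows: $\mathcal C_{ij}$ bounds $\sup|\PP_p(x_i=1\mid \mathbf x_{-i})-\PP_p(x_i=1\mid \mathbf x'_{-i})|$ over configurations differing only at coordinate $j$.

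\textbf{Step 1: decomposition of $F$.} Let $P$ be the heat-bath Glauber kernel for $p$, which picks a coordinate uniformly and resamples it from its conditional law. The kernel $P$ is reversible with respect to $p$. I would prove the standard contraction of oscillations,
\[
\boldsymbol\delta(PF)\le\bigl(1-\tfrac1m\bigr)\boldsymbol\delta F+\tfrac1m\,\mathcal C\,\boldsymbol\delta F \quad\text{coordinatewise}.
\]
Iterating gives $\boldsymbol\delta(P^kF)\le(I-\tfrac1m(I-\mathcal C))^k\boldsymbol\delta F$. Because $\mathcal C\ge0$ is symmetric with $\norm{\mathcal C}_{\rm op}\le c<1$, the Neumann series is entrywise nonnegative, and
\[
\sum_k\tfrac1m\bigl(I-\tfrac1m(I-\mathcal C)\bigr)^k=\mathcal D .
\]
Then I would write $F-\E_pF=\sum_{k\ge0}(P^kF-P^{k+1}F)$, which converges since $P^kF\to\E_pF$ by the contraction. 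This is equivalent to solving the Poisson equation $(I-P)u=F-\E_pF$ with $\boldsymbol\delta u\le m\,\mathcal D\,\boldsymbol\delta F$.

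\textbf{Step 2: covariance bound.} Reversibility gives
\[
\cov_p(F,G)=\E_p[u(I-P)G]=\tfrac12\E_p\,\E\bigl[(u(X')-u(X))(G(X')-G(X))\mid X\bigr],
\]
where $(X,X')$ is one Glauber step. At a resampled coordinate $i$, the product is at most $\delta_iu\,\delta_iG$ times $\PP(\text{flip at }i)$, which is at most $\tfrac1m\cdot2\cdot\tfrac14$, since a Bernoulli($\ell$) resample satisfies $\E[(x'-x)^2]\le 2\ell(1-\ell)\cdot$ (flip), averaged over $x$. Summing over $i$ and using $\boldsymbol\delta u\le m\mathcal D\boldsymbol\delta F$ yields $\tfrac14(\boldsymbol\delta G)^\top\mathcal D\boldsymbol\delta F$. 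By symmetry of $\mathcal D$ this is \eqref{eq:comparison-cov}. The constant $1/4$ comes from $\ell(1-\ell)\le1/4$, and I expect fixing it exactly to be the main bookkeeping obstacle.

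\textbf{Step 3: exponential moments.} For the mgf bound I would use the Herbst-type argument. Put $\Lambda(t)=\log\E_p\ee^{t(F-\E_pF)}$ and let $p_t\propto\ee^{tF}p$, so that $\Lambda''(t)=\var_{p_t}(F)$. The tilted measure $p_t$ alters the conditional laws of $p$. To keep \eqref{eq:comparison-cov} valid for $p_t$, I would instead realize the second derivative via the martingale/coupling decomposition: write $F-\E_pF$ as a sum of Glauber increments and bound each conditional mgf by Hoeffding's lemma, with the range given by the oscillation vector $m\mathcal D\boldsymbol\delta F/m$ weighted as in Step 2. This is the delicate step. The cleanest route I would attempt is the Chatterjee exchangeable-pair concentration: with $\Delta(X)=\E[(u(X)-u(X'))(F(X)-F(X'))\mid X]\cdot\tfrac12\cdot m$, Step 2's pointwise estimate gives $\Delta\le\tfrac14(\boldsymbol\delta F)^\top\mathcal D\boldsymbol\delta F$ uniformly. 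Chatterjee's theorem then yields $\Lambda(t)\le \tfrac{t^2}{2}\sup\Delta$, which is weaker by a factor $4$. To get the sharp $t^2/8$, I would refine by applying Hoeffding to each single-coordinate resample, which gains the $1/4$ variance factor.

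\textbf{Step 4: remaining bounds.} The second inequality in \eqref{eq:comparison-mgf} follows from $\norm{\mathcal D}_{\rm op}\le(1-c)^{-1}$. Finally, \eqref{eq:comparison-moment} follows from the sub-Gaussian mgf by the standard tail integration $\E|Y|^r\le C_r\sigma^r$, where $\sigma^2=\norm{\boldsymbol\delta F}_{\ell^2}^2/(4(1-c))$.
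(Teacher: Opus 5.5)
Your Steps~1, 2 and~4 are sound, and Step~2 takes a different route to \eqref{eq:comparison-cov} from the paper's. You solve the Poisson equation $(I-P)u=F-\E_pF$ with $\boldsymbol\delta u\le m\mathcal D\boldsymbol\delta F$ and bound the Dirichlet form, with the constant $\tfrac14$ coming from the \emph{stationary} flip probability $2\E_p[p_i(1-p_i)]\le\tfrac12$. The paper instead couples the heat-bath chains of $p$ and of the tilt $p^{(t)}\propto\ee^{tG}p$, then lets $t\to0$.

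The gap is Step~3: the stated bound $\log\E_p\ee^{t(F-\E_pF)}\le\tfrac{t^2}{8}(\boldsymbol\delta F)^\top\mathcal D\boldsymbol\delta F$ is never proved. Chatterjee's theorem needs a \emph{pointwise} bound on $\Delta(X)=\tfrac12\E[|(u(X)-u(X'))(F(X)-F(X'))|\mid X]$; your extra factor $m$ is a normalization slip. Pointwise, the flip probability $X_i(1-p_i)+(1-X_i)p_i$ can be arbitrarily close to $1$, so the averaging that produced $\tfrac14$ in Step~2 is not available. The direct bound is only $\Delta\le\tfrac12(\boldsymbol\delta F)^\top\mathcal D\boldsymbol\delta F$, which gives $\tfrac{t^2}{4}$. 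Your own bookkeeping is also inconsistent: you claim ``$\Delta\le\tfrac14(\cdot)$'' and then ``weaker by a factor $4$'', but $\tfrac{t^2}{2}\cdot\tfrac14=\tfrac{t^2}{8}$. The ``Hoeffding per single-coordinate resample'' refinement and the martingale decomposition are not arguments as written. A $t^2/4$ bound would still yield \eqref{eq:comparison-moment}, but not \eqref{eq:comparison-mgf} as stated.

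The missing idea is to control $\Lambda'(t)=\E_{p_t}F-\E_pF$ instead of $\Lambda''(t)=\var_{p_t}(F)$. You rightly observed that the influence matrix of $p_t\propto\ee^{tF}p$ is not controlled by $\mathcal C$. A first-order comparison of $p_t$ with $p$, however, needs only the influence matrix of $p$ plus the pointwise log-odds shift $p_i^{(t)}/(1-p_i^{(t)})=\ee^{t\{F(\mathbf x^{i,1})-F(\mathbf x^{i,0})\}}p_i/(1-p_i)$. Since $\sup\ell'=1/4$, this gives $|p_i^{(t)}-p_i|\le|t|\delta_iF/4$. The paper turns this into $|\E_{p_t}F-\E_pF|\le\tfrac{|t|}{4}(\boldsymbol\delta F)^\top\mathcal D\boldsymbol\delta F$ for every $t$, via a stationary coupling of the two heat-bath chains sharing the chosen coordinate and the uniform variable.

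Your Poisson solution gives the same bound in one line. Let $P_t$ be the heat-bath kernel of $p_t$; stationarity gives $\E_{p_t}[(I-P_t)u]=0$, hence
\[
 \E_{p_t}F-\E_pF=\E_{p_t}[(P_t-P)u]
 =\E_{p_t}\Bigl[\frac1m\sum_i\bigl(p_i^{(t)}-p_i\bigr)\bigl(u(\mathbf x^{i,1})-u(\mathbf x^{i,0})\bigr)\Bigr].
\]
Its absolute value is at most $\frac1m\sum_i\frac{|t|}{4}\delta_iF\,\delta_iu\le\frac{|t|}{4}(\boldsymbol\delta F)^\top\mathcal D\boldsymbol\delta F$. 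Integrating $\Lambda'$ from $0$ to $t$ gives exactly the factor $\tfrac{t^2}{8}$. The same identity with $\ee^{tG}$ in place of $\ee^{tF}$, divided by $|t|$ as $t\to0$, also recovers \eqref{eq:comparison-cov}.
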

The proof is given in Section~\ref{sec:comparison-proof}.

Fix an integer $k\ge1$, let $n>k$, and take a vertex set
$\mathcal I\subset[n]$ of size $k$. Put $L=n-k$, and let
$\calF_{\mathcal I}$ be the sigma-field generated by all edges within
$\mathcal I$ and within $\mathcal I^c$. Write
\[
 B=(X_{uv})_{u,v\in\mathcal I^c},\qquad
 e_{ab}=X_{ab}\quad(a,b\in\mathcal I),\qquad
 \mathbf z_a=(X_{au})_{u\in\mathcal I^c}\quad(a\in\mathcal I).
\]
Here $B$ and $(e_{ab})$ are fixed. Each $\mathbf z_a$ collects the free
entries in row $a$ of the adjacency matrix and is represented as a column
vector. We refer to these vectors as rows. The tuple
$\mathbf z=(\mathbf z_a)_{a\in\mathcal I}$ denotes the full free-edge
configuration. Write $\mathbf z_{-(a,u)}$ for all free coordinates except
$z_{au}=X_{au}$, and $\mathbf z_{a,-u}$ for $\mathbf z_a$ with coordinate
$u$ removed. At parameters $(\alpha,h)$, the conditional law of $\mathbf z$
in the edge--triangle model is
\begin{equation}\label{eq:triangle-row}
\begin{aligned}
 p_{B,1}(\mathbf z)
 &\propto\exp\left\{\sum_{a\in\mathcal I}
       \left(h\mathbf1^\top\mathbf z_a+\tfrac12\mathbf z_a^\top J_B\mathbf z_a\right)
       +J_{\rm cross}\right\},\\
 J_B&=\frac\alpha nB,\qquad
 J_{\rm cross}=\frac\alpha n
       \sum_{\substack{a<b\\a,b\in\mathcal I}}e_{ab}\mathbf z_a^\top\mathbf z_b.
\end{aligned}
\end{equation}
The dependence of $p_{B,1}$ on the fixed internal edges is suppressed in
the notation. Removing $J_{\rm cross}$ gives the product law
\begin{equation}\label{eq:one-row-law}
 p_B(\mathbf z_i)\propto\exp\{h\mathbf1^\top\mathbf z_i+\mathbf z_i^\top J_B\mathbf z_i/2\},\qquad
 p_{B,0}=p_B^{\otimes k}.
\end{equation}
Write $\E_B,\var_B,\cov_B$ for expectation, variance and covariance under
$p_B$, and also under the joint law of any independent row copies appearing
in their arguments. Write $\E_0$ for expectation under $p_{B,0}$.
For $0\le t\le1$, define
\begin{equation}\label{eq:interpolated-row-law}
 p_{B,t}=\frac{\ee^{tJ_{\rm cross}}}{\E_0\ee^{tJ_{\rm cross}}}\,p_{B,0},
\end{equation}
where $\E_t,\var_t,\cov_t$ refer to $p_{B,t}$.
Thus $p_{B,0}$ has independent rows, while $p_{B,1}$ is the conditional
graph law. We use the degree and common-neighbor counts
\[
 d_i=\sum_{j\ne i}X_{ij},\qquad
 M_{\{i,j\}}=\sum_{u\notin\{i,j\}}X_{iu}X_{ju}.
\]

\begin{lemma}\label{lem:row-interpolation}
Let $(\alpha,h)$ range over a fixed compact subset of
$(-4,4)\times\R$, and use the conditional row laws above.
The following estimates hold uniformly in $n>k$, the set $\mathcal I$,
the exterior graph $B$, and the fixed internal edges.
\begin{enumerate}
\renewcommand{\labelenumi}{\textup{(\roman{enumi})}}
\item
A single symmetric nonnegative matrix $\mathcal C$ bounds the changes in
single-coordinate conditional probabilities under every $p_{B,t}$,
$0\le t\le1$, and satisfies
\begin{equation}\label{eq:row-influence}
 \norm{\mathcal C}_{\rm op}
 \le\max_{(a,u)}\sum_{(b,v)}\mathcal C_{(a,u),(b,v)}
 \le\frac{|\alpha|(n-2)}{4n}\le\frac{|\alpha|}{4}<1.
\end{equation}
In particular, for every function $U$ of the free edges and fixed $r\ge1$,
\begin{equation}\label{eq:row-uniform-concentration}
 \sup_{0\le t\le1}\norm{U-\E_tU}_{L^r(p_{B,t})}
 \le C_r\norm{\boldsymbol\delta U}_{\ell^2}.
\end{equation}
\item
For every function $U$ of the free edges that does not depend on $t$,
\begin{equation}\label{eq:row-interpolation}
\begin{aligned}
 \frac{\dd}{\dd t}\E_tU&=\cov_t(U,J_{\rm cross}),\\
 \E_tU-\E_0U&=\int_0^t\cov_s(U,J_{\rm cross})\dd s,
 \qquad0\le t\le1,
\end{aligned}
\end{equation}
and
\begin{equation}\label{eq:row-mean-comparison}
 \sup_{0\le t\le1}|\E_tU-\E_0U|
 \le C_kn^{-1/2}\norm{\boldsymbol\delta U}_{\ell^2}.
\end{equation}
For each fixed $r\ge1$, the cross-row interaction satisfies
\begin{equation}\label{eq:cross-moments}
 \norm{\boldsymbol\delta J_{\rm cross}}_{\ell^2}^2\le C_k/n,
 \qquad
 \sup_{0\le t\le1}
 \norm{J_{\rm cross}-\E_tJ_{\rm cross}}_{L^r(p_{B,t})}
 \le C_{k,r}n^{-1/2}.
\end{equation}
\item
Let $U=d_i-d_j$ with $i,j\in\mathcal I$, or let $U=M_e-M_f$
with all endpoints of $e,f$ in $\mathcal I$. Then
\begin{equation}\label{eq:interpolated-contrast-means}
 \sup_{0\le t\le1}|\E_tU|\le C_k,
\end{equation}
and, for each fixed $r\ge1$,
\begin{equation}\label{eq:interpolated-contrast-moments}
 \sup_{0\le t\le1}\norm U_{L^r(p_{B,t})}\le C_{k,r}\sqrt n.
\end{equation}
\end{enumerate}
The constants may depend on the compact parameter set and on the indicated
indices, but not on the fixed edges or on $U$.
\end{lemma}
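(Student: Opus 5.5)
We prove the three parts in order, each time reducing to Lemma~\ref{lem:comparison} applied to the interpolated law $p_{B,t}$.

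\textbf{Part (i).} Fix $t\in[0,1]$ and a free coordinate $(a,u)$. Under $p_{B,t}$, the conditional probability that $z_{au}=1$ given the other free edges is
\[
\ell\Bigl(h+\tfrac\alpha n\sum_{v\in\mathcal I^c}B_{uv}z_{av}+t\tfrac\alpha n\sum_{b\ne a}e_{ab}z_{bu}\Bigr).
\]
Since $|\ell'|\le1/4$, flipping $z_{av}$ with $v\ne u$ changes this probability by at most $|\alpha|/(4n)$, and flipping $z_{bu}$ changes it by at most $t|\alpha|/(4n)\le|\alpha|/(4n)$. All other coordinates have no effect. We therefore set
\[
\mathcal C_{(a,u),(a,v)}=\frac{|\alpha|}{4n}B_{uv}\quad(v\ne u),\qquad \mathcal C_{(a,u),(b,u)}=\frac{|\alpha|}{4n}e_{ab}\quad(b\ne a),
\]
and all other entries zero. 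This matrix is symmetric, nonnegative, and independent of $t$. Each row contains at most $(L-1)+(k-1)=n-2$ nonzero entries, so the row sum is at most $|\alpha|(n-2)/(4n)$. For a symmetric matrix the operator norm is bounded by the maximal row sum, which gives \eqref{eq:row-influence}. The compact parameter set gives $|\alpha|\le4-\eta$ for some $\eta>0$, so $c=1-\eta/4$ works uniformly. Lemma~\ref{lem:comparison}, in the form \eqref{eq:comparison-moment}, then gives \eqref{eq:row-uniform-concentration}.

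\textbf{Part (ii).} The derivative formula is the standard exponential-tilting identity. Since $p_{B,t}\propto \ee^{tJ_{\rm cross}}p_{B,0}$ on a finite space, differentiating under the sum gives $\frac{\dd}{\dd t}\E_tU=\E_t[UJ_{\rm cross}]-\E_tU\,\E_tJ_{\rm cross}$. Integrating in $t$ gives \eqref{eq:row-interpolation}.

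For \eqref{eq:cross-moments}, note that $\delta_{(a,u)}J_{\rm cross}\le\frac{|\alpha|}n\sum_{b\ne a}e_{ab}\le |\alpha|(k-1)/n$. Summing the squares over the $kL$ coordinates gives $\norm{\boldsymbol\delta J_{\rm cross}}_{\ell^2}^2\le C_k/n$. The centered moment bound then follows from \eqref{eq:row-uniform-concentration}.

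For \eqref{eq:row-mean-comparison}, apply \eqref{eq:comparison-cov} at each $s$, using $\mathcal D=(I-\mathcal C)^{-1}$ with $\norm{\mathcal D}_{\rm op}\le(1-c)^{-1}$:
\[
|\cov_s(U,J_{\rm cross})|\le\frac{1}{4(1-c)}\norm{\boldsymbol\delta U}_{\ell^2}\norm{\boldsymbol\delta J_{\rm cross}}_{\ell^2}\le C_kn^{-1/2}\norm{\boldsymbol\delta U}_{\ell^2}.
\]
Integrating over $s$ gives the bound.

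\textbf{Part (iii).} This is the substantive step. Both $d_i-d_j$ and $M_e-M_f$ have oscillation vectors with $\ell^2$ norm of order $\sqrt n$:
\begin{itemize}
\item each free coordinate changes $d_i-d_j$ by at most $1$;
\item each free coordinate changes $M_e$ by at most $1$, so it changes $M_e-M_f$ by at most $2$.
\end{itemize}
The fixed internal edges contribute only $O(k)$ to either statistic.

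By \eqref{eq:row-mean-comparison}, $|\E_tU-\E_0U|\le C_kn^{-1/2}\cdot C\sqrt n=C_k$. It therefore suffices to show that $|\E_0U|\le C_k$.

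\emph{Degree contrasts.} Under $p_{B,0}=p_B^{\otimes k}$ the rows $\mathbf z_i$ and $\mathbf z_j$ are i.i.d. Hence $\E_0(\mathbf 1^\top\mathbf z_i-\mathbf 1^\top\mathbf z_j)=0$, and $d_i-d_j$ differs from this free part only by internal edges, which are $O(k)$.

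\emph{Common-neighbor contrasts.} Write $M_{\{i,j\}}=\mathbf z_i^\top\mathbf z_j+O(k)$. Two cases arise.
\begin{itemize}
\item If $e=\{i,j\}$ and $f=\{i',j'\}$ are both pairs of distinct vertices, then independence and identical distribution of the rows give $\E_0\mathbf z_i^\top\mathbf z_j=\E_0\mathbf z_{i'}^\top\mathbf z_{j'}=\norm{\E_B\mathbf z}^2_{\ell^2}$, so the free parts cancel exactly.
\item If $e$ and $f$ share a vertex, the same argument applies, since any two distinct rows are i.i.d. under $p_{B,0}$.
\end{itemize}
In either case $|\E_0U|\le C_k$, which proves \eqref{eq:interpolated-contrast-means}.

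Finally, \eqref{eq:interpolated-contrast-moments} follows from the triangle inequality: $\norm{U}_{L^r(p_{B,t})}\le|\E_tU|+\norm{U-\E_tU}_{L^r(p_{B,t})}\le C_k+C_r\norm{\boldsymbol\delta U}_{\ell^2}\le C_{k,r}\sqrt n$.

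The main obstacle is building a single influence matrix $\mathcal C$ that works uniformly in $t$ and has operator norm below $1$. The strict margin comes only from $\alpha<4$, and it is essential that the cross-row couplings enter with the same factor $|\alpha|/(4n)$ as the within-row couplings, so that the row sums stay at most $|\alpha|(n-2)/(4n)$.
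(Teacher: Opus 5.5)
Your proof is correct and follows essentially the same route as the paper. Both use the same $t$-independent influence matrix with row sums at most $|\alpha|(n-2)/(4n)$, and the exponential-tilt derivative identity combined with the covariance bound and $\norm{\boldsymbol\delta J_{\rm cross}}_{\ell^2}^2\le C_k/n$. For (iii), both get exact cancellation of $\E_0U$ up to bounded internal-edge terms, then apply the mean comparison and Minkowski's inequality.
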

The proof is given in Section~\ref{sec:row-interpolation-proof}.

For observables depending on disjoint groups of rows, independence at
$t=0$ gives a sharper covariance estimate.
\begin{lemma}\label{lem:separated}
Under the assumptions of Lemma~\ref{lem:row-interpolation}, let
$\mathcal I_F,\mathcal I_G\subset\mathcal I$ be disjoint, and let $F$ and
$G$ depend only on $(\mathbf z_a)_{a\in\mathcal I_F}$ and
$(\mathbf z_a)_{a\in\mathcal I_G}$, respectively.
Suppose that, for a constant $C_0$ independent of $n$ and the fixed edges,
\[
 \sup_{0\le t\le1}
 \max\left\{\norm{F-\E_tF}_{L^4(p_{B,t})},
             \norm{G-\E_tG}_{L^4(p_{B,t})}\right\}\le C_0.
\]
Then
\begin{align}
 |\E_1F-\E_0F|&\le Cn^{-1/2},\label{eq:separated-mean}\\
 |\cov_1(F,G)|&\le Cn^{-1}.\label{eq:separated-covariance}
\end{align}
The constant $C$ may depend on $k$, $C_0$ and the compact parameter set,
but is uniform in $n$, $B$ and the internal edges.
\end{lemma}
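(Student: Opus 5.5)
The plan is to use the interpolation identity \eqref{eq:row-interpolation} from Lemma~\ref{lem:row-interpolation}(ii) and exploit that $p_{B,0}$ makes $F$ and $G$ independent. For \eqref{eq:separated-mean}, write
\[
 \E_1F-\E_0F=\int_0^1\cov_t(F,J_{\rm cross})\dd t,
\]
and apply Cauchy--Schwarz:
\[
 |\cov_t(F,J_{\rm cross})|\le\norm{F-\E_tF}_{L^2(p_{B,t})}\,\norm{J_{\rm cross}-\E_tJ_{\rm cross}}_{L^2(p_{B,t})}.
\]
The first factor is at most $C_0$ by hypothesis, since $L^2$ is dominated by $L^4$. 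The second factor is at most $C_kn^{-1/2}$ by \eqref{eq:cross-moments}. Integrating over $t$ gives \eqref{eq:separated-mean} directly.

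For \eqref{eq:separated-covariance}, set $\phi(t)=\cov_t(F,G)$. Independence of the rows under $p_{B,0}$ gives $\phi(0)=0$. Differentiate using $\frac{\dd}{\dd t}\E_tU=\cov_t(U,J_{\rm cross})$ applied to $U=FG$, $U=F$ and $U=G$. With $\bar F=F-\E_tF$, $\bar G=G-\E_tG$ and $\bar J=J_{\rm cross}-\E_tJ_{\rm cross}$, this yields the standard third-cumulant formula
\[
 \phi'(t)=\E_t[\bar F\bar G\bar J].
\]
Hölder with exponents $(4,4,2)$ bounds this by $C_0^2\norm{\bar J}_{L^2(p_{B,t})}\le Cn^{-1/2}$. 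That only gives $|\phi(1)|\le Cn^{-1/2}$, so we must gain another factor $n^{-1/2}$.

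To gain it, I would differentiate once more:
\[
 \phi''(t)=\kappa_t(F,G,J,J),
\]
the joint fourth cumulant under $p_{B,t}$, and use Taylor's formula $\phi(1)=\phi(0)+\phi'(0)+\int_0^1(1-t)\phi''(t)\dd t$. The key step is $\phi'(0)=\E_0[\bar F\bar G\bar J]$. Under $p_{B,0}$, $F$ and $G$ are independent, but $J_{\rm cross}$ couples rows. Decompose $J_{\rm cross}$ into the sum $J_{FG}$ of terms $e_{ab}\mathbf z_a^\top\mathbf z_b$ with $a\in\mathcal I_F,b\in\mathcal I_G$, plus the remaining terms. The remaining terms each involve rows only from $\mathcal I_F$, only from $\mathcal I_G$, or from a third group $\mathcal I\setminus(\mathcal I_F\cup\mathcal I_G)$. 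Each such term $T$ depends on a set of rows on which $\bar F$ or $\bar G$ is centered and independent of the rest, so $\E_0[\bar F\bar G\bar T]=0$. For example, if $T$ depends on $\mathcal I_F$ and a third group, factor out $\E_0\bar G=0$.

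For $J_{FG}$, independence across rows gives
\[
 \E_0[\bar F\bar G\,\mathbf z_a^\top\mathbf z_b]=\sum_u\E_0[\bar F z_{au}]\,\E_0[\bar G z_{bu}]=\sum_u\cov_0(F,z_{au})\cov_0(G,z_{bu}).
\]
Cauchy--Schwarz in $u$ bounds this by $\norm{\boldsymbol v_F}_{\ell^2}\norm{\boldsymbol v_G}_{\ell^2}$, where $\boldsymbol v_F=(\cov_0(F,z_{au}))_u$. The $\ell^2$ norm of the covariance vector of $F$ against a row $\mathbf z_a$ equals $\sup_{\norm{\mathbf w}\le1}\cov_0(F,\mathbf w^\top\mathbf z_a)$. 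By \eqref{eq:row-uniform-concentration}, $\mathbf w^\top\mathbf z_a$ has centered $L^2$ norm $O(1)$, so this is at most $C_0\cdot C$. With the prefactor $\alpha/n$, this gives $|\phi'(0)|\le C/n$.

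For the integral term, I would avoid fourth cumulants and instead apply the first-derivative argument to $\phi'$: write $\phi'(t)-\phi'(0)=\int_0^t\frac{\dd}{\dd s}\E_s[\bar F\bar G\bar J]\dd s$. The derivative is a sum of covariances of products of $\bar F,\bar G,\bar J$ with $\bar J$, each carrying at least two $\bar J$ factors. Hölder then bounds each by $C_0^2\norm{\bar J}_{L^8}^2\le Cn^{-1}$, using \eqref{eq:cross-moments} with $r=8$ and treating the derivative of the centering constants similarly. One obstacle is that $F,G$ are only controlled in $L^4$, so Hölder needs exponents $(4,4,\infty)$-type. The remedy is to use $r$ large for $J$ and exponents $1/4+1/4+1/2=1$ with the $L^2$ factor of $\bar J^2$ bounded via $\norm{\bar J}_{L^4}^2\le Cn^{-1}$. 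Combining gives $|\phi(1)|\le Cn^{-1}$. The hardest step is the computation of $\phi'(0)$, since a naive bound gives only $n^{-1/2}$; the row-covariance vector estimate is where the real gain comes from.
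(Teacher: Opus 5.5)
Your proposal is correct and follows the paper's proof essentially step for step. The mean bound comes from the interpolation identity, Cauchy--Schwarz and \eqref{eq:cross-moments}. For the covariance you take $c(t)=\cov_t(F,G)$ with $c(0)=0$ and reduce $c'(0)$ exactly to the joining pairs $a\in\mathcal I_F$, $b\in\mathcal I_G$, bounding the row-covariance vectors by duality. The second derivative is controlled by $|c''(t)|\le C\norm{F-\E_tF}_{L^4(p_{B,t})}\norm{G-\E_tG}_{L^4(p_{B,t})}\norm{J_{\rm cross}-\E_tJ_{\rm cross}}_{L^4(p_{B,t})}^2\le C/n$; your passing mention of $L^8$ is unnecessary, since the $L^4$ bound from \eqref{eq:cross-moments} suffices, as in the paper.
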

The proof is given in Section~\ref{sec:separated-proof}.

\section{Proofs of lemmas in Section~\ref{sec:conditional-rows}}\label{sec:conditional-row-proofs}
We prove the estimates of Section~\ref{sec:conditional-rows} in their order of
dependence: the comparison bound, the interpolation estimates, and the
separated-row bounds.

\subsection{Proof of Lemma~\ref{lem:comparison}}\label{sec:comparison-proof}

{
For the exponential tilt $p^{(t)}\propto\ee^{tG}p$, couple the two
heat-bath chains using the same coordinate and uniform random variable.
The tilt changes each conditional probability by at most
$|t|\delta_iG/4$. Under a stationary coupling $\nu$, the disagreement probabilities
$r_i=\PP_\nu(X_i\ne Y_i)$ therefore satisfy
\[
\begin{aligned}
 \mathbf r&\le\mathcal C\mathbf r+\frac{|t|}{4}\boldsymbol\delta G,
 \quad\Longrightarrow\quad
 \mathbf r\le\frac{|t|}{4}\mathcal D\boldsymbol\delta G.
\end{aligned}
\]
Then the coordinate oscillations convert disagreements into an expectation
difference, $|\E_{p^{(t)}}F-\E_pF|
 \le\frac{|t|}{4}(\boldsymbol\delta F)^\top\mathcal D\boldsymbol\delta G.$
Differentiating at $t=0$ yields the covariance bound.
With $G=F$, the derivative of the centered log moment-generating
function is this same expectation difference. Integrating from zero
to $t$ gives
\[
 \log\E_p\ee^{t(F-\E_pF)}
 \le\frac{t^2}{8}(\boldsymbol\delta F)^\top\mathcal D\boldsymbol\delta F
 \le\frac{t^2}{8(1-c)}\norm{\boldsymbol\delta F}_{\ell^2}^2.
\]
Optimizing the exponential Markov bound gives Gaussian tails;
integrating these tails proves
$\norm{F-\E_pF}_{L^r(p)}\le C_{r,c}\norm{\boldsymbol\delta F}_{\ell^2}$
for each fixed $r\ge1$.
\par}

\begin{proof}
For $\mathbf x\in\mathcal X$, write $\mathbf x_{-i}$ for the vector with coordinate $i$ removed.
For $\mathbf X\sim p$, put
\[
 p_i(\mathbf x_{-i})=\PP_p(X_i=1\mid\mathbf X_{-i}=\mathbf x_{-i}).
\]
We first prove \eqref{eq:comparison-cov}. For $t\in\R$, define
\begin{equation*}
 p^{(t)}(\mathbf x)=\frac{\ee^{tG(\mathbf x)}p(\mathbf x)}{\E_p\ee^{tG}}.
\end{equation*}
For $\mathbf X\sim p^{(t)}$, define
\begin{equation*}
 p_i^{(t)}(\mathbf x_{-i})
 =\PP_{p^{(t)}}(X_i=1\mid\mathbf X_{-i}=\mathbf x_{-i}).
\end{equation*}
At $t=0$, this recovers $p_i^{(0)}=p_i$.
Differentiating $\E_{p^{(t)}}F$ at $t=0$ gives
\begin{equation*}
\begin{aligned}
 \left.\frac{\dd}{\dd t}\E_{p^{(t)}}F\right|_{t=0}
 &=\left.\frac{\dd}{\dd t}
   \frac{\sum_{\mathbf x\in\mathcal X}F(\mathbf x)\ee^{tG(\mathbf x)}p(\mathbf x)}
        {\sum_{\mathbf x\in\mathcal X}\ee^{tG(\mathbf x)}p(\mathbf x)}\right|_{t=0}\\
 &=\E_p(FG)-\E_pF\,\E_pG=\cov_p(F,G).
\end{aligned}
\end{equation*}
By the definition of $p^{(t)}$,
\begin{equation*}
\frac{p_i^{(t)}(\mathbf x_{-i})}{1-p_i^{(t)}(\mathbf x_{-i})}
 =\frac{p^{(t)}(\mathbf x^{i,1})}{p^{(t)}(\mathbf x^{i,0})} =\exp\{t[G(\mathbf x^{i,1})-G(\mathbf x^{i,0})]\}
   \frac{p_i(\mathbf x_{-i})}{1-p_i(\mathbf x_{-i})}.
\end{equation*}
Taking logarithms and using $\sup\ell'=1/4$ gives
\begin{equation*}
|p_i^{(t)}(\mathbf x_{-i})-p_i(\mathbf x_{-i})|
 \le\frac{|t|}{4}|G(\mathbf x^{i,1})-G(\mathbf x^{i,0})| \le\frac{|t|}{4}\delta_iG.
\end{equation*}
Changing the other coordinates one at a time under $p$ therefore yields
\begin{equation}\label{eq:comparison-conditional-difference}
\begin{aligned}
 |p_i^{(t)}(\mathbf y_{-i})-p_i(\mathbf x_{-i})|
 &\le |p_i^{(t)}(\mathbf y_{-i})-p_i(\mathbf y_{-i})|
       +|p_i(\mathbf y_{-i})-p_i(\mathbf x_{-i})|\\
 &\le\frac{|t|}{4}\delta_iG
       +\sum_j\mathcal C_{ij}\mathbf1_{\{x_j\ne y_j\}}.
\end{aligned}
\end{equation}

We compare $p$ and $p^{(t)}$ by coupling their heat-bath updates
\citep[see also][Theorem~2.1]{RR2019}.
Given $(\mathbf X,\mathbf Y)\in\mathcal X\times\mathcal X$, draw
$I$ uniformly from $\{1,\ldots,m\}$ and $U$ uniformly from $[0,1]$,
independently of each other and of $(\mathbf X,\mathbf Y)$.
For each $i$, set
\begin{equation*}
 X_i'=\begin{cases}
 \mathbf1_{\{U\le p_i(\mathbf X_{-i})\}},&I=i,\\
 X_i,&I\ne i,
 \end{cases}
 \qquad
 Y_i'=\begin{cases}
 \mathbf1_{\{U\le p_i^{(t)}(\mathbf Y_{-i})\}},&I=i,\\
 Y_i,&I\ne i.
 \end{cases}
\end{equation*}
For $\mathbf X\sim p$ and $\mathbf Y\sim p^{(t)}$, we can verify that $\mathbf X'\sim p$ and $\mathbf Y'\sim p^{(t)}$.
Let $K_t$ be the transition matrix of this update on $\mathcal X\times\mathcal X$.
Since the state space is finite, $K_t$ has a stationary distribution $\nu$.
Take $(\mathbf X,\mathbf Y)\sim\nu$, whose marginals are $p$ and $p^{(t)}$.
Then $(\mathbf X',\mathbf Y')$ has the same joint distribution.
Put $r_i=\PP_\nu(X_i\ne Y_i)$ and
$\mathbf r=(r_1,\ldots,r_m)^\top$.
On $I=i$, the common uniform variable gives
\begin{equation*}
 \PP(X_i'\ne Y_i'\mid\mathbf X,\mathbf Y,I=i)
 =|p_i(\mathbf X_{-i})-p_i^{(t)}(\mathbf Y_{-i})|.
\end{equation*}
On $I\ne i$, both $i$th coordinates remain unchanged.
Conditioning on these two events, using stationarity and
\eqref{eq:comparison-conditional-difference}, gives
\begin{equation*}
\begin{aligned}
 r_i=\PP_\nu(X_i'\ne Y_i')
 &=\E_\nu[\mathbf1_{\{I\ne i\}}\mathbf1_{\{X_i\ne Y_i\}}]\\
 &\quad+\PP(I=i)\,
   \E_\nu\!\left[\PP(X_i'\ne Y_i'\mid\mathbf X,\mathbf Y,I=i)\right]\\
 &=\left(1-\frac1m\right)r_i
   +\frac1m\E_\nu|p_i(\mathbf X_{-i})-p_i^{(t)}(\mathbf Y_{-i})|\\
 &\le\left(1-\frac1m\right)r_i
   +\frac1m\left\{\sum_j\mathcal C_{ij}r_j+\frac{|t|}{4}\delta_iG\right\}.
\end{aligned}
\end{equation*}
Thus with $\mathcal D=(I-\mathcal C)^{-1}$,
\begin{equation*}
 \mathbf r\le
   \frac{|t|}{4}\mathcal D\boldsymbol\delta G.
\end{equation*}
For any two configurations,
$|F(\mathbf x)-F(\mathbf y)|\le\sum_i\delta_iF\mathbf1_{\{x_i\ne y_i\}}$.
Taking expectation under the stationary coupling gives
\begin{equation*}
 |\E_{p^{(t)}}F-\E_pF|
 \le\sum_i\delta_iF r_i
 \le\frac{|t|}{4}(\boldsymbol\delta F)^\top\mathcal D\boldsymbol\delta G.
\end{equation*}
For $t\ne0$, divide by $|t|$ and let $t\to0$ to obtain
\eqref{eq:comparison-cov}.

For \eqref{eq:comparison-mgf}, apply the bound for
$|\E_{p^{(t)}}F-\E_pF|$ with $G=F$.
Differentiation gives
\begin{equation*}
\frac{\dd}{\dd s}\log\E_p\ee^{s(F-\E_pF)}
 =\E_{p^{(s)}}F-\E_pF,\qquad |\E_{p^{(s)}}F-\E_pF|
 \le\frac{|s|}{4}(\boldsymbol\delta F)^\top\mathcal D\boldsymbol\delta F.
\end{equation*}
For either sign of $t$,
\begin{equation*}
\begin{aligned}
 \log\E_p\ee^{t(F-\E_pF)}
 &=\int_0^t(\E_{p^{(s)}}F-\E_pF)\dd s \le\int_{\min\{0,t\}}^{\max\{0,t\}}
          |\E_{p^{(s)}}F-\E_pF|\dd s\\
 &\le\frac14(\boldsymbol\delta F)^\top\mathcal D\boldsymbol\delta F
         \int_{\min\{0,t\}}^{\max\{0,t\}}|s|\dd s =\frac{t^2}{8}(\boldsymbol\delta F)^\top\mathcal D\boldsymbol\delta F.
\end{aligned}
\end{equation*}
The convergent matrix series also gives
\begin{equation*}
\norm{\mathcal D}_{\rm op}
 \le\sum_{j=0}^{\infty}\norm{\mathcal C}_{\rm op}^j\le\frac1{1-c},\qquad (\boldsymbol\delta F)^\top\mathcal D\boldsymbol\delta F
 \le\norm{\mathcal D}_{\rm op}\norm{\boldsymbol\delta F}_{\ell^2}^2
 \le\frac{\norm{\boldsymbol\delta F}_{\ell^2}^2}{1-c}.
\end{equation*}
This proves \eqref{eq:comparison-mgf}.

If $\norm{\boldsymbol\delta F}_{\ell^2}=0$, changing any coordinate leaves $F$ unchanged, so $F$ is constant.
Otherwise, for $u>0$ and $t>0$, the exponential Markov inequality gives
\begin{equation*}
 \PP_p(F-\E_pF\ge u)
 \le\exp\left\{-tu+\frac{t^2\norm{\boldsymbol\delta F}_{\ell^2}^2}{8(1-c)}\right\}.
\end{equation*}
Taking $t=4(1-c)u/\norm{\boldsymbol\delta F}_{\ell^2}^2$ and applying the same argument to $-F$ yields
\begin{equation*}
 \PP_p(|F-\E_pF|\ge u)
 \le2\exp\left\{-\frac{2(1-c)u^2}{\norm{\boldsymbol\delta F}_{\ell^2}^2}\right\},
 \qquad u>0.
\end{equation*}
For a fixed $r\ge1$, integrating the tail bound and substituting
$u=\norm{\boldsymbol\delta F}_{\ell^2}\,v$ gives
\begin{equation*}
\begin{aligned}
 \E_p|F-\E_pF|^r
 &=r\int_0^\infty u^{r-1}\PP_p(|F-\E_pF|>u)\dd u\\
 &\le2r\norm{\boldsymbol\delta F}_{\ell^2}^r
       \int_0^\infty v^{r-1}\ee^{-2(1-c)v^2}\dd v =C_{r,c}^{\,r}\norm{\boldsymbol\delta F}_{\ell^2}^r,\\
 C_{r,c}
 &=\left\{2r\int_0^\infty v^{r-1}\ee^{-2(1-c)v^2}\dd v\right\}^{1/r}<\infty.
\end{aligned}
\end{equation*}
Taking the $r$th root proves the moment bound.
\end{proof}

\subsection{Proof of Lemma~\ref{lem:row-interpolation}}\label{sec:row-interpolation-proof}

{
Using the notation in \eqref{eq:triangle-row}, we interpolate from the independent-row law through
$p_{B,t}\propto\ee^{tJ_{\rm cross}}p_{B,0}$.
Then we gives a common influence matrix with
$\norm{\mathcal C}_{\rm op}\le|\alpha|/4<1$, so concentration holds
uniformly in $t$. 
Each coordinate changes $J_{\rm cross}$ by $O(n^{-1})$, then Lemma~\ref{lem:comparison} gives
\[
 \norm{\boldsymbol\delta J_{\rm cross}}_{\ell^2}^2\le C_k/n,
 \qquad
 \sup_{0\le t\le1}\norm{J_{\rm cross}-\E_tJ_{\rm cross}}_{L^r(p_{B,t})}
 \le C_{k,r}n^{-1/2}.
\]
For a fixed observable $U$, differentiating and integrating the normalized
tilt, then applying the covariance bound, gives
\[
\begin{aligned}
 \E_tU-\E_0U&=\int_0^t\cov_s(U,J_{\rm cross})\dd s,\\
 \sup_{0\le t\le1}|\E_tU-\E_0U|
 &\le C_kn^{-1/2}\norm{\boldsymbol\delta U}_{\ell^2}.
\end{aligned}
\]
For degree or common-neighbor differences $U$, we have
$|\E_0U|=O(1)$ and
$\norm{\boldsymbol\delta U}_{\ell^2}=O(\sqrt n)$.
Applying the preceding interpolation and concentration bounds then gives
$\sup_{0\le t\le1}|\E_tU|=O(1)$ and
$\sup_{0\le t\le1}\norm U_{L^r(p_{B,t})}=O(\sqrt n)$.
\par}

\begin{proof}
We first verify the conditional law \eqref{eq:triangle-row}.
The free edges contribute
\[
 \sum_{a\in\mathcal I}\sum_{u\in\mathcal I^c}X_{au}
 =\sum_{a\in\mathcal I}\mathbf1^\top\mathbf z_a
\]
to the edge count. A triangle containing a free edge has either one or two
vertices in $\mathcal I$. The two contributions are
\[
\begin{aligned}
 \sum_{a\in\mathcal I}\sum_{\substack{u<v\\u,v\in\mathcal I^c}}
 B_{uv}X_{au}X_{av}
 &=\frac12\sum_{a\in\mathcal I}\mathbf z_a^\top B\mathbf z_a,\\
 \sum_{\substack{a<b\\a,b\in\mathcal I}}\sum_{u\in\mathcal I^c}
 e_{ab}X_{au}X_{bu}
 &=\sum_{\substack{a<b\\a,b\in\mathcal I}}e_{ab}\mathbf z_a^\top\mathbf z_b.
\end{aligned}
\]
The factor $1/2$ in the first identity uses $B_{uv}=B_{vu}$ and $B_{uu}=0$.
All other edges and triangles are fixed under $\calF_{\mathcal I}$.
The Hamiltonian \eqref{eq:triangle-model}, with parameters $(\alpha,h)$,
therefore gives \eqref{eq:triangle-row}.

For \textup{(i)}, the conditional probability of $z_{au}=1$ under
$p_{B,t}$ is
\[
 \PP_{p_{B,t}}(z_{au}=1\mid\mathbf z_{-(a,u)})
 =\ell\left(h+\frac\alpha n\sum_{v\in\mathcal I^c}B_{uv}z_{av}
              +\frac{t\alpha}n\sum_{b\in\mathcal I\setminus\{a\}}e_{ab}z_{bu}\right).
\]
Since $\sup\ell'=1/4$, changing $z_{av}$ for $v\ne u$ changes this
probability by at most $|\alpha|B_{uv}/(4n)$, and changing $z_{bu}$ for
$b\ne a$ changes it by at most $t|\alpha|e_{ab}/(4n)$.
No other free coordinate appears in the conditional probability.
Thus the same symmetric nonnegative matrix
\[
 \mathcal C_{(a,u),(b,v)}=\frac{|\alpha|}{4n}
 \begin{cases}
 B_{uv},&a=b,\ u\ne v,\\
 e_{ab},&a\ne b,\ u=v,\\
 0,&\text{otherwise}
 \end{cases}
\]
is an influence bound for every $0\le t\le1$.
For each $(a,u)$, its row sum is
\[
\begin{aligned}
 \sum_{(b,v)}\mathcal C_{(a,u),(b,v)}
 &=\frac{|\alpha|}{4n}
 \left(\sum_{v\in\mathcal I^c\setminus\{u\}}B_{uv}
       +\sum_{b\in\mathcal I\setminus\{a\}}e_{ab}\right)\\
 &\le\frac{|\alpha|}{4n}\{(L-1)+(k-1)\}
 =\frac{|\alpha|(n-2)}{4n}.
\end{aligned}
\]
Symmetry bounds the operator norm by the largest row sum and proves
\eqref{eq:row-influence}. The compact parameter set gives a constant
$c<1$ with $|\alpha|/4\le c$. Applying
\eqref{eq:comparison-moment} under each $p_{B,t}$ proves
\eqref{eq:row-uniform-concentration}.

For \textup{(ii)}, the definition of $p_{B,t}$ gives
\[
 \E_tU=\frac{\E_0(U\ee^{tJ_{\rm cross}})}{\E_0\ee^{tJ_{\rm cross}}}.
\]
The state space is finite, so differentiation gives
\[
 \frac{\dd}{\dd t}\E_tU
 =\E_t(UJ_{\rm cross})-\E_tU\,\E_tJ_{\rm cross}
 =\cov_t(U,J_{\rm cross}).
\]
Integration proves \eqref{eq:row-interpolation}.
Changing $z_{au}$ from $0$ to $1$ changes $J_{\rm cross}$ by
\[
 \frac\alpha n\sum_{b\in\mathcal I\setminus\{a\}}e_{ab}z_{bu}.
\]
Consequently,
\[
 \delta_{(a,u)}J_{\rm cross}\le\frac{|\alpha|(k-1)}n,
 \qquad
 \norm{\boldsymbol\delta J_{\rm cross}}_{\ell^2}^2
 \le kL\left\{\frac{|\alpha|(k-1)}n\right\}^2\le C_k/n.
\]
The concentration estimate \eqref{eq:row-uniform-concentration} applied to
$J_{\rm cross}$ now gives
\[
 \sup_{0\le t\le1}
 \norm{J_{\rm cross}-\E_tJ_{\rm cross}}_{L^r(p_{B,t})}
 \le C_r\norm{\boldsymbol\delta J_{\rm cross}}_{\ell^2}
 \le C_{k,r}n^{-1/2},
\]
which proves \eqref{eq:cross-moments}.
For any $U$, the covariance estimate \eqref{eq:comparison-cov} and
\eqref{eq:row-influence} imply
\[
\begin{aligned}
 |\cov_t(U,J_{\rm cross})|
 &\le\frac{\norm{\boldsymbol\delta U}_{\ell^2}
              \norm{\boldsymbol\delta J_{\rm cross}}_{\ell^2}}{4-|\alpha|}\\
 &\le C_kn^{-1/2}\norm{\boldsymbol\delta U}_{\ell^2}.
\end{aligned}
\]
Integrating this bound in \eqref{eq:row-interpolation} proves
\eqref{eq:row-mean-comparison}.

For \textup{(iii)}, the degree and common-neighbor counts with endpoints
in $\mathcal I$ satisfy
\begin{equation}\label{eq:conditional-degree-overlap}
 d_i=\mathbf1^\top\mathbf z_i+\sum_{b\in\mathcal I\setminus\{i\}}e_{ib},\qquad
 M_{\{i,j\}}=\mathbf z_i^\top\mathbf z_j+
       \sum_{b\in\mathcal I\setminus\{i,j\}}e_{ib}e_{jb}.
\end{equation}
Each free coordinate changes either difference $U$ by at most two.
There are $kL$ free coordinates, and hence
\[
 \norm{\boldsymbol\delta U}_{\ell^2}\le2\sqrt{kL}\le2\sqrt{kn}.
\]
Under $p_{B,0}$, the rows are independent with the same law. In particular,
\[
\begin{aligned}
 \E_0d_i
 &=\sum_{u\in\mathcal I^c}\E_Bz_{iu}
     +\sum_{b\in\mathcal I\setminus\{i\}}e_{ib},\\
 \E_0M_{\{i,j\}}
 &=\sum_{u\in\mathcal I^c}(\E_Bz_{iu})(\E_Bz_{ju})
     +\sum_{b\in\mathcal I\setminus\{i,j\}}e_{ib}e_{jb}.
\end{aligned}
\]
The sums over $u$ are the same for every $i$ and every pair $\{i,j\}$,
respectively, and cancel in each difference defining $U$.
The internal counts lie between $0$ and $k-1$, so $|\E_0U|\le k$.
Using \eqref{eq:row-mean-comparison}, we obtain
\[
 \sup_{0\le t\le1}|\E_tU|
 \le |\E_0U|+C_kn^{-1/2}\norm{\boldsymbol\delta U}_{\ell^2}
 \le k+C_k\le C_k.
\]
This proves \eqref{eq:interpolated-contrast-means}. Finally, Minkowski's
inequality and \eqref{eq:row-uniform-concentration} give
\[
\begin{aligned}
 \sup_{0\le t\le1}\norm U_{L^r(p_{B,t})}
 &\le\sup_{0\le t\le1}\norm{U-\E_tU}_{L^r(p_{B,t})}
       +\sup_{0\le t\le1}|\E_tU|\\
 &\le C_r\norm{\boldsymbol\delta U}_{\ell^2}+C_k
 \le C_{k,r}\sqrt n,
\end{aligned}
\]
which is \eqref{eq:interpolated-contrast-moments}.
\end{proof}

\subsection{Proof of Lemma~\ref{lem:separated}}\label{sec:separated-proof}

{
The mean bound follows by integrating the identity in
Lemma~\ref{lem:row-interpolation}.
For $c(t)=\cov_t(F,G)$, independence gives $c(0)=0$, so Taylor's formula yields
\[
 |c(1)|\le |c'(0)|+\int_0^1(1-t)|c''(t)|\dd t.
\]
Independence also gives
\[
 c'(0)=\frac\alpha n
 \sum_{a\in\mathcal I_F}\sum_{b\in\mathcal I_G}e_{ab}
 \sum_{u\in\mathcal I^c}\cov_0(F,z_{au})\cov_0(G,z_{bu}).
\]
Lemma~\ref{lem:comparison} bounds the variance of every linear row statistic.
Duality and the moment assumptions
give uniform $\ell^2$ bounds on the coordinate-covariance vectors,
so Cauchy--Schwarz yields $|c'(0)|\le C/n$.
Differentiating again and applying H\"older's inequality with the assumed
fourth-moment bounds and Lemma~\ref{lem:row-interpolation}, we obtain
$\sup_{0\le t\le1}|c''(t)|\le C/n$.
Taylor's formula then gives $|\cov_1(F,G)|\le C/n$.
\par}

\begin{proof}
Let $C_0$ be a common uniform upper bound for
$\norm{F-\E_tF}_{L^4(p_{B,t})}$ and $\norm{G-\E_tG}_{L^4(p_{B,t})}$
along $0\le t\le1$, as assumed in the lemma.
Condition on $\calF_{\mathcal I}$, which fixes all edges within
$\mathcal I$ and within $\mathcal I^c$.
Only the edges joining these two sets remain random.
\par\medskip
\noindent\textit{Proof of \eqref{eq:separated-mean}}: By \eqref{eq:row-interpolation} and the Cauchy--Schwarz inequality,
\begin{equation*}
\begin{aligned}
 |\E_1F-\E_0F|
 &\le\int_0^1|\cov_t(F,J_{\rm cross})|\dd t\\
 &\le\int_0^1\norm{F-\E_tF}_{L^2(p_{B,t})}
          \norm{J_{\rm cross}-\E_tJ_{\rm cross}}_{L^2(p_{B,t})}\dd t
 \le Cn^{-1/2}.
\end{aligned}
\end{equation*}
Here $\norm{F-\E_tF}_{L^2(p_{B,t})}
\le\norm{F-\E_tF}_{L^4(p_{B,t})}\le C_0$, and
\eqref{eq:cross-moments} bounds the other factor by $Cn^{-1/2}$.

\par\medskip
\noindent\textit{Proof of \eqref{eq:separated-covariance}}: Put $c(t)=\cov_t(F,G)$. Under $p_{B,0}$, the families
$(\mathbf z_a)_{a\in\mathcal I_F}$ and
$(\mathbf z_b)_{b\in\mathcal I_G}$ are independent, so $c(0)=0$.
Taylor's formula gives
\begin{equation}\label{eq:separated-covariance-budget}
 |c(1)|\le |c'(0)|+\int_0^1(1-t)|c''(t)|\dd t.
\end{equation}
It therefore suffices to prove
\begin{equation}\label{eq:separated-derivative-targets}
 |c'(0)|\le C/n,\qquad \sup_{0\le t\le1}|c''(t)|\le C/n.
\end{equation}
Write $\var_t$ for variance under $p_{B,t}$.
For a differentiable family of functions $A_t$, differentiating its
expectation under the exponential tilt gives
\begin{equation}\label{eq:row-tilt-derivative}
 \frac{\dd}{\dd t}\E_tA_t
 =\E_t(\partial_tA_t)+\cov_t(A_t,J_{\rm cross}).
\end{equation}
Applying this identity to $(F-\E_tF)(G-\E_tG)$ and using the zero
means of both factors gives
\begin{equation*}
 c'(t)=\E_t\bigl[(F-\E_tF)(G-\E_tG)
                  (J_{\rm cross}-\E_tJ_{\rm cross})\bigr].
\end{equation*}

For the first bound in \eqref{eq:separated-derivative-targets}, expand
$J_{\rm cross}$. A pair that does not join
$\mathcal I_F$ to $\mathcal I_G$ has zero contribution to $c'(0)$ by independence.
For a pair joining the groups, label its endpoints
so that $a\in\mathcal I_F$ and $b\in\mathcal I_G$. Then
\begin{equation*}
 \E_0\bigl[(F-\E_0F)(G-\E_0G)z_{au}z_{bu}\bigr]
 =\cov_0(F,z_{au})\cov_0(G,z_{bu}).
\end{equation*}
Also, $\E_0[(F-\E_0F)(G-\E_0G)]=0$.
Hence \eqref{eq:triangle-row} yields
\begin{equation*}
 c'(0)=\frac\alpha n
 \sum_{a\in\mathcal I_F}\sum_{b\in\mathcal I_G}e_{ab}
 \sum_{u\in\mathcal I^c}\cov_0(F,z_{au})\cov_0(G,z_{bu}).
\end{equation*}
For a fixed $a\in\mathcal I$, Lemma~\ref{lem:comparison} gives
$\var_0(\mathbf w^\top\mathbf z_a)\le C\norm{\mathbf w}_{\ell^2}^2$.
Thus
\begin{equation*}
\begin{aligned}
\left\{\sum_{u\in\mathcal I^c}\cov_0(F,z_{au})^2\right\}^{1/2}
 &=\sup_{\norm{\mathbf w}_{\ell^2}\le1}|\cov_0(F,\mathbf w^\top\mathbf z_a)|\\
 &\le\norm{F-\E_0F}_{L^2(p_{B,0})}
       \sup_{\norm{\mathbf w}_{\ell^2}\le1}
             \sqrt{\var_0(\mathbf w^\top\mathbf z_a)}
 \le C.
\end{aligned}
\end{equation*}
The same bound holds for $G$. Since $|\mathcal I_F||\mathcal I_G|$
is bounded independently of $n$,
the Cauchy--Schwarz inequality in the coordinate sum gives
\begin{equation*}
\begin{aligned}
 |c'(0)|
 &\le\frac Cn\sum_{a\in\mathcal I_F}\sum_{b\in\mathcal I_G}
 \left\{\sum_u\cov_0(F,z_{au})^2\right\}^{1/2}
 \left\{\sum_u\cov_0(G,z_{bu})^2\right\}^{1/2}
 \le C/n.
\end{aligned}
\end{equation*}

For the second bound in \eqref{eq:separated-derivative-targets},
differentiating the expression for $c'(t)$ using
\eqref{eq:row-tilt-derivative} gives
\begin{equation*}
\begin{aligned}
 c''(t)
 &=\E_t\bigl[(F-\E_tF)(G-\E_tG)
                 (J_{\rm cross}-\E_tJ_{\rm cross})^2\bigr]\\
 &\quad-2\cov_t(F,J_{\rm cross})\cov_t(G,J_{\rm cross})
       -\cov_t(F,G)\var_t(J_{\rm cross}).
\end{aligned}
\end{equation*}
By H\"older's inequality, the Cauchy--Schwarz inequality, the hypothesis,
and \eqref{eq:cross-moments},
\begin{equation*}
 |c''(t)|\le 4\norm{F-\E_tF}_{L^4(p_{B,t})}
             \norm{G-\E_tG}_{L^4(p_{B,t})}
             \norm{J_{\rm cross}-\E_tJ_{\rm cross}}_{L^4(p_{B,t})}^2
 \le C/n.
\end{equation*}
Thus $\sup_{0\le t\le1}|c''(t)|\le C/n$.
Substituting \eqref{eq:separated-derivative-targets} into
\eqref{eq:separated-covariance-budget} gives
\begin{equation*}
 |\cov_1(F,G)|=|c(1)|
 \le C/n+\int_0^1(1-t)C/n\dd t\le C/n,
\end{equation*}
which proves \eqref{eq:separated-covariance}.
\end{proof}

\section{Proof of Proposition~\ref{prop:conditional-moments}}\label{sec:technical}\label{sec:moment-reduction}\label{sec:tested-proof}

{
Set $(\alpha,h)=(\alpha_n,h_n)$ and $\Delta_e=M_e-nQ^2$.
For part~\textup{(i)}, Taylor expansion and averaging over edges give
\[
\begin{aligned}
 b_n={}F_{\alpha,h}(Q)-Q
 +\frac{\alpha\ell'(h+\alpha Q^2)}{nN}\sum_e\Delta_e+\frac{\alpha^2\ell''(h+\alpha Q^2)}{2n^2N}\sum_e\Delta_e^2
 +O_{L^2}(n^{-2}).
\end{aligned}
\]
The signed cubic estimate bounds the omitted term
$\alpha^3\ell^{(3)}(h+\alpha Q^2)\sum_e\Delta_e^3/(6n^3N)$
in $L^2$ by $Cn^{-2}$.
The fourth-order remainder is bounded by $C(n^4N)^{-1}\sum_e|\Delta_e|^4$,
whose $L^2$ norm is $O(n^{-2})$ by the eighth-moment bound.
Counting identities and conditional row estimates approximate
$N^{-1}\sum_e\Delta_e$ and $(nN)^{-1}\sum_e\Delta_e^2$ by functions of $Q$.
Parameter expansion then identifies $g(Q)/n$ and the parameter correction.
The approximation errors give $\norm{\varepsilon_n}_2\le Cn^{-3/2}$.

For part~\textup{(ii)}, the Lipschitz bound for $\ell$ and local moment bounds
give the conditional second-moment estimate;
replacing $F_{\alpha,h}$ by $F$ costs $O(n^{-1})$.
For part~\textup{(iii)}, estimating the errors in the two averaged drift terms
directly against the test function retains cancellation and gives
\[
 \bigl|\E[\varepsilon_n f(\sqrt n(Q-2/3))]\bigr|
 \le Cn^{-2}\norm{f}_{C_b^1}.
\]
\par}

\begin{proof}
Take the parameters in \eqref{eq:proof-parameter-family} and write
$h=h_n$ and $\alpha=\alpha_n$.
We first derive the drift at these parameters, then expand the
resulting functions about the fixed pair $(\alpha_c,h_c)$.
The local moment estimates below hold uniformly on compact subsets
of $(-4,4)\times\R$. For this proof, fix
\begin{equation*}
 \mathcal U=[3,15/4]\times[h_c-1,h_c+1]\subset(-4,4)\times\R.
\end{equation*}
Since $\alpha_c=27/8$ and $(\alpha_n,h_n)\to(\alpha_c,h_c)$,
we have $(\alpha_n,h_n)\in\mathcal U$ for all sufficiently large $n$.

For \textup{(i)}, fix an edge $e=\{i,j\}$ and $X_{-e}$.
Recall the common-neighbor count $M_e$ from Section~\ref{sec:conditional-rows}.
Changing $X_e$ from $0$ to $1$ adds one edge and $M_e$ triangles.
Thus \eqref{eq:triangle-model} gives
\begin{equation*}
 \frac{\PP(X_e=1\mid X_{-e})}{\PP(X_e=0\mid X_{-e})}
 =\exp\left\{h+\frac\alpha nM_e\right\}.
\end{equation*}
Hence $L_e=\PP(X_e=1\mid X_{-e})$ satisfies
\begin{equation}\label{eq:fields}
 L_e=\frac{\exp\{h+\alpha M_e/n\}}{1+\exp\{h+\alpha M_e/n\}}
     =\ell\left(h+\frac\alpha n M_e\right).
\end{equation}
\begin{samepage}
By the Taylor expansion of $\ell$ at $h+\alpha Q^2$, together with
\eqref{eq:heatbath} and \eqref{eq:fields}, we obtain
\begin{equation}\label{eq:raw-drift-expansion}
 b_n=\frac1N\sum_e\ell\left(h+\frac\alpha nM_e\right)-Q =F_{\alpha,h}(Q)-Q+R_1+R_2+R_3+R_4.
\end{equation}
\end{samepage}
Here $F_{\alpha,h}$ is defined in \eqref{eq:critical-F},
\begin{equation}\label{eq:drift-Taylor-terms}
\begin{aligned}
 R_1&=\frac{\alpha\ell'(h+\alpha Q^2)}{nN} \sum_e(M_e-nQ^2),\qquad R_2=\frac{\alpha^2\ell''(h+\alpha Q^2)}{2n^2N} \sum_e(M_e-nQ^2)^2,\\
 R_3&=\frac{\alpha^3\ell^{(3)}(h+\alpha Q^2)}{6n^3N} \sum_e(M_e-nQ^2)^3,\qquad |R_4|\le\frac C{n^4N}\sum_e|M_e-nQ^2|^4.
\end{aligned}
\end{equation}

To control $R_4$, we first state a local moment estimate.
With $d_i$ as in Section~\ref{sec:conditional-rows}, define
\begin{equation}\label{eq:triangle-empirical}
 V_n=\frac1{n(n-1)}\sum_i\{d_i-(n-1)Q\}^2,\qquad K_n=\frac1{nN}\sum_e(M_e-nQ^2)^2.
\end{equation}
\begin{lemma}\label{thm:fields}
For every fixed $r\ge1$ and all vertices or possible edges involved,
\begin{equation}\label{eq:field-moments}
 \norm{d_i-d_j}_r+\norm{M_e-M_f}_r\le C_r\sqrt n,\qquad \norm{L_e-L_f}_r+\norm{L_e-\bar L}_r\le C_rn^{-1/2}.
\end{equation}
\begin{equation}\label{eq:local-moments}
 \norm{V_n}_r+\norm{K_n}_r\le C_r,\qquad
 \norm{M_e-nQ^2}_r\le C_r\sqrt n.
\end{equation}
\end{lemma}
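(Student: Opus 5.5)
The plan is to prove every bound by conditioning on $\calF_{\mathcal I}$ for a vertex set $\mathcal I$ of fixed size containing the relevant vertices, applying Lemma~\ref{lem:row-interpolation} to the conditional law $p_{B,1}$, and then averaging. Since the tower property gives $\norm{U}_r^r=\E[\E(|U|^r\mid\calF_{\mathcal I})]$, a bound on the conditional $L^r$ norm that is uniform in $B$ and the internal edges passes directly to the unconditional norm.

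\textbf{Degree and common-neighbor differences.} For $d_i-d_j$, take $\mathcal I=\{i,j\}$. For $M_e-M_f$, take $\mathcal I$ to be the union of the endpoints of $e$ and $f$, so $k\le4$. Then \eqref{eq:interpolated-contrast-moments} at $t=1$ gives $\norm{U}_{L^r(p_{B,1})}\le C_{k,r}\sqrt n$ uniformly, and averaging over $\calF_{\mathcal I}$ yields $\norm{d_i-d_j}_r+\norm{M_e-M_f}_r\le C_r\sqrt n$.

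\textbf{Conditional probabilities.} By \eqref{eq:fields} and $\sup|\ell'|\le1/4$, we have $|L_e-L_f|\le|\alpha|\,|M_e-M_f|/(4n)$, so $\norm{L_e-L_f}_r\le C_rn^{-1/2}$. Since $L_e-\bar L=N^{-1}\sum_f(L_e-L_f)$, Minkowski's inequality gives $\norm{L_e-\bar L}_r\le C_rn^{-1/2}$. This proves \eqref{eq:field-moments}.

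\textbf{The averaged quantities.} Write $\bar d=(n-1)Q=n^{-1}\sum_j d_j$, so that $d_i-(n-1)Q=n^{-1}\sum_j(d_i-d_j)$. Minkowski's inequality with exponent $2r$ gives $\norm{d_i-(n-1)Q}_{2r}\le C\sqrt n$. Hence $\norm{V_n}_r\le n^{-1}(n-1)^{-1}\sum_i\norm{d_i-\bar d}_{2r}^2\le C$.

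For $K_n$ and $M_e-nQ^2$, the step needing care is to identify the average of $M_e$ with $nQ^2$ up to a centered quadratic term. The double-counting identity $\sum_eM_e=\sum_u\binom{d_u}{2}$ gives
\[
\bar M:=N^{-1}\sum_eM_e=\frac{1}{2N}\sum_u d_u(d_u-1).
\]
Expanding $d_u=\bar d+(d_u-\bar d)$ gives
\[
\bar M=\frac{n\bar d^2-n\bar d+\sum_u(d_u-\bar d)^2}{2N}=\frac{n(n-1)^2Q^2-n(n-1)Q+n(n-1)V_n}{n(n-1)}.
\]
Since $\bar d=(n-1)Q$ and $2N=n(n-1)$, this simplifies to $\bar M=(n-1)Q^2-Q+V_n$. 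Therefore $\bar M-nQ^2=V_n-Q-Q^2=O_{L^r}(1)$ by the bound on $V_n$.

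Now write $M_e-nQ^2=(M_e-\bar M)+(\bar M-nQ^2)$, where $M_e-\bar M=N^{-1}\sum_f(M_e-M_f)$. Minkowski's inequality and the common-neighbor bound give $\norm{M_e-\bar M}_r\le C_r\sqrt n$, so $\norm{M_e-nQ^2}_r\le C_r\sqrt n$. Finally, $\norm{K_n}_r\le(nN)^{-1}\sum_e\norm{M_e-nQ^2}_{2r}^2\le C_r$.

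The main obstacle is the uniform conditional estimate \eqref{eq:interpolated-contrast-moments}: the $O(1)$ control of the contrast means and the $\sqrt n$ concentration must hold uniformly in the exterior graph. That estimate is already supplied by Lemma~\ref{lem:row-interpolation}, so the remaining work is only the counting identity above.
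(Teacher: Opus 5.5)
Your proposal is correct and follows essentially the same route as the paper: you condition on $\calF_{\mathcal I}$ and apply \eqref{eq:interpolated-contrast-moments} from Lemma~\ref{lem:row-interpolation} with the tower property for the degree and common-neighbor contrasts, use the Lipschitz bound on $\ell$ and Minkowski's inequality for $L_e$, and use the identity $\bar M=(n-1)Q^2-Q+V_n$ before averaging pairwise differences. The only cosmetic difference is in the bound for $V_n$: you write $d_i-(n-1)Q=n^{-1}\sum_j(d_i-d_j)$, whereas the paper writes $V_n=N^{-1}\sum_{i<j}(d_i-d_j)^2/(2n)$, and both reduce to the same pairwise-difference moment bounds.
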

The proof is given in Section~\ref{sec:row-contrast}.
Using \eqref{eq:drift-Taylor-terms}, Minkowski's inequality and
\eqref{eq:local-moments} at order eight, we obtain
\begin{equation*}
 \norm{R_4}_2 \le\frac C{n^4N}\sum_e\norm{M_e-nQ^2}_8^4 \le\frac C{n^4N}\sum_e Cn^2\le Cn^{-2}.
\end{equation*}

For $R_3$, we need a bound on the averaged cubic term in
\eqref{eq:drift-Taylor-terms}.
Put $\meanM=N^{-1}\sum_eM_e$.
\begin{lemma}\label{thm:triangle-cubic}
For parameters $(\alpha,h)$ in a fixed compact subset of
$(-4,4)\times\R$,
\begin{equation}\label{eq:overlap-cubic-average}
 \left\lVert\frac1N\sum_e(M_e-\meanM)^3\right\rVert_2\le Cn,
 \qquad
 \left\lVert\frac1N\sum_e(M_e-nQ^2)^3\right\rVert_2\le Cn.
\end{equation}
\end{lemma}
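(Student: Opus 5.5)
First reduce the second bound to the first. Write $M_e-nQ^2=(M_e-\meanM)+(\meanM-nQ^2)$ and expand the cube. The cross terms are
\[
 3(\meanM-nQ^2)\frac1N\sum_e(M_e-\meanM)^2+(\meanM-nQ^2)^3,
\]
because the linear average vanishes. A counting identity gives $\sum_eM_e=3T$. It also gives
\[
 \sum_{i}\binom{d_i}{2}=\sum_e M_e,
\]
so that
\[
 \meanM-nQ^2=\frac{1}{N}\sum_i\binom{d_i}2-nQ^2=O\bigl(nV_n+Q\bigr).
\]
Hence $\norm{\meanM-nQ^2}_r\le C_r$ by \eqref{eq:local-moments}. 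The same lemma gives $\norm{N^{-1}\sum_e(M_e-\meanM)^2}_r\le Cn$, since this average is at most $nK_n$. Hölder's inequality therefore bounds the cross terms by $Cn$ in $L^2$. This leaves the first inequality in \eqref{eq:overlap-cubic-average}.

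For the centered cubic average, the trivial bound from \eqref{eq:field-moments} is $n^{3/2}$, so a cancellation must be extracted. Write
\[
 \frac1N\sum_e(M_e-\meanM)^3=\frac1{N^2}\sum_{e,f}(M_e-\meanM)^2(M_e-M_f)
\]
and similar symmetrized forms. It is cleaner, though, to bound the second moment directly:
\[
 \E\Bigl(\frac1N\sum_e(M_e-\meanM)^3\Bigr)^2=\frac1{N^2}\sum_{e,e'}\E\bigl[(M_e-\meanM)^3(M_{e'}-\meanM)^3\bigr].
\]
I would replace $\meanM$ by an average over a fixed finite collection of reference edges, using exchangeability of vertices under $\PP_n$. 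This reduces the problem to expectations of products of cubes of differences $M_e-M_f$ with all endpoints in a set $\mathcal I$ of size $k\le 8$. Conditioning on $\calF_{\mathcal I}$ then brings us to the row setting of Section~\ref{sec:conditional-rows}.

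The key step is to show that a conditional odd moment is smaller than its naive size:
\[
 \Bigl|\E\bigl[(M_e-M_f)^3\mid\calF_{\mathcal I}\bigr]\Bigr|\le Cn
 \quad\text{instead of } n^{3/2}.
\]
Under $p_{B,0}$ the rows are i.i.d. with law $p_B$, so the centered differences $\mathbf z_i^\top\mathbf z_j-\mathbf z_{i'}^\top\mathbf z_{j'}$ have symmetric-type structure. For disjoint pairs the difference is antisymmetric under swapping row labels, so its odd moments vanish at $t=0$. When the pairs share a vertex, the third cumulant of a sum of $n$ weakly dependent terms is $O(n)$ by the Dobrushin covariance bounds in Lemma~\ref{lem:comparison}. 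Passing from $t=0$ to $t=1$ uses \eqref{eq:row-interpolation} with $U=(M_e-M_f)^3$. The covariance $\cov_t(U,J_{\rm cross})$ is bounded by Hölder's inequality by $\norm{U-\E_tU}_{L^2}\norm{J_{\rm cross}-\E_tJ_{\rm cross}}_{L^2}\le Cn^{3/2}n^{-1/2}=Cn$, using \eqref{eq:interpolated-contrast-moments} and \eqref{eq:cross-moments}.

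For the squared average, I would pair edges $e,e'$ on disjoint vertex sets. Lemma~\ref{lem:separated}, applied to $F,G$ the normalized cubes $n^{-3/2}(M_e-M_f)^3$, gives conditional covariance $O(n^{-1})$, i.e. $O(n^2)$ after rescaling. The product of means is $O(n)\cdot O(n)$. Overlapping pairs are a vanishing fraction and are bounded crudely by $n^3\cdot O(n^{-1})$. Together these give second moment $O(n^2)$, hence the first bound in \eqref{eq:overlap-cubic-average}.

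The main obstacle is making the antisymmetry and third-cumulant cancellation rigorous under the row law $p_B$. In particular the fourth-moment hypotheses of Lemma~\ref{lem:separated} must hold for the normalized cubes, which follows from \eqref{eq:interpolated-contrast-moments}. The reference edge must also be chosen so that the $\meanM$ replacement error is controlled; I would handle this first via Cauchy--Schwarz together with \eqref{eq:field-moments}.
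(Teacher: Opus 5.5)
Your reduction of the second bound to the first is the paper's. One slip there: the counting identity gives exactly $\meanM-nQ^2=V_n-Q^2-Q$, not $O(nV_n+Q)$, and only the former yields $\norm{\meanM-nQ^2}_r\le C_r$.

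\textbf{The gap is in the first bound.} You propose to replace $\meanM$ by the value of $M$ on a fixed finite set of reference edges, and this step does not work.
\begin{itemize}
\item Exchangeability identifies expectations of functions of finitely many $M_f$ with a given overlap pattern. It does not let you substitute $M_f$ for the global average $\meanM$ inside a nonlinear function.
\item The replacement error is not small. $M_f-\meanM$ is of order $\sqrt n$ in every $L^r$, the same order as $M_e-\meanM$. So Cauchy--Schwarz with \eqref{eq:field-moments} leaves an error of the full trivial size $n^{3/2}$.
\item The reduced object cannot detect the quantity you want. $\sum_{e,f}(M_e-M_f)^3=0$ identically by antisymmetry. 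That is why the odd moments in your ``key step'' vanish so easily: you are estimating something that carries no information about the empirical third central moment.
\item With a common fixed reference edge, the two factors in your second moment share rows, so Lemma~\ref{lem:separated} would not apply to them.
\end{itemize}

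\textbf{What the correct reduction needs.} Keep all reference edges summed. Since $M_e-\meanM=N^{-1}\sum_f(M_e-M_f)$, averaging over $e$ gives the exact identity $\frac1N\sum_e(M_e-\meanM)^3=N^{-3}\sum_{e,f,g}(M_e-M_f)^2(M_e-M_g)$, and generic triples have six distinct endpoints. For such a triple, the centered overlaps $X_e,X_f,X_g$ are i.i.d.\ under $p_{B,0}$. Then $\E_0[(M_e-M_f)^2(M_e-M_g)]=\E_0X_e^3+2(a+b)\var_0(M_e)+a^2b$ with $a,b=O(1)$.

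So the third central moment of a single overlap $\mathbf z_1^\top\mathbf z_2$ under $p_B^{\otimes2}$ appears. No symmetry removes it, and showing it is $O(n)$ rather than $O(n^{3/2})$ is the real content of the lemma (Lemma~\ref{lem:overlap-cubic}).

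This bound does not follow from the Dobrushin covariance estimate, as you suggest. For a linear statistic $S$ with bounded weights, $\E S^3=\cov(S^2,S)$, but the sup-oscillations $\delta_u(S^2)$ are of order $n$. Hence \eqref{eq:comparison-cov} gives only $O(n^2)$, which is worse than trivial. You also invoke this cumulant bound for pairs sharing a vertex, where conditioning on the shared row and symmetry already suffice; it is needed in the generic case, where you do not have it.

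The paper obtains the $O(n)$ bound in two steps:
\begin{itemize}
\item a stationarity recursion for the single-row Glauber generator acting on mixed third moments of linear statistics (Lemma~\ref{lem:row-refined-third}), which closes because $\norm{J_BD_\ell}_{\rm op}\le|\alpha|/4<1$;
\item conditioning on one row to handle the bilinear overlap.
\end{itemize}

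With that ingredient, the rest of your plan goes through as in the paper: configurations with a repeated endpoint contribute $O(1/n)$ after normalization, and Lemma~\ref{lem:separated} handles the shift from $\E_0$ to $\E_1$ and the covariance of disjoint triples.
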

The proof is given in Section~\ref{sec:cubic-proof}.
Since $\alpha$ and $\ell^{(3)}$ are bounded, this gives
\begin{equation*}
 \norm{R_3}_2
 \le\frac C{n^3}
        \left\lVert\frac1N\sum_e(M_e-nQ^2)^3\right\rVert_2
 \le Cn^{-2}.
\end{equation*}

To evaluate $R_1$, we count unordered pairs of neighbors of each vertex
to obtain
$\sum_eM_e=\sum_i(d_i^2-d_i)/2$.
Together with $\sum_i d_i=n(n-1)Q$, this yields
\begin{equation}\label{0201}
 \meanM = \frac{1}{N} \sum_eM_e=\frac{\sum_i d_i^2-\sum_i d_i}{n(n-1)}
       =V_n+(n-1)Q^2-Q.
\end{equation}
Hence
\begin{equation}\label{eq:triangle-first-moment}
 \frac1N\sum_e(M_e-nQ^2)=\meanM-nQ^2=V_n-Q^2-Q,
\end{equation}
and therefore
\begin{equation*}
 R_1=\frac{\alpha\ell'(h+\alpha Q^2)}n(V_n-Q^2-Q).
\end{equation*}

For $R_2$, the definition of $K_n$ in \eqref{eq:triangle-empirical} gives
\begin{equation*}
 R_2=\frac{\alpha^2\ell''(h+\alpha Q^2)}{2n}K_n.
\end{equation*}
We next express $R_1+R_2$ in terms of functions of $Q$.
For each $q\in[0,1]$, let $\varrho_{\alpha,h}(q)$ be the unique root
$r\in(0,1)$ of
\begin{equation}\label{eq:triangle-row-root}
 r-\ell(h+\alpha qr)=0.
\end{equation}
Define
\begin{equation}\label{eq:triangle-r}
\begin{aligned}
 v_{\alpha,h}(q)&=\varrho_{\alpha,h}(q)(1-\varrho_{\alpha,h}(q)),\qquad
 A_{\alpha,h}(q)=\frac{v_{\alpha,h}(q)}{1-\alpha qv_{\alpha,h}(q)},\\
 K_{\alpha,h}(q)&=v_{\alpha,h}(q)^2
       +2\varrho_{\alpha,h}(q)^2A_{\alpha,h}(q).
\end{aligned}
\end{equation}
The left side of \eqref{eq:triangle-row-root} is negative at $r=0$
and positive at $r=1$. Moreover,
\begin{equation*}
 \frac{\partial}{\partial r}\{r-\ell(h+\alpha qr)\}
 =1-\alpha q\ell'(h+\alpha qr)
 \ge1-\frac{|\alpha|}{4}>0.
\end{equation*}
Thus the root exists and is unique.
At this root, $\ell'(h+\alpha q\varrho_{\alpha,h}(q))=v_{\alpha,h}(q)$,
so $1-\alpha qv_{\alpha,h}(q)\ge1-|\alpha|/4$.
On $\mathcal U$, this lower bound is at least $1/16$.
Since $0<v_{\alpha,h}(q)\le1/4$, we also have $0<A_{\alpha,h}(q)\le4$,
uniformly for $q\in[0,1]$ and $(\alpha,h)\in\mathcal U$.
Implicit differentiation therefore gives uniform bounds for all derivatives
of each fixed order of $\varrho_{\alpha,h}$ and the functions in
\eqref{eq:triangle-r} with respect to $(q,\alpha,h)$, for
$q\in[0,1]$ and $(\alpha,h)\in\mathcal U$.
At the critical parameters, $A_{\alpha_c,h_c}=A$ and
$K_{\alpha_c,h_c}=K$, as defined in \eqref{eq:critical-row-response}.

Define
\begin{equation}\label{eq:triangle-g}
 g_{\alpha,h}(q)=\alpha F_{\alpha,h}(q)(1-F_{\alpha,h}(q)) \biggl[A_{\alpha,h}(q)-q^2-q +\frac\alpha2(1-2F_{\alpha,h}(q))K_{\alpha,h}(q)\biggr].
\end{equation}
At $(\alpha_c,h_c)$, $g_{\alpha,h}$ is the function $g$ in
\eqref{eq:critical-g}.
Under \eqref{eq:proof-parameter-family}, Taylor's formula and the
bounded parameter derivatives give, uniformly for $q\in[0,1]$,
\begin{equation}\label{eq:parameter-expansion}
\begin{aligned}
 F_{\alpha_n,h_n}(q)
 &=F(q)+F(q)(1-F(q))
       \left\{\frac{\gamma+\lambda q^2}n\right\}
       +O(n^{-2}),\\
 g_{\alpha_n,h_n}(q)&=g(q)+O(n^{-1}).
\end{aligned}
\end{equation}
Substituting the expressions for $R_1,R_2$ and these expansions into
\eqref{eq:raw-drift-expansion}, we obtain
\begin{equation*}
 b_n={}F(Q)-Q+\frac{g(Q)}n +F(Q)(1-F(Q))\left\{\frac{\gamma+\lambda Q^2}n\right\}+\varepsilon_n,
\end{equation*}
where
\begin{equation}\label{eq:closure-remainder}
 \varepsilon_n={}\frac{\alpha\ell'(h+\alpha Q^2)}n\{V_n-A_{\alpha,h}(Q)\} +\frac{\alpha^2\ell''(h+\alpha Q^2)}{2n}\{K_n-K_{\alpha,h}(Q)\} +O_{L^2}(n^{-2}).
\end{equation}
Here $O_{L^2}(a_n)$ denotes an error with $L^2$ norm at most $Ca_n$.
The last term includes $R_3+R_4$, the $O(n^{-2})$ error in
$F_{\alpha_n,h_n}$, and the $O(n^{-1})$ error in
$g_{\alpha_n,h_n}$ divided by $n$.

It remains to bound $\norm{\varepsilon_n}_2$.
For the two empirical terms, we use the following estimates.
\begin{lemma}\label{prop:triangle-moments}
For parameters $(\alpha,h)$ in a fixed compact subset of
$(-4,4)\times\R$, the functions $A_{\alpha,h}$ and $K_{\alpha,h}$
defined in \eqref{eq:triangle-r} satisfy
\begin{equation}\label{eq:triangle-moment-replacement}
 \norm{V_n-A_{\alpha,h}(Q)}_2+\norm{K_n-K_{\alpha,h}(Q)}_2\le Cn^{-1/2}.
\end{equation}
\end{lemma}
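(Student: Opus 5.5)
The plan is to reduce both empirical quantities to averages of single-vertex or single-pair conditional expectations, computed through the row machinery of Section~\ref{sec:conditional-rows}, and then to use exchangeability and concentration to replace the averages by functions of $Q$. For $V_n$, write $V_n=\frac1{n(n-1)}\sum_i\{d_i-(n-1)Q\}^2$. The deviations $d_i-(n-1)Q$ are, up to $O(1)$, averages of the contrasts $d_i-d_j$ over $j$. Lemma~\ref{thm:fields} gives $\norm{d_i-d_j}_r\le C_r\sqrt n$, so each summand is $O(1)$ in every $L^r$.

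The first main step is to identify the conditional mean. Take $\mathcal I=\{i\}$ (or $k=1,2$) and condition on $\calF_{\mathcal I}$. Row $i$ then has law $p_B$, a quadratic (Curie--Weiss-type) system with coupling $J_B=\frac\alpha nB$. For this law, Lemma~\ref{lem:comparison} gives concentration. A mean-field/linear-response computation then shows that $\E_B z_{iu}$ is approximately $\varrho_{\alpha,h}(\cdot)$ evaluated at a local density of $B$ seen from $u$, and that $\var_B(\mathbf1^\top\mathbf z_i)/n\approx v/(1-\alpha q v)=A_{\alpha,h}(q)$, where $q$ is the edge density of $B$. The geometric series in $\alpha q v$ comes from the influence matrix: the fluctuation of the row sum feeds back through $J_B$ with gain $\alpha q v$, as in the Ornstein--Zernike resummation. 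I would derive this from the identity $\var_B(\mathbf1^\top\mathbf z)=\sum_u\cov_B(z_u,\mathbf1^\top\mathbf z)$, combined with $\cov_B(z_u,\cdot)=\ell'(\cdot)\,[\ldots]$ obtained by conditioning on $\mathbf z_{-u}$. This produces a self-consistent linear equation whose error terms are controlled by Lemma~\ref{lem:comparison}. Replacing the degree heterogeneity of $B$ by $Q$ costs $O(n^{-1/2})$, using $\norm{d_i-d_j}_r\lesssim\sqrt n$ and the $L^2$ concentration of $Q$.

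The second step is the analogous computation for $K_n$. Take $\mathcal I=\{i,j\}$ with $e=\{i,j\}$. Under the product law $p_{B,0}$, we have $M_e-\E_0M_e=\mathbf z_i^\top\mathbf z_j-\E\cdot$. Its variance for two independent rows decomposes as $\sum_u v_u^2$, which gives the $v^2$ term, plus $2\varrho^2$ times the row-sum variance, which gives $2\varrho^2A$. This yields exactly $K_{\alpha,h}$. Lemma~\ref{lem:row-interpolation}(ii), via \eqref{eq:row-mean-comparison} with $\norm{\boldsymbol\delta U}_{\ell^2}\lesssim\sqrt n$, transfers means from $p_{B,0}$ to $p_{B,1}$ with an $O(n^{-1/2})\cdot n$ error on a quantity of size $n$. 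This is acceptable after dividing by $n$. The shift of $M_e$ against $nQ^2$ is handled with the row-mean identities \eqref{eq:conditional-degree-overlap}.

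The final step, which I expect to be the main obstacle, is passing from $L^2$ control of each term's conditional mean to the averaged quantity. One must show that $V_n-\frac1n\sum_i\E[\cdot\mid\calF_{\{i\}}]$ is $O_{L^2}(n^{-1/2})$. I would expand $\E(V_n-A(Q))^2$ as a double sum over $i,i'$ (respectively $e,f$). Disjoint index pairs are handled by Lemma~\ref{lem:separated} with $\mathcal I=\{i,i'\}$ (or four vertices): \eqref{eq:separated-covariance} gives conditional covariances $O(n^{-1})$, and the diagonal pairs contribute $O(1/n)$ after averaging. What remains is to show that the conditional means themselves are functions of $Q$ up to $O_{L^2}(n^{-1/2})$. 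This needs the Lipschitz regularity of $A_{\alpha,h},K_{\alpha,h}$ together with the fact that the density of $B$ differs from $Q$ by $O(1/n)$, plus an argument that heterogeneity in the $B$-degrees affects the linear response only at second order. That last claim is the delicate point.
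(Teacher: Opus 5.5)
Your ingredients are the paper's: one-row linear response for $\var_B(\mathbf1^\top\mathbf z)$ and $\var_B(\mathbf z_1^\top\mathbf z_2)$, comparison of $p_{B,0}$ with $p_{B,1}$, and a double-sum expansion with Lemma~\ref{lem:separated} on disjoint index sets. As set up, however, the argument does not close, because your summands are single-vertex and single-edge terms rather than contrasts. Given $\calF_{\{i\}}$, one has exactly $d_i-(n-1)Q=(1-2/n)\{d_i-(n-1)q_B\}$. So the conditional mean of $\{d_i-(n-1)Q\}^2/n$ contains, besides $\var_B(\mathbf1^\top\mathbf z_i)/n$, the squared offset $\{\E_Bd_i-(n-1)q_B\}^2/n$, where $\E_Bd_i-(n-1)q_B\approx n\{\varrho_{\alpha,h}(q_B)-q_B\}$. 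Likewise, for $K_n$ with $\mathcal I=\{i,j\}$, $\E_1M_e-nQ^2\approx n\{\varrho_{\alpha,h}(q_B)^2-q_B^2\}$. These offsets are negligible only if $q_B$ solves the mean-field equation $q=F_{\alpha,h}(q)$ to precision about $n^{-3/4}$. That is a global property of the law which no row estimate supplies; the paper locates $Q$ only later, near $(\alpha_c,h_c)$, via the drift expansion that itself rests on this lemma. The identities \eqref{eq:conditional-degree-overlap} do nothing to remove the offsets.

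The missing idea is to rewrite both quantities exactly as averages of squared contrasts. By \eqref{eq:degree-difference}, $V_n=N^{-1}\sum_{i<j}(d_i-d_j)^2/(2n)$. Via the counting identity $\meanM-nQ^2=V_n-Q^2-Q$ in \eqref{eq:triangle-first-moment}, $K_n=N^{-2}\sum_{e,f}(M_e-M_f)^2/(2n)+(V_n-Q^2-Q)^2/n$ as in \eqref{eq:overlap-difference}, and the last term is $O_{L^2}(n^{-1})$. Under $p_{B,0}$ the rows are i.i.d., so $|\E_0(d_i-d_j)|$ and $|\E_0(M_e-M_f)|$ are bounded by internal-edge counts and the offset cancels identically. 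Moreover, each contrast depends only on the rows of its own endpoints, as Lemma~\ref{lem:separated} requires. Your summand $\{d_i-(n-1)Q\}^2$ instead depends on every free row through $Q$.

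Two further corrections. First, the transfer from $p_{B,0}$ to $p_{B,1}$ cannot come from \eqref{eq:row-mean-comparison} applied to $U=D^2$, with $D$ a degree or overlap contrast. The oscillations $\delta_{(a,u)}D^2$ are suprema over all configurations and hence of order $n$, so $\norm{\boldsymbol\delta U}_{\ell^2}$ is of order $n^{3/2}$, and the error after dividing by $2n$ is only $O(1)$. Instead, use the interpolation identity with Cauchy--Schwarz, \eqref{eq:interpolated-contrast-moments} and \eqref{eq:cross-moments}, i.e.\ \eqref{eq:separated-mean}, which gives $Cn^{-1/2}$.

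Second, the point you flag as delicate (heterogeneity entering only at second order) is not needed for this lemma. Conditional-mean errors enter the double sum only through products. Take $\mathcal T\in\{A_{\alpha,h},K_{\alpha,h}\}$. The first-order bound $|\E_1[D^2/(2n)]-\mathcal T(q_B)|\le C(H_B+H_B^2)n^{-1/2}$ follows from \eqref{eq:row-coarse-b}, \eqref{eq:row-coarse-c} and \eqref{eq:separated-mean}. Those variance bounds need only the $\ell^2$ estimates \eqref{eq:row-coarse-a} and \eqref{eq:row-response-vector}, not the refined \eqref{eq:row-refined}. This first-order bound already yields an $O(n^{-1})$ contribution once $\sup_n\E H_B^s<\infty$, as in \eqref{eq:background-H-moments}.

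What you do need, and only gesture at, is the explicit heterogeneity measure $H_B$ of \eqref{eq:triangle-background}, one-row errors polynomial in $H_B$, and moment bounds on $H_B$ derived from those on $V_n$. That last step is where $\norm{d_i-d_j}_r\le C_r\sqrt n$ enters. The ``$L^2$ concentration of $Q$'' you invoke is neither available nor needed, since $|q_B-Q|\le C_k/n$ holds deterministically by \eqref{eq:background-density}.
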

The proof is given in Section~\ref{sec:empirical-moments-proof}.
Combining these estimates in \eqref{eq:closure-remainder}, we obtain
\begin{equation*}
\begin{aligned}
 \norm{\varepsilon_n}_2
 &\le\frac Cn\{\norm{V_n-A_{\alpha,h}(Q)}_2+\norm{K_n-K_{\alpha,h}(Q)}_2\}
       +\norm{R_3+R_4}_2+Cn^{-2}\\
 &\le Cn^{-3/2}+Cn^{-2}\le Cn^{-3/2}.
\end{aligned}
\end{equation*}
This proves \textup{(i)}.

For \textup{(ii)}, use \eqref{eq:heatbath} at the current parameters
$(\alpha,h)=(\alpha_n,h_n)$ to obtain
\begin{equation}\label{eq:technical-noise-expansion}
\begin{aligned}
 v_n={}&\frac1N\sum_e\{X_e+(1-2X_e)L_e\}\\
 ={}&Q+(1-2Q)F_{\alpha,h}(Q) +\frac1N\sum_e(1-2X_e)\{L_e-F_{\alpha,h}(Q)\}.
\end{aligned}
\end{equation}
Since $|1-2X_e|\le1$ and $\sup\ell'=1/4$, \eqref{eq:local-moments}
implies
\begin{equation*}
 \left\lVert v_n-\{Q+(1-2Q)F_{\alpha,h}(Q)\}\right\rVert_1 \le\frac1N\sum_e\norm{L_e-F_{\alpha,h}(Q)}_1 \le\frac{|\alpha|}{4nN}\sum_e\norm{M_e-nQ^2}_1 \le Cn^{-1/2}.
\end{equation*}
By \eqref{eq:parameter-expansion},
$\sup_{q\in[0,1]}|F_{\alpha_n,h_n}(q)-F(q)|\le C/n$.
Since $|1-2Q|\le1$, the triangle inequality gives
\begin{equation*}
\begin{aligned}
 &\left\lVert v_n-\{Q+(1-2Q)F(Q)\}\right\rVert_1\\
 &\quad\le\left\lVert v_n-\{Q+(1-2Q)F_{\alpha,h}(Q)\}\right\rVert_1 +\sup_{q\in[0,1]}|F_{\alpha,h}(q)-F(q)| \le Cn^{-1/2}+Cn^{-1}\le Cn^{-1/2}.
\end{aligned}
\end{equation*}
This proves \eqref{eq:taylor-noise}.

For \textup{(iii)}, put $W_n=\sqrt n(Q-2/3)$ and fix
$f\in C_b^1(\R)$.
By \eqref{eq:closure-remainder}, the triangle inequality and
the Cauchy--Schwarz inequality,
\begin{equation}\label{eq:tested-error-decomposition}
\begin{aligned}
 |\E[\varepsilon_n f(W_n)]|
 &\le\frac1n\left|\E[\{V_n-A_{\alpha,h}(Q)\}
                  \alpha\ell'(h+\alpha Q^2)f(W_n)]\right|\\
 &\quad+\frac1n\left|\E[\{K_n-K_{\alpha,h}(Q)\}
                  \tfrac12\alpha^2\ell''(h+\alpha Q^2)f(W_n)]\right|
          +Cn^{-2}\norm{f(W_n)}_2.
\end{aligned}
\end{equation}
The last term is at most $Cn^{-2}\norm{f}_\infty$.
To bound the two expectations, we use the following estimate.
\begin{lemma}\label{thm:tested-triangle}
For parameters in a fixed compact subset of $(-4,4)\times\R$, put
$W_n=\sqrt n(Q-2/3)$.
For $f\in C_b^1(\R)$ and $\omega\in C^1([0,1])$, the functions
$A_{\alpha,h},K_{\alpha,h}$ satisfy
\begin{equation}\label{eq:tested-moments}
 \left|\E[(V_n-A_{\alpha,h}(Q))\omega(Q)f(W_n)]\right| +\left|\E[(K_n-K_{\alpha,h}(Q))\omega(Q)f(W_n)]\right| \le\frac Cn\norm{\omega}_{C^1}\norm{f}_{C_b^1}.
\end{equation}
Here $\norm{\omega}_{C^1}=\norm{\omega}_\infty+\norm{\omega'}_\infty$ on $[0,1]$.
The constant is independent of $f$ and $\omega$.
\end{lemma}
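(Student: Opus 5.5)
We reduce both estimates in \eqref{eq:tested-moments} to averages of single-vertex and single-pair quantities and control each by conditioning on $\calF_{\mathcal I}$ for a fixed small vertex set. Write $D_i=d_i-(n-1)Q$. By exchangeability of vertices,
\[
 \E[(V_n-A_{\alpha,h}(Q))\omega(Q)f(W_n)]
 =\E\Bigl[\Bigl(\tfrac{D_1^2}{n-1}-A_{\alpha,h}(Q)\Bigr)\omega(Q)f(W_n)\Bigr],
\]
and similarly the $K_n$ term reduces to a single pair $e=\{1,2\}$ with $(M_e-nQ^2)^2/n$. The plan is to take $\mathcal I=\{1,2,3,4\}$, or in general a set of size $k$ containing the relevant vertices, so that $D_1$ and $M_e$ are functions of the free rows. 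The weight $\Psi=\omega(Q)f(W_n)$ depends on the free rows only through $Q$, whose free part is $N^{-1}\sum_{a}\mathbf1^\top\mathbf z_a$.

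\textbf{Step 1: freeze the weight.} Let $Q_0$ be $Q$ with all free edges replaced by their conditional means, which is $\calF_{\mathcal I}$-measurable up to $O(n^{-1})$. Each row changes $Q$ by $O(n/N)=O(n^{-1})$, and its fluctuation is $O_{L^r}(n^{-3/2})$ by \eqref{eq:row-uniform-concentration}. Therefore
\[
 \Psi=\Psi_0+\partial\Psi\cdot(Q-Q_0)+\text{remainder},
\]
with $\partial\Psi$ of size $\norm{\omega}_{C^1}\norm{f}_{C_b^1}\sqrt n$, since $f(W_n)$ carries a factor $\sqrt n$. The correction term is then the conditional covariance of $D_1^2/n$ with a linear row statistic of size $n^{-1}$. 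By \eqref{eq:comparison-cov}, this covariance is $O(n^{-1})$ after accounting for the $\sqrt n$ factor, giving $\sqrt n\cdot n^{-1}\cdot n^{-1/2}=O(n^{-1})$. This is the reason only a $C_b^1$ norm of $f$ is needed.

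\textbf{Step 2: conditional mean identity.} It then suffices to show that, conditionally on $\calF_{\mathcal I}$,
\[
 \E_1[D_1^2/n]=A_{\alpha,h}(Q_B)+\mathcal R_{\mathcal I},
\]
where $Q_B$ is the edge density of $B$ and $\mathcal R_{\mathcal I}$ has an expectation against $\calF_{\mathcal I}$-measurable weights of order $n^{-1}$, and likewise for $K$. The approach is:
\begin{itemize}
\item Pass from $p_{B,1}$ to the product law $p_{B,0}$ using \eqref{eq:row-interpolation}.
\item Under $p_B$, a single row is a quadratic Ising-type system with coupling $(\alpha/n)B$. Its mean-field fixed point is $r=\ell(h+\alpha qr)$, which yields $\varrho_{\alpha,h}$.
\item The row variance of $\mathbf1^\top\mathbf z$ is $n\,v/(1-\alpha qv)$ to leading order, by linear response, which yields $A_{\alpha,h}$.
\item The contributions $v^2$ and $2\varrho^2A$ in $K$ come from $(\mathbf z_1^\top\mathbf z_2-n\varrho^2)^2$ with independent rows.
\end{itemize}

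\textbf{Step 3: errors.} The irregularity of $B$, meaning degree heterogeneity around $Q_B$, contributes terms of the form $\frac1n\sum_u(\text{degree}_B(u)-nQ_B)^2$ and their linear analogues. The first-order heterogeneity terms average out against the frozen weight by vertex exchangeability, leaving quadratic terms of order $n^{-1}$ by \eqref{eq:local-moments}. The interpolation error is handled with Lemma~\ref{lem:separated}, not the crude bound \eqref{eq:row-mean-comparison}. The crude bound only gives $n^{-1/2}$, since $D_1^2/n$ has oscillation vector of $\ell^2$ norm $O(1)$. Instead, $D_1$ is a function of row $1$ alone and $\cov_t(D_1^2/n,J_{\rm cross})$ has a vanishing first-order term at $t=0$. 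Expanding $c(t)$ to second order as in the proof of Lemma~\ref{lem:separated} gives $O(n^{-1})$.

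\textbf{Main obstacle.} The hardest part is Step 2 at precision $n^{-1}$. This means computing the single-row variance under $p_B$ with an $O(1)$ additive error after normalization by $n$, uniformly in $B$. My first attempt would be a Stein/linear-response expansion inside the row: write $z_u-\ell(h+\tfrac\alpha n(B\mathbf z)_u)$ as a martingale-type difference, then linearize $\ell$ around $\varrho$. The quadratic remainders should be controlled by the Dobrushin bounds of Lemma~\ref{lem:comparison}, with $\norm{(I-\tfrac{\alpha v}{n}B)^{-1}}_{\rm op}\le 1/(1-|\alpha|/4)$ used to invert the linear response.
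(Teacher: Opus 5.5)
\textbf{Gap 1: the squared conditional mean.} Your single-vertex reduction
\[
 \E[(V_n-A_{\alpha,h}(Q))\omega(Q)f(W_n)]=\E[(D_1^2/(n-1)-A_{\alpha,h}(Q))\omega(Q)f(W_n)]
\]
is exact. The problem is the target in Step~2. $\E(D_1^2\mid\calF_{\mathcal I})$ is not, up to small errors, a row variance, because $D_1=d_1-(n-1)Q$ has a large conditional mean. Under the product law,
\[
 \E_0D_1=\mathbf1^\top\boldsymbol\mu-\frac{L(L-1)}{n}\,q_B+O(1)=n\{\varrho_{\alpha,h}(q_B)-q_B\}+O(H_B^2).
\]
This holds because a row's self-consistent density $\varrho_{\alpha,h}(q_B)$ differs from the ambient density $q_B$ unless $q_B$ is a fixed point of $F_{\alpha,h}$. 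The mismatch is present even for a perfectly regular $B$. So it is not one of your heterogeneity terms, and it does not average out by exchangeability.

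The nonnegative term $(\E_1D_1)^2/n\approx n\{\varrho_{\alpha,h}(q_B)-q_B\}^2$ is $O(n^{-1})$ in expectation only if $\norm{F_{\alpha,h}(Q)-Q}_2=O(n^{-1})$. That is a global statement about where $Q$ sits. No conditional-row estimate gives it, and for a general compact parameter set it is not available a priori. In the paper, bounds of this type come from Proposition~\ref{prop:conditional-moments}(i), and only for the two parameter families of Section~\ref{sec:proofs}. You could import them non-circularly there, but that would not prove the lemma as stated, and your plan does not identify the term at all.

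This input cannot be bypassed on your route. Take $f=\omega=1$ and $\mathcal I=\{1\}$ in your identity; then Lemma~\ref{lem:row-coarse} shows that the lemma itself implies $\norm{F_{\alpha,h}(Q)-Q}_2=O(n^{-1})$. The same problem affects $M_e-nQ^2$, whose conditional mean is $n\{\varrho_{\alpha,h}(q_B)^2-q_B^2\}+O(H_B^2)$.

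The missing idea is the paper's pair-difference representation:
\[
 V_n=\frac1N\sum_{i<j}\frac{(d_i-d_j)^2}{2n},\qquad
 K_n=\frac1{N^2}\sum_{e,f}\frac{(M_e-M_f)^2}{2n}+\frac{(V_n-Q^2-Q)^2}{n}.
\]
Here the exterior parts of i.i.d.\ rows cancel. Hence $|\E_0(d_i-d_j)|\le k$ deterministically, and Lemma~\ref{lem:row-coarse} gives $\E_0[(d_i-d_j)^2/(2n)]=A_{\alpha,h}(q_B)+O(H_B^2/n)$. The extra term in $K_n$ is $O_{L^2}(n^{-1})$.

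\textbf{Gap 2: the interpolation step.} The first-order term $m'(0)=\cov_0(J_0,J_{\rm cross})$ does not vanish. With $\mathcal I$ containing both endpoints of $e=\{1,2\}$, $J_{\rm cross}$ contains $\frac\alpha n e_{12}\mathbf z_1^\top\mathbf z_2$, which is strongly correlated with $M_{\{1,2\}}$. The same happens for degrees whenever internal edges touch row $1$. Lemma~\ref{lem:separated} concerns observables on disjoint row groups, and for a mean shift it gives only $n^{-1/2}$.

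In Lemma~\ref{lem:actual-refined}, the paper reduces $m'(0)$ by case analysis to signed cubic moments:
\[
 \E_0U_{12}^3=O(nH_B)\ \ (\text{Lemma~\ref{lem:overlap-cubic}}),\qquad
 \E_B[(\mathbf1^\top\boldsymbol\xi)^2(\boldsymbol\mu^\top\boldsymbol\xi)]=O(n)\ \ (\text{Lemma~\ref{lem:row-refined-third}}).
\]
H\"older's inequality alone gives $n^{3/2}$ for these, hence only $n^{-1/2}$ overall.

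\textbf{What matches the paper.} Your Step~1 (freezing $Q$ at its conditional mean via $\var(Q\mid\calF_{\mathcal I})=O(n^{-3})$ and conditional Cauchy--Schwarz) is what the paper does. You also correctly identify the row-variance expansion as the core computation. Note, though, that it holds only with error $O(H_B^2)$, not $O(1)$ uniformly in $B$.
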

The proof is given in Section~\ref{sec:tested-moments-proof}.
For the first expectation in \eqref{eq:tested-error-decomposition},
take $\omega(q)=\alpha\ell'(h+\alpha q^2)$.
For $q\in[0,1]$,
\begin{equation*}
 \omega'(q)=2\alpha^2q\ell''(h+\alpha q^2),\qquad \norm{\omega}_{C^1} \le |\alpha|\norm{\ell'}_\infty +2\alpha^2\norm{\ell''}_\infty\le C.
\end{equation*}
Hence \eqref{eq:tested-moments} gives
\begin{equation*}
 \left|\E[\{V_n-A_{\alpha,h}(Q)\}\alpha\ell'(h+\alpha Q^2)f(W_n)]\right|
 \le\frac Cn\norm{f}_{C_b^1}.
\end{equation*}

For the second expectation in \eqref{eq:tested-error-decomposition}, take $\omega(q)=\alpha^2\ell''(h+\alpha q^2)/2$.
Then
\begin{equation*}
 \omega'(q)=\alpha^3q\ell^{(3)}(h+\alpha q^2),\qquad \norm{\omega}_{C^1} \le\frac{\alpha^2}2\norm{\ell''}_\infty +|\alpha|^3\norm{\ell^{(3)}}_\infty\le C.
\end{equation*}
Applying \eqref{eq:tested-moments} again, we obtain
\begin{equation*}
 \left|\E[\{K_n-K_{\alpha,h}(Q)\}\tfrac12\alpha^2\ell''(h+\alpha Q^2)f(W_n)]\right|
 \le\frac Cn\norm{f}_{C_b^1}.
\end{equation*}
Substituting these bounds into \eqref{eq:tested-error-decomposition} gives
\begin{equation*}
 |\E[\varepsilon_n f(W_n)]| \le\frac1n\left(\frac Cn+\frac Cn\right)\norm{f}_{C_b^1} +Cn^{-2}\norm{f}_\infty \le Cn^{-2}\norm{f}_{C_b^1}.
\end{equation*}
Multiplying by $n^{3/2}$ proves \eqref{eq:quartic-tested-error}.
\end{proof}

\section{Proof of Proposition~\ref{thm:collapse}}\label{sec:motif-proof}
{
We induct on the number of edges of \(H\). A single-edge update expresses the drift of \(t_H-Q^{e_H}\) as a restoring term plus the corresponding differences for graphs with one edge removed.
The induction hypothesis controls these differences, and a rooted-count estimate bounds the remaining error.
 Stationarity applied to powers of \(|t_H-Q^{e_H}|\), together with the small update size, yields the required moment bound.
}

\begin{proof}
Fix $s\ge2$ and put $T_H=t_H-Q^{e_H}$.
We prove $\norm{T_H}_s\le C_{H,s}/n$ by induction on $k=e_H$.\\
If $k=0$, then $t_H=1$; if $k=1$, then $t_H=Q$.
Thus $T_H=0$ in both cases, including graphs with isolated vertices.\\
Suppose $k\ge2$ and the bound holds for all graphs with fewer edges.

For $s\ge2$, applying Taylor's formula to $x\mapsto |x|^s$ gives
\begin{equation*}
 \bigl||x+y|^s-|x|^s-s|x|^{s-2}xy\bigr|
 \le C_s\{|x|^{s-2}y^2+|y|^s\}.
\end{equation*}
Use the edge resampling from Section~\ref{sec:stein}.
Let $X'$ be obtained by replacing $X_I$ with $X_I'$, with all other
edges unchanged. Write $t_H'=t_H(X')$ and $T_H'=t_H'-(Q')^k$.
Taking $x=T_H$ and $y=T_H'-T_H$, we obtain
\begin{equation}\label{eq:motif-moment-taylor}
 \bigl||T_H'|^s-|T_H|^s -s|T_H|^{s-2}T_H(T_H'-T_H)\bigr| \le C_s\{|T_H|^{s-2}(T_H'-T_H)^2+|T_H'-T_H|^s\}.
\end{equation}
Since $X'$ and $X$ have the same law,
$\E|T_H'|^s=\E|T_H|^s$.
Taking expectations in \eqref{eq:motif-moment-taylor}, multiplying by
$N$ and conditioning on $X$ gives
\begin{equation*}
 s\left|\E\left[|T_H|^{s-2}T_H\,N\E(T_H'-T_H\mid X)\right]\right| \le C_sN\E\{|T_H|^{s-2}(T_H'-T_H)^2+|T_H'-T_H|^s\}.
\end{equation*}
We next bound the increments on the right hand side.
Only injective maps sending an edge of $H$ to $I$ can change after the update.
For each edge of $H$, there are $2(n-2)_{v_H-2}$ such maps. Thus
\begin{equation*}
 |t_H'-t_H|\le\frac{2k(n-2)_{v_H-2}}{(n)_{v_H}}=\frac{k}{N}.
\end{equation*}
Also, $|(Q')^k-Q^k|\le k|Q'-Q|\le k/N$. Hence
\begin{equation}\label{eq:motif-jumps}
 |T_H'-T_H|\le\frac{2k}{N}.
\end{equation}
Using this bound and dividing by $s$, we obtain
\begin{equation}\label{eq:motif-moment-drift}
 \left|\E\left[|T_H|^{s-2}T_H\,N\E(T_H'-T_H\mid X)\right]\right|
 \le\frac{C_{H,s}}N\E|T_H|^{s-2}+C_{H,s}N^{1-s}.
\end{equation}

To use \eqref{eq:motif-moment-drift}, we compute
$N\E(T_H'-T_H\mid X)$.
For each $a\in E(H)$ and possible edge $e=\{i,j\}$, define
\begin{equation}\label{eq:rooted-density}
 U_e^{H,a}=\frac1{2(n-2)_{v_H-2}}
 \sum_{\substack{\phi:V(H)\hookrightarrow[n]\\ \phi(a)=\{i,j\}}}
 \prod_{b\in E(H)\setminus\{a\}}X_{\phi(b)}.
\end{equation}
The sum includes both assignments of the endpoints of $a$.
Thus $0\le U_e^{H,a}\le1$, and $U_e^{H,a}$ does not depend on $X_e$.
Here $H-a$ denotes the graph obtained from $H$ by deleting the edge $a$
and retaining all vertices.
Since $2N(n-2)_{v_H-2}=(n)_{v_H}$, counting the corresponding maps gives
\begin{equation}\label{eq:rooted-identities}
 t_H'-t_H=\frac{X_I'-X_I}{N}\sum_{a\in E(H)}U_I^{H,a},\qquad \frac1N\sum_eU_e^{H,a}=t_{H-a},\qquad
 \frac1N\sum_eX_eU_e^{H,a}=t_H.
\end{equation}
Conditioning on $X$ and averaging over $I$, we obtain
\begin{equation}\label{eq:motif-generator}
 N\E(t_H'-t_H\mid X)
 =\frac1N\sum_e\sum_{a\in E(H)}(L_e-X_e)U_e^{H,a} =\frac1N\sum_e\sum_{a\in E(H)}L_eU_e^{H,a}-kt_H.
\end{equation}
For $Q^k$, Taylor's formula and $|Q'-Q|\le N^{-1}$ give
\begin{equation*}
 |(Q')^k-Q^k-kQ^{k-1}(Q'-Q)|
 \le C_H(Q'-Q)^2\le C_HN^{-2}.
\end{equation*}
Using $N\E(Q'-Q\mid X)=\bar L-Q$, we get
\begin{equation}\label{eq:motif-power-drift}
 N\E((Q')^k-Q^k\mid X)
 =kQ^{k-1}(\bar L-Q)+O_H(N^{-1}).
\end{equation}
Subtract \eqref{eq:motif-power-drift} from \eqref{eq:motif-generator}.
Separating $L_e=\bar L+(L_e-\bar L)$ and using
$\sum_e(L_e-\bar L)=0$, we obtain
\begin{equation}\label{eq:transverse-generator}
\begin{aligned}
 N\E(T_H'-T_H\mid X)
 &=-kT_H+\bar L\sum_{a\in E(H)}T_{H-a}+\epsilon_H,\\
 \epsilon_H
 &=\frac1N\sum_e\sum_{a\in E(H)}
       (L_e-\bar L)(U_e^{H,a}-t_{H-a})+O_H(N^{-1}).
\end{aligned}
\end{equation}

To bound $\epsilon_H$, we use the next lemma for the counts in
\eqref{eq:rooted-density}.
Its proof is given in Section~\ref{sec:rooted-proof}.
\begin{lemma}\label{lem:rooted}
For every fixed $H,a$ and $r\ge1$,
\begin{align}
 \norm{U_e^{H,a}-U_f^{H,a}}_r+
 \norm{U_e^{H,a}-t_{H-a}}_r&\le C_{H,r}n^{-1/2},\label{eq:rooted-difference}\\
 \left\lVert N^{-1}\sum_e(L_e-\bar L)(U_e^{H,a}-t_{H-a})\right\rVert_r
 &\le C_{H,r}n^{-1}.\label{eq:rooted-product}
\end{align}
\end{lemma}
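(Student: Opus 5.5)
We prove Lemma~\ref{lem:rooted} by the same conditioning on a fixed set of rows used for Lemma~\ref{thm:fields}. Take the vertex set $\mathcal I=e\cup f$ (at most four vertices) and work under $\calF_{\mathcal I}$ with the row law $p_{B,1}$ of \eqref{eq:triangle-row}. A single flip changes $U_e^{H,a}$ only slightly: $U_e^{H,a}$ averages $\prod_{b\ne a}X_{\phi(b)}$ over $2(n-2)_{v_H-2}$ maps. A free edge $\{c,u\}$ with $c\in\mathcal I$ enters $O(n^{v_H-3})$ of these maps, so $\delta_{(c,u)}U_e^{H,a}\le C_H/n$ and $\norm{\boldsymbol\delta U_e^{H,a}}_{\ell^2}\le C_{H,k}n^{-1/2}$. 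By \eqref{eq:row-uniform-concentration}, the conditional $L^r$ fluctuation of $U_e^{H,a}$, and hence of $U_e^{H,a}-U_f^{H,a}$, is $O(n^{-1/2})$.

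It remains to compare the conditional means. Split the sum defining $U_e^{H,a}$ according to how many vertices of $\phi(V(H))\setminus e$ fall in $\mathcal I$; maps hitting $\mathcal I$ outside $e$ contribute $O(1/n)$. For the other maps, the product depends on the free rows only through the edges from the endpoints $i,j$ of $e$ to $\mathcal I^c$, times a fixed $B$-factor. Under $p_{B,0}$ the rows are i.i.d.\ copies of $p_B$. Hence $\E_0$ of this main part is the same function of $B$ for $e$ and for $f$, up to the $O(1/n)$ boundary terms. Equality holds because the law $p_B^{\otimes2}$ of the two rows is symmetric under relabelling the row indices, exactly as in the proof of \eqref{eq:interpolated-contrast-means}. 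By \eqref{eq:row-mean-comparison}, $|\E_1-\E_0|\le C_kn^{-1/2}\norm{\boldsymbol\delta U}_{\ell^2}=O(n^{-1})$. This gives $\norm{U_e^{H,a}-U_f^{H,a}}_r\le C_{H,r}n^{-1/2}$. The second bound in \eqref{eq:rooted-difference} follows by writing $t_{H-a}=N^{-1}\sum_fU_f^{H,a}$ from \eqref{eq:rooted-identities} and applying Minkowski's inequality.

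For \eqref{eq:rooted-product}, a direct Cauchy--Schwarz argument gives only $O(n^{-1})$ per term, using \eqref{eq:field-moments} and \eqref{eq:rooted-difference}, and averaging keeps $O(n^{-1})$. So the bound already follows crudely: $\norm{(L_e-\bar L)(U_e^{H,a}-t_{H-a})}_r\le\norm{L_e-\bar L}_{2r}\norm{U_e^{H,a}-t_{H-a}}_{2r}\le Cn^{-1/2}\cdot n^{-1/2}$. Minkowski's inequality over the average in $e$ then yields $C_{H,r}n^{-1}$. Thus no cancellation across edges is needed, and \eqref{eq:rooted-product} is a corollary of \eqref{eq:field-moments} and \eqref{eq:rooted-difference}.

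The main obstacle is the mean comparison in the first step. One must check carefully that, after conditioning, the $B$-dependent weights for $e$ and $f$ coincide exactly under the product law $p_{B,0}$. The maps that use vertices of $\mathcal I$ or internal edges $e_{ab}$ contribute only $O(1/n)$, and the rows are exchangeable. If the exact symmetry is obscured, we would instead condition on $\mathcal I=e\cup f$ and use a transposition of the two row sets, which preserves $p_{B,0}$, to match the two expectations. The residual is then controlled by Lemma~\ref{lem:row-interpolation}(ii).
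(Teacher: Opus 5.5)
Your proposal is correct and follows essentially the same route as the paper. Both arguments condition on $\calF_{\mathcal I}$ with $\mathcal I=e\cup f$, discard the $O(1/n)$ maps that send $V(H)\setminus a$ into $\mathcal I$, and get $O(n^{-1/2})$ conditional concentration from $O(1/n)$ coordinate oscillations. Both then obtain $\E_0\widetilde U_e=\E_0\widetilde U_f$ from the i.i.d.\ row structure of $p_{B,0}$, pay $O(n^{-1})$ to interpolate to $t=1$, and finish with the tower property, Minkowski, and H\"older for \eqref{eq:rooted-product}. The only difference is cosmetic: you apply concentration to the unrestricted $U_e^{H,a}$ and cite \eqref{eq:row-mean-comparison} directly, whereas the paper works with the restricted counts throughout and redoes the covariance bound by Cauchy--Schwarz.
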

By \eqref{eq:rooted-product} and $N^{-1}=O(n^{-2})$,
\begin{equation*}
 \norm{\epsilon_H}_s
 \le\sum_{a\in E(H)}
 \left\lVert\frac1N\sum_e(L_e-\bar L)(U_e^{H,a}-t_{H-a})\right\rVert_s
       +\frac{C_H}{N} \le C_{H,s}n^{-1}.
\end{equation*}
Each $H-a$ has $k-1$ edges.
The induction hypothesis, $0\le\bar L\le1$ and the bound for
$\norm{\epsilon_H}_s$ give
\begin{equation}\label{eq:motif-induction-error}
 \left\lVert\bar L\sum_{a\in E(H)}T_{H-a}+\epsilon_H\right\rVert_s
 \le\sum_{a\in E(H)}\norm{T_{H-a}}_s+\norm{\epsilon_H}_s \le C_{H,s}n^{-1}.
\end{equation}
Substituting \eqref{eq:transverse-generator} into
\eqref{eq:motif-moment-drift} and applying H\"older's inequality and
\eqref{eq:motif-induction-error}, we obtain
\begin{equation*}
\begin{aligned}
 k\norm{T_H}_s^s
 &\le\left|\E\left[|T_H|^{s-2}T_H
       \left\{\bar L\sum_{a\in E(H)}T_{H-a}+\epsilon_H\right\}\right]\right| +\frac{C_{H,s}}N\E|T_H|^{s-2}+C_{H,s}N^{1-s}\\
 &\le\frac{C_{H,s}}n\norm{T_H}_s^{s-1}
       +\frac{C_{H,s}}N\norm{T_H}_s^{s-2}+C_{H,s}N^{1-s}.
\end{aligned}
\end{equation*}
For the first two terms on the right, Young's inequality gives
\begin{equation*}
 \frac{C_{H,s}}n\norm{T_H}_s^{s-1}
 \le\frac k4\norm{T_H}_s^s+C_{H,s}n^{-s},\qquad \frac{C_{H,s}}N\norm{T_H}_s^{s-2}
 \le\frac k4\norm{T_H}_s^s+C_{H,s}N^{-s/2}.
\end{equation*}
Since $N\asymp n^2$ and $s\ge2$,
$N^{-s/2}+N^{1-s}\le C_sn^{-s}$.
Substituting these estimates, we obtain
\begin{equation*}
 k\norm{T_H}_s^s\le\frac k2\norm{T_H}_s^s+C_{H,s}n^{-s}.
\end{equation*}
Thus $\norm{T_H}_s\le C_{H,s}/n$, which completes the induction.
For $1\le s<2$, the result follows from
$\norm{T_H}_s\le\norm{T_H}_2\le C_{H,2}/n$.
\end{proof}

\section{Proofs of the lemmas}\label{sec:lemma-proofs}

Sections~\ref{sec:smooth-factors}--\ref{sec:quartic-moment-proof}
prove the Stein and scalar moment estimates used in Section~\ref{sec:proofs}.
The subsequent proofs use the conditional row estimates stated in
Section~\ref{sec:conditional-rows} and proved in
Section~\ref{sec:conditional-row-proofs} to control local fields,
empirical moments and rooted subgraph counts. The one-row moment and variance calculations
are proved in Sections~\ref{sec:overlap-proof}--\ref{sec:actual-refined-proof}.

\subsection{Proof of Lemma~\ref{lem:smooth-stein}}\label{sec:smooth-factors}

\begin{proof}
We first derive \eqref{eq:stein-solution} using an integrating factor
\citep[Section~2.1]{CGS2011}.
Put $\bar\varphi=\varphi-\pi\varphi$.
Since $\frac49\pi'=b\pi$, multiplying \eqref{eq:stein-equation}
by $\pi$ gives
\begin{equation*}
 \frac49(f\pi)'=\bar\varphi\pi.
\end{equation*}
For a bounded solution, $f(z)\pi(z)\to0$ as $z\to-\infty$.
Integration therefore gives \eqref{eq:stein-solution}.
Since $\int_{\R}\bar\varphi(t)\pi(t)\dd t=0$, we also have
\begin{equation}\label{eq:stein-tail-solution}
 f(z)=-\frac{9}{4\pi(z)}\int_z^\infty\bar\varphi(t)\pi(t)\dd t
     =-\frac94\int_0^\infty
        \bar\varphi(z+s)\frac{\pi(z+s)}{\pi(z)}\dd s.
\end{equation}
Conversely, differentiation of \eqref{eq:stein-solution} verifies
\eqref{eq:stein-equation} and gives $f\in C^1$.
Since $\varphi$ is locally absolutely continuous,
$\frac49f'=\bar\varphi-bf$ implies that $f'$ is locally absolutely continuous.

We next prove the uniqueness of the solution.
For two bounded solutions $f_1,f_2$ of \eqref{eq:stein-equation}, their difference $u=f_1-f_2$ satisfies $\frac49u'+bu=0$.
Since $\frac49\pi'=b\pi$, we have $(u\pi)'=0$, hence $u=C/\pi$ for some constant $C$.
Since $\pi(z)\to0$ as $|z|\to\infty$, boundedness of $u$ forces $C=0$, proving uniqueness.

We now prove \eqref{eq:smooth-stein-factors}.
Put $m=1$ for the Gaussian target and $m=3$ for the quartic target.
The Lipschitz condition gives
\begin{equation}\label{eq:stein-test-growth}
 |\bar\varphi(t)|\le|t|+\int_{\R}|x|\pi(x)\dd x,\qquad
 |\varphi'(t)|\le1\quad\text{for almost every }t.
\end{equation}
We first bound $f$ in the right tail.
Choose $R\ge1$ so that $b(t)\le-ct^m$ for $t\ge R$.
For $z\ge R$ and $s\ge0$, the density ratio in
\eqref{eq:stein-tail-solution} satisfies
\begin{equation}\label{eq:stein-tail-kernel}
 k(z,s)=\frac{\pi(z+s)}{\pi(z)}
       =\exp\left\{\frac94\int_0^s b(z+t)\dd t\right\}
       \le\ee^{-cz^m s}.
\end{equation}
For every integer $j\ge0$,
\begin{equation}\label{eq:stein-exponential-integrals}
 \int_0^\infty s^j\ee^{-cz^m s}\dd s
       =\frac{j!}{(cz^m)^{j+1}}.
\end{equation}
Substituting \eqref{eq:stein-tail-kernel} into \eqref{eq:stein-tail-solution} and applying \eqref{eq:stein-exponential-integrals} with $j=0,1$ gives
\begin{equation}\label{eq:stein-f-tail}
 |f(z)|\le C\int_0^\infty(1+z+s)k(z,s)\dd s
          \le Cz^{1-m}.
\end{equation}

We next bound $f'$.
The Lipschitz condition and the exponential bound in
\eqref{eq:stein-tail-kernel} justify differentiation under the integral
by dominated convergence applied to difference quotients. This gives
\begin{equation}\label{eq:stein-tail-derivatives}
\begin{aligned}
 f'(z)&=-\frac94\int_0^\infty
       \{\bar\varphi'(z+s)k(z,s)+\bar\varphi(z+s)\partial_z k(z,s)\}\dd s,\\
 f''(z)&=-\frac94\int_0^\infty
       \{\bar\varphi''(z+s)k(z,s)+2\bar\varphi'(z+s)\partial_z k(z,s)
                         +\bar\varphi(z+s)\partial_z^2 k(z,s)\}\dd s.
\end{aligned}
\end{equation}
Here the first identity uses the almost everywhere derivative $\varphi'$; the second is used only for $\varphi(z)=z^j$, $j=1,2$, in the proof of \eqref{eq:quartic-polynomial-stein-factors}.
To estimate the kernel derivatives in these formulas, we use
\eqref{eq:stein-tail-kernel} to obtain
\begin{equation}\label{eq:stein-kernel-derivatives}
\begin{aligned}
 \partial_z k(z,s)
 &=\frac94\{b(z+s)-b(z)\}k(z,s),\\
 \partial_z^2 k(z,s)
 &=\left[\frac{81}{16}\{b(z+s)-b(z)\}^2
       +\frac94\{b'(z+s)-b'(z)\}\right]k(z,s).
\end{aligned}
\end{equation}
For the quartic target and $z\ge R\ge1$,
\begin{equation*}
 |b(z+s)-b(z)|\le C(z^2s+zs^2+s^3),\qquad |b'(z+s)-b'(z)|\le C(zs+s^2).
\end{equation*}
For the Gaussian target, $b(z+s)-b(z)=-27a^2s/4$ and
$b'(z+s)-b'(z)=0$.
Combining these bounds with \eqref{eq:stein-exponential-integrals}
yields, in both cases,
\begin{equation}\label{eq:stein-kernel-integrals}
\begin{aligned}
 \int_0^\infty k(z,s)\dd s&\le Cz^{-m},\qquad \int_0^\infty|\partial_z k(z,s)|\dd s\le Cz^{-m-1},\\
 \int_0^\infty|\partial_z^2 k(z,s)|\dd s&\le Cz^{-m-2},\qquad \int_0^\infty(z+s)|\partial_z k(z,s)|\dd s\le Cz^{-m}.
\end{aligned}
\end{equation}
Using the bound on $\bar\varphi$ in \eqref{eq:stein-test-growth} and \eqref{eq:stein-kernel-integrals} in the formula for $f'$ in \eqref{eq:stein-tail-derivatives}, we obtain
\begin{equation}\label{eq:stein-fprime-tail}
 |f'(z)|\le Cz^{-m},\qquad z\ge R.
\end{equation}

Reflecting the density and replacing $b(z)$ by $-b(-z)$ gives the bounds \eqref{eq:stein-f-tail} and \eqref{eq:stein-fprime-tail} for $z\le -R$, with $z$ replaced by $|z|$.
On $[-R,R]$, the density has a positive minimum, so
\eqref{eq:stein-solution} and $\frac49f'=\bar\varphi-bf$ bound $f$ and $f'$.
Differentiating the Stein equation almost everywhere gives
\begin{equation}\label{eq:stein-third-derivative}
 \frac49f''=\varphi'-b'f-bf'\qquad\text{almost everywhere}.
\end{equation}
Since $b(z)=O(|z|^m)$ and $b'(z)=O(|z|^{m-1})$ in the tails,
the bounds for $\varphi',f,f'$ in \eqref{eq:stein-test-growth},
\eqref{eq:stein-f-tail} and \eqref{eq:stein-fprime-tail}, together with
the boundedness of $f,f'$ on $[-R,R]$, imply by
\eqref{eq:stein-third-derivative} that $\operatorname*{ess\,sup}|f''|\le C$.
Local absolute continuity of $f'$ therefore gives $\operatorname{Lip}(f')\le C$.
This proves \eqref{eq:smooth-stein-factors} and the boundedness of
\eqref{eq:stein-solution}.

For \eqref{eq:quartic-polynomial-stein-factors}, take the quartic target
and $\varphi(z)=z^j$, $j=1,2$.
Its finite moments make \eqref{eq:stein-solution} well defined.
We use \eqref{eq:stein-tail-solution} and
\eqref{eq:stein-tail-derivatives} with
$\bar\varphi(z)=z^j-\pi\varphi$.
For $d,r\in\{0,1,2\}$, the bound $(z+s)^d\le C(z^d+s^d)$,
\eqref{eq:stein-kernel-derivatives} and
\eqref{eq:stein-exponential-integrals} give
\begin{equation}\label{eq:stein-polynomial-kernel}
 \int_0^\infty(z+s)^d|\partial_z^r k(z,s)|\dd s
 \le Cz^{d-3-r},\qquad z\ge R.
\end{equation}
Applying \eqref{eq:stein-polynomial-kernel} to the formulas for $f,f',f''$ in \eqref{eq:stein-tail-solution} and \eqref{eq:stein-tail-derivatives} yields
\begin{equation*}
 |f(z)|\le Cz^{j-3},\qquad
 |f'(z)|\le Cz^{j-4},\qquad
 |f''(z)|\le Cz^{j-5},\qquad z\ge R.
\end{equation*}
Reflection gives the same estimates in the left tail, with $z$ replaced
by $|z|$.
On $[-R,R]$, use \eqref{eq:stein-solution}, \eqref{eq:stein-equation} and \eqref{eq:stein-third-derivative}.
Since $j=1,2$, all three functions are bounded.
This proves \eqref{eq:quartic-polynomial-stein-factors}.
\end{proof}

\subsection{Proof of Lemma~\ref{lem:gaussian-center}}\label{sec:center-moments}

\begin{proof}
For \eqref{eq:triangle-global-derivative}, differentiation of
\eqref{eq:critical-F} gives
\begin{equation*}
\begin{aligned}
 F'(q)&=2\alpha_cqF(q)(1-F(q)),\\
 F''(q)&=2\alpha_cF(q)(1-F(q))
          \{1-2\alpha_cq^2(2F(q)-1)\}.
\end{aligned}
\end{equation*}
If $F(q)<1/2$, then $F''(q)>0$.
If $F(q)\ge1/2$ and $q>0$, then
\begin{equation*}
 \frac{\dd}{\dd q}\{2\alpha_cq^2(2F(q)-1)\}
 =4\alpha_cq(2F(q)-1)+4\alpha_cq^2F'(q)>0.
\end{equation*}
At $q=2/3$,
\begin{equation*}
 F(2/3)=\frac23,\qquad
 2\alpha_c(2/3)^2(2F(2/3)-1)=1,\qquad F'(2/3)=1.
\end{equation*}
Hence
\begin{equation*}
 F''(q)>0\quad(0\le q<2/3),\qquad
 F''(q)<0\quad(2/3<q\le1),
\end{equation*}
and $F'(q)<1$ for $q\ne2/3$.
Further differentiation and Taylor's formula give
\begin{equation}\label{eq:critical-derivative-expansion}
 F''(2/3)=0,\qquad F'''(2/3)=-\frac{27}{2},\qquad
 1-F'(2/3+x)=\frac{27}{4}x^2+O(x^3).
\end{equation}
By continuity and compactness,
\begin{equation*}
 \inf_{\substack{0\le q\le1\\q\ne2/3}}
       \frac{1-F'(q)}{(q-2/3)^2}>0.
\end{equation*}
This proves the first bound in \eqref{eq:triangle-global-derivative}.
For the second bound, by the first bound in \eqref{eq:triangle-global-derivative},
\begin{equation*}
\begin{aligned}
 -(q-2/3)\{F(q)-q\}
 &=(q-2/3)^2\int_0^1
       \{1-F'(2/3+t(q-2/3))\}\dd t\\
 &\ge c(q-2/3)^4\int_0^1t^2\dd t
   =\frac c3(q-2/3)^4.
\end{aligned}
\end{equation*}

For the root expansion \eqref{eq:endpoint-root}, further differentiation
gives $F^{(4)}(2/3)=-81/4$.
Together with \eqref{eq:beta-values}, Taylor's formula yields
\begin{equation}\label{eq:endpoint-expansion}
\begin{aligned}
 F(2/3+x)-(2/3+x)&=-\frac94x^3-\frac{27}{32}x^4+O(x^5),\\
 \beta(2/3+x)&=\tau+\beta'(2/3)x+O(x^2).
\end{aligned}
\end{equation}
Set $t=n^{-1/3}$ and $q=2/3+ty$.
By \eqref{eq:endpoint-expansion}, the equation $\mathcal B_n(q)=0$ becomes
\begin{equation*}
\begin{aligned}
 0=n\mathcal B_n(2/3+ty)
 &=\frac{F(2/3+ty)-(2/3+ty)}{t^3}+\beta(2/3+ty)\\
 &=-\frac94y^3+\tau
       +t\left(-\frac{27}{32}y^4+\beta'(2/3)y\right)+O(t^2).
\end{aligned}
\end{equation*}
The right-hand side extends to a $C^2$ function at $t=0$, and
\begin{equation*}
 -\frac94a^3+\tau=0,\qquad
 \left.\frac{\partial}{\partial y}
       \left(-\frac94y^3+\tau\right)\right|_{y=a}
       =-\frac{27}{4}a^2\ne0.
\end{equation*}
The implicit function theorem gives a root of the form
\begin{equation*}
 y(t)=a+dt+O(t^2),\qquad
 u_n=\frac23+an^{-1/3}+dn^{-2/3}+O(n^{-1}).
\end{equation*}
Substituting this into $\mathcal B_n(u_n)=0$ gives
\begin{equation*}
 0=\left(-\frac94a^3+\tau\right)n^{-1} +\left(-\frac{27}{4}a^2d-\frac{27}{32}a^4
                    +\beta'(2/3)a\right)n^{-4/3}
       +O(n^{-5/3}).
\end{equation*}
Since $a^3=4\tau/9$, comparison of the coefficients gives
\begin{equation*}
 d=\frac{-27a^4/32+\beta'(2/3)a}{(27/4)a^2}=c_2,
\end{equation*}
where the last identity follows from \eqref{eq:beta-values} and
the definition of $c_2$ in Theorem~\ref{thm:triangle-gaussian}.
This proves \eqref{eq:endpoint-root}.

We next prove uniqueness and \eqref{eq:triangle-dissipation}.
Since $\mathcal B_n(u_n)=0$, by \eqref{eq:triangle-global-derivative}, for $q\in[0,1]\setminus\{u_n\}$,
\begin{equation*}
\begin{aligned}
 -\frac{\mathcal B_n(q)}{q-u_n}
 &=\int_0^1\left\{1-F'(u_n+t(q-u_n))
                  -\frac{\beta'(u_n+t(q-u_n))}{n}\right\}\dd t\\
 &\ge c\int_0^1\{u_n-2/3+t(q-u_n)\}^2\dd t
                 -\frac{\norm{\beta'}_\infty}{n}.
\end{aligned}
\end{equation*}
The quadratic integral satisfies
\begin{equation*}
\begin{aligned}
 \int_0^1\{u_n-2/3+t(q-u_n)\}^2\dd t
 &=(u_n-2/3)^2+(u_n-2/3)(q-u_n)+\frac13(q-u_n)^2\\
 &\ge\frac1{18}\{(u_n-2/3)^2+(q-u_n)^2\}.
\end{aligned}
\end{equation*}
By \eqref{eq:endpoint-root},
$(u_n-2/3)^2=a^2n^{-2/3}+O(n^{-1})$.
Thus, for all large $n$,
\begin{equation}\label{eq:gaussian-secant-bound}
 -\frac{\mathcal B_n(q)}{q-u_n}
 \ge\frac c{18}\{(q-u_n)^2+(u_n-2/3)^2\}
              -\frac{\norm{\beta'}_\infty}{n}
 \ge c_*\{(q-u_n)^2+n^{-2/3}\}>0.
\end{equation}
The strict inequality proves uniqueness.
Multiplying \eqref{eq:gaussian-secant-bound} by $(q-u_n)^2$ gives
\eqref{eq:triangle-dissipation}.

Finally, \eqref{eq:critical-derivative-expansion} and
\eqref{eq:endpoint-root} give
\begin{equation}\label{eq:Gaussian-local-drift}
\begin{aligned}
 \mathcal B_n'(u_n)
 &=F'(u_n)-1+\frac{\beta'(u_n)}n\\
 &=-\frac{27}{4}(u_n-2/3)^2+O(|u_n-2/3|^3+n^{-1}) =-\frac{27a^2}{4}n^{-2/3}+O(n^{-1}).
\end{aligned}
\end{equation}
\end{proof}

\subsection{Proof of Lemma~\ref{lem:quartic-moment}}\label{sec:quartic-moment-proof}

\begin{proof}
We prove $\E(Q-2/3)^4\le Cn^{-2}$.
Since $Q'$ and $Q$ have the same law, \eqref{eq:heatbath} gives
\begin{equation}\label{eq:quartic-stationarity-balance}
\begin{aligned}
 0&=\E\{(Q'-2/3)^2-(Q-2/3)^2\}\\
 &=2\E[(Q-2/3)(Q'-Q)]+\E(Q'-Q)^2 =\frac2N\E[(Q-2/3)b_n]+\frac{\E v_n}{N^2}.
\end{aligned}
\end{equation}
By Proposition~\ref{prop:conditional-moments}\textup{(i)} and
\eqref{eq:beta-correction},
\begin{equation*}
 b_n=F(Q)-Q+\frac{\beta(Q)}n
      +\varepsilon_n,
 \qquad \norm{\varepsilon_n}_2\le Cn^{-3/2}.
\end{equation*}
Substituting this expression for $b_n$ into \eqref{eq:quartic-stationarity-balance} gives
\begin{equation}\label{eq:quartic-moment-balance}
\begin{aligned}
 -\E[(Q-2/3)\{F(Q)-Q\}]
 &=\frac1n\E[(Q-2/3)\beta(Q)]\\
 &\quad+\E[(Q-2/3)\varepsilon_n]+\frac{\E v_n}{2N}.
\end{aligned}
\end{equation}
For the left-hand side, \eqref{eq:triangle-global-derivative} gives
\begin{equation*}
 -\E[(Q-2/3)\{F(Q)-Q\}]\ge\frac c3\E(Q-2/3)^4.
\end{equation*}
For the right-hand side, $\beta(2/3)=0$ implies
\begin{equation*}
 |\beta(q)|\le\norm{\beta'}_\infty|q-2/3|\le C|q-2/3|.
\end{equation*}
Using this bound, the Cauchy--Schwarz inequality and
$0\le v_n\le1$, we obtain
\begin{equation*}
\begin{aligned}
 \frac1n\bigl|\E[(Q-2/3)\beta(Q)]\bigr|
 &\le\frac Cn\E(Q-2/3)^2,\\
 \bigl|\E[(Q-2/3)\varepsilon_n]\bigr|
 &\le\{\E(Q-2/3)^2\}^{1/2}\norm{\varepsilon_n}_2 \le Cn^{-3/2}\{\E(Q-2/3)^2\}^{1/2},\\
 \frac{\E v_n}{2N}&\le\frac1{2N}\le Cn^{-2}.
\end{aligned}
\end{equation*}
Substituting these estimates into \eqref{eq:quartic-moment-balance} yields
\begin{equation*}
 \frac c3\E(Q-2/3)^4
 \le Cn^{-1}\E(Q-2/3)^2
       +Cn^{-3/2}\{\E(Q-2/3)^2\}^{1/2}+Cn^{-2}.
\end{equation*}
Multiplying by $n^2$ and using $W_n=\sqrt n(Q-2/3)$ and
H\"older's inequality gives
\begin{equation}\label{eq:quartic-tightness}
 \frac c3\E|W_n|^4
 \le C\{\E W_n^2+(\E W_n^2)^{1/2}+1\} \le C\{(\E|W_n|^4)^{1/2}+(\E|W_n|^4)^{1/4}+1\}.
\end{equation}
Young's inequality now gives
\begin{equation*}
 \frac c3\E|W_n|^4\le\frac c6\E|W_n|^4+C
 \quad\Longrightarrow\quad\sup_n\E|W_n|^4\le\frac{6C}{c}<\infty.
\end{equation*}
\end{proof}

\subsection{Proof of Lemma~\ref{thm:fields}}\label{sec:row-contrast}
\begin{proof}
For $U=d_i-d_j$ or $U=M_e-M_f$, choose $\mathcal I$ to contain
the vertices involved, so that $k\le4$.
We use the conditional row law from Section~\ref{sec:conditional-rows}.
Lemma~\ref{lem:row-interpolation} at $t=1$ gives
\begin{equation}\label{eq:field-conditional-comparison}
 \norm{U-\E(U\mid\calF_{\mathcal I})}_{L^r(\PP_n(\cdot\mid\calF_{\mathcal I}))}
 \le C_{k,r}\norm{\boldsymbol\delta U}_{\ell^2}\le C_r\sqrt n.
\end{equation}
The conditional mean is bounded by \eqref{eq:interpolated-contrast-means}.
Minkowski's inequality and the tower property therefore give
\begin{align}
 \norm U_r
 &\le\norm{U-\E(U\mid\calF_{\mathcal I})}_r
       +\norm{\E(U\mid\calF_{\mathcal I})}_r,
       \label{eq:field-conditional-decomposition}\\
 \norm{U-\E(U\mid\calF_{\mathcal I})}_r^r
 &=\E\left[\norm{U-\E(U\mid\calF_{\mathcal I})}
       _{L^r(\PP_n(\cdot\mid\calF_{\mathcal I}))}^r\right]
 \le C_r n^{r/2}.\label{eq:conditional-tower-moment}
\end{align}
Thus $\norm U_r\le C_r\sqrt n+C_r\le C_r\sqrt n$, proving the
first bound in \eqref{eq:field-moments}.
By \eqref{eq:fields} and $\sup\ell'=1/4$,
\begin{equation*}
 \norm{L_e-L_f}_r
 \le\frac{|\alpha|}{4n}\norm{M_e-M_f}_r\le C_rn^{-1/2},\qquad \norm{L_e-\bar L}_r
 \le\frac1N\sum_f\norm{L_e-L_f}_r\le C_rn^{-1/2}.
\end{equation*}

For \eqref{eq:local-moments}, use $\sum_i d_i=n(n-1)Q$ to write
\begin{equation*}
 \sum_{i<j}(d_i-d_j)^2
 =n\sum_i d_i^2-\left(\sum_i d_i\right)^2 =n\sum_i\{d_i-(n-1)Q\}^2.
\end{equation*}
Dividing by $2nN$ gives
\begin{equation}\label{eq:degree-difference}
 V_n=\frac1N\sum_{i<j}\frac{(d_i-d_j)^2}{2n}.
\end{equation}
The first line of \eqref{eq:field-moments} yields
\begin{equation}\label{010}
 \norm{V_n}_r\le\frac1N\sum_{i<j}
                    \frac{\norm{d_i-d_j}_{2r}^2}{2n}\le C_r.
\end{equation}
By \eqref{eq:triangle-first-moment} and  \eqref{010}, we have
\begin{equation*}
\begin{aligned}
 \norm{\meanM-nQ^2}_r
 &\le\norm{V_n}_r+2\le C_r,\\
 \norm{M_e-nQ^2}_r
 &\le\norm{M_e-\meanM}_r+\norm{\meanM-nQ^2}_r \le\frac1N\sum_f\norm{M_e-M_f}_r+C_r\le C_r\sqrt n.
\end{aligned}
\end{equation*}
Finally, \eqref{eq:triangle-empirical} gives
\begin{equation*}
 \norm{K_n}_r\le\frac1{nN}\sum_e\norm{M_e-nQ^2}_{2r}^2\le C_r.
\end{equation*}
\end{proof}

\subsection{Proof of Lemma~\ref{thm:triangle-cubic}}\label{sec:cubic-proof}

\begin{proof}
We first prove the bound for $N^{-1}\sum_e(M_e-\meanM)^3$ in
\eqref{eq:overlap-cubic-average}. For each $e$,
\begin{equation*}
\begin{aligned}
 \frac1{N^2}\sum_{f,g}(M_e-M_f)^2(M_e-M_g)
 &=(M_e-\meanM)\frac1N\sum_f(M_e-M_f)^2\\
 &=(M_e-\meanM)\left\{(M_e-\meanM)^2
                  +\frac1N\sum_f(M_f-\meanM)^2\right\}.
\end{aligned}
\end{equation*}
Averaging over $e$ and using $\sum_e(M_e-\meanM)=0$ gives
\begin{equation}\label{eq:empirical-cubic-difference}
 \frac1N\sum_e(M_e-\meanM)^3
 =\frac1{N^3}\sum_{e,f,g}(M_e-M_f)^2(M_e-M_g).
\end{equation}
Put $Y_{efg}=n^{-3/2}(M_e-M_f)^2(M_e-M_g)$. Then
\begin{equation*}
 \left\lVert\frac1N\sum_e(M_e-\meanM)^3\right\rVert_2^2
 =n^3\E\left|\frac1{N^3}\sum_{e,f,g}Y_{efg}\right|^2.
\end{equation*}
It is therefore enough to bound this expectation by $C/n$.
Lemma~\ref{thm:fields} and H\"older's inequality give
\begin{equation*}
 \norm{Y_{efg}}_2
 \le n^{-3/2}\norm{M_e-M_f}_6^2\norm{M_e-M_g}_6\le C.
\end{equation*}
Consequently $|\E[Y_{efg}Y_{e'f'g'}]|\le C$ for every pair of triples.
Each product involves six edges, hence twelve endpoint positions.
If an endpoint is repeated, at most eleven distinct vertices occur.
There are at most $Cn^{11}$ such choices, whose contribution to the
squared average is at most $Cn^{11}/N^6\le C/n$. Thus
\begin{equation}\label{eq:cubic-averaging-budget}
 \E\left|\frac1{N^3}\sum_{e,f,g}Y_{efg}\right|^2
 =\frac1{N^6}\sum_{\substack{e,f,g\\e',f',g'}}
                      \E[Y_{efg}Y_{e'f'g'}] \le\frac Cn+
   \sup_{\substack{e,f,g,e',f',g'\\
                   |e\cup f\cup g\cup e'\cup f'\cup g'|=12}}
   |\E[Y_{efg}Y_{e'f'g'}]|.
\end{equation}

Fix six edges with twelve distinct endpoints, and let $\mathcal I$
be their endpoint set. Condition on $\calF_{\mathcal I}$ as in
Section~\ref{sec:conditional-rows}, with $k=12$ and $L=n-k$.
The conditional covariance identity gives
\begin{equation}\label{eq:cubic-conditional-product}
 \E[Y_{efg}Y_{e'f'g'}]
 =\E\left\{\cov_1(Y_{efg},Y_{e'f'g'})
                  +(\E_1Y_{efg})(\E_1Y_{e'f'g'})\right\}.
\end{equation}

We first estimate the means under the independent-row law $p_{B,0}$.
Since the six endpoints of $e,f,g$ are distinct, the centered overlaps
\begin{equation*}
 M_e-\E_0M_e,\qquad M_f-\E_0M_f,\qquad M_g-\E_0M_g
\end{equation*}
are independent and identically distributed under $p_{B,0}$:
centering removes the fixed internal-edge contributions. Put
\begin{equation*}
 a=\E_0(M_e-M_f),\qquad b=\E_0(M_e-M_g),\qquad |a|+|b|\le2k.
\end{equation*}
Using \eqref{eq:interpolated-contrast-moments} with $U=M_e-M_f$, $t=0$ and $r=2$ gives
\begin{equation*}
 \var_0(M_e)=\tfrac12\var_0(M_e-M_f)
 \le\tfrac12\E_0[(M_e-M_f)^2]\le Cn.
\end{equation*}
Expanding the product and using independence and centering, we obtain
\begin{equation}\label{eq:overlap-product-expansion}
 n^{3/2}\E_0Y_{efg}
 =\E_0[(M_e-M_f)^2(M_e-M_g)] =\E_0[(M_e-\E_0M_e)^3]+2(a+b)\var_0(M_e)+a^2b.
\end{equation}
It remains to bound $\E_0[(M_e-\E_0M_e)^3]$.
To quantify the variation of the exterior degrees, recall $B,L$ from
Section~\ref{sec:conditional-rows} and define
\begin{equation}\label{eq:triangle-background}
 q_B=\frac{\mathbf1^\top B\mathbf1}{L(L-1)},\qquad
 H_B=1+\frac{\norm{B\mathbf1-(L-1)q_B\mathbf1}_{\ell^2}}{n}.
\end{equation}
These definitions apply to any fixed vertex set $\mathcal I\subset[n]$
of size $k$, provided $L=n-k\ge2$. For every graph on $\mathcal I^c$
with adjacency matrix $B$, we have $1\le H_B\le C\sqrt n$.
The following lemma bounds the third centered moment of
$\mathbf z_1^\top\mathbf z_2$ for independent vectors
$\mathbf z_1,\mathbf z_2$ with law $p_B$.
\begin{lemma}\label{lem:overlap-cubic}
Let $\mathbf z_1=(z_{1u})_{u\in\mathcal I^c}$ and $\mathbf z_2=(z_{2u})_{u\in\mathcal I^c}$ be independent random vectors with law $p_B$ defined in \eqref{eq:one-row-law}; the subscripts $1,2$ label the copies.
 Then
\begin{equation}\label{eq:overlap-cubic}
 \left|\E_B\{\mathbf z_1^\top \mathbf z_2-\E_B(\mathbf z_1^\top \mathbf z_2)\}^3\right|
 \le CnH_B.
\end{equation}
\end{lemma}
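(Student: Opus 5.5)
Write $\mathbf w=\mathbf z_1-\boldsymbol\mu$ and $\mathbf w'=\mathbf z_2-\boldsymbol\mu$, where $\boldsymbol\mu=\E_B\mathbf z_1$. Let $\Sigma$ be the covariance matrix of $p_B$. Then
\[
 \mathbf z_1^\top\mathbf z_2-\E_B(\mathbf z_1^\top\mathbf z_2)
 =\boldsymbol\mu^\top\mathbf w+\boldsymbol\mu^\top\mathbf w'+\mathbf w^\top\mathbf w'
 =:S_1+S_2+P .
\]
I will expand the cube and use independence of the two copies together with $\E\mathbf w=\E\mathbf w'=0$.

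\textbf{Step 1: expand and discard.} The terms $\E S_1^2S_2$ and $\E S_1S_2^2$ vanish. So does $\E S_1S_2P$? No: it equals $\boldsymbol\mu^\top\Sigma\Sigma\boldsymbol\mu$, which survives. The surviving terms are
\begin{gather*}
 \E S_1^3+\E S_2^3=2\kappa_3(\boldsymbol\mu^\top\mathbf w),\qquad
 3\E S_1^2P=3\E[(\boldsymbol\mu^\top\mathbf w)^2\mathbf w]^\top\E\mathbf w'=0,\\
 6\E S_1S_2P=6\,\boldsymbol\mu^\top\Sigma^2\boldsymbol\mu,\qquad
 3\E S_1P^2=3\E[(\boldsymbol\mu^\top\mathbf w)\,\mathbf w^\top\Sigma\mathbf w],\qquad
 \E P^3=\sum_{u,v,x}\bigl(\E w_uw_vw_x\bigr)^2 .
\end{gather*}
Lemma~\ref{lem:comparison}, applied with the influence matrix $\mathcal C=\frac{|\alpha|}{4n}B$ (operator norm at most $|\alpha|/4<1$, as in Lemma~\ref{lem:row-interpolation}), gives $\norm{\Sigma}_{\rm op}\le C$. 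Hence $\boldsymbol\mu^\top\Sigma^2\boldsymbol\mu\le C\norm{\boldsymbol\mu}^2_{\ell^2}\le Cn$.

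\textbf{Step 2: linear statistics.} The vector $\boldsymbol\mu$ has bounded entries, so $\boldsymbol\delta(\boldsymbol\mu^\top\mathbf w)$ has $\ell^2$ norm $O(\sqrt n)$. Lemma~\ref{lem:comparison} then bounds every centered moment of $\boldsymbol\mu^\top\mathbf w$ by $O(n^{r/2})$. This only gives $\kappa_3=O(n^{3/2})$, which is too crude; this is where $H_B$ must enter. I would compare $p_B$ with the product Bernoulli law having the same marginals. The conditional field at $u$ is $h+\frac\alpha n(B\mathbf z)_u$, and $(B\mathbf z)_u$ differs from its typical value through the degree fluctuation $(B\mathbf1)_u-(L-1)q_B$, whose total $\ell^2$ size is $n(H_B-1)$.

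For an exact product law with inhomogeneous means, the third cumulant of $\boldsymbol\mu^\top\mathbf w$ is a sum of $n$ bounded terms, hence $O(n)$. The correction from the interaction has to be expanded to first order. Writing $\kappa_3$ as a derivative of the cumulant generating function, this amounts to the tilt identity $\frac{\dd}{\dd s}\cov_s(F,F)=\kappa_3$ along $p^{(s)}\propto e^{sF}p_B$. The goal is to bound $\kappa_3$ by $\frac Cn\bigl(n^{3/2}\cdot\norm{B\mathbf1-(L-1)q_B\mathbf1}_{\ell^2}/n\bigr)+Cn$. The mean-field (homogeneous) part cancels, because there the row law is exchangeable and the linear statistic reduces to a degree whose third cumulant is $O(n)$. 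The same decomposition handles $\E S_1P^2$, writing $\mathbf w^\top\Sigma\mathbf w-\Tr\Sigma$ as a quadratic statistic whose covariance with $S_1$ is controlled by Lemma~\ref{lem:comparison}.

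\textbf{Step 3: the term $\E P^3$.} Splitting diagonal and off-diagonal index triples, the diagonal part is $O(n)$. The off-diagonal third moments $\E w_uw_vw_x$ are of order $n^{-2}$ in the homogeneous case, with deviations of order $n^{-1}$ times the local degree fluctuations. Summing their squares gives $O(n)+O(nH_B)$, via Lemma~\ref{lem:comparison} applied to $\sup_{\norm{T}\le1}\E\sum T_{uvx}w_uw_vw_x$.

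\textbf{Main obstacle.} I expect the main obstacle to be Step 2: extracting the $n$ rather than $n^{3/2}$ bound for the third cumulant of a linear statistic under $p_B$. Lemma~\ref{lem:comparison} gives only sub-Gaussian scale, so the decisive cancellation must come from a second-order Dobrushin/interpolation expansion around the homogeneous model.
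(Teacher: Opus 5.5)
Your Step 1 expansion is correct. The terms $\boldsymbol\mu^\top\Sigma^2\boldsymbol\mu$, $\E_B[(\boldsymbol\mu^\top\mathbf w)\,\mathbf w^\top\Sigma\mathbf w]$ and $\E_BP^3$ are indeed $O(n)$ from Lemma~\ref{lem:comparison} alone. The off-diagonal entries of $(I-\mathcal C)^{-1}$ are $O(1/n)$, which gives bounded absolute row sums of $\Sigma$ and $|\E_Bw_uw_vw_x|=O(1/n)$ unless $u=v=x$.

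\textbf{The gap is Step 2.} Everything hinges on $\E_B(\boldsymbol\mu^\top\mathbf w)^3=O(nH_B)$, and you give only a heuristic for it, one that would not work.
\begin{itemize}
\item A first-order expansion of the interaction around the product law is not valid. The quadratic part $\frac12\mathbf w^\top J_B\mathbf w$ has $O(1)$ fluctuations. It already changes $\var_B(\mathbf 1^\top\mathbf z)$ from about $nv_{\alpha,h}(q_B)$ to $nA_{\alpha,h}(q_B)=nv_{\alpha,h}(q_B)/(1-\alpha q_Bv_{\alpha,h}(q_B))$, an $O(1)$ factor. So its effect on cumulants has to be resummed, not linearized.
\item The tilt identity $\frac{\dd}{\dd s}\var_s(F)=\kappa_3$ only restates the problem. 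The tilted law is again a row law with shifted fields, and you would need $\boldsymbol\mu^\top\Sigma_s\boldsymbol\mu$ to be $O(n)$-Lipschitz in $s$, which is itself a third-moment statement.
\item The claim that the homogeneous part cancels by exchangeability has nothing to stand on. For any regular $B$ other than the complete graph (e.g.\ complete bipartite) one has $H_B=1$, yet the row law is not exchangeable. There $(B\mathbf z)_u$ fluctuates through the structure of $B$, not through degree deviations.
\end{itemize}
Since the bound must be uniform in $B$, you need an $O(n)$ estimate in which $H_B$ plays no role. That is what the paper proves: $|\E_B\{\cdot\}^3|\le Cn\le CnH_B$.

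\textbf{The missing ingredient} is a self-consistent bound obtained from the single-site heat-bath generator $\calL_B$ of $p_B$.
\begin{enumerate}
\item Set $m_u=h+(J_B\boldsymbol\mu)_u$ and $D_\ell=\diag\{\ell'(m_u)\}$. Linearizing $p_u(\mathbf z)=\ell(m_u+(J_B\mathbf w)_u)$ gives, for $L_{\mathbf a}=\mathbf a^\top\mathbf w$,
\[ \calL_BL_{\mathbf a}=-L_{\mathbf a}+L_{J_BD_\ell\mathbf a}+\mathbf a^\top\boldsymbol\rho,\qquad \norm{\rho_u}_{L^s(p_B)}\le C_s/n. \]
\item Take weights with $\ell^2$ norm at most $1$ and $\ell^\infty$ norm at most $K_0n^{-1/2}$, and expand $\E_B\calL_B(L_{\mathbf a}L_{\mathbf b}L_{\mathbf c})=0$. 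This yields
\[ 3\E_BL_{\mathbf a}L_{\mathbf b}L_{\mathbf c}=\E_BL_{J_BD_\ell\mathbf a}L_{\mathbf b}L_{\mathbf c}+(\text{two permutations})+O(n^{-1/2}). \]
The carr\'e-du-champ terms $\E_B[\Gamma_B(L_{\mathbf a},L_{\mathbf b})L_{\mathbf c}]$ are covariances controlled by Lemma~\ref{lem:comparison}. The triple-jump term is controlled by the $\ell^\infty$ constraint.
\item $J_BD_\ell$ has both $\ell^2$ and $\ell^\infty$ operator norms at most $|\alpha|/4$, so it maps this class of weights into $|\alpha|/4$ times itself. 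Hence the supremum $M_n$ of these third moments satisfies $3M_n\le\frac{3|\alpha|}{4}M_n+Cn^{-1/2}$. Rescaling gives $O(n)$ for any three linear statistics with bounded weights, uniformly in $B$.
\item With that lemma, conditioning on $\mathbf z_2$ suffices. Write $\mathbf z_1^\top\mathbf z_2-\boldsymbol\mu^\top\boldsymbol\mu=\mathbf z_2^\top\mathbf w+\boldsymbol\mu^\top\mathbf w'$. The cube then equals
\[ \E_B[\E_B\{(\mathbf z_2^\top\mathbf w)^3\mid\mathbf z_2\}]+\E_B(\boldsymbol\mu^\top\mathbf w')^3+3\cov_B(\boldsymbol\mu^\top\mathbf z_2,\mathbf z_2^\top\Sigma\mathbf z_2). \]
The first two terms are $O(n)$ by the lemma, since the weights lie in $[0,1]$. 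The last is $O(n)$ by Lemma~\ref{lem:comparison} and the bounded row sums of $\Sigma$.
\end{enumerate}
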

The proof is given in Section~\ref{sec:overlap-proof}.
Combining \eqref{eq:overlap-cubic} with \eqref{eq:overlap-product-expansion} gives
\begin{equation*}
 |\E_0Y_{efg}|\le n^{-3/2}(CnH_B+Cn)\le CH_Bn^{-1/2}.
\end{equation*}
The same estimate holds for $Y_{e'f'g'}$.

We next bound the change from $\E_0$ to $\E_1$ and the conditional
covariance of the two triples.
H\"older's inequality and \eqref{eq:interpolated-contrast-moments}, applied with $r=12$ to $M_e-M_f$ and $M_e-M_g$, give
\begin{equation*}
 \sup_{0\le t\le1}\norm{Y_{efg}}_{L^4(p_{B,t})}
 \le n^{-3/2}\sup_{0\le t\le1}
       \norm{M_e-M_f}_{L^{12}(p_{B,t})}^2
       \norm{M_e-M_g}_{L^{12}(p_{B,t})}\le C.
\end{equation*}
The same bound holds for $Y_{e'f'g'}$. Since
$\norm{Y_{efg}-\E_tY_{efg}}_{L^4(p_{B,t})}
\le2\norm{Y_{efg}}_{L^4(p_{B,t})}$, their centered fourth moments are
bounded uniformly in $t$.
The two triples depend on disjoint groups of rows.
Applying \eqref{eq:separated-mean} and \eqref{eq:separated-covariance}
from Lemma~\ref{lem:separated} with $F=Y_{efg}$ and $G=Y_{e'f'g'}$ gives
\begin{equation*}
\begin{aligned}
 |\E_1Y_{efg}-\E_0Y_{efg}|
   +|\E_1Y_{e'f'g'}-\E_0Y_{e'f'g'}|&\le Cn^{-1/2},\\
 |\E_1Y_{efg}|+|\E_1Y_{e'f'g'}|
 &\le|\E_0Y_{efg}|+|\E_0Y_{e'f'g'}|+Cn^{-1/2}
   \le CH_Bn^{-1/2},\\
 |\cov_1(Y_{efg},Y_{e'f'g'})|&\le Cn^{-1}.
\end{aligned}
\end{equation*}
Substituting these estimates into \eqref{eq:cubic-conditional-product} gives
\begin{equation}\label{eq:cubic-product-background}
 |\E[Y_{efg}Y_{e'f'g'}]|
 \le\E|\cov_1(Y_{efg},Y_{e'f'g'})|
       +\E\bigl[|\E_1Y_{efg}|\,|\E_1Y_{e'f'g'}|\bigr] \le Cn^{-1}+Cn^{-1}\E H_B^2.
\end{equation}

It remains to bound $\E H_B^2$.
Let $S_{\mathcal I}$ be the number of edges with at least one endpoint
in $\mathcal I$. Then
\[
 NQ=\binom L2 q_B+S_{\mathcal I},
 \qquad
 0\le S_{\mathcal I}\le N-\binom L2.
\]
Since $0\le q_B\le1$, we obtain
\begin{equation}\label{eq:background-density}
 |q_B-Q|
 =\frac{\left|\left(N-\binom L2\right)q_B-S_{\mathcal I}\right|}{N} \le\frac{N-\binom L2}{N}
 =\frac{k(2n-k-1)}{n(n-1)}
 \le\frac{C_k}{n}.
\end{equation}
For each $u\in\mathcal I^c$, the identity
$(B\mathbf1)_u=d_u-\sum_{a\in\mathcal I}X_{au}$ gives
\[
 (B\mathbf1)_u-(L-1)q_B
 =d_u-(n-1)Q
   +kQ-\sum_{a\in\mathcal I}X_{au}
   +(L-1)(Q-q_B).
\]
By \eqref{eq:background-density} and
$\left|kQ-\sum_{a\in\mathcal I}X_{au}\right|\le k$,
the last three terms have absolute value at most $C_k$.
The Euclidean triangle inequality and the definition of $V_n$
in \eqref{eq:triangle-empirical} therefore yield
\[
\begin{aligned}
 &\norm{B\mathbf1-(L-1)q_B\mathbf1}_{\ell^2}\\
 &\quad\le
 \left(\sum_{u\in\mathcal I^c}\{d_u-(n-1)Q\}^2\right)^{1/2}
 +C_k\sqrt L\\
 &\quad\le \sqrt{n(n-1)V_n}+C_k\sqrt n.
\end{aligned}
\]
Using the definition of $H_B$ in \eqref{eq:triangle-background},
we conclude that
\begin{equation}\label{eq:background-degree-bound}
 H_B
 \le1+\frac{\sqrt{n(n-1)V_n}}{n}+C_kn^{-1/2}
 \le1+\sqrt{V_n}+C_kn^{-1/2}.
\end{equation}
For every fixed $s\ge1$, Minkowski's and Cauchy--Schwarz inequalities,
together with \eqref{eq:background-degree-bound}, give
\[
\begin{aligned}
 \norm{H_B}_s
 &\le1+\norm{\sqrt{V_n}}_s+C_kn^{-1/2}\\
 &\le1+\norm{V_n}_s^{1/2}+C_kn^{-1/2}.
\end{aligned}
\]
Applying \eqref{eq:local-moments} with $r=s$ proves
\begin{equation}\label{eq:background-H-moments}
 \sup_{n\ge k+2}\E H_B^s\le C_{k,s}.
\end{equation}
Taking $s=2$ in \eqref{eq:background-H-moments} gives
$\E H_B^2\le C_k$.
Combining this moment bound with
\eqref{eq:cubic-product-background}, with $k=12$, yields
\begin{equation}\label{eq:cubic-disjoint-product}
 \sup_{\substack{e,f,g,e',f',g'\\
                 |e\cup f\cup g\cup e'\cup f'\cup g'|=12}}
 \bigl|\E[Y_{efg}Y_{e'f'g'}]\bigr|
 \le \frac Cn.
\end{equation}
Substituting \eqref{eq:cubic-disjoint-product} into
\eqref{eq:cubic-averaging-budget} and then using
\eqref{eq:empirical-cubic-difference}, we conclude that
\begin{equation*}
 \left\lVert\frac1N\sum_e(M_e-\meanM)^3\right\rVert_2
 \le n^{3/2}\left\{\E\left|\frac1{N^3}\sum_{e,f,g}Y_{efg}\right|^2\right\}^{1/2}
 \le n^{3/2}(C/n)^{1/2}\le Cn.
\end{equation*}
This proves the first bound in \eqref{eq:overlap-cubic-average}.

For the second bound, expand about $\meanM$ and use
$\sum_e(M_e-\meanM)=0$ to obtain
\begin{equation*}
 \frac1N\sum_e(M_e-nQ^2)^3
 =\frac1N\sum_e(M_e-\meanM)^3 +3(\meanM-nQ^2)\frac1N\sum_e(M_e-\meanM)^2
       +(\meanM-nQ^2)^3.
\end{equation*}
By \eqref{eq:triangle-first-moment} and Lemma~\ref{thm:fields},
\begin{equation*}
 \norm{\meanM-nQ^2}_r\le C_r,\qquad
 \left\lVert\frac1N\sum_e(M_e-\meanM)^2\right\rVert_4
 \le\frac1N\sum_e\norm{M_e-\meanM}_8^2\le Cn.
\end{equation*}
H\"older's inequality now yields
\begin{equation*}
\begin{aligned}
 \left\lVert\frac1N\sum_e(M_e-nQ^2)^3\right\rVert_2
 &\le Cn+3\norm{\meanM-nQ^2}_4
       \left\lVert\frac1N\sum_e(M_e-\meanM)^2\right\rVert_4
       +\norm{\meanM-nQ^2}_6^3\\
 &\le Cn+Cn+C\le Cn.
\end{aligned}
\end{equation*}
This proves the second bound in \eqref{eq:overlap-cubic-average}.
\end{proof}

\subsection{Proof of Lemma~\ref{prop:triangle-moments}}\label{sec:empirical-moments-proof}

\begin{proof}
We first prove the bound for $V_n-A_{\alpha,h}(Q)$ in
\eqref{eq:triangle-moment-replacement}.
Define
\begin{equation}\label{eq:degree-contrast-square}
 J^{(d)}_{ij}=\frac{(d_i-d_j)^2}{2n},\qquad i<j.
\end{equation}
By \eqref{eq:degree-difference},
\begin{equation}\label{eq:degree-square-expansion}
\begin{aligned}
 \E|V_n-A_{\alpha,h}(Q)|^2
 &=\E\left|\frac1N\sum_{i<j}\{J^{(d)}_{ij}-A_{\alpha,h}(Q)\}\right|^2\\
 &=\frac1{N^2}\sum_{i<j,\,i'<j'}
       \E[(J^{(d)}_{ij}-A_{\alpha,h}(Q))(J^{(d)}_{i'j'}-A_{\alpha,h}(Q))].
\end{aligned}
\end{equation}
Lemma~\ref{thm:fields} and boundedness of $A_{\alpha,h}$ give
\begin{equation*}
 \norm{J^{(d)}_{ij}-A_{\alpha,h}(Q)}_2
 \le\frac1{2n}\norm{d_i-d_j}_4^2+\norm{A_{\alpha,h}}_\infty\le C.
\end{equation*}
There are at most $Cn^3$ choices with a repeated vertex among
$i,j,i',j'$. Their contribution to the squared average is at most
$Cn^3/N^2\le C/n$.
It remains to bound the products with four distinct vertices.

Fix such a choice and condition on $\calF_{\mathcal I}$, with
$\mathcal I=\{i,j,i',j'\}$.
We first replace $A_{\alpha,h}(Q)$ by $A_{\alpha,h}(q_B)$, which is fixed under this conditioning.
The product difference is
\begin{equation*}
\begin{aligned}
 &(J^{(d)}_{ij}-A_{\alpha,h}(Q))(J^{(d)}_{i'j'}-A_{\alpha,h}(Q))
       -(J^{(d)}_{ij}-A_{\alpha,h}(q_B))(J^{(d)}_{i'j'}-A_{\alpha,h}(q_B))\\
 &\qquad=(A_{\alpha,h}(q_B)-A_{\alpha,h}(Q))
              \{J^{(d)}_{ij}+J^{(d)}_{i'j'}-A_{\alpha,h}(Q)-A_{\alpha,h}(q_B)\}.
\end{aligned}
\end{equation*}
Since $A_{\alpha,h}$ is bounded and Lipschitz, \eqref{eq:background-density} shows
that the change in the product expectation is at most
\begin{equation*}
 \frac Cn\E(|J^{(d)}_{ij}|+|J^{(d)}_{i'j'}|+1)\le\frac Cn.
\end{equation*}
For the product centered at $A_{\alpha,h}(q_B)$, conditioning gives
\begin{equation}\label{eq:degree-conditional-product}
\begin{aligned}
 &\E[(J^{(d)}_{ij}-A_{\alpha,h}(q_B))(J^{(d)}_{i'j'}-A_{\alpha,h}(q_B))]\\
 &\quad=\E\left\{\cov_1(J^{(d)}_{ij},J^{(d)}_{i'j'})
       +(\E_1J^{(d)}_{ij}-A_{\alpha,h}(q_B))(\E_1J^{(d)}_{i'j'}-A_{\alpha,h}(q_B))\right\}.
\end{aligned}
\end{equation}
The next two lemmas give the required one-row variance and conditional mean estimates.
We use Lemma~\ref{lem:separated} for the covariance bound.
\begin{lemma}\label{lem:row-coarse}
Let $\mathbf z_1$ have the one-row law $p_B$, and let $\mathbf z_2$ be an
independent copy. Put $\boldsymbol\mu=\E_B\mathbf z_1$ and $\Sigma=\cov_B(\mathbf z_1)$.
For any fixed vertex set $\mathcal I\subset[n]$ of size $k$, with $L=n-k\ge2$, the following estimates hold uniformly over all adjacency matrices $B$ of graphs on $\mathcal I^c$:
\begin{equation}
 \norm{\boldsymbol\mu-\varrho_{\alpha,h}(q_B)\mathbf1}_{\ell^2}\le CH_B,\qquad
 \norm{\Sigma}_{\rm op}\le C,\qquad
 |\Sigma_{uv}|\le C/n\quad(u\ne v),\label{eq:row-coarse-a}
\end{equation}
The row-sum and overlap variances satisfy
\begin{equation}\label{eq:row-refined}
 |\var_B(\mathbf1^\top\mathbf z_1)-nA_{\alpha,h}(q_B)|+
 |\var_B(\mathbf z_1^\top\mathbf z_2)-nK_{\alpha,h}(q_B)|\le CH_B^2.
\end{equation}
In particular, since $1\le H_B\le C\sqrt n$,
\begin{align}
 \var_B(\mathbf1^\top \mathbf z_1)&=nA_{\alpha,h}(q_B)+O(\sqrt nH_B),\label{eq:row-coarse-b}\\
 \var_B(\mathbf z_1^\top \mathbf z_2)&=nK_{\alpha,h}(q_B)+O\{\sqrt n(H_B+H_B^2)\}.
 \label{eq:row-coarse-c}
\end{align}
The absolute row sums of $\Sigma$ are bounded uniformly.
\end{lemma}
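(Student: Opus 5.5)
We treat $p_B$ as a quadratic spin system on $L=n-k$ sites with field $h$ and couplings $J_B=\alpha B/n$. Its Dobrushin influence matrix has row sums at most $|\alpha|/4<1$, and we compare it with a mean-field product measure. The background degrees enter through the fluctuation vector $\boldsymbol\eta=B\mathbf1-(L-1)q_B\mathbf1$, which has $\norm{\boldsymbol\eta}_{\ell^2}=n(H_B-1)$.

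\emph{Covariance bounds.} The bound $\norm{\Sigma}_{\rm op}\le C$ follows from \eqref{eq:comparison-cov}. For unit vectors $\mathbf w$, the linear statistic $\mathbf w^\top\mathbf z_1$ has $\boldsymbol\delta=|\mathbf w|$, so $\var_B(\mathbf w^\top\mathbf z_1)\le\frac14\mathbf w^\top\mathcal D\mathbf w\le C$. Likewise, taking $F=\mathbf1^\top\mathbf z_1$ and $G=z_{1v}$ gives bounded absolute row sums of $\Sigma$. For the entries with $u\ne v$, we use the tilt argument from the proof of Lemma~\ref{lem:comparison} with $G=z_{1v}$ and $F=z_{1u}$. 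This gives $|\Sigma_{uv}|\le\frac14(\mathcal D)_{uv}$. Since $\mathcal D=I+\mathcal C+\mathcal C^2+\cdots$, every entry of $\mathcal C$ is at most $|\alpha|/(4n)$, and the row sums of $\mathcal C$ are bounded by $c<1$, each off-diagonal entry of $\mathcal D$ is $O(1/n)$.

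\emph{Mean.} We use the mean-field equation $\mu_u=\E_B\ell\bigl(h+\frac\alpha n(B\mathbf z_1)_u\bigr)$. Write $(B\mathbf z_1)_u=(B\boldsymbol\mu)_u+\xi_u$. Then $\xi_u$ is a linear statistic with bounded variance, and the Gaussian concentration \eqref{eq:comparison-mgf} shows that the second-order Taylor error is $O(1/n)$ per coordinate, hence $O(n^{-1/2})$ in $\ell^2$. This gives
$$\boldsymbol\mu=\ell\bigl(h+\tfrac\alpha n B\boldsymbol\mu\bigr)+O_{\ell^2}(n^{-1/2}).$$
Next, put $\boldsymbol\mu=\varrho\mathbf1+\mathbf r$ with $\varrho=\varrho_{\alpha,h}(q_B)$, and linearize. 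The constant part cancels because of \eqref{eq:triangle-row-root}, using $(B\mathbf1)_u/n=q_B+\eta_u/n+O(1/n)$. The remainder satisfies
$$\mathbf r=v\,\tfrac\alpha n B\mathbf r+v\alpha\varrho\,\boldsymbol\eta/n+O_{\ell^2}(1)+\text{quadratic terms}.$$
The operator $v\alpha B/n$ has norm at most $|\alpha|/4<1$, so we can invert $I-v\alpha B/n$. This yields $\norm{\mathbf r}_{\ell^2}\le C(1+\norm{\boldsymbol\eta}/n)\le CH_B$. The quadratic terms are controlled by $\norm{\mathbf r}_{\ell^\infty}\le1$ and the boundedness of $\ell''$.

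\emph{Row-sum variance.} By the fluctuation--response identity, $\var_B(\mathbf1^\top\mathbf z_1)=\mathbf1^\top\Sigma\mathbf1$. We approximate $\Sigma$ by the linear-response expression $(D^{-1}-\frac\alpha nB)^{-1}$, where $D=\diag(\mu_u(1-\mu_u))$. To justify this, we differentiate the mean equation with respect to a uniform field shift, $\partial_h\boldsymbol\mu=\Sigma\mathbf1$, and repeat the Taylor control above. Replacing $D$ by $vI$ and $B\mathbf1$ by $(L-1)q_B\mathbf1$ then gives $\mathbf1^\top\Sigma\mathbf1=n\frac{v}{1-\alpha q_Bv}+O(H_B^2)=nA_{\alpha,h}(q_B)+O(H_B^2)$. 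The error terms are bilinear in $\mathbf r$ and $\boldsymbol\eta/n$, and each has size $O(H_B)$ in $\ell^2$.

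\emph{Overlap variance.} Condition on $\mathbf z_2$ and decompose
$$\var_B(\mathbf z_1^\top\mathbf z_2)=\E\bigl[\mathbf z_2^\top\Sigma\mathbf z_2\bigr]+\var\bigl(\boldsymbol\mu^\top\mathbf z_2\bigr).$$
In the first term, the diagonal part is $\sum_u\Sigma_{uu}\mu_u\approx nv\varrho=nv^2+\dots$. Together with the off-diagonal part $\boldsymbol\mu^\top(\Sigma-\diag\Sigma)\boldsymbol\mu$, this first term produces $n[v^2\cdot\,$ correction $]$. The second term equals $\boldsymbol\mu^\top\Sigma\boldsymbol\mu\approx\varrho^2\mathbf1^\top\Sigma\mathbf1=n\varrho^2A$. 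Carefully summing these contributions should produce $n(v^2+2\varrho^2A)=nK$, again with $O(H_B^2)$ errors coming from $\boldsymbol\mu-\varrho\mathbf1$.

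The main obstacle is the linear-response step. There we need the exact $O(H_B^2)$ accuracy for $\mathbf1^\top\Sigma\mathbf1$, not merely $O(\sqrt n)$, which requires controlling the second-order cavity corrections in the Dobrushin regime. I would first try to handle this through the exact identity $\Sigma\mathbf1=\partial_h\boldsymbol\mu$, differentiating the approximate fixed-point equation together with its $O_{\ell^2}(n^{-1/2})$ error.
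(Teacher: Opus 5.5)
Your covariance bounds and the $\ell^2$ bound on the mean essentially match the paper. The step you flag as the main obstacle, however, is a genuine gap, and your proposed way around it does not close it. Write the exact cavity identity $(\Sigma\mathbf1)_u=\mu_u(1-\mu_u)+\cov_B(p_u,\mathbf1^\top\boldsymbol\xi)-\var_B(p_u)$ and linearize $p_u=\ell(m_u+H_u)$. This gives \eqref{eq:exact-response}: $(I-D_\ell J_B)\Sigma\mathbf1=(\mu_u(1-\mu_u))_u+\boldsymbol\epsilon$, with $\epsilon_u=\cov_B(\mathbf1^\top\boldsymbol\xi,\rho_u(\mathbf z_1))-\var_B(p_u)$ and $\rho_u\approx\frac12\ell''(m_u)(H_u^2-\E_BH_u^2)$. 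Cauchy--Schwarz only gives $|\epsilon_u|\le C\sqrt n\cdot n^{-1}$, hence $\sum_u\epsilon_u=O(\sqrt n)$. That suffices for \eqref{eq:row-coarse-b}, but not for the $O(H_B^2)$ accuracy in \eqref{eq:row-refined}. That accuracy is what yields the $CH_B^2/n$ bound in Lemma~\ref{lem:actual-refined}, and through it Lemma~\ref{thm:tested-triangle}.

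Differentiating the approximate fixed-point equation in $h$ cannot rescue this. An $O_{\ell^2}(n^{-1/2})$ bound on the error $\mu_u-\ell(m_u)$ says nothing about its $h$-derivative. Since $\partial_h$ of an expectation under $p_B$ produces a covariance with $\mathbf1^\top\mathbf z_1$, that derivative equals, up to $O(1/n)$, exactly $\cov_B(\rho_u,\mathbf1^\top\boldsymbol\xi)\approx\frac12\ell''(m_u)\cov_B(H_u^2,\mathbf1^\top\boldsymbol\xi)$ --- the same term.

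The missing idea is that mixed third moments of delocalized linear statistics are small. Lemma~\ref{lem:row-refined-third} shows $|\E_B(\mathbf a^\top\boldsymbol\xi)(\mathbf b^\top\boldsymbol\xi)(\mathbf c^\top\boldsymbol\xi)|\le Cn^{-1/2}$ for vectors with $\ell^2$-norm at most one and $\ell^\infty$-norm $O(n^{-1/2})$. The proof uses stationarity $\E_B\calL_B(L_{\mathbf a}L_{\mathbf b}L_{\mathbf c})=0$ of the one-row heat-bath generator and closes the recursion $3M_n\le\frac{3|\alpha|}{4}M_n+Cn^{-1/2}$ with the same contraction $|\alpha|/4<1$. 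Taking $\mathbf a=\mathbf1/\sqrt n$ and $\mathbf b=\mathbf c=\sqrt n(J_B)_{u\cdot}/4$ gives $\cov_B(\mathbf1^\top\boldsymbol\xi,H_u^2)=O(1/n)$. Hence $|\epsilon_u|\le C/n$ and $|\sum_u\epsilon_u|\le C$.

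There is a second, smaller gap: the errors from replacing $D$ by $vI$ and $\boldsymbol\mu$ by $\varrho\mathbf1$ are not all bilinear. For example, $\sum_u\{\mu_u(1-\mu_u)-v\}=(1-2\varrho)\mathbf1^\top\mathbf r-\norm{\mathbf r}_{\ell^2}^2$ is linear in $\mathbf r$, and $\norm{\mathbf r}_{\ell^2}\le CH_B$ only yields $|\mathbf1^\top\mathbf r|\le C\sqrt nH_B$. You need $|\mathbf1^\top\mathbf r|\le CH_B^2$. The paper obtains it by summing the second-order expansion of the mean equation over $u$, using three facts:
\begin{enumerate}
\item $\mathbf1^\top\boldsymbol\eta=0$ exactly, so $\mathbf1^\top(J_B\mathbf1-\alpha q_B\mathbf1)=O(1)$;
\item symmetry of $J_B$;
\item the quadratic remainders sum to $O(H_B^2)$.
\end{enumerate}
The same device bounds $\mathbf1^\top(D_\ell\mathbf1-v\mathbf1)$. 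Combined with $\norm{\Sigma\mathbf1-A_{\alpha,h}(q_B)\mathbf1}_{\ell^2}\le CH_B$, this makes both $\boldsymbol\mu^\top\Sigma\boldsymbol\mu$ and $\Tr(\Sigma^2)$ in $\var_B(\mathbf z_1^\top\mathbf z_2)=2\boldsymbol\mu^\top\Sigma\boldsymbol\mu+\Tr(\Sigma^2)$ accurate to $O(H_B^2)$.

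Two minor points. First, in the $\ell^2$ mean bound the quadratic Taylor remainder is not small merely because $\norm{\mathbf r}_{\ell^\infty}\le1$, since the field perturbation is $O(1)$ per coordinate. Use $|\ell(x)-\ell(y)|\le|x-y|/4$ directly to get the contraction. Second, taking $F=\mathbf1^\top\mathbf z_1$ and $G=z_{1v}$ bounds $|\sum_u\Sigma_{uv}|$, not $\sum_u|\Sigma_{uv}|$. The absolute row sums follow instead from your entrywise bound $|\Sigma_{uv}|\le\frac14\mathcal D_{uv}$ together with the row sums of $\mathcal D$.
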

The proof is given in Section~\ref{sec:row-response}.

\begin{lemma}\label{lem:actual-refined}
Let $\mathcal I\subset[n]$ be a fixed vertex set, and let $\calF_{\mathcal I}$ fix all edges within $\mathcal I$ and within $\mathcal I^c$.
In the first estimate, take $i,j\in\mathcal I$ with $i<j$.
In the second, $e,f$ have four distinct endpoints in $\mathcal I$.
Then
\begin{align}
 \left|\E\left(\frac{(d_i-d_j)^2}{2n}\mid\calF_{\mathcal I}\right)
                         -A_{\alpha,h}(q_B)\right|&\le CH_B^2/n,\label{eq:actual-refined-degree}\\
 \left|\E\left(\frac{(M_e-M_f)^2}{2n}\mid\calF_{\mathcal I}\right)
                         -K_{\alpha,h}(q_B)\right|&\le CH_B^2/n.\label{eq:actual-refined-overlap}
\end{align}
\end{lemma}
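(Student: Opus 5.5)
We have to prove Lemma~\ref{lem:actual-refined}. The plan is to compute the conditional second moment exactly under the independent-row law $p_{B,0}$, using Lemma~\ref{lem:row-coarse}. We then transfer the result to $p_{B,1}$ by the interpolation of Lemma~\ref{lem:row-interpolation}, keeping track of the $O(1/n)$ correction.

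\textbf{Step 1 (degree case, at $t=0$).} By \eqref{eq:conditional-degree-overlap}, write $d_i-d_j=\mathbf1^\top\mathbf z_i-\mathbf1^\top\mathbf z_j+c_{ij}$ with $|c_{ij}|\le k$ fixed. Under $p_{B,0}$ the rows are i.i.d.\ with law $p_B$, so
\[
\E_0(d_i-d_j)^2=2\var_B(\mathbf1^\top\mathbf z_1)+c_{ij}^2 .
\]
By \eqref{eq:row-refined}, dividing by $2n$ gives $|\E_0[(d_i-d_j)^2/(2n)]-A_{\alpha,h}(q_B)|\le CH_B^2/n+C/n$. Since $H_B\ge1$, this error is at most $CH_B^2/n$.

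\textbf{Step 2 (overlap case, at $t=0$).} Take $e=\{i,j\}$ and $f=\{i',j'\}$. Then $M_e-M_f=\mathbf z_i^\top\mathbf z_j-\mathbf z_{i'}^\top\mathbf z_{j'}+c$ with $|c|\le 2k$. The two bilinear forms involve disjoint i.i.d.\ row pairs, so they are independent copies. Hence
\[
\E_0(M_e-M_f)^2=2\var_B(\mathbf z_1^\top\mathbf z_2)+c^2 ,
\]
and \eqref{eq:row-refined} gives the same $CH_B^2/n$ bound against $K_{\alpha,h}(q_B)$.

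\textbf{Step 3 (from $t=0$ to $t=1$).} This is the main obstacle. The crude bound \eqref{eq:row-mean-comparison} with $\norm{\boldsymbol\delta U}_{\ell^2}\sim n^{3/2}$ for $U=(d_i-d_j)^2$ only gives an $O(1)$ change, because the $O(n^{-1/2})$ loss must be recovered to reach order $1/n$. We therefore use a second-order interpolation, as in the proof of Lemma~\ref{lem:separated}. Put $\phi(t)=\E_tU/(2n)$. By \eqref{eq:row-tilt-derivative},
\[
\phi'(t)=\cov_t(U,J_{\rm cross})/(2n),\qquad
\phi''(t)=\E_t[(U-\E_tU)(J_{\rm cross}-\E_tJ_{\rm cross})^2]/(2n).
\]
For the second derivative, H\"older's inequality combined with $\norm{U-\E_tU}_{L^2(p_{B,t})}\le \norm U_{L^2}\le Cn$ from \eqref{eq:interpolated-contrast-moments} and $\norm{J_{\rm cross}-\E_tJ_{\rm cross}}_{L^4}\le Cn^{-1/2}$ from \eqref{eq:cross-moments} gives $|\phi''|\le C/n$.

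For $\phi'(0)$ we compute at independence. Write $U=(D+c)^2$ with $D$ the centered-free linear or bilinear difference, and $J_{\rm cross}=\frac\alpha n\sum_{a<b}e_{ab}\mathbf z_a^\top\mathbf z_b$. Each term of $\cov_0(U,J_{\rm cross})$ factorizes over rows into products of covariances. A typical example is $\frac\alpha n\sum_u\cov_0(\mathbf1^\top\mathbf z_i,z_{iu})\cov_0(\mathbf1^\top\mathbf z_j,z_{ju})$ from the $-2\mathbf1^\top\mathbf z_i\,\mathbf1^\top\mathbf z_j$ cross term. Using the bounded row sums of $\Sigma$ (Lemma~\ref{lem:row-coarse}), each such term is $O(\frac1n\cdot n)=O(1)$, which after division by $2n$ is $O(1/n)$. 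Terms linear in $D$ times $c$ contribute $O(\frac1n\norm{\Sigma\mathbf1}_{\ell^2}^2/n)$-type quantities, also $O(1/n)$. In the overlap case, the third-order row cumulants that appear are handled analogously: they have the form $\E(\mathbf z_i^\top\mathbf z_j)$ times row covariances with $\boldsymbol\mu$, and are bounded using $\norm{\boldsymbol\mu-\varrho\mathbf1}_{\ell^2}\le CH_B$ and $\norm\Sigma_{\rm op}\le C$, which gives at most $CH_B^2/n$.

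\textbf{Step 4 (conclusion).} Taylor's formula, $|\phi(1)-\phi(0)|\le|\phi'(0)|+\sup|\phi''|/2$, together with Steps 1 and 2 yields both \eqref{eq:actual-refined-degree} and \eqref{eq:actual-refined-overlap}, since $\E(\cdot\mid\calF_{\mathcal I})=\E_1$.
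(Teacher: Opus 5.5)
Your overall architecture is the same as the paper's. The exact computation at $t=0$ from \eqref{eq:row-refined}, the second-order Taylor expansion of $t\mapsto\E_t[D^2/(2n)]$, and the H\"older bound $\sup_t|\phi''(t)|\le C/n$ correspond to the paper's terms $I_1$ and $I_3$. The gap is in $\phi'(0)$.

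Your claim that every term of $\cov_0(U,J_{\rm cross})$ factorizes over rows into products of covariances is false whenever the pair $\{a,b\}$ in $J_{\rm cross}$ shares a row with $D$. Write $\boldsymbol\xi_a=\mathbf z_a-\boldsymbol\mu$ and take $D$ to be the free part. A pair $\{i,m\}$ with $m\notin\{i,j\}$ gives $\cov_0(D^2,\mathbf z_i^\top\mathbf z_m)=\E_0[(\mathbf1^\top\boldsymbol\xi_i)^2(\boldsymbol\mu^\top\boldsymbol\xi_i)]$. The pair $\{i,j\}$ gives $2\E_0[(\mathbf1^\top\boldsymbol\xi_i)^2(\boldsymbol\mu^\top\boldsymbol\xi_i)]-2\norm{\Sigma\mathbf1}_{\ell^2}^2$, and you kept only the second piece. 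In the overlap case the corresponding terms are $\E_0U_{12}^3$ with $U_{12}=\mathbf z_1^\top\mathbf z_2-\boldsymbol\mu^\top\boldsymbol\mu$, $\E_0[(\boldsymbol\mu^\top\boldsymbol\xi_1)^3]$, and $\cov_0(\mathbf z_1^\top\Sigma\mathbf z_1,\boldsymbol\mu^\top\boldsymbol\xi_1)$. These are genuine third-order cumulants of a single row under the dependent law $p_B$. The bounds $\norm{\boldsymbol\mu-\varrho_{\alpha,h}(q_B)\mathbf1}_{\ell^2}\le CH_B$ and $\norm\Sigma_{\rm op}\le C$ that you invoke are second-order information and do not control them.

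Since $\phi'(0)$ carries the prefactor $|\alpha|/(2n^2)$, you need these cubic quantities to be $O(nH_B^2)$; the paper gets $O(nH_B)$. Concentration \eqref{eq:comparison-moment} plus H\"older gives only $O(n^{3/2})$, hence $|\phi'(0)|\le Cn^{-1/2}$. Your conclusion then degrades to $CH_B^2/n+Cn^{-1/2}$, which is not the lemma, and the $1/n$ precision is exactly what Lemmas~\ref{prop:triangle-moments} and~\ref{thm:tested-triangle} consume.

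The missing ingredient is Lemma~\ref{lem:row-refined-third}. For linear sums with weights in $\mathcal A_n(K_0)$, stationarity $\E_B\calL_B(L_{\mathbf a}L_{\mathbf b}L_{\mathbf c})=0$ of the one-row Glauber dynamics gives the self-consistent bound $3M_n\le\frac{3|\alpha|}{4}M_n+C_{K_0}n^{-1/2}$. Hence mixed third moments of bounded-weight linear sums are $O(n)$. Combine this with:
\begin{itemize}
\item Lemma~\ref{lem:overlap-cubic}, which gives $|\E_BU_{12}^3|\le CnH_B$;
\item \eqref{eq:comparison-cov} together with the bounded absolute row sums of $\Sigma$, which gives $|\cov_0(\mathbf z_1^\top\Sigma\mathbf z_1,\boldsymbol\mu^\top\boldsymbol\xi_1)|\le Cn$.
\end{itemize}
Together these give $|\phi'(0)|\le CH_B/n$, and the rest of your argument then goes through.
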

The proof is given in Section~\ref{sec:actual-refined-proof}.
By \eqref{eq:actual-refined-degree} in Lemma~\ref{lem:actual-refined},
with $|\mathcal I|=4$, and $H_B\le C\sqrt n$, we have
\begin{equation*}
 |\E_1J^{(d)}_{ij}-A_{\alpha,h}(q_B)|
 \le CH_B^2/n\le CH_Bn^{-1/2}.
\end{equation*}
The same mean estimate also holds for $J^{(d)}_{i'j'}$.
Using \eqref{eq:degree-contrast-square} and
\eqref{eq:interpolated-contrast-moments} with $U=d_i-d_j$, we obtain
\begin{equation*}
 \sup_{0\le t\le1}\norm{J^{(d)}_{ij}}_{L^4(p_{B,t})}
 \le\frac1{2n}\sup_{0\le t\le1}
                 \norm{d_i-d_j}_{L^8(p_{B,t})}^2\le C.
\end{equation*}
The same bound holds for $J^{(d)}_{i'j'}$, so their centered fourth moments
are bounded.
Since variables $J^{(d)}_{ij}$ and $J^{(d)}_{i'j'}$ depend only on $(\mathbf z_i,\mathbf z_j)$ and $(\mathbf z_{i'},\mathbf z_{j'})$, respectively,
Lemma~\ref{lem:separated} therefore gives
\begin{equation*}
 |\cov_1(J^{(d)}_{ij},J^{(d)}_{i'j'})|\le Cn^{-1}.
\end{equation*}
Substituting into \eqref{eq:degree-conditional-product} and combining \eqref{eq:background-density}, we obtain
\begin{equation}\label{eq:empirical-square-covariance}
\begin{aligned}
 &|\E[(J^{(d)}_{ij}-A_{\alpha,h}(Q))(J^{(d)}_{i'j'}-A_{\alpha,h}(Q))]|\\
 &\le\E|\cov_1(J^{(d)}_{ij},J^{(d)}_{i'j'})|
 +\E|\{\E_1J^{(d)}_{ij}-A_{\alpha,h}(q_B)\}\{\E_1J^{(d)}_{i'j'}-A_{\alpha,h}(q_B)\}|+\frac Cn\\
 &\le\frac Cn(1+\E H_B^2)\le\frac Cn,
\end{aligned}
\end{equation}
where the last inequality uses \eqref{eq:background-H-moments}.
Then by \eqref{eq:degree-square-expansion}, we obtain
\begin{equation*}
 \E|V_n-A_{\alpha,h}(Q)|^2\le\frac{Cn^3}{N^2}+\frac Cn\le\frac Cn.
\end{equation*}
Thus
\begin{equation}\label{eq:degree-empirical-L2-bound}
 \norm{V_n-A_{\alpha,h}(Q)}_2\le Cn^{-1/2}.
\end{equation}

For the bound on $K_n$, centering at $\meanM$  and using \eqref{0201} gives
\begin{equation}\label{eq:overlap-difference}
\begin{aligned}
 K_n&=\frac1{nN}\sum_e(M_e-\meanM)^2
             +\frac{(\meanM-nQ^2)^2}{n}\\
    &=\frac1{N^2}\sum_{e,f}\frac{(M_e-M_f)^2}{2n}
             +\frac{(V_n-Q^2-Q)^2}{n}.
\end{aligned}
\end{equation}
By Lemma~\ref{thm:fields},
\begin{equation}\label{eq:overlap-extra-square-bound}
 \left\lVert\frac{(V_n-Q^2-Q)^2}{n}\right\rVert_2
 \le\frac1n(\norm{V_n}_4+2)^2\le\frac Cn.
\end{equation}

It remains to control the average in \eqref{eq:overlap-difference}.
Define
\begin{equation}\label{eq:overlap-contrast-square}
 J^{(M)}_{ef}=\frac{(M_e-M_f)^2}{2n}.
\end{equation}
Then
\begin{equation}\label{eq:overlap-square-expansion}
\begin{aligned}
 &\E\left|\frac1{N^2}\sum_{e,f}\{J^{(M)}_{ef}-K_{\alpha,h}(Q)\}\right|^2\\
 &\quad=\frac1{N^4}\sum_{e,f,e',f'}
           \E[(J^{(M)}_{ef}-K_{\alpha,h}(Q))(J^{(M)}_{e'f'}-K_{\alpha,h}(Q))].
\end{aligned}
\end{equation}
Lemma~\ref{thm:fields} and boundedness of $K_{\alpha,h}$ imply
\begin{equation*}
 \norm{J^{(M)}_{ef}-K_{\alpha,h}(Q)}_2
 \le\frac1{2n}\norm{M_e-M_f}_4^2+\norm{K_{\alpha,h}}_\infty\le C.
\end{equation*}
Among these $N^4$ products, at most $Cn^7$ choices have a repeated
endpoint. By the Cauchy--Schwarz inequality, their total contribution is bounded by
\begin{equation}\label{eq:overlap-repeated-endpoints}
\begin{aligned}
 &\frac1{N^4}\sum_{\substack{e,f,e',f'\\
                 |e\cup f\cup e'\cup f'|<8}}
   \bigl|\E[(J^{(M)}_{ef}-K_{\alpha,h}(Q))
              (J^{(M)}_{e' f'}-K_{\alpha,h}(Q))]\bigr|\\
 &\qquad\le\frac{Cn^7}{N^4}\le\frac Cn.
\end{aligned}
\end{equation}

Fix four edges with eight distinct endpoints, and condition on
$\calF_{\mathcal I}$ with $\mathcal I$ equal to this endpoint set.
All distributions and background quantities below refer to this
same conditioning.
By \eqref{eq:actual-refined-overlap} in Lemma~\ref{lem:actual-refined},
with $|\mathcal I|=8$, and $H_B\le C\sqrt n$, we have
\begin{equation*}
 |\E_1J^{(M)}_{ef}-K_{\alpha,h}(q_B)|
 \le CH_B^2/n\le CH_Bn^{-1/2}
 \le C(H_B+H_B^2)n^{-1/2}.
\end{equation*}
The same mean estimate holds for $J^{(M)}_{e'f'}$.
Using \eqref{eq:overlap-contrast-square} and \eqref{eq:interpolated-contrast-moments} with $U=M_e-M_f$ also gives
\begin{equation*}
 \sup_{0\le t\le1}\norm{J^{(M)}_{ef}}_{L^4(p_{B,t})}
 \le\frac1{2n}\sup_{0\le t\le1}
                 \norm{M_e-M_f}_{L^8(p_{B,t})}^2\le C.
\end{equation*}
The same bounds hold for $J^{(M)}_{e'f'}$.
Applying Lemma~\ref{lem:separated} as in the estimate for $V_n$ yields
\begin{equation*}
 |\cov_1(J^{(M)}_{ef},J^{(M)}_{e'f'})|\le Cn^{-1}.
\end{equation*}
Since $K_{\alpha,h}$ is bounded and Lipschitz,
$|K_{\alpha,h}(Q)-K_{\alpha,h}(q_B)|\le C/n$ by \eqref{eq:background-density}.
By a similar decomposition as in \eqref{eq:empirical-square-covariance}, it follows that
\begin{equation}\label{eq:overlap-distinct-endpoints}
 |\E[(J^{(M)}_{ef}-K_{\alpha,h}(Q))(J^{(M)}_{e'f'}-K_{\alpha,h}(Q))]|
 \le\frac Cn\{1+\E(H_B+H_B^2)^2\}\le\frac Cn.
\end{equation}
Substituting \eqref{eq:overlap-repeated-endpoints} and
\eqref{eq:overlap-distinct-endpoints} into
\eqref{eq:overlap-square-expansion}, we obtain
\begin{equation*}
 \E\left|\frac1{N^2}\sum_{e,f}\{J^{(M)}_{ef}-K_{\alpha,h}(Q)\}\right|^2
 \le\frac{Cn^7}{N^4}+\frac Cn\le\frac Cn.
\end{equation*}
Combining this estimate with \eqref{eq:overlap-difference} and \eqref{eq:overlap-extra-square-bound}, we conclude that
\begin{equation*}
 \norm{K_n-K_{\alpha,h}(Q)}_2\le Cn^{-1/2}+Cn^{-1}\le Cn^{-1/2}.
\end{equation*}
Together with \eqref{eq:degree-empirical-L2-bound}, this proves
\eqref{eq:triangle-moment-replacement}.
\end{proof}

\subsection{Proof of Lemma~\ref{thm:tested-triangle}}\label{sec:tested-moments-proof}

\begin{proof}
Put $\psi_n(q)=\omega(q)f(\sqrt n(q-2/3))$.
We first bound the expectation involving $V_n$ in
\eqref{eq:tested-moments}.
By \eqref{eq:degree-contrast-square} and \eqref{eq:degree-difference},
\begin{equation*}
 \left|\E[(V_n-A_{\alpha,h}(Q))\psi_n(Q)]\right|
 \le\frac1N\sum_{i<j}\left|\E[(J^{(d)}_{ij}-A_{\alpha,h}(Q))\psi_n(Q)]\right|.
\end{equation*}
Fix $i<j$ and condition on $\calF_{\mathcal I}$, with
$\mathcal I=\{i,j\}$. Put $q_{\mathcal I}=\E(Q\mid\calF_{\mathcal I})$.
Both $q_{\mathcal I}$ and $q_B$ are fixed under this conditioning.
Adding and subtracting $A_{\alpha,h}(q_B)$ and $\psi_n(q_{\mathcal I})$ gives
\begin{equation}\label{eq:tested-decomposition}
\begin{aligned}
 \E[(J^{(d)}_{ij}-A_{\alpha,h}(Q))\psi_n(Q)]
 ={}&\E[(J^{(d)}_{ij}-A_{\alpha,h}(q_B))\{\psi_n(Q)-\psi_n(q_{\mathcal I})\}]\\
 &+\E[\{\E(J^{(d)}_{ij}\mid\calF_{\mathcal I})-A_{\alpha,h}(q_B)\}
                          \psi_n(q_{\mathcal I})]\\
 &+\E[(A_{\alpha,h}(q_B)-A_{\alpha,h}(Q))\psi_n(Q)].
\end{aligned}
\end{equation}
For the first term, the conditional Cauchy--Schwarz inequality gives
\begin{equation}\label{eq:tested-conditional-cs}
\begin{aligned}
 &\left|\E[(J^{(d)}_{ij}-A_{\alpha,h}(q_B))\{\psi_n(Q)-\psi_n(q_{\mathcal I})\}]\right|\\
 &\quad\le\E\left[
    \norm{J^{(d)}_{ij}-A_{\alpha,h}(q_B)}_{L^2(\PP_n(\cdot\mid\calF_{\mathcal I}))}
    \norm{\psi_n(Q)-\psi_n(q_{\mathcal I})}_{L^2(\PP_n(\cdot\mid\calF_{\mathcal I}))}
             \right].
\end{aligned}
\end{equation}
The bound \eqref{eq:interpolated-contrast-moments} with $U=d_i-d_j$, $t=1$ and $r=4$, and boundedness of $A_{\alpha,h}$ imply
\begin{equation}\label{eq:tested-degree-factor}
 \norm{J^{(d)}_{ij}-A_{\alpha,h}(q_B)}_{L^2(\PP_n(\cdot\mid\calF_{\mathcal I}))}
 \le\frac1{2n}\norm{d_i-d_j}_{L^4(\PP_n(\cdot\mid\calF_{\mathcal I}))}^2
           +\norm{A_{\alpha,h}}_\infty\le C.
\end{equation}

To bound the factor involving $\psi_n$, we estimate
$\var(Q\mid\calF_{\mathcal I})$ for any fixed vertex set $\mathcal I\subset[n]$.
Write $k=|\mathcal I|$ and $L=n-k$. There are $kL$ free coordinates,
with $\delta_{(a,u)}Q=N^{-1}$ for each one.
By \eqref{eq:comparison-cov} and \eqref{eq:row-influence},
\begin{equation}\label{eq:conditional-Q-variance}
 \var(Q\mid\calF_{\mathcal I})
 \le\tfrac14(\boldsymbol\delta Q)^\top(I-\mathcal C)^{-1}\boldsymbol\delta Q \le\frac{\norm{\boldsymbol\delta Q}_{\ell^2}^2}{4-|\alpha|}
   =\frac{kL}{(4-|\alpha|)N^2}\le C_kn^{-3}.
\end{equation}
For $\psi_n$,
\begin{equation*}
\begin{aligned}
 \norm{\psi_n}_\infty&\le\norm{\omega}_\infty\norm f_\infty,\qquad \psi_n'(q)=\omega'(q)f(\sqrt n(q-2/3))
       +\sqrt n\,\omega(q)f'(\sqrt n(q-2/3)),\\
 \norm{\psi_n'}_\infty
 &\le\norm{\omega'}_\infty\norm f_\infty
        +\sqrt n\,\norm{\omega}_\infty\norm{f'}_\infty
  \le C\sqrt n\,\norm{\omega}_{C^1}\norm f_{C_b^1}.
\end{aligned}
\end{equation*}
Since $q_{\mathcal I}$ is the conditional mean of $Q$,
\begin{equation}\label{eq:slow-test-oscillation}
\begin{aligned}
 &\norm{\psi_n(Q)-\psi_n(q_{\mathcal I})}_{L^2(\PP_n(\cdot\mid\calF_{\mathcal I}))} \le\norm{\psi_n'}_\infty
       \left\{\E[(Q-q_{\mathcal I})^2\mid\calF_{\mathcal I}]\right\}^{1/2}\\
 &\quad=\norm{\psi_n'}_\infty\sqrt{\var(Q\mid\calF_{\mathcal I})}
  \le\frac{C_k}{n}\norm{\omega}_{C^1}\norm f_{C_b^1}.
\end{aligned}
\end{equation}
Substituting \eqref{eq:tested-degree-factor} and \eqref{eq:slow-test-oscillation} into \eqref{eq:tested-conditional-cs} gives
\begin{equation*}
 \left|\E[(J^{(d)}_{ij}-A_{\alpha,h}(q_B))\{\psi_n(Q)-\psi_n(q_{\mathcal I})\}]\right|
 \le\frac Cn\norm{\omega}_{C^1}\norm f_{C_b^1}.
\end{equation*}

For the second term in \eqref{eq:tested-decomposition}, we need the
conditional mean of $J^{(d)}_{ij}$.
By \eqref{eq:actual-refined-degree} in Lemma~\ref{lem:actual-refined} and \eqref{eq:background-H-moments},
\begin{equation*}
\begin{aligned}
 &\left|\E[\{\E(J^{(d)}_{ij}\mid\calF_{\mathcal I})-A_{\alpha,h}(q_B)\}
 \psi_n(q_{\mathcal I})]\right|\\
 &\quad\le\norm{\psi_n}_\infty
       \E|\E(J^{(d)}_{ij}\mid\calF_{\mathcal I})-A_{\alpha,h}(q_B)| \le\frac Cn\norm{\omega}_\infty\norm f_\infty\E H_B^2
  \le\frac Cn\norm{\omega}_{C^1}\norm f_{C_b^1}.
\end{aligned}
\end{equation*}

For the third term in \eqref{eq:tested-decomposition}, the Lipschitz bound on $A_{\alpha,h}$ and \eqref{eq:background-density} give
\begin{equation*}
 \left|\E[(A_{\alpha,h}(q_B)-A_{\alpha,h}(Q))\psi_n(Q)]\right|
 \le\norm{A'_{\alpha,h}}_\infty\E|q_B-Q|\norm{\psi_n}_\infty
 \le\frac Cn\norm{\omega}_{C^1}\norm f_{C_b^1}.
\end{equation*}
Substituting the three estimates into \eqref{eq:tested-decomposition}
and averaging over $i<j$, we obtain
\begin{equation}\label{eq:tested-degree-bound}
 \left|\E[(V_n-A_{\alpha,h}(Q))\psi_n(Q)]\right|
 \le\frac Cn\norm{\omega}_{C^1}\norm f_{C_b^1}.
\end{equation}
This proves the first bound in \eqref{eq:tested-moments}.

For the expectation involving $K_n$, \eqref{eq:overlap-contrast-square}
and \eqref{eq:overlap-difference} give
\begin{equation*}
\begin{aligned}
 \left|\E[(K_n-K_{\alpha,h}(Q))\psi_n(Q)]\right|
 &\le\frac1{N^2}\sum_{e,f}
       \left|\E[(J^{(M)}_{ef}-K_{\alpha,h}(Q))\psi_n(Q)]\right|\\
 &\quad+\left|\E\left[\frac{(V_n-Q^2-Q)^2}{n}\psi_n(Q)\right]\right|.
\end{aligned}
\end{equation*}
The bound \eqref{eq:overlap-extra-square-bound} controls the last term:
\begin{equation}\label{eq:tested-overlap-extra-square}
 \norm{\psi_n}_\infty
 \left\lVert\frac{(V_n-Q^2-Q)^2}{n}\right\rVert_2
 \le\frac Cn\norm{\omega}_{C^1}\norm f_{C_b^1}.
\end{equation}
For the remaining average, Lemma~\ref{thm:fields} gives
$\norm{J^{(M)}_{ef}}_1=\norm{M_e-M_f}_2^2/(2n)\le C$ for all $e,f$.
Among the $N^2$ ordered pairs, at most $Cn^3$ have a repeated endpoint.
Since $K_{\alpha,h}$ is bounded, their contribution is at most
\begin{equation}\label{eq:tested-overlap-repeated-endpoints}
 \frac{Cn^3}{N^2}\sup_{e,f}\E|J^{(M)}_{ef}-K_{\alpha,h}(Q)|\norm{\psi_n}_\infty
 \le\frac Cn\norm{\omega}_{C^1}\norm f_{C_b^1}.
\end{equation}

Fix $e,f$ with four distinct endpoints, and take $\mathcal I$ to be
this endpoint set. Put $q_{\mathcal I}=\E(Q\mid\calF_{\mathcal I})$
for this conditioning.
The bound \eqref{eq:interpolated-contrast-moments} with $U=M_e-M_f$, $t=1$ and $r=4$, and boundedness of $K_{\alpha,h}$ give
\begin{equation*}
 \norm{J^{(M)}_{ef}-K_{\alpha,h}(q_B)}_{L^2(\PP_n(\cdot\mid\calF_{\mathcal I}))}
 \le\frac1{2n}\norm{M_e-M_f}_{L^4(\PP_n(\cdot\mid\calF_{\mathcal I}))}^2
          +\norm{K_{\alpha,h}}_\infty\le C.
\end{equation*}
We use the same decomposition as in \eqref{eq:tested-decomposition}.
Equations \eqref{eq:slow-test-oscillation},
\eqref{eq:actual-refined-overlap} and \eqref{eq:background-density}
then give
\begin{equation}\label{eq:tested-overlap-distinct-endpoints}
\begin{aligned}
 \left|\E[(J^{(M)}_{ef}-K_{\alpha,h}(Q))\psi_n(Q)]\right|
 &\le\frac Cn\norm{\omega}_{C^1}\norm f_{C_b^1}
        +\frac Cn\norm{\psi_n}_\infty\E H_B^2
        +\frac Cn\norm{\psi_n}_\infty\\
 &\le\frac Cn\norm{\omega}_{C^1}\norm f_{C_b^1}.
\end{aligned}
\end{equation}
Combining \eqref{eq:tested-overlap-extra-square},
\eqref{eq:tested-overlap-repeated-endpoints}, and
\eqref{eq:tested-overlap-distinct-endpoints}, we obtain
\begin{equation*}
 \left|\E[(K_n-K_{\alpha,h}(Q))\psi_n(Q)]\right|
 \le\frac Cn\norm{\omega}_{C^1}\norm f_{C_b^1}.
\end{equation*}
Together with \eqref{eq:tested-degree-bound}, this proves \eqref{eq:tested-moments}.
\end{proof}

\subsection{Proof of Lemma~\ref{lem:rooted}}\label{sec:rooted-proof}

\begin{proof}
We first prove the pairwise bound in \eqref{eq:rooted-difference}.
If $v_H=2$, then $U_e^{H,a}=t_{H-a}=1$, and both conclusions hold.
Assume $v_H\ge3$.
It is immediate when $e=f$. Otherwise, let
$\mathcal I$ be the union of the endpoint sets of $e$ and $f$, and condition on
$\calF_{\mathcal I}$, which fixes all edges within $\mathcal I$ and within $\mathcal I^c$.
Only the edges joining the two sets remain random.
Write $V(H)\setminus a$ for the set of vertices of $H$ other than
the two endpoints of $a$.
For $g\in\{e,f\}$, restrict the sum in \eqref{eq:rooted-density} to injections
that map every vertex in $V(H)\setminus a$ into $\mathcal I^c$:
\begin{equation}\label{eq:rooted-restricted-count}
 \widetilde U_g
 =\frac1{2(n-2)_{v_H-2}}
   \sum_{\substack{\phi:V(H)\hookrightarrow[n]\\
         \phi(a)=g,\ \phi(V(H)\setminus a)\subset\mathcal I^c}}
       \prod_{b\in E(H)\setminus\{a\}}X_{\phi(b)}.
\end{equation}
Here $\phi(b)=\{\phi(r),\phi(s)\}$ for $b=\{r,s\}$.
Each omitted embedding sends at least one of the $v_H-2$ vertices in
$V(H)\setminus a$ into $\mathcal I$.
There are two assignments of the endpoints of $a$ to those of $g$,
at most $|\mathcal I|$ choices for the image of that vertex, and at most
$(n-3)_{v_H-3}$ injections of the remaining vertices.
Since every summand lies in $[0,1]$, a union bound gives
\begin{equation*}
 0\le U_g^{H,a}-\widetilde U_g
 \le\frac{2(v_H-2)|\mathcal I|(n-3)_{v_H-3}}
              {2(n-2)_{v_H-2}}
 \le\frac{C_H}{n}.
\end{equation*}
Put $F=\widetilde U_e-\widetilde U_f$. Under the conditional law
$p_{B,1}$ defined by \eqref{eq:interpolated-row-law} with $t=1$,
the identity
\begin{equation*}
 U_e^{H,a}-U_f^{H,a}
 =F+(U_e^{H,a}-\widetilde U_e)-(U_f^{H,a}-\widetilde U_f)
\end{equation*}
yields
\begin{equation}\label{eq:rooted-conditional-decomposition}
 \norm{U_e^{H,a}-U_f^{H,a}}_{L^r(\PP_n(\cdot\mid\calF_{\mathcal I}))}
 \le \norm{F-\E_1F}_{L^r(p_{B,1})}+|\E_1F|+\frac{2C_H}{n}.
\end{equation}
To bound $\norm{F-\E_1F}_{L^r(p_{B,1})}$, we estimate the coordinate oscillations of $F$.
By \eqref{eq:rooted-restricted-count}, a free coordinate $z_{bu}$ can affect
$\widetilde U_g$ only if $b$ is an endpoint of $g$ and a vertex in
$V(H)\setminus a$ maps to $u$. There are at most $v_H-2$ choices for that vertex,
two assignments of the endpoints of $a$ to those of $g$,
and $(n-3)_{v_H-3}$ injections of the remaining vertices.
Consequently,
\begin{equation*}
\begin{aligned}
 \delta_{(b,u)}\widetilde U_g
 &\le\frac{2(v_H-2)(n-3)_{v_H-3}}{2(n-2)_{v_H-2}}
 =\frac{v_H-2}{n-2},\\
 \delta_{(b,u)}F
 &\le\delta_{(b,u)}\widetilde U_e+\delta_{(b,u)}\widetilde U_f
 \le\frac{2(v_H-2)}{n-2}\le\frac{C_H}{n}.
\end{aligned}
\end{equation*}
The influence bound \eqref{eq:row-influence} holds for every $p_{B,t}$,
$0\le t\le1$. Since there are at most $4n$ free
coordinates, \eqref{eq:comparison-moment} gives
\begin{equation*}
 \norm{\boldsymbol\delta F}_{\ell^2}^2\le4n(C_H/n)^2\le C_H/n,\qquad
 \norm{F-\E_tF}_{L^r(p_{B,t})}\le C_{H,r}n^{-1/2},
 \quad 0\le t\le1.
\end{equation*}

To bound $|\E_1F|$, first work under the product law $p_{B,0}$
in \eqref{eq:one-row-law}. Under this law, the vectors
$\mathbf z_b=(z_{bu})_{u\in\mathcal I^c}$, $b\in\mathcal I$,
are independent and each has law $p_B$.
For $g=\{i,j\}$, \eqref{eq:rooted-restricted-count} shows that
$\widetilde U_g$ depends only on $B,\mathbf z_i,\mathbf z_j$.
With $B$ fixed, the same counting formula applies to both $g=e$ and $g=f$.
If $e=\{i,j\}$ and $f=\{i',j'\}$ have four distinct endpoints, then
$(\mathbf z_i,\mathbf z_j)$ and $(\mathbf z_{i'},\mathbf z_{j'})$ both have law
$p_B^{\otimes2}$, so $\E_0\widetilde U_e=\E_0\widetilde U_f$.
If instead $e=\{i,j\}$ and $f=\{i,j'\}$ share one endpoint, then, conditional on
$\mathbf z_i$, both $\mathbf z_j$ and $\mathbf z_{j'}$ have law $p_B$.
Hence
\begin{equation*}
 \E_0[\widetilde U_e\mid\mathbf z_i]
 =\E_0[\widetilde U_f\mid\mathbf z_i].
\end{equation*}
Taking expectations gives $\E_0\widetilde U_e=\E_0\widetilde U_f$ also
in this case. Therefore $\E_0F=0$.
By \eqref{eq:row-interpolation}, the Cauchy--Schwarz inequality, and
\eqref{eq:cross-moments},
\begin{equation*}
 |\E_1F|
 =\left|\int_0^1\cov_t(F,J_{\rm cross})\dd t\right|
 \le\int_0^1
 \norm{F-\E_tF}_{L^2(p_{B,t})}
 \norm{J_{\rm cross}-\E_tJ_{\rm cross}}_{L^2(p_{B,t})}\dd t
 \le C_H/n.
\end{equation*}
Substituting the centered-moment and mean bounds into
\eqref{eq:rooted-conditional-decomposition} yields
\begin{equation*}
 \norm{U_e^{H,a}-U_f^{H,a}}_{L^r(\PP_n(\cdot\mid\calF_{\mathcal I}))}
 \le C_{H,r}n^{-1/2}+C_Hn^{-1}\le C_{H,r}n^{-1/2}.
\end{equation*}
The bound is uniform in the conditioned edges. By the tower property,
\begin{equation*}
 \E|U_e^{H,a}-U_f^{H,a}|^r
 =\E\bigl[\E(|U_e^{H,a}-U_f^{H,a}|^r\mid\calF_{\mathcal I})\bigr]
 \le C_{H,r}^r n^{-r/2}.
\end{equation*}
By \eqref{eq:rooted-identities} and Minkowski's inequality,
\begin{equation*}
 \norm{U_e^{H,a}-t_{H-a}}_r
 =\left\lVert\frac1N\sum_f(U_e^{H,a}-U_f^{H,a})\right\rVert_r
 \le\frac1N\sum_f\norm{U_e^{H,a}-U_f^{H,a}}_r
 \le C_{H,r}n^{-1/2}.
\end{equation*}
This completes \eqref{eq:rooted-difference}.

For \eqref{eq:rooted-product}, apply Minkowski's and H\"older's
inequalities, by \eqref{eq:field-moments} and
\eqref{eq:rooted-difference}, we obtain
\begin{equation*}
\begin{aligned}
 \left\lVert\frac1N\sum_e(L_e-\bar L)(U_e^{H,a}-t_{H-a})\right\rVert_r
 &\le\frac1N\sum_e
       \norm{L_e-\bar L}_{2r}\norm{U_e^{H,a}-t_{H-a}}_{2r}\\
 &\le\frac1N\sum_e C_{H,r}n^{-1/2}n^{-1/2}
 \le C_{H,r}n^{-1},
\end{aligned}
\end{equation*}
which proves \eqref{eq:rooted-product}.
\end{proof}

\subsection{Proof of Lemma~\ref{lem:overlap-cubic}}\label{sec:overlap-proof}

\begin{proof}
Put $\boldsymbol\mu=\E_B\mathbf z_1$, $\Sigma=\cov_B(\mathbf z_1)$ and
$\boldsymbol\xi=\mathbf z_1-\boldsymbol\mu$.
We first prove the following bound for three centered linear sums.

\begin{lemma}\label{lem:row-refined-third}
For fixed $K_0\ge0$, let $\mathbf a,\mathbf b,\mathbf c\in\R^L$ be deterministic
vectors with Euclidean norms at most one and all coordinates bounded
in absolute value by $K_0/\sqrt n$. Then
\begin{equation}\label{eq:dense-third}
 |\E_B(\mathbf a^\top\boldsymbol\xi)(\mathbf b^\top\boldsymbol\xi)
       (\mathbf c^\top\boldsymbol\xi)|\le C_{K_0}n^{-1/2}.
\end{equation}
In particular, three centered linear sums with uniformly bounded deterministic
weights have a mixed third moment of order $O(n)$.
\end{lemma}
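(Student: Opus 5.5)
The plan is a self-consistent (fixed-point) argument built on the heat-bath structure of $p_B$. Write $T(\mathbf a,\mathbf b,\mathbf c)=\E_B(\mathbf a^\top\boldsymbol\xi)(\mathbf b^\top\boldsymbol\xi)(\mathbf c^\top\boldsymbol\xi)$ and $G=(\mathbf b^\top\boldsymbol\xi)(\mathbf c^\top\boldsymbol\xi)$. Let $L_u=\ell(h+\tfrac\alpha n(B\mathbf z_1)_u)$ be the conditional probability that $z_{1u}=1$ under $p_B$; the diagonal of $B$ vanishes, so $L_u$ does not depend on $z_{1u}$. Split
\[
 \mathbf a^\top\boldsymbol\xi=\sum_u a_u(z_{1u}-L_u)+\sum_u a_u(L_u-\mu_u).
\]
The goal is to show that the first piece tested against $G$ is $O(n^{-1/2})$, and that the second piece is a \emph{contracted} linear sum $\mathbf a'^\top\boldsymbol\xi$ plus an $O(n^{-1/2})$ error. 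Iterating, or taking suprema, then closes the bound.

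\emph{Step 1 (martingale part).} For each $u$, $\E_B[(z_{1u}-L_u)\,\Psi]=0$ for every $\Psi$ not depending on $z_{1u}$. Hence only the part of $G$ that depends on $z_{1u}$ survives. Write $G=G^{(u)}+z_{1u}\Delta_u$, where $G^{(u)}$ and $\Delta_u$ do not depend on $z_{1u}$ and
\[
 \Delta_u=b_u(\mathbf c^\top\boldsymbol\xi)_{-u}+c_u(\mathbf b^\top\boldsymbol\xi)_{-u}+O(b_uc_u).
\]
Then $\E_B[(z_{1u}-L_u)G]=\E_B[L_u(1-L_u)\Delta_u]$. Since $\E_B(\mathbf c^\top\boldsymbol\xi)=0$, the main part is $b_u\cov_B(L_u(1-L_u),(\mathbf c^\top\boldsymbol\xi)_{-u})$, plus $O(|b_uc_u|)$ corrections. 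By Lemma~\ref{lem:comparison}, with influence bound $|\alpha|/4<1$ as in \eqref{eq:row-influence}, this covariance is at most $C\norm{\boldsymbol\delta L_u}_{\ell^2}\norm{\mathbf c}_{\ell^2}\le Cn^{-1/2}$, because each coordinate moves $L_u$ by at most $|\alpha|/(4n)$. Summing with weights $a_u$ and using Cauchy--Schwarz, $\sum_u|a_ub_u|\le1$ and $\sum_u|a_ub_uc_u|\le K_0n^{-1/2}$, shows that the first piece contributes $O(n^{-1/2})$.

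\emph{Step 2 (linearizing the field).} Taylor-expand $L_u$ about the centered field, using the quantity $\tfrac\alpha n(B\boldsymbol\xi)_u$:
\[
 L_u-\mu_u=\tfrac\alpha n\,\ell'_u\,(B\boldsymbol\xi)_u+r_u,\qquad |r_u|\le C\bigl\{n^{-2}(B\boldsymbol\xi)_u^2+|\E_BL_u-\mu_u-\text{const}_u|\bigr\}.
\]
Here $\ell'_u$ is evaluated at the deterministic centre $h+\tfrac\alpha n(B\boldsymbol\mu)_u$, and the constant parts cancel by centering. By \eqref{eq:comparison-moment}, $(B\boldsymbol\xi)_u$ has all moments of order $\sqrt n$, so $\norm{r_u}_{L^4(p_B)}\le C/n$. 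Since $G$ has bounded $L^2$ norm by \eqref{eq:comparison-moment}, Hölder's inequality gives a remainder contribution of at most $\sum_u|a_u|\,C/n\le Cn^{-1/2}$. The linear part is $\mathbf a'^\top\boldsymbol\xi$ with $\mathbf a'=\tfrac\alpha nB\,\diag(\ell'_u)\mathbf a$. Because $\ell'\le1/4$ and $\norm B_{\rm op}\le n$, we get $\norm{\mathbf a'}_{\ell^2}\le|\alpha|/4=:c<1$. Also $|a'_u|\le\tfrac{|\alpha|}{4n}\sum_v|a_v|\le\tfrac{|\alpha|}{4}n^{-1/2}$, so the coordinate bound of $\mathbf a'$ is a fixed constant, independent of $K_0$.

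\emph{Step 3 (closing).} Let $S_n=\sup T(\mathbf a,\mathbf b,\mathbf c)$ over unit-ball vectors with coordinates at most $K_1/\sqrt n$, where $K_1=\max(K_0,|\alpha|)$. This supremum is finite for each $n$, since $|T|\le C$ by \eqref{eq:comparison-moment}. The steps above give $S_n\le cS_n+Cn^{-1/2}$, hence $S_n\le Cn^{-1/2}/(1-c)$. This proves \eqref{eq:dense-third}; the ``in particular'' statement follows by rescaling $\mathbf a,\mathbf b,\mathbf c$ by $\sqrt n$.

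The main obstacle I expect is Step 1, namely getting the factor $n^{-1/2}$ rather than $O(1)$. This relies on centering $L_u(1-L_u)$ against $\mathbf c^\top\boldsymbol\xi$ and applying the covariance form of Lemma~\ref{lem:comparison}. The $O(|b_uc_u|)$ corrections and the exact bookkeeping for $(\cdot)_{-u}$ need care, but the coordinate bound $K_0/\sqrt n$ is precisely what makes them summable.
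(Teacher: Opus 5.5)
Your argument is correct and is essentially the paper's: the paper also linearizes $p_u=\ell(m_u+H_u)$ about the deterministic field $m_u$ with an $O(n^{-1})$ remainder in $L^s(p_B)$, controls the single-coordinate (carr\'e du champ) term with the covariance bound of Lemma~\ref{lem:comparison} and the triple increments by $K_0n^{-1/2}$, and closes $M_n\le\frac{|\alpha|}4M_n+C_{K_0}n^{-1/2}$ through the contraction $\mathbf v\mapsto J_BD_\ell\mathbf v$. The main difference is that the paper uses the symmetric identity $\E_B\calL_B\{(\mathbf a^\top\boldsymbol\xi)(\mathbf b^\top\boldsymbol\xi)(\mathbf c^\top\boldsymbol\xi)\}=0$, which contracts all three vectors, whereas your split of $\mathbf a^\top\boldsymbol\xi$ into $\sum_ua_u(z_{1u}-L_u)$ and $\sum_ua_u(L_u-\mu_u)$ is its one-sided version, and there is one bookkeeping fix in Step~3: under your $\ell^1$ coordinate bound the rescaled vector $4\mathbf a'/|\alpha|$ has coordinates up to $n^{-1/2}$, so you need $K_1\ge1$ (e.g.\ $K_1=\max(K_0,1)$), or else use the row-sum bound $\norm{J_BD_\ell\mathbf v}_{\ell^\infty}\le\frac{|\alpha|}4\norm{\mathbf v}_{\ell^\infty}$ as the paper does, which keeps the class $\mathcal A_n(K_0)$ invariant.
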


\begin{proof}[{Proof of Lemma \ref{lem:row-refined-third}}]
The one-row law \eqref{eq:one-row-law} gives, for
$\mathbf x=(x_v)_{v\in\mathcal I^c}\in\{0,1\}^{\mathcal I^c}$,
\begin{equation*}
 p_u(\mathbf x)
 =\PP_{p_B}(z_{1u}=1\mid\mathbf z_{1,-u}=\mathbf x_{-u})
 =\ell(h+(J_B\mathbf x)_u).
\end{equation*}
Since $\sup\ell'=1/4$, the influence matrix
$\mathcal C_{uv}=|(J_B)_{uv}|/4$ is symmetric and satisfies
\begin{equation*}
 \norm{\mathcal C}_{\rm op}
 \le\max_u\sum_v\mathcal C_{uv}
 \le\frac{|\alpha|(L-1)}{4n}\le\frac{|\alpha|}{4}<1.
\end{equation*}
Put $\Sigma=\cov_B(\mathbf z_1)$. For any unit vector $\mathbf a$,
the coordinate oscillations of $\mathbf a^\top\mathbf z_1$ equal $|\mathbf a|$,
so \eqref{eq:comparison-cov} gives
\begin{equation*}
 \mathbf a^\top\Sigma\mathbf a=\var_B(\mathbf a^\top\mathbf z_1)
 \le\tfrac14|\mathbf a|^\top(I-\mathcal C)^{-1}|\mathbf a|
 \le\tfrac14\norm{(I-\mathcal C)^{-1}}_{\rm op}.
\end{equation*}
Taking the supremum over unit vectors and using
$\norm{\mathcal C}_{\rm op}\le|\alpha|/4<1$, we obtain
\begin{equation*}
 \norm\Sigma_{\rm op}
 \le\tfrac14\norm{(I-\mathcal C)^{-1}}_{\rm op}
 \le\frac1{4(1-|\alpha|/4)}.
\end{equation*}

Put
\begin{equation}\label{eq:row-fields}
 m_u=h+(J_B\boldsymbol\mu)_u,\qquad H_u=(J_B\boldsymbol\xi)_u.
\end{equation}
Since $\delta_vH_u=|(J_B)_{uv}|$, \eqref{eq:comparison-moment} gives,
for each fixed $s\ge1$,
\begin{equation}\label{eq:row-field-moment}
 \norm{H_u}_{L^s(p_B)}
 \le C_s\left\{\sum_v(J_B)_{uv}^2\right\}^{1/2}
 \le C_sn^{-1/2},\qquad \E_BH_u=0.
\end{equation}
Since $\mu_u=\E_B\ell(m_u+H_u)$, Taylor's formula yields
\begin{equation}\label{eq:row-mean-equation}
 |\mu_u-\ell(m_u)|\le C\E_BH_u^2\le C/n,\qquad
 \norm{(\mu_u-\ell(m_u))_u}_{\ell^2}\le C\sqrt L/n.
\end{equation}
The symmetry and row sums of $J_B$ imply
\begin{equation}\label{eq:row-interaction-norms}
 \norm{J_B}_{\rm op}
 \le\max_u\sum_v|(J_B)_{uv}|
 \le |\alpha|(L-1)/n\le|\alpha|.
\end{equation}

For $r=1,2$, define
\begin{equation}\label{eq:row-linearization}
 \rho_u(\mathbf z_r)
 =p_u(\mathbf z_r)-\mu_u
  -\ell'(m_u)[J_B(\mathbf z_r-\boldsymbol\mu)]_u.
\end{equation}
Since $\E_Bp_u(\mathbf z_r)=\mu_u$ and
$\E_B(\mathbf z_r-\boldsymbol\mu)=\mathbf0$, we have
$\E_B\rho_u(\mathbf z_r)=0$.
For $r=1$, $p_u(\mathbf z_1)=\ell(m_u+H_u)$, so Taylor's formula gives
\begin{equation*}
 |\rho_u(\mathbf z_1)-\{\ell(m_u)-\mu_u\}|
 =|\ell(m_u+H_u)-\ell(m_u)-\ell'(m_u)H_u|\le CH_u^2.
\end{equation*}
Applying \eqref{eq:row-field-moment} and \eqref{eq:row-mean-equation}, and
using that $\mathbf z_1,\mathbf z_2$ have the same law, yields, for every fixed $s\ge1$,
\begin{equation}\label{eq:row-linearization-remainder}
 \norm{\rho_u(\mathbf z_r)}_{L^s(p_B)}
 \le C\norm{H_u}_{L^{2s}(p_B)}^2+|\ell(m_u)-\mu_u|
 \le C_sn^{-1},\qquad r=1,2.
\end{equation}
Let $D_\ell=\diag\{\ell'(m_u)\}_{u\in\mathcal I^c}$ and
$\boldsymbol\rho(\mathbf z_r)=(\rho_u(\mathbf z_r))_{u\in\mathcal I^c}$.

For $m\ge1$, let $\mathbf z=(\mathbf z_r)_{r=1}^m$ have law $p_B^{\otimes m}$.
Write $\mathbf z^{ru,b}$ for $\mathbf z$ with only $z_{ru}$ replaced by $b$.
An update of $(r,u)$ draws
$\eta_{ru}\sim\operatorname{Bernoulli}(p_u(\mathbf z_r))$.
For real-valued functions $F,G$, define
\begin{equation}\label{eq:row-resampling-operators}
\begin{aligned}
 \calL_BF(\mathbf z)
 &=\sum_{r=1}^m\sum_{u\in\mathcal I^c}
    \E[F(\mathbf z^{ru,\eta_{ru}})-F(\mathbf z)\mid\mathbf z],\\
 \Gamma_B(F,G)(\mathbf z)
 &=\sum_{r=1}^m\sum_{u\in\mathcal I^c}
    \E\big[\{F(\mathbf z^{ru,\eta_{ru}})-F(\mathbf z)\}
           \{G(\mathbf z^{ru,\eta_{ru}})-G(\mathbf z)\}\mid\mathbf z\big].
\end{aligned}
\end{equation}
Each coordinate is updated at rate one. Conditional resampling preserves
$p_B^{\otimes m}$, so $\E_B\calL_BF=0$; the operator acts on vectors
componentwise. Here we take $m=1$.

Define the conditional probability that coordinate $(r,u)$ changes by
\begin{equation}\label{eq:row-update-second-moment}
 q_u(\mathbf z_r)
 =z_{ru}+p_u(\mathbf z_r)-2z_{ru}p_u(\mathbf z_r)
 =\E[(\eta_{ru}-z_{ru})^2\mid\mathbf z].
\end{equation}
Fix $(r,v)$ and change $z_{rv}$ from $0$ to $1$.
Since $(J_B)_{vv}=0$, $p_v(\mathbf z_r)$ does not change, so
$q_v(\mathbf z_r)$ changes by at most $1$.
For $u\ne v$, only $p_u(\mathbf z_r)$ changes in the definition of
$q_u(\mathbf z_r)$. Writing $\mathbf z_r^{v,b}$ for $\mathbf z_r$
with its $v$th coordinate replaced by $b$, we have
\begin{equation}\label{eq:row-probability-influence}
 |q_u(\mathbf z_r^{v,1})-q_u(\mathbf z_r^{v,0})|
 \le|p_u(\mathbf z_r^{v,1})-p_u(\mathbf z_r^{v,0})|
 \le\tfrac14|(J_B)_{uv}|\le C/n.
\end{equation}

For a deterministic vector $\mathbf v\in\R^L$, put
$L_{\mathbf v}=\mathbf v^\top\boldsymbol\xi$ and
$\norm{\mathbf v}_{\ell^\infty}=\max_{u\in\mathcal I^c}|v_u|$. Define
\begin{equation*}
\begin{aligned}
 \mathcal A_n(K_0)&=\{\mathbf v\in\R^L:
       \norm{\mathbf v}_{\ell^2}\le1,\quad
       \norm{\mathbf v}_{\ell^\infty}\le K_0/\sqrt n\},\\
 M_n&=\sup_{\mathbf a,\mathbf b,\mathbf c\in\mathcal A_n(K_0)}
           |\E_BL_{\mathbf a}L_{\mathbf b}L_{\mathbf c}|.
\end{aligned}
\end{equation*}
Our goal is $M_n\le C_{K_0}n^{-1/2}$.
Since $\delta_uL_{\mathbf v}=|v_u|$ and $\E_BL_{\mathbf v}=0$,
\eqref{eq:comparison-moment} gives, for each fixed $s\ge1$,
\begin{equation*}
 \sup_{\mathbf v\in\mathcal A_n(K_0)}\norm{L_{\mathbf v}}_{L^s(p_B)}
 \le C_s\sup_{\mathbf v\in\mathcal A_n(K_0)}\norm{\mathbf v}_{\ell^2}\le C_s.
\end{equation*}
In particular, H\"older's inequality gives $M_n\le C_3^3<\infty$.

Use $\calL_B,\Gamma_B$ from \eqref{eq:row-resampling-operators} with $m=1$.
Recall $m_u$ from \eqref{eq:row-fields}, and put
$D_\ell=\diag\{\ell'(m_u)\}_{u\in\mathcal I^c}$.
Write $\boldsymbol\rho(\mathbf z_1)=(\rho_u(\mathbf z_1))_u$, with
$\rho_u$ from \eqref{eq:row-linearization}.
For $\mathbf a,\mathbf b,\mathbf c\in\mathcal A_n(K_0)$, symmetry of $J_B$
and \eqref{eq:row-linearization} give
\begin{equation*}
 \calL_BL_{\mathbf a}
 =-L_{\mathbf a}+L_{J_BD_\ell\mathbf a}
       +\mathbf a^\top\boldsymbol\rho(\mathbf z_1).
\end{equation*}
An update of coordinate $u$ has increment
$\Delta_uL_{\mathbf a}=a_u(\eta_{1u}-z_{1u})$.
All increments in a product refer to the same update, and expectation
includes the draw $\eta_{1u}$.
Expanding the product after this update and using
$\E_B\calL_B(L_{\mathbf a}L_{\mathbf b}L_{\mathbf c})=0$, yields
\begin{equation}\label{eq:dense-third-stationarity}
\begin{aligned}
 3\E_BL_{\mathbf a}L_{\mathbf b}L_{\mathbf c}
 ={}&\E_BL_{J_BD_\ell\mathbf a}L_{\mathbf b}L_{\mathbf c}
       +\E_BL_{\mathbf a}L_{J_BD_\ell\mathbf b}L_{\mathbf c}\\
 &+\E_BL_{\mathbf a}L_{\mathbf b}L_{J_BD_\ell\mathbf c}\\
 &+\E_B[(\mathbf a^\top\boldsymbol\rho(\mathbf z_1))L_{\mathbf b}L_{\mathbf c}
       +(\mathbf b^\top\boldsymbol\rho(\mathbf z_1))L_{\mathbf a}L_{\mathbf c}\\
 &\hspace{3em}+(\mathbf c^\top\boldsymbol\rho(\mathbf z_1))L_{\mathbf a}L_{\mathbf b}]\\
 &+\E_B[\Gamma_B(L_{\mathbf a},L_{\mathbf b})L_{\mathbf c}
       +\Gamma_B(L_{\mathbf a},L_{\mathbf c})L_{\mathbf b}\\
 &\hspace{3em}+\Gamma_B(L_{\mathbf b},L_{\mathbf c})L_{\mathbf a}]\\
 &+\sum_u\E_B[\Delta_uL_{\mathbf a}\Delta_uL_{\mathbf b}\Delta_uL_{\mathbf c}].
\end{aligned}
\end{equation}
We bound the terms containing $\boldsymbol\rho$, $\Gamma_B$, and three increments
by proving, uniformly over $\mathbf a,\mathbf b,\mathbf c\in\mathcal A_n(K_0)$,
\begin{align}
 |\E_B[(\mathbf a^\top\boldsymbol\rho(\mathbf z_1))L_{\mathbf b}L_{\mathbf c}]|
 &\le Cn^{-1/2},\label{eq:dense-third-taylor-error}\\
 |\E_B[\Gamma_B(L_{\mathbf a},L_{\mathbf b})L_{\mathbf c}]|
 &\le C_{K_0}n^{-1/2},\label{eq:dense-third-covariance-error}\\
 \sum_u\E_B|\Delta_uL_{\mathbf a}\Delta_uL_{\mathbf b}\Delta_uL_{\mathbf c}|
 &\le K_0n^{-1/2}.\label{eq:dense-third-jump-error}
\end{align}

\par\medskip
\noindent\textit{Proof of \eqref{eq:dense-third-taylor-error}}: Equation \eqref{eq:row-linearization-remainder} and the Cauchy--Schwarz inequality give
\begin{equation*}
 \norm{\mathbf a^\top\boldsymbol\rho(\mathbf z_1)}_{L^2(p_B)}
 \le\sum_u|a_u|\norm{\rho_u(\mathbf z_1)}_{L^2(p_B)}
 \le\frac Cn\sqrt L\norm{\mathbf a}_{\ell^2}\le Cn^{-1/2}.
\end{equation*}
Applying H\"older's inequality and the uniform $L^4(p_B)$ bounds above, we obtain
\begin{equation*}
\begin{aligned}
 |\E_B[(\mathbf a^\top\boldsymbol\rho(\mathbf z_1))L_{\mathbf b}L_{\mathbf c}]|
 &\le\norm{\mathbf a^\top\boldsymbol\rho(\mathbf z_1)}_{L^2(p_B)}
       \norm{L_{\mathbf b}}_{L^4(p_B)}\norm{L_{\mathbf c}}_{L^4(p_B)}\\
 &\le Cn^{-1/2}.
\end{aligned}
\end{equation*}

\par\medskip
\noindent\textit{Proof of \eqref{eq:dense-third-covariance-error}}: Equation \eqref{eq:row-update-second-moment} gives
\begin{equation*}
 \Gamma_B(L_{\mathbf a},L_{\mathbf b})
 =\sum_u a_ub_uq_u(\mathbf z_1).
\end{equation*}
Changing coordinate $v$ changes $q_v$ by at most one.
For $u\ne v$, \eqref{eq:row-probability-influence} bounds the change
in $q_u$ by $|\alpha|/(4n)$. Hence
\begin{equation*}
\begin{aligned}
 \delta_v\Gamma_B(L_{\mathbf a},L_{\mathbf b})
 &\le |a_vb_v|+\frac{|\alpha|}{4n}\sum_{u\ne v}|a_ub_u|\\
 &\le\frac{K_0^2}{n}
       +\frac{|\alpha|}{4n}\norm{\mathbf a}_{\ell^2}\norm{\mathbf b}_{\ell^2}
 \le\frac{C_{K_0}}n,\\
 \norm{\boldsymbol\delta\Gamma_B(L_{\mathbf a},L_{\mathbf b})}_{\ell^2}
 &\le\frac{C_{K_0}\sqrt L}{n}\le C_{K_0}n^{-1/2}.
\end{aligned}
\end{equation*}
Since $\E_BL_{\mathbf c}=0$ and $\boldsymbol\delta L_{\mathbf c}=|\mathbf c|$,
the covariance bound \eqref{eq:comparison-cov} now gives
\begin{equation*}
\begin{aligned}
 |\E_B[\Gamma_B(L_{\mathbf a},L_{\mathbf b})L_{\mathbf c}]|
 &=|\cov_B(\Gamma_B(L_{\mathbf a},L_{\mathbf b}),L_{\mathbf c})|\\
 &\le C\norm{\boldsymbol\delta\Gamma_B(L_{\mathbf a},L_{\mathbf b})}_{\ell^2}
       \norm{\mathbf c}_{\ell^2}\le C_{K_0}n^{-1/2}.
\end{aligned}
\end{equation*}

\par\medskip
\noindent\textit{Proof of \eqref{eq:dense-third-jump-error}}: $|\eta_{1u}-z_{1u}|\le1$ gives
\begin{equation*}
 \sum_u\E_B|\Delta_uL_{\mathbf a}\Delta_uL_{\mathbf b}\Delta_uL_{\mathbf c}|
 \le\sum_u|a_ub_uc_u|
 \le\norm{\mathbf c}_{\ell^\infty}\norm{\mathbf a}_{\ell^2}\norm{\mathbf b}_{\ell^2}
 \le K_0n^{-1/2}.
\end{equation*}

The first two estimates also hold after permuting
$\mathbf a,\mathbf b,\mathbf c$.
Substituting \eqref{eq:dense-third-taylor-error}--\eqref{eq:dense-third-jump-error}
into \eqref{eq:dense-third-stationarity} gives
\begin{equation}\label{eq:dense-third-recursion}
\begin{aligned}
 3\E_BL_{\mathbf a}L_{\mathbf b}L_{\mathbf c}
 ={}&\E_BL_{J_BD_\ell\mathbf a}L_{\mathbf b}L_{\mathbf c}
      +\E_BL_{\mathbf a}L_{J_BD_\ell\mathbf b}L_{\mathbf c}\\
 &+\E_BL_{\mathbf a}L_{\mathbf b}L_{J_BD_\ell\mathbf c}
      +O_{K_0}(n^{-1/2}).
\end{aligned}
\end{equation}
By \eqref{eq:row-interaction-norms} and $0\le\ell'(m_u)\le1/4$,
\begin{equation*}
\begin{aligned}
 \norm{J_BD_\ell\mathbf v}_{\ell^2}
 &\le\norm{J_B}_{\rm op}\norm{D_\ell}_{\rm op}\norm{\mathbf v}_{\ell^2}
 \le\frac{|\alpha|}{4}\norm{\mathbf v}_{\ell^2},\\
 \norm{J_BD_\ell\mathbf v}_{\ell^\infty}
 &\le\max_u\sum_w|(J_B)_{uw}|\ell'(m_w)\norm{\mathbf v}_{\ell^\infty}
 \le\frac{|\alpha|}{4}\norm{\mathbf v}_{\ell^\infty}.
\end{aligned}
\end{equation*}
For $\alpha\ne0$, these bounds imply
$4J_BD_\ell\mathbf v/|\alpha|\in\mathcal A_n(K_0)$ whenever
$\mathbf v\in\mathcal A_n(K_0)$, so
\begin{equation*}
 \left|\E_BL_{J_BD_\ell\mathbf a}L_{\mathbf b}L_{\mathbf c}\right|
 =\frac{|\alpha|}{4}
    \left|\E_BL_{4J_BD_\ell\mathbf a/|\alpha|}L_{\mathbf b}L_{\mathbf c}\right|
 \le\frac{|\alpha|}{4}M_n.
\end{equation*}
For $\alpha=0$, $J_B=0$ and this expectation vanishes.
The same bound holds for the other two expectations in
\eqref{eq:dense-third-recursion}. Taking the supremum gives
\begin{equation*}
 3M_n\le3\frac{|\alpha|}{4}M_n+C_{K_0}n^{-1/2},\qquad
 M_n\le\frac{C_{K_0}}{3(1-|\alpha|/4)}n^{-1/2}.
\end{equation*}
The denominator is uniformly positive on the fixed compact parameter set.
This proves \eqref{eq:dense-third}.

For the bounded-weight assertion, suppose all three
coefficient vectors have coordinates bounded by $C_0$ in absolute value.
For any one of these vectors $\mathbf v$, $L\le n$ implies
\begin{equation*}
 \left\|\frac{\mathbf v}{(1+C_0)\sqrt n}\right\|_{\ell^2}
 \le\frac{C_0\sqrt L}{(1+C_0)\sqrt n}\le1,\qquad
 \left\|\frac{\mathbf v}{(1+C_0)\sqrt n}\right\|_{\ell^\infty}\le n^{-1/2}.
\end{equation*}
The normalized vectors belong to $\mathcal A_n(1)$.
Applying \eqref{eq:dense-third} and restoring the scale factors gives
\begin{equation*}
 |\E_BL_{\mathbf a}L_{\mathbf b}L_{\mathbf c}|
 \le(1+C_0)^3n^{3/2}C_1n^{-1/2}\le C_{C_0}n.
\end{equation*}
\end{proof}

{
To complete \eqref{eq:overlap-cubic}, we also need bounds on the absolute row sums of $\Sigma$.
}
To bound $|\Sigma_{uv}|$ for $u\ne v$
and $\max_u\sum_v|\Sigma_{uv}|$, use the one-row influence bound
\begin{equation*}
 \mathcal C_{uv}=|(J_B)_{uv}|/4,\qquad
 \max_u\sum_v\mathcal C_{uv}\le|\alpha|/4<1.
\end{equation*}
Since $\mathcal C$ is nonnegative and its row sums are strictly less
than one, $(I-\mathcal C)^{-1}$ exists and has nonnegative entries.
Multiplying the componentwise inequality
$(I-\mathcal C)\mathbf1\ge(1-|\alpha|/4)\mathbf1$
by this inverse gives
$(I-\mathcal C)^{-1}\mathbf1\le(1-|\alpha|/4)^{-1}\mathbf1$.
Using $(I-\mathcal C)^{-1}=I+(I-\mathcal C)^{-1}\mathcal C$
and $\max_{r,s}\mathcal C_{rs}\le|\alpha|/(4n)$, we obtain, for $u\ne v$,
\begin{equation*}
 [(I-\mathcal C)^{-1}]_{uv}
 =\sum_w[(I-\mathcal C)^{-1}]_{uw}\mathcal C_{wv}
 \le\frac{|\alpha|}{4n}\sum_w[(I-\mathcal C)^{-1}]_{uw}
 \le\frac{|\alpha|}{4n(1-|\alpha|/4)}.
\end{equation*}
Applying \eqref{eq:comparison-cov} to $z_{1u},z_{1v}$ therefore gives
\begin{equation}\label{eq:row-covariance-entries}
\begin{aligned}
 |\Sigma_{uv}|&\le\tfrac14[(I-\mathcal C)^{-1}]_{uv},\qquad
 \max_u\sum_v|\Sigma_{uv}|\le\frac1{4(1-|\alpha|/4)},\\
 |\Sigma_{uv}|&\le\frac{|\alpha|}{16n(1-|\alpha|/4)}
 \quad(u\ne v).
\end{aligned}
\end{equation}

Put $\widetilde{\boldsymbol\xi}=\mathbf z_2-\boldsymbol\mu$.
Conditioning on $\mathbf z_2$ in
\begin{equation*}
 \mathbf z_1^\top\mathbf z_2-\boldsymbol\mu^\top\boldsymbol\mu
 =\mathbf z_2^\top\boldsymbol\xi
   +\boldsymbol\mu^\top\widetilde{\boldsymbol\xi}
\end{equation*}
gives
\begin{equation*}
\begin{aligned}
 \E_B(\mathbf z_1^\top\mathbf z_2-\boldsymbol\mu^\top\boldsymbol\mu)^3
 ={}&\E_B\!\left[\E_B\{(\mathbf z_2^\top\boldsymbol\xi)^3\mid\mathbf z_2\}\right]
     +\E_B(\boldsymbol\mu^\top\widetilde{\boldsymbol\xi})^3\\
 &+3\cov_B(\boldsymbol\mu^\top\mathbf z_2,
            \mathbf z_2^\top\Sigma\mathbf z_2).
\end{aligned}
\end{equation*}
The first two terms are $O(n)$ by the bounded-weight conclusion of
Lemma~\ref{lem:row-refined-third}: conditionally, every coordinate of
$\mathbf z_2$ lies in $[0,1]$, and $0\le\mu_u\le1$.
Equation~\eqref{eq:row-covariance-entries} gives
\begin{equation*}
 \delta_u(\mathbf z_2^\top\Sigma\mathbf z_2)
 \le |\Sigma_{uu}|+2\sum_{v\ne u}|\Sigma_{uv}|\le C,
 \qquad \delta_u(\boldsymbol\mu^\top\mathbf z_2)\le1.
\end{equation*}
Lemma~\ref{lem:comparison} therefore bounds the covariance by $Cn$.
The resulting bound $Cn$ implies \eqref{eq:overlap-cubic}, since $H_B\ge1$.
\end{proof}

\subsection{Proof of Lemma~\ref{lem:row-coarse}}\label{sec:row-response}

\begin{proof}[Proof of \eqref{eq:row-coarse-a}:\nopunct]
Put $\boldsymbol\mu=\E_B\mathbf z_1$, $\Sigma=\cov_B(\mathbf z_1)$,
$\boldsymbol\xi=\mathbf z_1-\boldsymbol\mu$, and
$\mathbf e=\boldsymbol\mu-\varrho_{\alpha,h}(q_B)\mathbf1$.
Recall $m_u,H_u$ from \eqref{eq:row-fields} and
$D_\ell=\diag\{\ell'(m_u)\}_{u\in\mathcal I^c}$.
The covariance estimates and the absolute row-sum bound follow from
\eqref{eq:row-covariance-entries}. We first bound $\norm{\mathbf e}_{\ell^2}$.
Write
\begin{equation*}
 e_u=\{\mu_u-\ell(m_u)\}+\{\ell(m_u)-\varrho_{\alpha,h}(q_B)\}.
\end{equation*}
Since $\boldsymbol\mu=\mathbf e+\varrho_{\alpha,h}(q_B)\mathbf1$,
\begin{equation*}
 m_u-\{h+\alpha q_B\varrho_{\alpha,h}(q_B)\}
 =(J_B\mathbf e)_u
 +\varrho_{\alpha,h}(q_B)(J_B\mathbf1-\alpha q_B\mathbf1)_u.
\end{equation*}
Together with \eqref{eq:triangle-row-root} and $\sup\ell'=1/4$, this gives
\begin{equation}\label{eq:row-mean-error-decomposition}
\begin{aligned}
 \norm{\mathbf e}_{\ell^2}
 \le{}&\norm{(\mu_u-\ell(m_u))_u}_{\ell^2}+\tfrac14\norm{J_B\mathbf e}_{\ell^2}\\
 &+\tfrac14\varrho_{\alpha,h}(q_B)
       \norm{J_B\mathbf1-\alpha q_B\mathbf1}_{\ell^2}.
\end{aligned}
\end{equation}
For the first two terms, \eqref{eq:row-mean-equation} and
\eqref{eq:row-interaction-norms} give
\begin{equation*}
 \norm{(\mu_u-\ell(m_u))_u}_{\ell^2}\le C\sqrt L/n,\qquad
 \norm{J_B\mathbf e}_{\ell^2}\le|\alpha|\norm{\mathbf e}_{\ell^2}.
\end{equation*}
For the third term, $L=n-|\mathcal I|$ and the definitions in
\eqref{eq:triangle-background} give
\begin{equation}\label{eq:exterior-field-deviation}
\begin{aligned}
 \norm{J_B\mathbf1-\alpha q_B\mathbf1}_{\ell^2}
 &\le\frac{|\alpha|}{n}\norm{B\mathbf1-(L-1)q_B\mathbf1}_{\ell^2}+Cn^{-1/2}\\
 &=|\alpha|(H_B-1)+Cn^{-1/2}\le CH_B.
\end{aligned}
\end{equation}
Substituting \eqref{eq:row-mean-equation}, \eqref{eq:row-interaction-norms}, and
\eqref{eq:exterior-field-deviation}
into \eqref{eq:row-mean-error-decomposition} and using
$0<\varrho_{\alpha,h}(q_B)<1$, we obtain
\begin{equation*}
 \left(1-\frac{|\alpha|}{4}\right)\norm{\mathbf e}_{\ell^2}
 \le C\sqrt L/n+CH_B.
\end{equation*}
Since $L\le n$, $H_B\ge1$, and $1-|\alpha|/4$ is uniformly positive on the fixed
compact parameter set, we conclude, together with the covariance bound, that
\begin{equation}\label{eq:row-basic-mean-cov}
 \norm{\boldsymbol\mu-\varrho_{\alpha,h}(q_B)\mathbf1}_{\ell^2}\le CH_B,
 \qquad \norm\Sigma_{\rm op}\le C.
\end{equation}
This proves \eqref{eq:row-coarse-a}.

Since $\ell'$ is Lipschitz and
$\ell'(h+\alpha q_B\varrho_{\alpha,h}(q_B))=v_{\alpha,h}(q_B)$,
\eqref{eq:row-interaction-norms}, \eqref{eq:exterior-field-deviation}, and
\eqref{eq:row-basic-mean-cov} imply
\begin{equation}\label{eq:row-slope-coefficient-bound}
 \norm{D_\ell\mathbf1-v_{\alpha,h}(q_B)\mathbf1}_{\ell^2}
 \le C\norm{J_B\mathbf e+
     \varrho_{\alpha,h}(q_B)(J_B\mathbf1-\alpha q_B\mathbf1)}_{\ell^2}
 \le CH_B.
\end{equation}
\par\medskip
To prove  \eqref{eq:row-refined}, we start with
\begin{equation}\label{eq:row-response-vector}
 \norm{\Sigma\mathbf1-A_{\alpha,h}(q_B)\mathbf1}_{\ell^2}\le CH_B.
\end{equation}
We first derive an equation for the difference in
\eqref{eq:row-response-vector} and then bound its right-hand side.
The conditional identity
$p_u=\E_B(z_{1u}\mid\mathbf z_{1,-u})$ yields
\begin{equation*}
 \cov_B(p_u,z_{1u})
 =\cov_B\{p_u,\E_B(z_{1u}\mid\mathbf z_{1,-u})\}
 =\var_B(p_u).
\end{equation*}
Conditioning on $\mathbf z_{1,-u}$ in each off-diagonal covariance gives
\begin{equation*}
\begin{aligned}
 (\Sigma\mathbf1)_u
 &=\mu_u(1-\mu_u)
      +\cov_B\left(p_u,\sum_{v\ne u}\xi_v\right) =\mu_u(1-\mu_u)+\cov_B(p_u,\mathbf1^\top\boldsymbol\xi)
                          -\var_B(p_u)\\
 &=\mu_u(1-\mu_u)+\ell'(m_u)(J_B\Sigma\mathbf1)_u
      +\cov_B(\rho_u(\mathbf z_1),\mathbf1^\top\boldsymbol\xi)-\var_B(p_u),
\end{aligned}
\end{equation*}
where we use  $\cov_B((J_B\boldsymbol\xi)_u,\mathbf1^\top\boldsymbol\xi)
=(J_B\Sigma\mathbf1)_u$ and
 \eqref{eq:row-linearization}.
Recall that $D_\ell=\diag\{\ell'(m_u)\}_{u\in\mathcal I^c}$.
Thus, with $\boldsymbol\epsilon=(\epsilon_u)_{u\in\mathcal I^c}$,
\begin{equation}\label{eq:exact-response}
(I-D_\ell J_B)\Sigma\mathbf1
 =\bigl(\mu_u(1-\mu_u)\bigr)_u+\boldsymbol\epsilon,\qquad \epsilon_u=\cov_B(\mathbf1^\top\boldsymbol\xi,\rho_u(\mathbf z_1))-\var_B(p_u).
\end{equation}
By \eqref{eq:triangle-r},
\begin{equation*}
 A_{\alpha,h}(q_B)
 =v_{\alpha,h}(q_B)
     +\alpha q_Bv_{\alpha,h}(q_B)A_{\alpha,h}(q_B).
\end{equation*}
Subtracting $(I-D_\ell J_B)A_{\alpha,h}(q_B)\mathbf1$
from both sides of \eqref{eq:exact-response} gives
\begin{equation}\label{eq:row-response-error-decomposition}
\begin{aligned}
 &(I-D_\ell J_B)
       (\Sigma\mathbf1-A_{\alpha,h}(q_B)\mathbf1) =\bigl(\mu_u(1-\mu_u)-v_{\alpha,h}(q_B)\bigr)_u\\
 &\qquad+A_{\alpha,h}(q_B)
       \{D_\ell J_B\mathbf1-\alpha q_Bv_{\alpha,h}(q_B)\mathbf1\}
       +\boldsymbol\epsilon.
\end{aligned}
\end{equation}
To bound the first two vectors on the right, the Lipschitz property of
$x\mapsto x(1-x)$ and
\eqref{eq:row-basic-mean-cov}, \eqref{eq:row-slope-coefficient-bound} and
\eqref{eq:exterior-field-deviation} give
\begin{equation}\label{eq:row-coefficient-bounds}
\begin{aligned}
 \norm{\bigl(\mu_u(1-\mu_u)-v_{\alpha,h}(q_B)\bigr)_u}_{\ell^2}
 &\le C\norm{\mathbf e}_{\ell^2}\le CH_B,\\
 \norm{D_\ell J_B\mathbf1-\alpha q_Bv_{\alpha,h}(q_B)\mathbf1}_{\ell^2}
 &\le C\norm{D_\ell\mathbf1-v_{\alpha,h}(q_B)\mathbf1}_{\ell^2}
      +C\norm{J_B\mathbf1-\alpha q_B\mathbf1}_{\ell^2}\le CH_B.
\end{aligned}
\end{equation}
For the remainder $\boldsymbol\epsilon$, first use $p_u=\ell(m_u+H_u)$
and \eqref{eq:row-field-moment} to obtain
\begin{equation}\label{eq:row-probability-variance}
\begin{aligned}
 \var_B(p_u)
 &\le\E_B|p_u-\ell(m_u)|^2
 \le\tfrac1{16}\E_BH_u^2\le C/n.
\end{aligned}
\end{equation}
By \eqref{eq:row-basic-mean-cov} and \eqref{eq:row-linearization-remainder},
we obtain
\begin{equation*}
\begin{aligned}
|\epsilon_u|
 &\le\norm{\mathbf1^\top\boldsymbol\xi}_{L^2(p_B)}\norm{\rho_u(\mathbf z_1)}_{L^2(p_B)}
                      +\var_B(p_u)\le C\sqrt n\,n^{-1}+C/n\le Cn^{-1/2},\\
 \norm{\boldsymbol\epsilon}_{\ell^2}&\le\sqrt L\max_u|\epsilon_u|\le C.
\end{aligned}
\end{equation*}
Since $0<\ell'(m_u)\le1/4$, \eqref{eq:row-interaction-norms} gives
$\norm{D_\ell J_B}_{\rm op}\le|\alpha|/4<1$.
Hence $I-D_\ell J_B$ is invertible, and the Neumann series gives
\begin{equation}\label{eq:row-response-inverse}
 \norm{(I-D_\ell J_B)^{-1}}_{\rm op}
 \le\frac1{1-\norm{D_\ell J_B}_{\rm op}}
 \le\frac1{1-|\alpha|/4}.
\end{equation}
The coefficient $A_{\alpha,h}(q_B)$ is uniformly bounded by \eqref{eq:triangle-r}.
Using \eqref{eq:row-coefficient-bounds}, $\norm{\boldsymbol\epsilon}_{\ell^2}\le C$
and \eqref{eq:row-response-inverse} in
\eqref{eq:row-response-error-decomposition} gives
\begin{equation*}
 \norm{\Sigma\mathbf1-A_{\alpha,h}(q_B)\mathbf1}_{\ell^2}
 \le\frac{CH_B+C}{1-|\alpha|/4}\le CH_B,
\end{equation*}
which proves \eqref{eq:row-response-vector}.

\par\medskip
\noindent\textit{Proof of \eqref{eq:row-refined}}: We first estimate $\var_B(\mathbf1^\top\mathbf z_1)$.
Write
\begin{equation*}
 \boldsymbol\mu=\E_B\mathbf z_1,\qquad
 \Sigma=\cov_B(\mathbf z_1),\qquad
 \mathbf e=\boldsymbol\mu-\varrho_{\alpha,h}(q_B)\mathbf1.
\end{equation*}
Put
\begin{equation*}
 \mathbf w=J_BD_\ell\mathbf1-\alpha q_Bv_{\alpha,h}(q_B)\mathbf1.
\end{equation*}
By symmetry of $J_B$,
\begin{equation*}
\begin{aligned}
 \sum_u\ell'(m_u)(J_B\Sigma\mathbf1)_u
 &=(J_BD_\ell\mathbf1)^\top\Sigma\mathbf1\\
 &=\alpha q_Bv_{\alpha,h}(q_B)\var_B(\mathbf1^\top\mathbf z_1)
   +\mathbf w^\top\Sigma\mathbf1.
\end{aligned}
\end{equation*}
Summing \eqref{eq:exact-response} therefore gives
\begin{equation}\label{eq:refined-response-sum}
\begin{aligned}
 &(1-\alpha q_Bv_{\alpha,h}(q_B))\var_B(\mathbf1^\top\mathbf z_1)\\
 &\qquad=\sum_u\mu_u(1-\mu_u)
          +\mathbf w^\top\Sigma\mathbf1+\sum_u\epsilon_u.
\end{aligned}
\end{equation}
By \eqref{eq:triangle-r},
\begin{equation*}
\begin{aligned}
 1-\alpha q_Bv_{\alpha,h}(q_B)&\ge1-|\alpha|/4>0,\\
 (1-\alpha q_Bv_{\alpha,h}(q_B))A_{\alpha,h}(q_B)
 &=v_{\alpha,h}(q_B).
\end{aligned}
\end{equation*}
It thus suffices to prove
\begin{align}
 \left|\sum_u\mu_u(1-\mu_u)-nv_{\alpha,h}(q_B)\right|
 &\le CH_B^2,\label{eq:refined-diagonal-target}\\
 |\mathbf w^\top\Sigma\mathbf1|&\le CH_B^2,\label{eq:refined-cross-target}\\
 \left|\sum_u\epsilon_u\right|&\le C.\label{eq:refined-error-target}
\end{align}
We prove these three bounds in order.

\par\medskip
\noindent\textit{Proof of \eqref{eq:refined-diagonal-target}}: Put
$\mathbf s=J_B\mathbf1-\alpha q_B\mathbf1$.
Equations \eqref{eq:row-basic-mean-cov}, \eqref{eq:exterior-field-deviation},
\eqref{eq:row-slope-coefficient-bound} and \eqref{eq:row-response-vector} give
\begin{equation*}
\begin{aligned}
 \norm{\mathbf e}_{\ell^2}+\norm{\mathbf s}_{\ell^2}&\le CH_B,\\
 \norm{D_\ell\mathbf1-v_{\alpha,h}(q_B)\mathbf1}_{\ell^2}&\le CH_B,\\
 \norm{\Sigma\mathbf1-A_{\alpha,h}(q_B)\mathbf1}_{\ell^2}&\le CH_B.
\end{aligned}
\end{equation*}
We first bound $\mathbf1^\top\mathbf e$.
The definition of $q_B$ in \eqref{eq:triangle-background} and $L=n-k$ imply
\begin{equation}\label{eq:refined-field-sum}
\begin{aligned}
 \mathbf1^\top\mathbf s
 &=\frac\alpha nL(L-1)q_B-\alpha q_BL\\
 &=-\frac{\alpha q_BL(k+1)}n=O(1).
\end{aligned}
\end{equation}
Since $\boldsymbol\mu=\varrho_{\alpha,h}(q_B)\mathbf1+\mathbf e$,
\begin{equation*}
 m_u-\{h+\alpha q_B\varrho_{\alpha,h}(q_B)\}
 =(J_B\mathbf e+\varrho_{\alpha,h}(q_B)\mathbf s)_u.
\end{equation*}
By \eqref{eq:row-mean-equation}, $\mu_u=\ell(m_u)+O(n^{-1})$.
Taylor's formula at $h+\alpha q_B\varrho_{\alpha,h}(q_B)$, together with
\eqref{eq:triangle-row-root} and \eqref{eq:triangle-r}, gives
\begin{equation*}
\begin{aligned}
 e_u={}&v_{\alpha,h}(q_B)
          (J_B\mathbf e+\varrho_{\alpha,h}(q_B)\mathbf s)_u\\
 &+O\{n^{-1}+
          (J_B\mathbf e+\varrho_{\alpha,h}(q_B)\mathbf s)_u^2\}.
\end{aligned}
\end{equation*}
Using \eqref{eq:row-interaction-norms} and the preceding bounds on
$\mathbf e,\mathbf s$, we can sum the remainders:
\begin{equation*}
\begin{aligned}
 &\sum_u\{n^{-1}+
       (J_B\mathbf e+\varrho_{\alpha,h}(q_B)\mathbf s)_u^2\}\\
 &\qquad=L/n+
          \norm{J_B\mathbf e+\varrho_{\alpha,h}(q_B)\mathbf s}_{\ell^2}^2\\
 &\qquad\le1+
          (\norm{J_B}_{\rm op}\norm{\mathbf e}_{\ell^2}+\norm{\mathbf s}_{\ell^2})^2
          \le CH_B^2.
\end{aligned}
\end{equation*}
Symmetry gives
\begin{equation*}
 \mathbf1^\top J_B\mathbf e
 =(J_B\mathbf1)^\top\mathbf e
 =\alpha q_B\mathbf1^\top\mathbf e+\mathbf s^\top\mathbf e.
\end{equation*}
Summing the expansion of $e_u$ and using \eqref{eq:refined-field-sum}, we get
\begin{equation}\label{eq:refined-mean-sum}
\begin{aligned}
 (1-\alpha q_Bv_{\alpha,h}(q_B))\mathbf1^\top\mathbf e
 &=v_{\alpha,h}(q_B)\mathbf s^\top\mathbf e
          +v_{\alpha,h}(q_B)\varrho_{\alpha,h}(q_B)\mathbf1^\top\mathbf s
          +O(H_B^2),\\
 |\mathbf1^\top\mathbf e|
 &\le C\{\norm{\mathbf s}_{\ell^2}\norm{\mathbf e}_{\ell^2}
            +|\mathbf1^\top\mathbf s|+H_B^2\}
       \le CH_B^2.
\end{aligned}
\end{equation}
The inequality uses $1-\alpha q_Bv_{\alpha,h}(q_B)\ge1-|\alpha|/4>0$.
Expanding $\mu_u(1-\mu_u)$ now yields
\begin{equation}\label{eq:refined-diagonal-sum}
\begin{aligned}
 \sum_u\mu_u(1-\mu_u)
 &=Lv_{\alpha,h}(q_B)
    +(1-2\varrho_{\alpha,h}(q_B))\mathbf1^\top\mathbf e
    -\norm{\mathbf e}_{\ell^2}^2\\
 &=nv_{\alpha,h}(q_B)+O(H_B^2).
\end{aligned}
\end{equation}
This proves \eqref{eq:refined-diagonal-target}.

\par\medskip
\noindent\textit{Proof of \eqref{eq:refined-cross-target}}: Taylor's formula for $\ell'$ gives
\begin{equation*}
\begin{aligned}
 \ell'(m_u)-v_{\alpha,h}(q_B)
 &=\ell''(h+\alpha q_B\varrho_{\alpha,h}(q_B))
             (J_B\mathbf e+\varrho_{\alpha,h}(q_B)\mathbf s)_u\\
 &\qquad+O\{(J_B\mathbf e+\varrho_{\alpha,h}(q_B)\mathbf s)_u^2\}.
\end{aligned}
\end{equation*}
By \eqref{eq:refined-field-sum} and \eqref{eq:refined-mean-sum},
\begin{equation*}
\begin{aligned}
 \mathbf1^\top(J_B\mathbf e+\varrho_{\alpha,h}(q_B)\mathbf s)=\alpha q_B\mathbf1^\top\mathbf e+\mathbf s^\top\mathbf e
          +\varrho_{\alpha,h}(q_B)\mathbf1^\top\mathbf s
          =O(H_B^2).
\end{aligned}
\end{equation*}
The squared terms sum to
$\norm{J_B\mathbf e+\varrho_{\alpha,h}(q_B)\mathbf s}_{\ell^2}^2\le CH_B^2$,
as shown above. Summing the Taylor expansion therefore gives
\begin{equation*}
\begin{aligned}
 \mathbf1^\top(D_\ell\mathbf1-v_{\alpha,h}(q_B)\mathbf1)
 &=\ell''(h+\alpha q_B\varrho_{\alpha,h}(q_B))
             \mathbf1^\top(J_B\mathbf e+\varrho_{\alpha,h}(q_B)\mathbf s)\\
 &\qquad+O(\norm{J_B\mathbf e+\varrho_{\alpha,h}(q_B)\mathbf s}_{\ell^2}^2)
          =O(H_B^2).
\end{aligned}
\end{equation*}
The definition of $\mathbf w$ gives
\begin{equation*}
 \mathbf w=J_B(D_\ell\mathbf1-v_{\alpha,h}(q_B)\mathbf1)
             +v_{\alpha,h}(q_B)\mathbf s.
\end{equation*}
Using $J_B\mathbf1=\alpha q_B\mathbf1+\mathbf s$ and symmetry once more,
we obtain
\begin{equation}\label{eq:refined-sums}
\begin{aligned}
 \norm{\mathbf w}_{\ell^2}
 &\le\norm{J_B}_{\rm op}
        \norm{D_\ell\mathbf1-v_{\alpha,h}(q_B)\mathbf1}_{\ell^2}
       +v_{\alpha,h}(q_B)\norm{\mathbf s}_{\ell^2}
       \le CH_B,\\
 \mathbf1^\top\mathbf w
 &=\alpha q_B\mathbf1^\top(D_\ell\mathbf1-v_{\alpha,h}(q_B)\mathbf1)\\
 &\quad+\mathbf s^\top(D_\ell\mathbf1-v_{\alpha,h}(q_B)\mathbf1)
        +v_{\alpha,h}(q_B)\mathbf1^\top\mathbf s,\\
 |\mathbf1^\top\mathbf w|
 &\le C\{H_B^2+
       \norm{\mathbf s}_{\ell^2}\norm{D_\ell\mathbf1-v_{\alpha,h}(q_B)\mathbf1}_{\ell^2}+1\}
       \le CH_B^2.
\end{aligned}
\end{equation}
Finally,
\begin{equation}\label{eq:refined-response-cross-term}
\begin{aligned}
 \mathbf w^\top\Sigma\mathbf1
 &=A_{\alpha,h}(q_B)\mathbf1^\top\mathbf w
    +\mathbf w^\top(\Sigma\mathbf1-A_{\alpha,h}(q_B)\mathbf1),\\
 |\mathbf w^\top\Sigma\mathbf1|
 &\le |A_{\alpha,h}(q_B)|\,|\mathbf1^\top\mathbf w|\\
 &\quad+\norm{\mathbf w}_{\ell^2}
          \norm{\Sigma\mathbf1-A_{\alpha,h}(q_B)\mathbf1}_{\ell^2}
       \le CH_B^2.
\end{aligned}
\end{equation}
The last bound follows from \eqref{eq:row-response-vector} and
\eqref{eq:refined-sums}. This proves \eqref{eq:refined-cross-target}.

\par\medskip
\noindent\textit{Proof of \eqref{eq:refined-error-target}}: Put
$R=\mathbf1^\top\boldsymbol\xi$.
Recall $H_u=(J_B\boldsymbol\xi)_u$ from \eqref{eq:row-fields}.
Write $(J_B)_{u\cdot}$ for row $u$ of $J_B$.
Since $J_B=\alpha B/n$ and $|\alpha|<4$, the coefficient vectors of
$R/\sqrt n$ and $\sqrt nH_u/4$ satisfy
\begin{equation*}
\begin{aligned}
 \norm{\frac{\mathbf1}{\sqrt n}}_{\ell^2}&\le1,
 &\norm{\frac{\mathbf1}{\sqrt n}}_{\ell^\infty}&=\frac1{\sqrt n},\\
 \norm{\frac{\sqrt n}{4}(J_B)_{u\cdot}}_{\ell^2}&\le\frac{|\alpha|}{4}<1,
 &\norm{\frac{\sqrt n}{4}(J_B)_{u\cdot}}_{\ell^\infty}
 &\le\frac{|\alpha|}{4\sqrt n}\le\frac1{\sqrt n}.
\end{aligned}
\end{equation*}
Applying \eqref{eq:dense-third} to these two linear sums, with the second
used twice, and using $\E_BR=0$, gives
\begin{equation*}
 |\cov_B(R,H_u^2)|
 =\frac{16}{\sqrt n}
   \left|\E_B\left[\frac R{\sqrt n}
                \left(\frac{\sqrt nH_u}{4}\right)^2\right]\right|
 \le\frac Cn.
\end{equation*}
The remainder $\rho_u(\mathbf z_1)$ in \eqref{eq:row-linearization} is centered.
Since $\E_BH_u=0$, centering the second-order Taylor expansion of
$p_u=\ell(m_u+H_u)$ yields
\begin{equation*}
 \norm{\rho_u(\mathbf z_1)
       -\tfrac12\ell''(m_u)(H_u^2-\E_BH_u^2)}_{L^2(p_B)}
 \le C\norm{H_u}_{L^6(p_B)}^3\le Cn^{-3/2},
\end{equation*}
where the last bound is \eqref{eq:row-field-moment}.
Also, \eqref{eq:comparison-moment} gives
$\norm R_{L^2(p_B)}\le C\sqrt L\le C\sqrt n$, and
\eqref{eq:row-probability-variance} gives $\var_B(p_u)\le C/n$.
The definition of $\epsilon_u$ in \eqref{eq:exact-response} therefore implies
\begin{equation}\label{eq:refined-response-error}
\begin{aligned}
 |\cov_B(R,\rho_u(\mathbf z_1))|
 &\le C|\cov_B(R,H_u^2)|
       +C\norm R_{L^2(p_B)}n^{-3/2}\le C/n,\\
 |\epsilon_u|
 &\le|\cov_B(R,\rho_u(\mathbf z_1))|+\var_B(p_u)\le C/n,\\
 \left|\sum_u\epsilon_u\right|&\le LC/n\le C.
\end{aligned}
\end{equation}
This proves \eqref{eq:refined-error-target}.

Substituting \eqref{eq:refined-diagonal-target}--\eqref{eq:refined-error-target}
into \eqref{eq:refined-response-sum}, and using $H_B\ge1$, gives
\begin{equation}\label{eq:refined-row-sum-variance}
\begin{aligned}
 &(1-\alpha q_Bv_{\alpha,h}(q_B))
        \var_B(\mathbf1^\top\mathbf z_1)
   =nv_{\alpha,h}(q_B)+O(H_B^2),\\
 &\var_B(\mathbf1^\top\mathbf z_1)
   =\frac{nv_{\alpha,h}(q_B)+O(H_B^2)}
          {1-\alpha q_Bv_{\alpha,h}(q_B)}
   =nA_{\alpha,h}(q_B)+O(H_B^2).
\end{aligned}
\end{equation}
Here we used $1-\alpha q_Bv_{\alpha,h}(q_B)\ge1-|\alpha|/4>0$
and the definition of $A_{\alpha,h}$ in \eqref{eq:triangle-r}.
This proves the row-sum variance estimate in \eqref{eq:row-refined}.

For the overlap variance, put
$\widetilde{\boldsymbol\xi}=\mathbf z_2-\boldsymbol\mu$.
Expanding the overlap gives
\begin{equation*}
 \mathbf z_1^\top\mathbf z_2-\boldsymbol\mu^\top\boldsymbol\mu
 =\boldsymbol\mu^\top\boldsymbol\xi
      +\boldsymbol\mu^\top\widetilde{\boldsymbol\xi}
      +\boldsymbol\xi^\top\widetilde{\boldsymbol\xi}.
\end{equation*}
By independence and centering, the three terms have mean zero, and their
pairwise products have expectation zero. For example,
\begin{equation*}
\E_B[(\boldsymbol\mu^\top\boldsymbol\xi)
          (\boldsymbol\mu^\top\widetilde{\boldsymbol\xi})]=0,\qquad \E_B[(\boldsymbol\mu^\top\boldsymbol\xi)
          (\boldsymbol\xi^\top\widetilde{\boldsymbol\xi})]
 =\E_B[(\boldsymbol\mu^\top\boldsymbol\xi)
          \boldsymbol\xi^\top\E_B\widetilde{\boldsymbol\xi}]=0.
\end{equation*}
Writing $\Tr M=\sum_uM_{uu}$ for matrix trace, we also have
\begin{equation*}
 \E_B[(\boldsymbol\xi^\top\widetilde{\boldsymbol\xi})^2]
 =\sum_{u,v}\E_B(\xi_u\xi_v)
                \E_B(\widetilde\xi_u\widetilde\xi_v)
 =\sum_{u,v}\Sigma_{uv}^2=\Tr(\Sigma^2).
\end{equation*}
The two linear terms each have variance
$\boldsymbol\mu^\top\Sigma\boldsymbol\mu$. Hence
\begin{equation}\label{eq:independent-overlap}
 \var_B(\mathbf z_1^\top\mathbf z_2)
 =2\boldsymbol\mu^\top\Sigma\boldsymbol\mu+\Tr(\Sigma^2).
\end{equation}
By \eqref{eq:row-basic-mean-cov}, \eqref{eq:row-response-vector}
and \eqref{eq:refined-mean-sum},
\begin{equation*}
\begin{aligned}
 |\mathbf e^\top\Sigma\mathbf1|
 &\le |A_{\alpha,h}(q_B)|\,|\mathbf1^\top\mathbf e|
      +\norm{\mathbf e}_{\ell^2}
       \norm{\Sigma\mathbf1-A_{\alpha,h}(q_B)\mathbf1}_{\ell^2}
 \le CH_B^2,\\
 |\mathbf e^\top\Sigma\mathbf e|
 &\le\norm{\Sigma}_{\rm op}\norm{\mathbf e}_{\ell^2}^2\le CH_B^2.
\end{aligned}
\end{equation*}
Thus $\boldsymbol\mu=\varrho_{\alpha,h}(q_B)\mathbf1+\mathbf e$ and
\eqref{eq:refined-row-sum-variance} give
\begin{equation*}
\begin{aligned}
 \boldsymbol\mu^\top\Sigma\boldsymbol\mu
 &=\varrho_{\alpha,h}(q_B)^2\mathbf1^\top\Sigma\mathbf1
    +2\varrho_{\alpha,h}(q_B)\mathbf e^\top\Sigma\mathbf1
    +\mathbf e^\top\Sigma\mathbf e\\
 &=n\varrho_{\alpha,h}(q_B)^2A_{\alpha,h}(q_B)+O(H_B^2).
\end{aligned}
\end{equation*}

For the trace, use
\begin{equation*}
 \Sigma_{uu}=\mu_u(1-\mu_u)
 =v_{\alpha,h}(q_B)+(1-2\varrho_{\alpha,h}(q_B))e_u-e_u^2.
\end{equation*}
Since $|e_u|\le1$, squaring and summing gives
\begin{equation*}
\begin{aligned}
 \sum_u\Sigma_{uu}^2
 &=Lv_{\alpha,h}(q_B)^2
   +2v_{\alpha,h}(q_B)(1-2\varrho_{\alpha,h}(q_B))
         \mathbf1^\top\mathbf e
   +O(\norm{\mathbf e}_{\ell^2}^2)\\
 &=nv_{\alpha,h}(q_B)^2+O(H_B^2).
\end{aligned}
\end{equation*}
The last equality uses $L=n-k$, \eqref{eq:row-basic-mean-cov}
and \eqref{eq:refined-mean-sum}.
By \eqref{eq:row-covariance-entries},
\begin{equation*}
 \sum_{u\ne v}\Sigma_{uv}^2\le C L(L-1)/n^2\le C.
\end{equation*}
Since $\Sigma$ is symmetric and $H_B\ge1$,
\begin{equation*}
 \Tr(\Sigma^2)
 =\sum_u\Sigma_{uu}^2+\sum_{u\ne v}\Sigma_{uv}^2
 =nv_{\alpha,h}(q_B)^2+O(H_B^2).
\end{equation*}
Substituting these estimates into \eqref{eq:independent-overlap}
and using the definition of $K_{\alpha,h}$ in \eqref{eq:triangle-r} gives
\begin{equation*}
\begin{aligned}
 \var_B(\mathbf z_1^\top\mathbf z_2)
 &=n\{2\varrho_{\alpha,h}(q_B)^2A_{\alpha,h}(q_B)
                  +v_{\alpha,h}(q_B)^2\}+O(H_B^2)\\
 &=nK_{\alpha,h}(q_B)+O(H_B^2).
\end{aligned}
\end{equation*}
This proves the overlap variance estimate in \eqref{eq:row-refined}.
Finally, \eqref{eq:triangle-background} gives $1\le H_B\le C\sqrt n$, so
$H_B^2\le C\sqrt nH_B$. Thus \eqref{eq:row-refined} implies
\eqref{eq:row-coarse-b} and \eqref{eq:row-coarse-c}.
\end{proof}

\subsection{Proof of Lemma~\ref{lem:actual-refined}}\label{sec:actual-refined-proof}

{
Condition on $\calF_{\mathcal I}$ and label the vertices of $\mathcal I$
as $1,\ldots,k$, where $k=|\mathcal I|$.
Take $(D,\mathcal T)=(d_1-d_2,A_{\alpha,h})$ or
$(M_{\{1,2\}}-M_{\{3,4\}},K_{\alpha,h})$, and put
$m(t)=\E_t[D^2/(2n)]$.
Taylor's formula gives
\[
 |m(1)-\mathcal T(q_B)|
 \le\underbrace{|m(0)-\mathcal T(q_B)|}_{O(H_B^2/n)}
    +\underbrace{|m'(0)|}_{O(H_B/n)}
    +\underbrace{\tfrac12\sup_{0\le t\le1}|m''(t)|}_{O(n^{-1})}.
\]
At $t=0$, the row-sum and overlap variance errors are $O(H_B^2)$.
Division by $n$, together with the bounded internal-edge contributions,
gives the product-law error.

Conditioning on shared rows in
$m'(0)=\cov_0(D^2/(2n),J_{\rm cross})$ yields signed third moments of
overlaps and linear sums.
Their bounds and the $n^{-1}$ interaction coefficient give $|m'(0)|\le CH_B/n$.

Uniform moment bounds for $D$ and $J_{\rm cross}-\E_tJ_{\rm cross}$ give,
for $0\le t\le1$,
\[
\begin{aligned}
 |m''(t)|
 &\le\left\|\frac{D^2}{2n}-m(t)\right\|_{L^2(p_{B,t})}
      \norm{J_{\rm cross}-\E_tJ_{\rm cross}}_{L^4(p_{B,t})}^2\\
 &\le C(Cn^{-1/2})^2\le C/n.
\end{aligned}
\]
Since $H_B\ge1$, the three estimates give the required $CH_B^2/n$ bound.
\par}

\begin{proof}
Condition on $\calF_{\mathcal I}$ and label the $k$ vertices of $\mathcal I$
as $1,\ldots,k$. For the two estimates, set
\begin{equation*}
 (D,\mathcal T)=
 \begin{cases}
  (d_1-d_2,A_{\alpha,h}),&\text{for \eqref{eq:actual-refined-degree}},\\
  (M_{\{1,2\}}-M_{\{3,4\}},K_{\alpha,h}),
       &\text{for \eqref{eq:actual-refined-overlap}}.
 \end{cases}
\end{equation*}
Put $J_0=D^2/(2n)$ and $m(t)=\E_tJ_0$, with $p_{B,t}$ from
\eqref{eq:interpolated-row-law}. Then
$m(1)=\E(J_0\mid\calF_{\mathcal I})$.
By \eqref{eq:row-interpolation}, $m'(0)=\cov_0(J_0,J_{\rm cross})$.
Taylor's formula gives
\begin{equation}\label{eq:conditional-bias-budget}
\begin{aligned}
 |m(1)-\mathcal T(q_B)|
 \le\underbrace{|m(0)-\mathcal T(q_B)|}_{I_1}
      +\underbrace{|\cov_0(J_0,J_{\rm cross})|}_{I_2}+\underbrace{\frac12\sup_{0\le t\le1}|m''(t)|}_{I_3}.
\end{aligned}
\end{equation}
Since $H_B\ge1$, it suffices to prove
\begin{align}
 I_1&\le CH_B^2/n,\label{eq:actual-refined-first-term}\\
 I_2&\le CH_B/n,\label{eq:refined-first-interpolation}\\
 I_3&\le C/n.\label{eq:actual-refined-third-term}
\end{align}
\par\medskip
\noindent\textit{Proof of \eqref{eq:actual-refined-first-term}}: Equation \eqref{eq:conditional-degree-overlap} gives $|\E_0D|\le C_k$ and
\begin{equation*}
 D-\E_0D=
 \begin{cases}
  \mathbf1^\top(\mathbf z_1-\mathbf z_2),&\mathcal T=A_{\alpha,h},\\
  \mathbf z_1^\top\mathbf z_2-\mathbf z_3^\top\mathbf z_4,
       &\mathcal T=K_{\alpha,h}.
 \end{cases}
\end{equation*}
Under $p_{B,0}$, the two terms in each difference are independent and
have the same law. Hence
\begin{equation*}
\begin{aligned}
 m(0)=\frac{\var_0D+(\E_0D)^2}{2n}=\begin{cases}
 n^{-1}\var_B(\mathbf1^\top\mathbf z_1)+(\E_0D)^2/(2n),
       &\mathcal T=A_{\alpha,h},\\
 n^{-1}\var_B(\mathbf z_1^\top\mathbf z_2)+(\E_0D)^2/(2n),
       &\mathcal T=K_{\alpha,h}.
 \end{cases}
\end{aligned}
\end{equation*}
\eqref{eq:row-refined} in Lemma~\ref{lem:row-coarse},
$|\E_0D|\le C_k$, and $H_B\ge1$ give
\begin{equation*}
 I_1\le CH_B^2/n+(\E_0D)^2/(2n)\le CH_B^2/n.
\end{equation*}
This proves \eqref{eq:actual-refined-first-term}.

\par\medskip
\noindent\textit{Proof of \eqref{eq:refined-first-interpolation}}: Put
$\boldsymbol\mu=\E_B\mathbf z_1$ and, for $a\ne b$, define
\begin{equation*}
\begin{aligned}
 \boldsymbol\xi_a=\mathbf z_a-\boldsymbol\mu, \qquad
 U_{ab}=\mathbf z_a^\top\mathbf z_b-\boldsymbol\mu^\top\boldsymbol\mu
       =\boldsymbol\mu^\top\boldsymbol\xi_a
        +\boldsymbol\mu^\top\boldsymbol\xi_b
        +\boldsymbol\xi_a^\top\boldsymbol\xi_b.
\end{aligned}
\end{equation*}
The variables $\boldsymbol\mu^\top\boldsymbol\xi_a$ and $U_{ab}$ are centered under $p_{B,0}$.
By \eqref{eq:triangle-row},
\begin{equation}\label{eq:cross-covariance-sum}
\begin{aligned}
 J_{\rm cross}-\E_0J_{\rm cross}
 &=\frac\alpha n\sum_{a<b}e_{ab}U_{ab},\\
 I_2=|\cov_0(J_0,J_{\rm cross})|
 &\le\frac{|\alpha|}{n}\sum_{a<b}|\cov_0(J_0,U_{ab})|.
\end{aligned}
\end{equation}
There are at most $\binom{k}{2}$ pairs. It therefore suffices to prove
\begin{equation}\label{eq:interaction-covariance-target}
 |\cov_0(J_0,U_{ab})|\le CH_B,\qquad 1\le a<b\le k.
\end{equation}
\par\medskip
\noindent\textit{Proof of \eqref{eq:interaction-covariance-target}}: Since $J_0=D^2/(2n)$ and $\E_0U_{ab}=0$,
\begin{equation}\label{eq:interaction-centering}
 \cov_0(J_0,U_{ab})
 =\frac1{2n}\E_0[(D-\E_0D)^2U_{ab}]
  +\frac{\E_0D}{n}\cov_0(D,U_{ab}).
\end{equation}
By \eqref{eq:conditional-degree-overlap}, changing one free coordinate
changes $D$ by at most one. The same holds for $U_{ab}$ by its definition.
The variable $D$ depends on at most four rows, and $U_{ab}$ on two, so
\begin{equation*}
 \norm{\boldsymbol\delta D}_{\ell^2}^2\le4L,
 \qquad \norm{\boldsymbol\delta U_{ab}}_{\ell^2}^2\le2L.
\end{equation*}
The moment bound \eqref{eq:comparison-moment} gives
\begin{equation*}
 \norm{D-\E_0D}_{L^2(p_{B,0})}
 +\norm{U_{ab}}_{L^2(p_{B,0})}\le C\sqrt n.
\end{equation*}
Using $|\E_0D|\le C_k$ and the Cauchy--Schwarz inequality, we obtain
\begin{equation*}
 \frac{|\E_0D|}{n}|\cov_0(D,U_{ab})|
 \le\frac{C_k}{n}
       \norm{D-\E_0D}_{L^2(p_{B,0})}\norm{U_{ab}}_{L^2(p_{B,0})}
 \le C_k.
\end{equation*}
We now estimate the first term in \eqref{eq:interaction-centering}.
For the overlap difference, \eqref{eq:conditional-degree-overlap} gives
$D-\E_0D=U_{12}-U_{34}$. We consider the following three cases:
\par\smallskip
\noindent\textit{Case I:} $\{a,b\}\subset\{1,2,3,4\}$.\par\nopagebreak
\noindent\textit{Case II:} $|\{a,b\}\cap\{1,2,3,4\}|=1$.\par\nopagebreak
\noindent\textit{Case III:} $\{a,b\}\cap\{1,2,3,4\}=\varnothing$.\par

\par\smallskip
\noindent\textit{Case I}: By interchanging the rows, it suffices to consider
$\{a,b\}=\{1,2\}$ and $\{a,b\}=\{1,3\}$.
For $\{a,b\}=\{1,2\}$, by independence, we have
\begin{equation}\label{eq:overlap-cov-1}
 \frac1{2n}\E_0[(D-\E_0D)^2U_{12}]
 =\frac{\E_0U_{12}^3}{2n}.
\end{equation}
Lemma~\ref{lem:overlap-cubic} gives $|\E_0U_{12}^3|\le CnH_B$,
so the absolute value in \eqref{eq:overlap-cov-1} is at most $CH_B$.

Put $\Sigma=\cov_B(\mathbf z_1)$.
Since $U_{12}=\boldsymbol\mu^\top\boldsymbol\xi_1+\mathbf z_1^\top\boldsymbol\xi_2$, independence gives
\begin{equation}\label{eq:appendix-conditional}
\begin{aligned}
 \E_0(U_{12}\mid\mathbf z_1)&=\boldsymbol\mu^\top\boldsymbol\xi_1,\\
 \E_0(U_{12}^2\mid\mathbf z_1)
 &=(\boldsymbol\mu^\top\boldsymbol\xi_1)^2+\mathbf z_1^\top\Sigma\mathbf z_1.
\end{aligned}
\end{equation}
For $\{a,b\}=\{1,3\}$, \eqref{eq:appendix-conditional} gives
\begin{equation*}
 \E_0(U_{12}^2U_{13})=\E_0(U_{34}^2U_{13})
 =\E_0[(\boldsymbol\mu^\top\boldsymbol\xi_1)^3]+\cov_0(\mathbf z_1^\top\Sigma\mathbf z_1,\boldsymbol\mu^\top\boldsymbol\xi_1).
\end{equation*}
For $\E_0(U_{12}U_{34}U_{13})$, condition on $(\mathbf z_1,\mathbf z_3)$.
Given these two rows, $\mathbf z_2$ and $\mathbf z_4$ remain independent
with law $p_B$. By \eqref{eq:appendix-conditional} and the definition of $U_{13}$,
\begin{equation*}
\begin{aligned}
 \E_0(U_{12}U_{34}U_{13})
 &=\E_0[(\boldsymbol\mu^\top\boldsymbol\xi_1)(\boldsymbol\mu^\top\boldsymbol\xi_3)U_{13}]=\E_0[(\boldsymbol\mu^\top\boldsymbol\xi_1)(\boldsymbol\mu^\top\boldsymbol\xi_3)\boldsymbol\xi_1^\top\boldsymbol\xi_3]\\
 &=\sum_{u\in\mathcal I^c}
       \E_0[(\boldsymbol\mu^\top\boldsymbol\xi_1)\xi_{1u}]\E_0[(\boldsymbol\mu^\top\boldsymbol\xi_3)\xi_{3u}]
   =\norm{\Sigma\boldsymbol\mu}_{\ell^2}^2.
\end{aligned}
\end{equation*}
Consequently,
\begin{equation}\label{eq:overlap-cov-3}
 \frac1{2n}\E_0[(D-\E_0D)^2U_{13}]
 =\frac{\E_0[(\boldsymbol\mu^\top\boldsymbol\xi_1)^3]+\cov_0(\mathbf z_1^\top\Sigma\mathbf z_1,\boldsymbol\mu^\top\boldsymbol\xi_1)
              -\norm{\Sigma\boldsymbol\mu}_{\ell^2}^2}{n}.
\end{equation}
Since $0\le\mu_u\le1$, the conclusion of
Lemma~\ref{lem:row-refined-third} gives
$|\E_0[(\boldsymbol\mu^\top\boldsymbol\xi_1)^3]|\le Cn$.
The row-sum bound \eqref{eq:row-covariance-entries} gives
\begin{equation*}
\begin{aligned}
 \delta_u(\mathbf z_1^\top\Sigma\mathbf z_1)
=\sup_{\mathbf z_{1,-u}}
   \left|\Sigma_{uu}+2\sum_{v\ne u}\Sigma_{uv}z_{1v}\right|\le |\Sigma_{uu}|+2\sum_{v\ne u}|\Sigma_{uv}|\le C.
\end{aligned}
\end{equation*}
Since $\delta_u(\boldsymbol\mu^\top\boldsymbol\xi_1)=\mu_u\le1$, \eqref{eq:comparison-cov} implies
\begin{equation*}
 |\cov_0(\mathbf z_1^\top\Sigma\mathbf z_1,\boldsymbol\mu^\top\boldsymbol\xi_1)|
 \le C\norm{\boldsymbol\delta(\mathbf z_1^\top\Sigma\mathbf z_1)}_{\ell^2}
        \norm{\boldsymbol\mu}_{\ell^2}
 \le Cn.
\end{equation*}
By \eqref{eq:row-coarse-a},
\begin{equation*}
 \norm{\Sigma\boldsymbol\mu}_{\ell^2}^2
 \le\norm\Sigma_{\rm op}^2\norm{\boldsymbol\mu}_{\ell^2}^2\le Cn.
\end{equation*}
Together with the preceding bounds, this bounds the absolute value in
\eqref{eq:overlap-cov-3} by $C$.

\par\smallskip
\noindent\textit{Case II}: By interchanging the rows, we may write $\{a,b\}=\{1,5\}$. Under $p_{B,0}$, $U_{34}$ is independent of
$(U_{12},U_{15})$. Since $\E_0U_{34}=\E_0U_{15}=0$,
\begin{equation*}
\begin{aligned}
 \E_0(U_{12}U_{34}U_{15})
 &=\E_0(U_{12}U_{15})\E_0U_{34}=0,\\
 \E_0(U_{34}^2U_{15})
 &=\E_0U_{34}^2\,\E_0U_{15}=0.
\end{aligned}
\end{equation*}
Given $\mathbf z_1$, the variables $U_{12}$ and $U_{15}$ depend on the
independent rows $\mathbf z_2$ and $\mathbf z_5$, respectively, and are
therefore conditionally independent.
Using $D-\E_0D=U_{12}-U_{34}$, we obtain from \eqref{eq:appendix-conditional}
\begin{equation}\label{eq:overlap-cov-2}
\begin{aligned}
 \frac1{2n}\E_0[(D-\E_0D)^2U_{15}]
 &=\frac1{2n}\E_0[(U_{12}^2-2U_{12}U_{34}+U_{34}^2)U_{15}]\\
 &=\frac1{2n}\E_0\!\left[\E_0(U_{12}^2U_{15}\mid\mathbf z_1)\right]\\
 &=\frac1{2n}\E_0\!\left[\E_0(U_{12}^2\mid\mathbf z_1)
                         \E_0(U_{15}\mid\mathbf z_1)\right]\\
 &=\frac1{2n}\E_0[((\boldsymbol\mu^\top\boldsymbol\xi_1)^2+\mathbf z_1^\top\Sigma\mathbf z_1)(\boldsymbol\mu^\top\boldsymbol\xi_1)]\\
 &=\frac{\E_0[(\boldsymbol\mu^\top\boldsymbol\xi_1)^3]+
          \cov_0(\mathbf z_1^\top\Sigma\mathbf z_1,\boldsymbol\mu^\top\boldsymbol\xi_1)}{2n}.
\end{aligned}
\end{equation}
Thus the absolute value in \eqref{eq:overlap-cov-2} is at most $C$.

\par\smallskip
\noindent\textit{Case III}: The variable $U_{ab}$ is independent of $(U_{12},U_{34})$ and has mean zero,
so $\E_0[(D-\E_0D)^2U_{ab}]=0$.

For the degree difference, \eqref{eq:conditional-degree-overlap} gives
$D-\E_0D=\mathbf1^\top\boldsymbol\xi_1-\mathbf1^\top\boldsymbol\xi_2$.
For a pair meeting $\{1,2\}$ only at $1$, denote its other index by $3$.
The first identity in \eqref{eq:appendix-conditional} gives
$\E_0(U_{13}\mid\mathbf z_1)=\boldsymbol\mu^\top\boldsymbol\xi_1$.
Independence and centering yield
\begin{equation*}
\begin{aligned}
 \E_0[(\mathbf1^\top\boldsymbol\xi_1)^2U_{13}]
 &=\E_0[(\mathbf1^\top\boldsymbol\xi_1)^2(\boldsymbol\mu^\top\boldsymbol\xi_1)],\\
 \E_0[(\mathbf1^\top\boldsymbol\xi_2)^2U_{13}]
 &=\E_0[(\mathbf1^\top\boldsymbol\xi_2)^2]\,\E_0U_{13}=0,\\
 \E_0[(\mathbf1^\top\boldsymbol\xi_1)(\mathbf1^\top\boldsymbol\xi_2)U_{13}]
 &=\E_0(\mathbf1^\top\boldsymbol\xi_2)\,
   \E_0[(\mathbf1^\top\boldsymbol\xi_1)U_{13}]=0.
\end{aligned}
\end{equation*}
For $U_{12}$, conditioning on either row gives
\begin{equation*}
 \E_0[(\mathbf1^\top\boldsymbol\xi_1)^2U_{12}]
 =\E_0[(\mathbf1^\top\boldsymbol\xi_2)^2U_{12}]
 =\E_0[(\mathbf1^\top\boldsymbol\xi_1)^2(\boldsymbol\mu^\top\boldsymbol\xi_1)].
\end{equation*}
In the mixed term, the contributions of $\boldsymbol\mu^\top\boldsymbol\xi_1+\boldsymbol\mu^\top\boldsymbol\xi_2$ vanish. Thus
\begin{equation*}
\begin{aligned}
 \E_0[(\mathbf1^\top\boldsymbol\xi_1)(\mathbf1^\top\boldsymbol\xi_2)U_{12}]
 &=\E_0[(\mathbf1^\top\boldsymbol\xi_1)(\mathbf1^\top\boldsymbol\xi_2)
          \boldsymbol\xi_1^\top\boldsymbol\xi_2]\\
 &=\sum_u\E_0[(\mathbf1^\top\boldsymbol\xi_1)\xi_{1u}]
        \E_0[(\mathbf1^\top\boldsymbol\xi_2)\xi_{2u}]
 =\norm{\Sigma\mathbf1}_{\ell^2}^2.
\end{aligned}
\end{equation*}
Expanding $(D-\E_0D)^2$ now gives
\begin{equation}\label{eq:appendix-degree-covariances}
\begin{aligned}
 \frac1{2n}\E_0[(D-\E_0D)^2U_{13}]
 &=\frac{\E_0[(\mathbf1^\top\boldsymbol\xi_1)^2(\boldsymbol\mu^\top\boldsymbol\xi_1)]}{2n},\\
 \frac1{2n}\E_0[(D-\E_0D)^2U_{12}]
 &=\frac{\E_0[(\mathbf1^\top\boldsymbol\xi_1)^2(\boldsymbol\mu^\top\boldsymbol\xi_1)]
          -\norm{\Sigma\mathbf1}_{\ell^2}^2}{n}.
\end{aligned}
\end{equation}
Pairs meeting $\{1,2\}$ only at $2$ follow by interchanging the two rows.
Pairs disjoint from $\{1,2\}$ give zero by independence.
The weights of $\mathbf1^\top\boldsymbol\xi_1$ and $\boldsymbol\mu^\top\boldsymbol\xi_1$ are $1$ and $\mu_u\in[0,1]$.
Lemma~\ref{lem:row-refined-third}
and \eqref{eq:row-coarse-a} therefore give
\begin{equation*}
 \bigl|\E_0[(\mathbf1^\top\boldsymbol\xi_1)^2(\boldsymbol\mu^\top\boldsymbol\xi_1)]\bigr|\le Cn,\qquad
 \norm{\Sigma\mathbf1}_{\ell^2}^2
 \le\norm{\Sigma}_{\rm op}^2L\le Cn.
\end{equation*}
Both quantities in \eqref{eq:appendix-degree-covariances} are $O(1)$.

Combining the degree and overlap bounds with
\eqref{eq:interaction-centering}, and using $H_B\ge1$, proves
\eqref{eq:interaction-covariance-target} for every $a<b$.
Substitution into \eqref{eq:cross-covariance-sum} gives
\begin{equation*}
 I_2=|\cov_0(J_0,J_{\rm cross})|
 \le\frac{|\alpha|}{n}\sum_{a<b}CH_B
 \le\frac{CH_B}{n}.
\end{equation*}
This proves \eqref{eq:refined-first-interpolation}.

\par\medskip
\noindent\textit{Proof of \eqref{eq:actual-refined-third-term}}: Differentiate
$m'(t)=\cov_t(J_0,J_{\rm cross})$ using
\eqref{eq:row-tilt-derivative} to obtain
\begin{equation*}
\begin{aligned}
 m''(t)
 &=\E_t[J_0(J_{\rm cross}-\E_tJ_{\rm cross})^2]
       -\E_tJ_0\,\var_t(J_{\rm cross})\\
 &=\E_t[(J_0-\E_tJ_0)(J_{\rm cross}-\E_tJ_{\rm cross})^2].
\end{aligned}
\end{equation*}
Since $J_0=D^2/(2n)$, \eqref{eq:interpolated-contrast-moments} with $r=4$ gives
\begin{equation*}
\begin{aligned}
 \sup_{0\le t\le1}\norm{J_0-\E_tJ_0}_{L^2(p_{B,t})}
 \le\sup_{0\le t\le1}\norm{J_0}_{L^2(p_{B,t})}=\frac1{2n}\sup_{0\le t\le1}\norm D_{L^4(p_{B,t})}^2\le C.
\end{aligned}
\end{equation*}
The Cauchy--Schwarz inequality and \eqref{eq:cross-moments} imply
\begin{equation*}
\begin{aligned}
 I_3
 &\le\frac12\sup_{0\le t\le1}
   \norm{J_0-\E_tJ_0}_{L^2(p_{B,t})}
   \norm{J_{\rm cross}-\E_tJ_{\rm cross}}_{L^4(p_{B,t})}^2
 \le\frac Cn.
\end{aligned}
\end{equation*}
Substituting \eqref{eq:actual-refined-first-term},
\eqref{eq:refined-first-interpolation} and \eqref{eq:actual-refined-third-term}
into \eqref{eq:conditional-bias-budget} gives
\begin{equation*}
 |m(1)-\mathcal T(q_B)|
 \le\frac{CH_B^2}{n}+\frac{CH_B}{n}+\frac Cn
 \le\frac{CH_B^2}{n}.
\end{equation*}
This proves \eqref{eq:actual-refined-degree} and \eqref{eq:actual-refined-overlap}.
\end{proof}

\section*{Acknowledgements}
Liu S.H. was partially supported by the Fundamental Research Funds for the Central Universities DUT25RC(3)133. Shao Q.M. was partially supported by National Nature Science Foundation of China NSFC 12031005 and Shenzhen Outstanding Talents Training Fund, China.

\section*{AI Statement}
During the preparation of this manuscript, the authors used ChatGPT and
Codex (OpenAI) for language editing, LaTeX formatting, and assistance with
proof exposition and organization. These tools were also used to check
references, notation, cross-references, and mathematical derivations.
The authors reviewed and revised the manuscript, verified the mathematical
arguments, and take full responsibility for its content.

\end{document}